\newtheorem{thm}{Theorem}[section]
\newtheorem{theorem}[thm]{Theorem}
\newtheorem{lem}[thm]{Lemma}
\newtheorem{lemma}[thm]{Lemma}
\newtheorem{prop}[thm]{Proposition}
\newtheorem{cor}[thm]{Corollary}
\theoremstyle{definition}
\newtheorem{definition}[thm]{Definition}
\theoremstyle{remark}
\newtheorem{rmk}[thm]{Remark}
\newcommand{\thmref}[1]{Theorem~\ref{#1}}
\newcommand{\corref}[1]{Corollary~\ref{#1}}
\newcommand{\secref}[1]{\S\ref{#1}}
\newcommand{\lemref}[1]{Lemma~\ref{#1}} 
\newcommand{\propref}[1]{Proposition~\ref{#1}}
\newcommand{\defref}[1]{Definition~\ref{#1}}
\newcommand*{\figref}[1]{Figure~\ref{#1}}
\newcommand*{\dis}{\displaystyle}
\newcommand*{\qq}{\qquad}
\newcommand*{\tx}[1]{\text{#1}}
\newcommand*{\lamb}{\lambda}
\newcommand*{\ep}{\epsilon}
\newcommand*{\kap}{\kappa}
\newcommand*{\suchthat}{\, \middle| \,}
\newcommand*{\n}{\!\!}
\newcommand*{\Pminus}{P_{-}}
\newcommand*{\Pminusbar}{\myoverline{-3}{0}{P}_{-}}
\newcommand*{\myoverline}[3]{\mkern -#1mu\overline{\mkern#1mu#3\mkern#2mu}\mkern -#2mu}	
\newcommand*{\vboldbar}{\myoverline{0}{0}{\vbold}}
\newcommand*{\Zbar}{\myoverline{-3}{0}{\Z}}
\newcommand*{\zbar}{\myoverline{-2}{0}{\z}}
\newcommand*{\Dapbar}{\myoverline{-3}{-1}{D}_\ap}
\newcommand*{\wbar}{\myoverline{0}{-0.5}{\omega}}
\newcommand*{\Ubar}{\myoverline{0}{0}{\U}}
\newcommand*{\fbar}{\myoverline{0}{0}{f}}
\newcommand*{\Dspbar}{\myoverline{0}{0}{\Dsp}}
\newcommand*{\half}{\frac{1}{2}}
\newcommand*{\onebytwo}{\frac{1}{2}}
\newcommand*{\threebytwo}{\frac{3}{2}}
\newcommand*{\onebythree}{\frac{1}{3}}
\newcommand*{\twobythree}{\frac{2}{3}}
\newcommand*{\onebyfour}{\frac{1}{4}}
\newcommand*{\Rsp}{\mathbb{R}}
\newcommand*{\Csp}{\mathbb{C}}
\newcommand*{\Nsp}{\mathbb{N}}
\newcommand*{\Zsp}{\mathbb{Z}}
\newcommand*{\Dsp}{\mathbb{D}}
\newcommand*{\Tsp}{\mathbb{T}}
\newcommand*{\Scalsp}{\mathcal{S}}
\newcommand*{\Sone}{\mathbb{S}^1}
\newcommand*{\Lone}{L^1}
\newcommand*{\Ltwo}{L^2}
\newcommand*{\Linfty}{L^{\infty}}
\newcommand*{\Hhalf}{\dot{H}^\half}
\newcommand*{\al}{\alpha}
\newcommand*{\ap}{{\alpha'}}
\newcommand*{\bp}{{\beta'}}
\newcommand*{\xp}{{x'}}
\newcommand*{\yp}{{y'}}
\newcommand*{\zp}{{z'}}
\DeclareMathOperator*{\Tr}{Tr}
\newcommand*{\diff}{\mathop{}\! d}
\newcommand*{\difff}{\mathop{}\!\! d}
\newcommand*{\compose}[1]{\circ{#1}}
\newcommand*{\conv}{*}
\newcommand*{\Hil}{\mathbb{H}}
\newcommand*{\Hiltil}{\widetilde{\mathbb{H}}}
\newcommand*{\Pa}{\mathbb{P}_A}
\newcommand*{\Ph}{\mathbb{P}_H}
\newcommand*{\Id}{\mathbb{I}}
\newcommand*{\Imag}{\tx{Im}}
\newcommand*{\Real}{\tx{Re}}
\newcommand*{\Av}{Av}
\newcommand*{\Avg}{\Av}
\newcommand*{\sgn}{sgn}
\newcommand*{\grad}{\nabla}
\newcommand*{\Dt}{D_t}
\newcommand*{\pt}{\partial_t}
\newcommand*{\ps}{\partial_s}
\newcommand*{\px}{\partial_x}
\newcommand*{\py}{\partial_y}
\newcommand*{\pz}{\partial_z}
\newcommand*{\pxp}{\partial_\xp}
\newcommand*{\pyp}{\partial_\yp}
\newcommand*{\pzp}{\partial_\zp}
\newcommand*{\pap}{\partial_\ap}
\newcommand*{\papabs}{\abs{\pap}}
\newcommand*{\pbp}{\partial_\bp}
\newcommand*{\pal}{\partial_\al}
\newcommand*{\palabs}{\abs{\pal}}
\newcommand*{\Dal}{D_{\al}}
\newcommand*{\Dap}{D_{\ap}}
\newcommand*{\Dapep}{\Dap^\ep}
\newcommand*{\Dapabs}{\abs{D_{\ap}}}
\newcommand*{\Dapfrac}{\frac{1}{\Zap}\pap}
\newcommand*{\Dapbarfrac}{\frac{1}{\Zapbar}\pap}
\newcommand*{\Dapabsfrac}{\frac{1}{\Zapabs}\pap}
\newcommand*{\E}{E} 
\newcommand*{\Eone}{E_1}
\newcommand*{\Etwo}{E_2}
\newcommand*{\Ethree}{E_3}
\newcommand*{\Ea}{E_a}
\newcommand*{\Ebdd}{\widetilde{E}} 
\newcommand*{\Eonebdd}{\widetilde{E}_1}
\newcommand*{\Etwobdd}{\widetilde{E}_2}
\newcommand*{\Ethreebdd}{\widetilde{E}_3}
\newcommand*{\Eabdd}{\widetilde{E}_a}
\newcommand*{\Ecal}{\mathcal{E}}
\newcommand*{\Ecaltil}{\widetilde{\mathcal{E}}}
\newcommand*{\Ecalone}{\mathcal{E}_1}
\newcommand*{\Ecaltilone}{\widetilde{\mathcal{E}}_1}
\newcommand*{\Ecaltwo}{\mathcal{E}_2}
\newcommand*{\Ecaltiltwo}{\widetilde{\mathcal{E}}_2}
\newcommand*{\Ecalthree}{\mathcal{E}_3}
\newcommand*{\Ecaltilthree}{\widetilde{\mathcal{E}}_3}
\newcommand*{\Ecalsigma}{\mathcal{E}_{\sigma}}
\newcommand*{\Fcal}{\mathcal{F}}
\newcommand*{\Fcaltil}{\widetilde{\mathcal{F}}}
\newcommand*{\vbold}{\mathbf{v}}
\newcommand*{\Pfrak}{\mathfrak{P}}
\newcommand*{\A}{\mathcal{A}}
\newcommand*{\Azero}{A_0}
\newcommand*{\Ag}{A_g}
\newcommand*{\bvar}{b}
\newcommand*{\bap}{\bvar_\ap}
\newcommand*{\bvarap}{\bap}
\newcommand*{\avar}{a}
\newcommand*{\h}{h}
\newcommand*{\hep}{\h^\ep}
\newcommand*{\hvart}{\h_t}
\newcommand*{\hal}{\h_\al}
\newcommand*{\hinv}{\h^{-1}}
\newcommand*{\htil}{\widetilde{\h}}
\newcommand*{\U}{U}
\newcommand*{\Uep}{\U^\ep}
\newcommand*{\Util}{\widetilde{U}}
\newcommand*{\Uzp}{\U_\zp}
\newcommand*{\Ut}{\U_t}
\newcommand*{\g}{g}
\newcommand*{\f}{f}
\newcommand*{\w}{\omega}
\newcommand*{\Psiep}{\Psi^\ep}
\newcommand*{\Psizp}{\Psi_{\zp}}
\newcommand*{\onePsizp}{\frac{1}{\Psizp}}
\newcommand*{\Psit}{\Psi_{t}}
\newcommand*{\Qzero}{Q_0}
\newcommand*{\Qzerobdd}{\widetilde{Q}_0}
\newcommand*{\Jzero}{J_0}
\newcommand*{\Jone}{J_1}
\newcommand*{\Jzerobdd}{\widetilde{J}_0}
\newcommand*{\Jonebdd}{\widetilde{J}_1}
\newcommand*{\Rzero}{R_0}
\newcommand*{\Rone}{R_1}
\newcommand*{\Rzerobdd}{\widetilde{R}_0}
\newcommand*{\Ronebdd}{\widetilde{R}_1}
\newcommand*{\z}{z}
\newcommand*{\zal}{\z_\al}
\newcommand*{\zalbar}{\zbar_\al}
\newcommand*{\zalabs}{\abs{\zal}}
\newcommand*{\zt}{\z_t}
\newcommand*{\ztbar}{\zbar_t}
\newcommand*{\ztal}{\z_{t\al}}
\newcommand*{\ztalbar}{\zbar_{t\al}}
\newcommand*{\ztt}{\z_{tt}}
\newcommand*{\zttbar}{\zbar_{tt}}
\newcommand*{\zttal}{\z_{tt\al}}
\newcommand*{\ztttbar}{\zbar_{ttt}}
\newcommand*{\Z}{Z}
\newcommand*{\Zep}{\Z^\ep}
\newcommand*{\Zap}{\Z_{,\ap}}
\newcommand*{\Zapep}{\Zap^\ep}
\newcommand*{\Zapbar}{\Zbar_{,\ap}}
\newcommand*{\Zapabs}{\abs{\Zap}}
\newcommand*{\Zt}{\Z_t}
\newcommand*{\Ztep}{\Zt^\ep}
\newcommand*{\Ztbar}{\Zbar_t}
\newcommand*{\Ztbarep}{\Zbar_t^\ep}
\newcommand*{\Ztepbar}{\Ztbarep}
\newcommand*{\Ztap}{\Z_{t,\ap}}
\newcommand*{\Ztapep}{\Ztap^\ep}
\newcommand*{\Ztapbar}{\Zbar_{t,\ap}}
\newcommand*{\Ztapepbar}{\Ztapbar^\ep}
\newcommand*{\Ztbarbp}{\Zbar_{t,\bp}}
\newcommand*{\Ztbpbar}{\Zbar_{t,\bp}}
\newcommand*{\Ztt}{\Z_{tt}}
\newcommand*{\Zttep}{\Zep_{tt}}
\newcommand*{\Zttbar}{\Zbar_{tt}}
\newcommand*{\Zttap}{\Z_{tt,\ap}}
\newcommand*{\Zttapbar}{\Zbar_{tt,\ap}}
\newcommand*{\Zttbarap}{\Zttapbar}
\newcommand*{\Zttbpbar}{\Zbar_{tt,\bp}}
\newcommand*{\Ztttbar}{\Zbar_{ttt}}
\newcommand*{\nobrac}[1]{ #1 }
\DeclarePairedDelimiter{\oldbrac}{\lparen}{\rparen}			
\NewDocumentCommand{\brac}{ s o m }{						
	\IfBooleanT{#1}{
  		\IfValueT{#2}{\oldbrac[#2]{#3}}
		\IfValueF{#2}{\oldbrac{#3}} 
	}
	\IfBooleanF{#1}{
  		\IfValueT{#2}{\PackageError{mypackage}{Incorrect use of brac. Insert star}{}}
		\IfValueF{#2}{\oldbrac*{#3}} 
	}		
}
\DeclarePairedDelimiter\oldcbrac{\lbrace}{\rbrace}				
\NewDocumentCommand{\cbrac}{ s o m }{					
	\IfBooleanT{#1}{
  		\IfValueT{#2}{\oldcbrac[#2]{#3}}
		\IfValueF{#2}{\oldcbrac{#3}} 
	}
	\IfBooleanF{#1}{
  		\IfValueT{#2}{\PackageError{mypackage}{Incorrect use of cbrac. Insert star}{}}
		\IfValueF{#2}{\oldcbrac*{#3}} 
	}		
}
\DeclarePairedDelimiter\oldsqbrac{\lbrack}{\rbrack}				
\NewDocumentCommand{\sqbrac}{ s o m }{					
	\IfBooleanT{#1}{
  		\IfValueT{#2}{\oldsqbrac[#2]{#3}}
		\IfValueF{#2}{\oldsqbrac{#3}} 
	}
	\IfBooleanF{#1}{
  		\IfValueT{#2}{\PackageError{mypackage}{Incorrect use of sqbrac. Insert star}{}}
		\IfValueF{#2}{\oldsqbrac*{#3}} 
	}		
}
\DeclarePairedDelimiter{\oldabs}{\lvert}{\rvert}
\NewDocumentCommand{\abs}{ s o m }{						
	\IfBooleanT{#1}{
  		\IfValueT{#2}{\oldabs[#2]{#3}}
		\IfValueF{#2}{\oldabs{#3}} 
	}
	\IfBooleanF{#1}{
  		\IfValueT{#2}{\PackageError{mypackage}{Incorrect use of abs. Insert star}{}}
		\IfValueF{#2}{\oldabs*{#3}} 
	}		
}
\DeclarePairedDelimiterX{\oldnorm}[1]{\lVert}{\rVert}{#1}
\NewDocumentCommand{\norm}{ s o o m }{					
	\IfValueT{#2} {
		\IfBooleanT{#1}{
  			\IfValueT{#3}{\oldnorm[#2]{#4}_{#3}}
			\IfValueF{#3}{\oldnorm{#4}_{#2}} 
		}
		\IfBooleanF{#1}{
  			\IfValueT{#3}{\PackageError{mypackage}{Incorrect use of norm. Insert star}{}}
			\IfValueF{#3}{\oldnorm*{#4}_{#2}} 
		}
	}
	\IfValueF{#2} {
		\IfBooleanT{#1}{\oldnorm{#4}}	
		\IfBooleanF{#1}{\oldnorm*{#4}}		
	}	
}
\def\black@#1{%
    \noalign{%
        \ifdim#1>\displaywidth
            \dimen@\prevdepth
            \nointerlineskip
            \vskip-\ht\strutbox@
            \vskip-\dp\strutbox@
            \vbox{\noindent\hbox to \displaywidth{\hbox to#1{\strut@\hfill}}}%
            \prevdepth\dimen@
        \fi
    }%
}
\renewcommand{\tocsection}[3]{%
  \indentlabel{\@ifnotempty{#2}{\bfseries\ignorespaces#1 #2\quad}}\bfseries#3}
\renewcommand{\tocsubsection}[3]{%
  \indentlabel{\@ifnotempty{#2}{\ignorespaces#1 #2\quad}}#3}
\newcommand\@dotsep{4.5}
\def\@tocline#1#2#3#4#5#6#7{\relax
  \ifnum #1>\c@tocdepth 
  \else
    \par \addpenalty\@secpenalty\addvspace{#2}%
    \begingroup \hyphenpenalty\@M
    \@ifempty{#4}{%
      \@tempdima\csname r@tocindent\number#1\endcsname\relax
    }{%
      \@tempdima#4\relax
    }%
    \parindent\z@ \leftskip#3\relax \advance\leftskip\@tempdima\relax
    \rightskip\@pnumwidth plus1em \parfillskip-\@pnumwidth
    #5\leavevmode\hskip-\@tempdima{#6}\nobreak
    \leaders\hbox{$\m@th\mkern \@dotsep mu\hbox{.}\mkern \@dotsep mu$}\hfill
    \nobreak
    \hbox to\@pnumwidth{\@tocpagenum{\ifnum#1=1\bfseries\fi#7}}\par
    \nobreak
    \endgroup
  \fi}
\renewcommand\csname r@tocindent0\endcsname{0pt}
\def\l@subsection{\@tocline{2}{0pt}{2.5pc}{5pc}{}}
 \def\@testdef #1#2#3{%
   \def\reserved@a{#3}\expandafter \ifx \csname #1@#2\endcsname
  \reserved@a  \else
 \typeout{^^Jlabel #2 changed:^^J%
 \meaning\reserved@a^^J%
 \expandafter\meaning\csname #1@#2\endcsname^^J}%
 \@tempswatrue \fi}
\newcommand*{\rom}[1]{\expandafter\@slowromancap\romannumeral #1@}
\patchcmd{\@sect}{\@addpunct.}{}{}{}
\patchcmd{\subsection}{-.5em}{1em}{}{}
   \def\MR#1{}
\begin{document}

\title[2D Water waves]{Uniform in gravity estimates for 2D water waves}
\author[Siddhant Agrawal]{Siddhant Agrawal}
\address{\parbox{\linewidth}{Department of Mathematics\\
University of Colorado Boulder\\
2300 Colorado Avenue\\
Boulder, CO 80309-0395\\
USA\smallskip}}
\email{Siddhant.Agrawal@colorado.edu}

\begin{abstract}
We consider the 2D gravity water waves equation on an infinite domain. We prove a local wellposedness result which allows interfaces with corners and cusps as initial data and which is such that the time of existence of solutions is uniform even as the gravity parameter $g \to 0$. For $g>0$, we prove an improved blow up criterion for these singular solutions and we also prove an existence result for $g = 0$. Moreover the energy estimate used to prove this result is scaling invariant.  

As an application of this energy estimate, we then consider the water wave equation with no gravity where the fluid domain is homeomorphic to the disc. We prove a local wellposedness result which allows for interfaces with angled crests and cusps as initial data and then by a rigidity argument, we show that there exists initial interfaces with angled crests for which the energy blows up in finite time, thereby proving the optimality of this local wellposedness result. For smooth initial data, this local wellposedness result gives a longer time of existence as compared to previous results when the initial velocity is small and we also improve upon the blow up criterion. 
\end{abstract}

\subjclass[2020]{35Q35, 76B15, water waves, singular solutions}

\maketitle
\tableofcontents

\section{Introduction}

We are concerned with the motion of a fluid in dimension two with a free boundary. In this work we will identify 2D vectors with complex numbers. The fluid is assumed to be inviscid, incompressible and irrotational. The fluid domain $\Omega(t) \subset \Csp$ and the air region are separated by an interface $\partial\Omega(t)$. The air and the fluid are assumed to have constant densities of 0 and 1 respectively. The effect of surface tension is ignored. We will consider two different models:

In the first model we assume that the interface $\partial\Omega(t)$ is homeomorphic to $\Rsp$ and tends to real line at infinity. The fluid is below the air region and there is no bottom. The fluid is subject to a uniform gravitational field $-gi$ acting in the downward direction (here $g \geq 0$).
The motion of the fluid is then governed by the Euler equation
\begin{align}\label{eq:Euler}
\begin{split}
& u_t + (u \cdot \grad)u = -gi -\grad P  \qq\text{ on } \Omega (t) 	\\
& \tx{div } u = 0, \quad \tx{curl } u = 0 \qq\text{ on } \Omega(t) \\
& P = 0 \qq\text{ on } \partial\Omega (t) \\
& (1, u) \tx{ is tangent to the free surface } (t, \partial\Omega(t))
\end{split}
\end{align}
along with the decay conditions $ u \to 0, \grad P \to -gi$  as $ \abs{(x,y)} \to \infty$. We will consider this problem for all values of the gravity parameter $g \geq 0$. 

For the second model we will consider the problem when the domain $\Omega(t)$ is bounded and is homeomorphic to the unit disc $\Dsp$. In this case we assume that there is no gravity. With these assumptions, the equations become
\begin{align}\label{eq:Euler2}
\begin{split}
& u_t + (u \cdot \grad) u = -\grad P  \qq\text{ on } \Omega (t) 	\\
& \tx{div } u = 0, \quad \tx{curl } u = 0 \qq\text{ on } \Omega(t)  \\
& P = 0 \qq\text{ on } \partial\Omega (t) \\
& (1, u) \tx{ is tangent to the free surface } (t, \partial\Omega(t)) 
\end{split}
\end{align}

The earliest results on local well-posedness for the Cauchy problem are for small data in 2D and were obtained by Nalimov \cite{Na74}, Yoshihara \cite{Yo82,Yo83} and Craig \cite{Cr85}. In the case of zero surface tension and gravity $g = 1$, Wu \cite{Wu97,Wu99} obtained the proof of local well-posedness for arbitrary data in Sobolev spaces. Continuous dependence of solutions on the initial data was proved in \cite{HuIfTa16, Ng16}.  These results has been extended in various directions, see the works \cite{ChLi00, Li05, La05, CoSh07, ZhZh08, ShZe08, ShZe11, CaCoFeGaGo13, AlBuZu14, KuTu14, Po14, AlBuZu16, HuIfTa16, WaZh17, HaIfTa17, AlBuZu18, Ai19, Ai20, Gi21, WaZhZhZh21, AlBuZu22, AiIfTa24, IfPiTaTa25}.

An important quantity related to the well-posedness of the problem in the zero surface tension case is the Taylor sign condition proposed by Taylor in \cite{Ta50}. The condition is that there exists a constant $c>0$ such that 
\begin{align*}
-\frac{\partial P}{\partial n} \geq c >0 \quad \tx{ on } \partial\Omega(t)
\end{align*}
where $n$ is the outward unit normal. In \cite{Eb87}, Ebin gave an example of initial data with non-zero vorticity not satisfying the Taylor sign condition, for which the problem is ill posed. In \cite{BeHoLo93} Beale, Hou and Lowengrub proved that the linearized problem around a solution is wellposed if the Taylor sign condition is satisfied. In \cite{Wu97} Wu proved that for the infinite bottom case and gravity $g = 1$, the Taylor sign condition is satisfied if the interface is $C^{1,\alpha}$ for $\alpha>0$. This was later shown to be true for flat bottoms and with perturbations to flat bottom by Lannes \cite{La05}.  See also \cite{HuIfTa16, Su23}.

In \cite{CaCoFeGaGo13} Castro, C\'ordoba, Fefferman, Gancedo and 
  G\'omez-Serrano proved the existence of splash singularities for the water wave equation (see also \cite{CoSh14}). In order to study solutions with non $C^1$ interfaces, Kinsey and Wu \cite{KiWu18} proved an a priori estimate for angled crested water waves in the case of zero surface tension. Using this, Wu \cite{Wu19} proved a local wellposedness result which allows the initial data to have interfaces with corners and cusps. In \cite{Ag20} we proved that the singular solutions constructed in \cite{Wu19} are rigid and in particular the angle of the corner does not change with time. In a recent work \cite{CoEnGr23} for a zero gravity model, C\'{o}rdoba, Enciso and Grubic construct solutions with interfaces which have corners and cusps, with the property that the angle of the corner changes with time. Note that at the corners and cusps in \cite{Wu19, CoEnGr23}, the Taylor sign condition is not satisfied and one has $-\frac{\partial P}{\partial n} = 0$ at those singularities. For the related question concerning the contact angle problem for water waves, see \cite{Po19, MiWa20, MiWa21, MiWa24}. 

In this paper we explore the question of local wellposedness when the gravity parameter $g \to 0$. For example consider the following question: For some fixed initial data, does one have a uniform time of existence for the solutions as $g \to 0$? All the above results give a time of existence $T \to 0$ as $g \to 0$. A related question of taking the zero surface tension limit $\sigma \to 0$ when $g = 1$ was first studied by Ambrose and Masmoudi \cite{AmMa05, AmMa09}. As we explain below, taking $g \to 0$ when $\sigma = 0$ is significantly more difficult due to issues with the Taylor sign condition. 

To understand our main motivation for studying this problem, we first need to recall the scaling symmetries of the water wave equation and our previous work \cite{Ag21} on surface tension. Let $\Z(\cdot,t)$ and $\Zt(\cdot,t)$ be the interface and the velocity on the boundary in conformal coordinates. If $(\Z(\ap,t), \Zt(\ap,t)) $ is a solution to the water wave equation in the time interval $[0,T]$  with gravity $g \geq 0$  and surface tension $\sigma \geq 0$, then for any $\lamb>0$ and $s \in \Rsp$, the functions $((\Z_\lamb)(\ap,t), (\Zt)_{\lamb}(\ap,t))$ given by 
\begin{align}\label{eq:scaling}
(\Z_\lamb)(\ap,t) = \lamb^{-1}\Z(\lamb\ap, \lamb^s t), \quad  (\Zt)_{\lamb}(\ap,t) = \lamb^{s-1}\Zt(\lamb\ap, \lamb^s t) 
\end{align}
is also a solution to the water wave equation  in the time interval $[0, \lamb^{-s} T]$ with gravity $g_\lamb = \lamb^{2s - 1}g$ and surface tension $ \sigma_\lamb = \lamb^{2s-3}\sigma$. 

Note that if $g > 0$ and $\sigma = 0$, then $s = \half$ keeps $g_\lamb = g$ and $\sigma_\lamb = 0$. If $g = 0$ and $\sigma > 0$, then $s = \threebytwo$ keeps $g_\lamb = 0$ and $\sigma_\lamb = \sigma$. If $g= \sigma = 0$, then we see that for all $s \in \Rsp$ we have that $g_\lamb = \sigma_\lamb = 0$. In this paper when we talk about scaling transformations, we will consider all transformations of the type \eqref{eq:scaling} and not restrict ourselves to the case where $s = \half$ or $s = \threebytwo$, as we are interested in the entire family of solutions for $g \geq 0$.

The work of Kinsey and Wu \cite{KiWu18} and Wu \cite{Wu19} on angled crested water waves was restricted to the case of surface tension $\sigma = 0$ and in \cite{Ag21} we studied the problem of angled crested like water waves with $\sigma \geq 0$. If the interface is smooth but is close to being angled crested, then its curvature is very large and hence surface tension becomes quite dominant, and hence this problem is very relevant physically. In \cite{Ag21} we showed that the natural extension of the energy of \cite{KiWu18} to the case of non-zero surface tension does not allow initial interfaces with corners (or indeed any singularities). We also proved an energy estimate for an energy called $\Ecalsigma(t)$, which works for all $\sigma \geq 0$ and which reduces to a lower order version of the energy of \cite{KiWu18} for $\sigma = 0$. Although this energy $\Ecalsigma(t)$ does not allow initial data with corner interfaces when $\sigma > 0$, it does allow smooth interfaces with very large curvature. More precisely we proved a local wellposedness result based on this energy, and showed that there exists a large class of smooth initial data with $\Linfty$ norm of curvature $\norm[\infty]{\kap} \sim \sigma^{-\frac{1}{3} + \delta}$ for any fixed $\delta > 0$ arbitrarily small, for which the time of existence is $O(1)$ even as $\sigma \to 0$. Note that for $\sigma$ small, this gives a time of existence much larger as compared to previous existence results, as those would yield a time of existence $T \lesssim \frac{1}{\norm[\infty]{\kap}} \sim \sigma^{\frac{1}{3} - \delta} $ which is very small if $\sigma$ is small.   

This growth rate of $\sigma^{-\onebythree}$ for the $\Linfty$ norm of the curvature in the work \cite{Ag21} was heuristically explained by the following scaling argument: As one is interested in the zero surface tension limit of solutions in \cite{Ag21}, one is comparing solutions with different values of surface tension on the same time interval and so in \eqref{eq:scaling} one should put $s = 0$. If one also ignores gravity i.e. puts $g = 0$, then this yields from  \eqref{eq:scaling} $\Z_\lamb(\ap,t) = \lamb^{-1}\Z(\lamb\ap, t)$ and $\sigma_\lamb = \lamb^{-3}\sigma$ and $g_\lamb = 0$. In this regime we see that the curvature $\kap_\lamb (\al,t) = \lamb\kap(\lamb\al,t)$ which yields $\sigma_\lamb\kap_\lamb^3 (\al,t) = \sigma\kap^3(\lamb\al,t)$. Hence $\norm*[\infty]{\sigma^{\onebythree}\kap}$ is invariant under this scaling and so the $\Linfty$ norm of the curvature grows like $\sigma^{-\onebythree}$ as $\sigma \to 0$.

One of our main motivation is to place this heuristic scaling argument on a more rigorous footing. Note that there is huge gap between what is proven in \cite{Ag21} and this heuristic. First of all the model considered in \cite{Ag21} had $g =1$ and moreover the energy estimate for $\Ecal_\sigma(t)$ was not scaling invariant under \eqref{eq:scaling} with $s = 0$, which is required to make sense of the scaling argument. To see one of the main difficulties in constructing a modified energy $\widetilde{\Ecal}_\sigma(t)$ and proving an energy estimate which works for $g = 0$ and is scaling invariant under \eqref{eq:scaling} with $s = 0$, observe that one consequence of such an energy estimate would be that if one lets $\sigma \to 0$, then for $\sigma = 0$ one would get an energy estimate for $g = \sigma = 0$, which is scaling invariant under \eqref{eq:scaling} with $s = 0$ and allows initial interfaces with corners and cusps similar to \cite{KiWu18} and \cite{Wu19}. This reduced problem itself is wide open as even for nice smooth decaying initial data, no existence results are known for $g = \sigma = 0$ on the infinite domain and moreover the only scaling invariant energy estimates in water waves are the works \cite{HuIfTa16} and \cite{AiIfTa24} which are for $g = 1, \sigma = 0$ and are invariant under \eqref{eq:scaling} for $s = \frac{1}{2}$. 

In this paper we tackle this reduced problem and solve it completely. We actually prove a much stronger result as our energy estimate is invariant under \eqref{eq:scaling} for all values of $s \in \Rsp$ and not just $s = 0$ and works for all $g \geq 0$. As a consequence of this new energy estimate, we then also prove a blow up result for the model \eqref{eq:Euler2} with sharp bounds on the blow up time.

We now give a heuristic argument to illustrate one of the main difficulties of this problem. Consider the following simplified model of the water wave equation
\begin{align*}
\Dt^2 f + \brac{-\frac{\partial P}{\partial n}}\palabs f = l.o.t
\end{align*}
Here $\Dt$ is the material derivate, $\palabs = \sqrt{-\Delta}$,  l.o.t are lower order terms and $f$ is some variable such as the velocity on the interface. To prove energy estimates for this equation, one can naturally take the following energy
\begin{align*}
E(t) =  \int_\Rsp \abs{\Dt f}^2 \diff \al + \int_\Rsp \brac{-\frac{\partial P}{\partial n}} \abs{\palabs^\half f}^2 \diff \al
\end{align*}
and take the time derivative to get 
\begin{align}\label{eq:approxtimeenergy}
\frac{\diff}{\diff t} E(t) = \int_\Rsp \Dt\brac{-\frac{\partial P}{\partial n}} \abs{\palabs^\half f}^2 \diff \al + errors
\end{align}
Now if $g >0$ and fixed, and the interface is $C^{1,\gamma}$ for some $\gamma > 0$, then as shown in \cite{Wu97}, we have a positive lower bound on $-\frac{\partial P}{\partial n}$ and hence the energy $E(t)$ controls the $\Hhalf(\Rsp)$ norm of $f$. Then one shows that we can control $\norm[\infty]{ \Dt\brac{-\frac{\partial P}{\partial n}}}$ from the energy and hence the first term on the right hand side of \eqref{eq:approxtimeenergy} is controlled. 

Now as we show later in the introduction, $-\frac{\partial P}{\partial n} \to g$ at infinity. So as $g \to 0$, the energy $E(t)$ no longer controls the $\Hhalf(\Rsp)$ norm of $f$ uniformly. In the extreme case of $g = 0$, we will see that if the interface is $C^{1, \gamma}$ and the velocity on the boundary is in $H^1(\Rsp)$ and is not identically zero, then $-\frac{\partial P}{\partial n} > 0$ everywhere but  $-\frac{\partial P}{\partial n} \to 0$ at infinity. Therefore in the case of $g = 0$, the energy $E(t)$ does not control the $\Hhalf(\Rsp)$ norm of $f$, but a weighted  $\Hhalf(\Rsp)$ norm of $f$ with the weight decaying to zero at infinity. Hence to control the first term of \eqref{eq:approxtimeenergy}, it is no longer enough to simply get a bound on $\norm[\infty]{ \Dt\brac{-\frac{\partial P}{\partial n}}}$ by the energy. 

This is one of the fundamental issues one faces when trying to prove an energy estimate which works uniformly as $g \to 0$. Similar issues come up in several places in the energy estimate. Note that this issue occurs even for nice smooth initial data and we are interested in studying interfaces with corners and cusps, and so we need to deal with both these issues simultaneously. We overcome this difficulty by proving new estimates for the Taylor sign condition term and then use these to prove a new energy estimate which works for all values of gravity $g \geq 0$. We explain this later in the introduction in greater detail. First let us now informally state our first main result which is for the model \eqref{eq:Euler}. We construct an energy $\Ecal(t)$ and prove a local wellposedness result based on this energy and also prove a blow up criterion based on a quantity called $B(t)$. The result is as follows:
\begin{thm}\label{thm:intromain}
(Informal) Let $g \geq 0$ and consider an initial data with $\Ecal(0)<\infty$. Then there exists a time $T \gtrsim \frac{1}{\Ecal(0)^\half}$ such that there exists a solution to \eqref{eq:Euler} with gravity $g$ in $[0,T]$. 

If $g>0$, then the solution is unique and in this case if $T^*>0$ is the maximal time of existence, then either $T^* = \infty$ or $\limsup_{t \to T^*} B(t) = \infty$. 
\end{thm} 
The precise statement of this theorem is given in \thmref{thm:existencemain}. The quantity $B(t)$ can be seen from \thmref{thm:existencemain} and the energy $\Ecal(t)$ is defined just before \thmref{thm:existencemain}. We now mention some of the key features of this result:
\begin{enumerate}
\item The energy $\Ecal(t)$ is finite if the initial data belongs to regular enough Sobolev spaces on $\Rsp$. Moreover the energy is finite for interfaces with a finite number of acute angled corners or cusps. Therefore the above theorem proves local wellposedness for interfaces with angled crests and cusps similar to \cite{Wu19} and the energy $\Ecal(t)$ is a modified and improved version of the energy used in \cite{Wu19}. 
\item The energy $\Ecal(t)$ is an increasing function of gravity. Hence for a fixed initial data and gravity $g$ satisfying $0\leq g \leq g_0$, we have $\Ecal(0) \leq C$ where $C$ depends only on $g_0$ and the initial data. Therefore, from the lower bound on the time of existence $T \gtrsim \frac{1}{\Ecal(0)^\half}$, we see that the time of existence of solutions is uniform as $g \to 0$. 

This is the first result which proves uniform time of existence of solutions as $g\to 0$ in the non-periodic setting, as  all previous results gave a time of existence $T\to 0$ as $g \to 0$. The above theorem also gives the first local existence result for $g = \sigma = 0$ in the non-periodic setting. For the periodic case, see the comments made after \thmref{thm:introbdd} below. 

\item The lower bound of the time of existence $T \gtrsim \frac{1}{\Ecal(0)^\half}$ is scaling invariant with respect to all the scaling transformations \eqref{eq:scaling}. To understand this, fix $g \geq 0$ and consider an initial data $(\Z,\Zt)(0)$ with $\Ecal(0) < \infty$. Then for this initial data and gravity $g$, the above theorem gives a time of existence of $T \gtrsim \frac{1}{\Ecal(0)^\half}$. Now let $\lamb > 0$ and $s \in \Rsp$ and consider the initial data $(\Z,\Zt)_\lamb(0)$ given by \eqref{eq:scaling} and solve \eqref{eq:Euler} with this initial data and gravity $g_\lamb = \lamb^{2s - 1}g$. Now the energy is such that $\Ecal_\lamb(0) = \lamb^{2s}\Ecal(0)$ and hence the above theorem gives a time of existence of $T_\lamb \gtrsim \lamb^{-s}\frac{1}{\Ecal(0)^\half}$ which is consistent as that from the scaling \eqref{eq:scaling} namely $T_\lamb = \lamb^{-s} T$.

The proof of this lower bound on the time of existence, relies on a new a priori estimate \thmref{thm:aprioriE} which is scaling invariant with respect to all the scaling transformations of \eqref{eq:scaling}. To the best of our knowledge, this is the first such a priori estimate as previously the scaling invariant estimates of \cite{HuIfTa16} and \cite{AiIfTa24} were restricted to $g= 1$ and $s = \frac{1}{2}$. 

\item From the precise version of this result namely \thmref{thm:existencemain}, we see that $B(t)$ scales like $\norm[\infty]{\grad u}(t)$ and the new blow up criterion in the above theorem is a significant improvement over the blow up criterion for singular water waves in \cite{Wu19}. 
\end{enumerate}
In the above theorem we do not prove uniqueness of solutions for the $g = 0$ case. Although we believe that uniqueness holds in this case, we do not put in any effort to prove this as that is not the main focus of this paper. 

For model \eqref{eq:Euler2}, we prove a local wellposedness result similar to \thmref{thm:intromain} by modifying its proof and this result improves several aspects of previous known results. The energy is modified to $\Ecaltil(t)$ and the quantity in the blow up criterion is modified to $ \widetilde{B}(t)$. Moreover we then prove a blow up result for this model with angled crested initial interfaces, which proves the optimality of this local wellposedness result. The informal statement of the theorem is as follows:

\begin{figure}[h]
\centering
 \begin{tikzpicture}[scale=0.35]
\draw [thick] (-2,0) to [out=15,in=180] (6,3.5) to [out=0,in=180-15] (10,0) ;
\draw [thick] (-2,0) to [out=-15,in=180] (6,-3.5) to [out=0,in=-180+15] (10,0) ;
\draw [thick, ->] (-0.5,0.0) -- (1,0);
\draw [thick, ->] (9,0.0) -- (7.5,0);
\end{tikzpicture}
\caption{An example of blow up of the energy $\Ecaltil(t)$}\label{fig:blowup}
\end{figure}

\begin{thm}\label{thm:introbdd}
(Informal) Consider an initial data with $\Ecaltil(0)<\infty$. Then there exists a time $T \gtrsim \frac{1}{\Ecaltil(0)^\half}$ such that there exists a unique solution to \eqref{eq:Euler2}  in $[0,T]$. If $T^*>0$ is the maximal time of existence, then either $T^* = \infty$ or $\limsup_{t \to T^*} \widetilde{B}(t) = \infty$. 

Moreover there exist initial data with $\Ecaltil(0)<\infty$ for which the maximal time of existence $T^*$ is finite with $T^* \approx \frac{1}{\Ecaltil(0)^\half}$ and $\limsup_{t \to T^*} \widetilde{B}(t) = \infty$. In particular $\limsup_{t \to T^*} \Ecaltil(t) = \infty$.
\end{thm}
The precise statement of the first part of this result is given in \thmref{thm:existencemainbdd} and the precise blow up result is \thmref{thm:blowup}. The quantity $\widetilde{B}(t)$ can be seen from \thmref{thm:existencemainbdd} and the energy $\Ecaltil(t)$ is defined just before \thmref{thm:existencemainbdd}. We mention some of the key features of this result:

\begin{enumerate}
\item Similar to \thmref{thm:intromain}, the above theorem proves local wellposedness  for initial data which belongs to regular enough Sobolev spaces on $\Tsp$ and also for initial interfaces having a finite number of acute angled corners or cusps. Note that the above model is for $g = 0$ and we do have uniqueness in the above theorem.
\item The above theorem gives significantly larger time of existence of solutions if the initial velocity is small, even for smooth initial data. To understand this, consider a smooth initial domain and initial velocity $\ep v_0$ with $v_0$ fixed and $\ep > 0$ small. As the pressure is quadratic in the velocity, this implies that the lower bound on $-\frac{\partial P}{\partial n}$ scales like $\ep^2$. Now the time of existence, in all previous local wellposedness results for model \eqref{eq:Euler2}, depends \emph{quantitatively} on this lower bound and thereby gives a time of existence $T \to 0$ as $\ep \to 0$.

In contrast, the time of existence in the above theorem \emph{does not} depend on the lower bound on $-\frac{\partial P}{\partial n}$. For such initial data, the energy $\Ecaltil(0)$ scales like $\ep^2$ and hence the time of existence that we get from the above theorem is $T \gtrsim \frac{1}{\ep} \to \infty$ as $\ep \to 0$. This is what one would expect naturally, because if $\ep = 0$ then the trivial solution is the clear global in time solution. We also remark that the energy $\Ecaltil(0) = 0 $ if and only if the initial velocity is a constant function.

A similar argument applies as well to the model \eqref{eq:Euler} with periodic boundary conditions. Although we do not prove results in this case, this situation is essentially identical to the model \eqref{eq:Euler2} at least with respect to local wellposedness issues. For model  \eqref{eq:Euler} with periodic boundary conditions, for any given initial data, standard local wellposedness arguments already give uniform in time of existence as gravity $g \to 0$ (this is because of compactness). However the time of existence one obtains from such arguments for $g = 0$ is very small if the initial velocity is small, in exactly the same way as described above. The analogous theorem to \thmref{thm:introbdd} for model \eqref{eq:Euler} with periodic boundary conditions will remove this issue in the same way as mentioned above. 

\item The blow up criterion in the above theorem is an improvement over previous blow up criterions for the model \eqref{eq:Euler2}, because in contrast to previous results the blow up criterion above does not impose the validity of the Taylor sign condition (see \thmref{thm:existencemainbdd} for the precise definition of $\widetilde{B}(t)$). Moreover the blow up criterion does not depend on the chord arc condition or whether the interface is self intersecting or not. 
\item The blow up in the above result is a new type of singularity formation, as the blow up of $\widetilde{B}(t)$ implies that either some weighted norm of the gradient of the velocity blows up or a weighted norm of the second derivative of the interface in conformal coordinates blows up. This blow up result is not related to the splash/splat singularity formation in \cite{CaCoFeGaGo13} in any sense. Note that for the above theorem, the interface is parametrized in conformal coordinates $Z(\cdot,t):\Tsp \to \Csp$, and whether the mapping $Z(\cdot,t)$ is injective or not plays no role in the analysis and does not affect the quantity $ \widetilde{B}(t)$ in the blow up criterion. In particular the solution can be continued even if there is a splash/splat singularity. Of course if that happens, then the physical relevance of the solution is lost past the splash/splat singularity. 

\item This blow up result is proved by utilizing a pinch off scenario (see \figref{fig:blowup}). The proof is via a contradiction argument and in particular we do not known whether a pinch off actually happens or not, though we do know that $ \widetilde{B}(t)$ and the energy $\Ecaltil(t)$ blows up. As the proof is via contradiction, we do not know the exact behavior of the solution at the blow up time nor do we know the exact time of blow up. However we give very tight lower and upper bounds on the time of blow up, which in particular proves the optimality of the lower bound on the time of existence $T \gtrsim \frac{1}{\Ecaltil(0)^\half}$ in the local wellposedness result. In particular the blow up example shows that this bound cannot be improved to $T \gtrsim \frac{1}{\Ecaltil(0)^{\half + \delta}}$ for any $\delta > 0$, even for initial data with small $\Ecaltil(0)$. 
\end{enumerate}

The proof of the blow up crucially uses the new blow up criterion proved in the above theorem. The proof of the blow up also strongly uses the fact that for interfaces with corners/cusps and with $\Ecaltil(0) < \infty$, the singularities are rigid similar to \cite{Ag20} (see \thmref{thm:mainangle}). In particular this blow up result cannot be easily modified to prove blow up for smooth solutions. 

Let us now explain the construction of the energy and some key ideas in the proof. 

\medskip
\textbf{The energy, heuristics and key estimates:}
\smallskip

Consider the model \eqref{eq:Euler} and we parametrize the interface $\partial \Omega(t)$ in conformal coordinates $Z(\cdot,t):\Rsp \to \partial \Omega(t) \subset \Csp$ and use $\ap \in \Rsp$ as the conformal parameter. The velocity on the boundary in conformal coordinates is denoted by $\Zt$ and so $\Zt(\ap,t) = u(Z(\ap,t),t)$. Note that here we are using the notation of Wu \cite{Wu19} and in particular $\partial_t Z \neq \Zt$ but $\Dt Z = \Zt$ where $\Dt$ is the material derivative.

Now if $\ps$ is the arc length derivative on $\partial \Omega(t)$, then in conformal coordinates this transforms to the weighted derivative $\frac{1}{\Zapabs}\pap$ on $\Rsp$. The Dirichlet to Neumann map $\mathcal{G}$ on $\partial\Omega(t)$ transforms to $\frac{1}{\Zapabs}\papabs$ on $\Rsp$, where $\papabs = \sqrt{-\Delta}$. The arc length measure $\diff \sigma$  on $\partial\Omega(t)$ transforms to the measure $\Zapabs \diff \ap$ on $\Rsp$. Finally as shown by Wu \cite{Wu97} the quantity $-\frac{\partial P}{\partial n}$ on $\partial \Omega(t)$ transforms to $\frac{\Ag}{\Zapabs}$ on $\Rsp$ where 
\begin{align}\label{eq:Agintro}
\Ag(\ap,t) = g + \frac{1}{2\pi}\int_\Rsp \abs{\frac{\Zt(\ap,t) - \Zt(\bp,t)}{\ap-\bp}}^2 \diff\bp
\end{align}
For $\Zt(\cdot,t) \in H^1(\Rsp)$, we see that $0\leq g \leq \Ag \leq g + C\norm[2]{\Ztap}^2$, for some universal constant $C>0$ (see \propref{prop:Hardy}). Now if the interface is $C^{1,\gamma}$ for some $\gamma>0$ and decays to the flat interface at infinity appropriately, then there exists constants $c_1, c_2 > 0$ such that $0< c_1 \leq \Zapabs(\ap) \leq c_2 < \infty$ for all $\ap \in \Rsp$ (see Theorem 3.5 in \cite{Po92}). Hence if $g>0$ and the interface is $C^{1,\gamma}$, then there exists $\tilde{c}_1, \tilde{c}_2 >0$ such that $0 <  \tilde{c}_1 \leq \frac{\Ag}{\Zapabs}(\ap) \leq  \tilde{c}_2 < \infty $ for all $\ap \in \Rsp$ and so
\begin{align*}
0 < \tilde{c}_1 \leq   -\frac{\partial P}{\partial n}(x) \leq \tilde{c}_2 < \infty \qq \tx{ for all } x\in \partial \Omega(t)
\end{align*}
Therefore the Taylor sign condition is satisfied, and this is the same argument as in \cite{Wu97}. 

If the interface has a corner of angle $\nu\pi$ with $0<\nu<1$ at $\Z(0,t)$, then near $\ap = 0$ we have
\begin{align}\label{eq:Zasymptotics}
\Z(\ap,t) \sim (\ap)^\nu  \qq \frac{1}{\Zap}(\ap,t) \sim (\ap)^{1 - \nu} \qq  \pap\frac{1}{\Zap}(\ap,t) \sim (\ap)^{- \nu}
\end{align} 
In particular we see that $\frac{1}{\Zapabs} = 0$ at $\ap = 0$, and therefore $-\frac{\partial P}{\partial n} = 0$ at the corner and so the Taylor sign condition is not satisfied. The water waves equation on the boundary $\partial \Omega(t)$ can be written as (see \cite{Wu16, ChLi00})
\begin{align}\label{eq:WWeqgeometric}
\Dt^2 f + \brac{-\frac{\partial P}{\partial n}}\mathcal{G} f = l.o.t
\end{align}
where $\Dt$ is the material derivative, $\mathcal{G}$ is the Dirichlet to Neumann map and $f$ is some variable such as the velocity on the boundary. In conformal coordinates this equation transform into the following equation on $\Rsp$ (here we still use $\Dt$ and $f$ as the material derivative and the main variable respectively)
\begin{align}\label{eq:WWintro}
\Dt^2 f + \frac{\Ag}{\Zapabs^2}\papabs f = l.o.t
\end{align}
So the issue with the interface having a corner is that $\frac{1}{\Zapabs} = 0$ at those points, which causes issues with the energy estimate for the above equation. This issue was overcome by Kinsey and Wu \cite{KiWu18} by using weighted energy estimates, with the weights being powers of $\frac{1}{\Zap}$, and a slightly modified energy estimate was also proved in \cite{Wu19}. The highest order energy used in \cite{Wu19} is 
\begin{align}\label{eq:Wuintro}
E(t) = \int_\Rsp \frac{\Zapabs^2}{\Ag} \abs{\Dt f}^2 \diff \ap + \int_\Rsp \abs{\papabs^\half f}^2 \diff \ap
\end{align}
with $f = \frac{1}{\Zap^2}\pap\brac{ \frac{\Ztapbar}{\Zap} } $. 

In this paper, we are interested in studying the problem when $g \to 0$. From the formula \eqref{eq:Agintro} we see that if $g = 0$, then from \lemref{lem:decayatinfinity} we have  $\Azero(\ap,t) \to 0$ as $\abs{\ap} \to \infty$. 
Note that as $\Zap \to 1$ as $\abs{\ap} \to \infty$, and as $-\frac{\partial P}{\partial n} = \frac{\Ag}{\Zapabs}$, we see that for $g = 0$ we also have $-\frac{\partial P}{\partial n} \to 0$ as $\abs{\ap} \to \infty$. This causes the standard energy estimates to fail for the water wave equation \eqref{eq:WWintro}, even for smooth initial data, and we are interested in studying angled crested interfaces. So in particular we need to deal with two issues simultaneously, namely that  $\Azero(\ap,t) \to 0$ as $\abs{\ap} \to \infty$ and that $\frac{1}{\Zapabs} = 0$ at the corners/cusps. This will require us to very carefully select the energy and deal with the two weights $\Azero$ and $\frac{1}{\Zapabs}$ in a very careful and deliberate manner. 

First let us try to understand how previous energies such as the ones in \cite{Wu19} and \cite{Ag21} do not work. We immediately see that the above energy \eqref{eq:Wuintro} used in \cite{Wu19} is very problematic when $g \to 0$, because of the $\Ag$ factor in the denominator in the first term. Even if one imposes extra decay conditions on the initial data to make this energy finite at $t = 0$, the proof of the energy estimate in \cite{Wu19} fundamentally uses the fact that $g = 1$ in several places. As an example, one of the most important terms to control in the energy estimate for water waves with angled crests is the $\Linfty$ norm of the gradient of the velocity, which in this context is the term $ \norm[\infty]{\frac{\Ztapbar}{\Zap}} $. To control this term from the energy, in \cite{Wu19} one interpolates between two terms, one of which is $\norm[2]{\frac{1}{\Zap}\pap\brac{\frac{\Ztapbar}{\Zap}} }$. This term is added to the energy as a lower order term. However to then control the time derivative of this term, it is estimated by assuming $g = 1$ and then controlled it by the first term of \eqref{eq:Wuintro} (see (4.30) and (4.32) in \cite{Wu19}). This same issue is also present in the energy of \cite{KiWu18}. 

In \cite{Ag21} we worked on water waves with angled crested like solutions with surface tension and proved an energy estimate which works for all values of surface tension $\sigma \geq 0$. When $\sigma = 0$, the highest order energy in \cite{Ag21} is
\begin{align}\label{eq:Etsurfacetension}
E(t) = \int_\Rsp \abs{\papabs^\half (\Dt f)}^2 \diff \ap + \int_\Rsp \Ag \abs{\frac{1}{\Zapabs}\pap f}^2 \diff \ap
\end{align}
where $f = \frac{\Ztapbar}{\Zapbar}$. Now this energy is finite as $g \to 0$ without imposing any artificial decay conditions on the initial data, however $g = 1$ is again fundamentally used in \cite{Ag21} in several places to prove the energy estimate. Once again this can be seen by noting the proof of the estimate for $\norm[\infty]{\frac{\Ztapbar}{\Zap}}$ (see estimate 1 and 4 in section 4.1 in \cite{Ag21}). 
This term is again controlled by interpolating between two terms, one of which is $\norm[2]{\frac{1}{\Zap}\pap\brac{\frac{\Ztapbar}{\Zap}}}$. This term is controlled by using $g = 1$ and then using the second term of \eqref{eq:Etsurfacetension}. 

To avoid these issues, in this paper we use energy of the form
\begin{align}\label{eq:El}
E_l(t) = \int_\Rsp \abs{\Dt f}^2 \diff \ap + \int_\Rsp \abs{\papabs^\half\brac{\frac{\sqrt{\Ag}}{\Zap} f}}^2 \diff \ap
\end{align}
for $f = \pap\frac{1}{\Zap}$. To understand the fundamental difficulty in controlling the time derivative of the above energy, first note that we want to use \eqref{eq:WWintro} as the main equation.  We have
\begin{align*}
& \half \frac{\diff }{\diff t} \brac{ \norm[2]{\Dt f}^2 + \norm[\Hhalf]{\frac{\sqrt{\Ag}}{\Zap}f }^2 } \\
& \approx \Real\cbrac{\int_\Rsp (\Dt^2 f)(\Dt \fbar)\diff \ap + \int_\Rsp  \cbrac{\papabs\brac*[\bigg]{\frac{\sqrt{\Ag}}{\Zap} f}} \Dt\brac*[\bigg]{\frac{\sqrt{\Ag}}{\Zapbar} \bar{f}} \difff\ap }
\end{align*}
For the second term we use
\begin{align*}
 \Dt\brac*[\bigg]{\frac{\sqrt{\Ag}}{\Zapbar} \bar{f}}  = \cbrac{\frac{\Dt\Ag}{2\Ag} + \Zapbar\Dt\frac{1}{\Zapbar} }\frac{\sqrt{\Ag}}{\Zapbar}\bar{f} + \frac{\sqrt{\Ag}}{\Zapbar}\Dt\bar{f}
\end{align*}
Now the first term in the above expression is an error which needs to be controlled in $\Hhalf$. To control this, we in particular need a bound for $\norm[\infty]{\frac{\Dt\Ag}{\Ag}} $. This is very problematic as if $g = 0$, then $\Ag \to 0$ at infinity which makes the denominator go to zero at infinity, and hence it would seem that it is not possible to control this term.

Surprisingly we show that this term can indeed be controlled. We overcome the difficulty by using the special structure of $\Ag$ and prove the following estimate (see estimate 15 in \secref{sec:quantEa} for the proof)
\begin{align}\label{eq:DtAgintro}
\norm[\infty]{\frac{\Dt\Ag}{\Ag}} \lesssim \norm[\infty]{\frac{\Ztapbar}{\Zap}} + \norm[2]{\Ztapbar}\norm[2]{\pap\frac{1}{\Zap}}
\end{align}
Note that the constant appearing in the above estimate is universal and does not depend the interface, the velocity etc. In this paper, whenever we use the $\lesssim$ symbol, the constants in the inequalities are universal.

As $- \frac{\partial P}{\partial n} = \frac{\Ag}{\Zapabs}$, from this estimate we also immediately get the estimate
\begin{align*}
\norm[\infty]{\frac{\Dt\brac{-\frac{\partial P}{\partial n}}}{\brac{-\frac{\partial P}{\partial n}}}} \lesssim \norm[\infty]{\frac{\Ztapbar}{\Zap}} + \norm[2]{\Ztapbar}\norm[2]{\pap\frac{1}{\Zap}}
\end{align*}

To the best of our knowledge, both of these estimates are new even in the special case of the interface being flat i.e. $\Zap \equiv 1$ (in which case the second term on the right hand side in the above estimate is zero. Note that these estimates are nontrivial even in this special case). Both of these estimates are fully scaling invariant with respect to all the scaling transformations \eqref{eq:scaling}. Moreover the only term on the right hand side which controls the interface namely $\norm[2]{\pap\frac{1}{\Zap}}$, allows corners of angle less than $\pi/2$ and cusps (see the heuristics \eqref{eq:Zasymptotics}). In addition to the estimate \eqref{eq:DtAgintro}, we also need to prove other estimates with a similar difficulty (see the introduction of \secref{sec:quantEa} for more details). These estimates are the crux of the proof of the energy estimate in this paper. Previously several improved bounds for the material derivative of the Taylor sign condition term have been obtained, see for example \cite{HuIfTa16, AiIfTa24}. Unfortunately those estimates are not applicable to the problem we are studying in this paper, as those estimates do not provide a uniform bound for $\norm[\infty]{\frac{\Dt\brac{-\frac{\partial P}{\partial n}}}{\brac{-\frac{\partial P}{\partial n}}}}$ as $g \to 0$, which is fundamental to obtaining a uniform time of existence as $g \to 0$. 

To describe the main energy estimate in this paper, let us first define
\begin{align}\label{eq:Bt}
B(t) = \nobrac{\norm[\Linfty\cap\Hhalf]{\frac{\Ztapbar}{\Zap}} + \brac{\norm[2]{\Ztapbar} + \sqrt{g}}\norm[2]{\pap\frac{1}{\Zap}} }
\end{align}
This $B(t)$ is the same as the blow up criterion mentioned in \thmref{thm:intromain}. We prove the following estimate for the energy $E_l(t)$ in \eqref{eq:Elestimatemainproof} (recall that $E_l$ was defined above in \eqref{eq:El})
\begin{align}\label{eq:Elestimate}
\frac{\diff }{\diff t} E_l  & \lesssim B^2(t)\norm[2]{\pap\frac{1}{\Zap}} E_l^\half(t)  + B(t)E_l(t)
\end{align}
We then define our main energy $\Ea(t)$ (see \secref{sec:resultunbdd})
\begin{align}\label{eq:Eaintrodef}
\begin{aligned}
\Eone(t) & = \brac{\norm[2]{\Ztapbar}^2 + g}\norm[2]{\pap\frac{1}{\Zap}}^2\\
\Etwo(t) & =  \brac{\norm[2]{\Ztapbar}^2 + g}E_l(t) \\
\Ea(t) & = \brac{\Eone(t)^2 + \Etwo(t)}^\half
\end{aligned}
\end{align}
We also prove that $B(t) \lesssim \Ea(t)^{1/2}$ which is proved in \eqref{eq:controllBt}. Using the above estimate for $E_l(t)$ and some lower order estimates,  directly leads us to our main energy estimate
\begin{align}\label{eq:mainenergyestintro}
\frac{d\Ea(t)}{dt} \lesssim B(t) \Ea(t) \lesssim \Ea(t)^{3/2}
\end{align}
This energy estimate works uniformly for all values of gravity $g \geq 0$. We defined $\Eone(t), \Etwo(t)$ and $\Ea(t)$ in the above fashion as we not only want to have uniform in gravity estimates, but also a scaling invariant energy estimate. The energies $\Eone(t), \Etwo(t)$ and $\Ea(t)$ scale like $\norm[\infty]{\grad u}^2(t)$,  $\norm[\infty]{\grad u}^4(t)$ and $\norm[\infty]{\grad u}^2(t)$ respectively. The main energy estimate \eqref{eq:mainenergyestintro} is fully scaling invariant with respect all the scaling transformations \eqref{eq:scaling}. This energy also allows interfaces with corners of angles less than $\pi/2$ and cusps, for example $E_1(t)$ contains $\norm[2]{\pap\frac{1}{\Zap}}$ which allows corners from the heuristic \eqref{eq:Zasymptotics}. Therefore this energy estimate satisfies all the properties we were looking for to solve the reduced problem mentioned at the start of the introduction. 

In this paper we also prove an energy estimate for an energy called $E(t)$ which is a weighted half order higher in regularity as compared to $\Ea(t)$ (see  \secref{sec:resultunbdd}). This energy is used to prove the uniqueness of solutions by adapting the uniqueness proof in \cite{Wu19}, and also to prove the new blow up criterion. The energy estimate for $E(t)$ also satisfies all the nice features of the energy estimate for $\Ea(t)$.  

Although we do not prove even higher order energy estimates in this paper, one can indeed prove such estimates to arbitrarily higher levels of regularity. Note that as we are working with angled crested interfaces, one has to work with weighted spacial derivatives and not standard spacial derivatives to construct higher order energies. To do this, one can simply use the energy \eqref{eq:El} but with 
\begin{align*}
f = \brac{\frac{\Ag}{\Zapabs^2}\pap}^{\hspace{-1mm} n} \pap\frac{1}{\Zap}
\end{align*}
for any $n \geq 0$. When $n = 0$, then we get the energy \eqref{eq:El} used in this paper. Having constructed these energies, one can then prove estimates similar to \eqref{eq:Elestimate}. Then analogous to \eqref{eq:Eaintrodef} and \eqref{eq:mainenergyestintro} one can then construct and prove scaling invariant estimates, similar to the one for $\Ea(t)$. 

Note that this new energy does indeed allow interfaces with angled crests with angle less than $\pi/2$. To see this, first note that $\Ag > 0$ at each corner/cusp (as $\Ag > 0$ everywhere and only goes to zero at infinity when $g = 0$). Now if there is a corner of angle $\nu\pi$ at $\Z(0,t)$, then similar to \eqref{eq:Zasymptotics} we have near $\ap = 0$
\begin{align*}
\brac{\frac{1}{\Zap^2}\pap}^{\hspace{-1mm} n} \pap\frac{1}{\Zap}(\ap,t) & \sim (\ap)^{- \nu + n(1 -2\nu)}  \in \Ltwo_{loc}(\Rsp) \\
\frac{1}{\Zap}\brac{\frac{1}{\Zap^2}\pap}^{\hspace{-1mm} n} \pap\frac{1}{\Zap}(\ap,t) & \sim (\ap)^{(n+ 1)(1 -2\nu)}  \in \Linfty_{loc}(\Rsp)
\end{align*}
if $0\leq \nu < \half$. 

We remark that the weighted derivative $ \nobrac{\frac{\Ag}{\Zapabs^2}\pap}$ is the same as the weighted derivative $\brac{-\frac{\partial P}{\partial n}} \partial_s$ where $\partial_s$ is the arc length derivative on the interface. One could also instead work with the weighted derivative $ \nobrac{\frac{\Ag}{\Zapabs^2}\papabs}$ which is then the same as the weighted derivative $\brac{-\frac{\partial P}{\partial n}} \mathcal{G}$ (compare \eqref{eq:WWeqgeometric} and \eqref{eq:WWintro}). Note from \eqref{eq:WWeqgeometric} that $\Dt^2 \sim \brac{-\frac{\partial P}{\partial n}} \mathcal{G}$, so therefore one could work directly work with the material derivative $\Dt$ to construct higher order energies instead of working with these weighted spacial derivatives. This is exactly what is used in this paper to construct the energy $E_3(t)$ in comparison to $E_2(t)$ in \eqref{eq:EoneEtwoEthree}, where we note that essentially $\Dt^2 \pap\frac{1}{\Zap} \approx \Dt\Dap^2\Ztbar$ (modulo lower order corrections). Working with the material derivative $\Dt$ is also much better than working with any weighted spacial derivatives if there is non-zero surface tension, and this fact was crucially used in \cite{Ag21} to construct the energies (see section 3.2 in \cite{Ag21} for a discussion on this issue)

The paper is organized as follows: In  \secref{sec:notation} we introduce the notation and write down the system of equations in conformal coordinates. In \secref{sec:mainresults} we explain our main results in detail. In \secref{sec:unboundedsection} we prove the results for the unbounded domain case stated in \secref{sec:resultunbdd} and in \secref{sec:bounded} we prove the results for the bounded domain case stated in \secref{sec:resultbdd}. Finally in \secref{sec:appendix} we collect some of the identities and estimates used throughout the paper.

\section{Notation, preliminaries and equations of motion}\label{sec:notation}

In this paper we write $a \lesssim b$ if there exists a universal constant $C>0$ so that $a\leq Cb$. We write $a \lesssim_M b$ if there exits a constant $C(M)$ depending only on $M$ so that $a \leq C(M) b$. Similar definitions for $a \lesssim_{M_1, M_2} b$ etc. For singular integrals, all integrals will be understood in the principle value sense and we will suppress writing p.v. in front of the integrals. In this paper we will follow the notation used in \cite{Ag21} which is itself based on the notation used in \cite{Wu97, Wu09, KiWu18, Wu19}.-

\subsection{Notation for the unbounded domain case}\label{sec:Notationunbounded}

In this section we recall the notation used in \cite{Ag21}.  The Fourier transform is defined as
\[
\hat{f}(\xi) = \frac{1}{\sqrt{2\pi}}\int_\Rsp e^{-ix\xi}f(x) \diff x
\] 
We will denote by $\Scalsp(\Rsp)$ the Schwartz space of rapidly decreasing functions and $\Scalsp'(\Rsp)$ is the space of tempered distributions. A Fourier multiplier with symbol $a(\xi)$ is the operator $T_a$ defined formally by the relation $\dis \widehat{T_a{f}} = a(\xi)\hat{f}(\xi)$. For $s \geq 0$ the operators $\papabs^s $ and $\langle\pap\rangle^s$ are defined as the Fourier multipliers with symbols $\abs{\xi}^s$ and $(1 + \abs{\xi}^2)^{\frac{s}{2}}$ respectively. 
The Sobolev space $H^s(\Rsp)$ for $s\geq 0$  is the space of functions with  $\norm[H^s]{f} = \norm*[\Ltwo(\diff x)]{\langle\pap\rangle^s f} < \infty$. The homogenous Sobolev space $\Hhalf(\Rsp)$ is the space of functions modulo constants with  $\norm[\Hhalf]{f} = \norm*[\Ltwo(\diff x)]{\papabs^\half f} < \infty$.  

From now on compositions of functions will always be in the spatial variables. We write $f = f(\cdot,t), g = g(\cdot,t), f \compose g(\cdot,t) :=  f(g(\cdot,t),t)$. Define the operator $U_g$ as given by $U_g f = f\compose g$. Observe that $U_f U_g = U_{g\compose f}$.  Let $[A,B] := AB - BA$ be the commutator of the operators $A$ and $B$. If $A$ is an operator and $f$ is a function, then $(A + f)$ will represent the addition of the operators A and the multiplication operator $T_f$ where $T_f (g) = fg$.  We denote the convolution of $f$ and $g$ by $f \conv g$. We will denote the spacial coordinates in $\Omega(t) $ with $z = x+iy$, whereas $\zp = \xp + i\yp$ will denote the coordinates in the lower half plane $\Pminus = \cbrac{(x,y) \in \Rsp^2 \suchthat y<0}$. As we will frequently work with holomorphic functions, we will use the holomorphic derivatives $\pz = \half(\px-i\py)$ and $\pzp = \half(\pxp-i\pyp)$. In this paper all norms will be taken in the spacial coordinates unless otherwise specified. For example for a function $f:\Rsp\times [0,T] \to \Csp$ we write $\norm[2]{f} = \norm[2]{f(\cdot,t)} = \norm[\Ltwo(\Rsp,\diff \ap)]{f(\cdot,t)}$. Also for a function $f:\Pminus \to \Csp$ we write $\sup_{\yp<0}\norm[\Ltwo(\Rsp,\diff\xp)]{f} = \sup_{\yp<0}\norm[\Ltwo(\Rsp,\diff\xp)]{f(\cdot,\yp)}$. The Poisson kernel is given by
\begin{align}\label{eq:Poissonkernel}
K_\ep(x) = \frac{\ep}{\pi(\ep^2 + x^2)} \qquad \tx{ for } \ep>0
\end{align}

Let the interface be parametrized in Lagrangian coordinates by $\z(\cdot,t): \Rsp \to \Sigma(t)$ satisfying $\z_{\al}(\al,t) \neq 0 $ for all  $\al \in \Rsp$. Hence $\zt(\al,t) = u(\z(\al,t),t)$ is the velocity of the fluid on the interface and $\ztt(\al,t) = (u_t + (u\cdot\grad)u)(\z(\al,t),t)$ is the acceleration. 

Let $\Psi(\cdot,t): \Pminus \to  \Omega(t)$ be conformal maps satisfying $\lim_{\z\to \infty} \Psi_\z(\z,t) =1$  and that $\lim_{\z\to \infty} \Psi_t(\z,t) =0$.  With this, the only ambiguity left in the definition of $\Psi$ is that of the choice of translation of the conformal map at $t=0$, which does not play any role in the analysis. Let $\Phi(\cdot,t):\Omega(t) \to \Pminus $ be the inverse of the map $\Psi(\cdot,t)$ and define $\h(\cdot,t):\Rsp \to \Rsp$ as
\begin{align}\label{def:hunbddorig}
\h(\al,t) = \Phi(\z(\al,t),t)
\end{align}
hence $\h(\cdot,t)$ is a homeomorphism. As we use both Lagrangian and conformal parameterizations, we will denote the Lagrangian parameter by $\al$ and the conformal parameter by $\ap$. Let $\hinv(\cdot,t)$ be its spacial inverse i.e. $\h(\hinv(\ap,t),t) = \ap$. From now on, we will fix our Lagrangian parametrization at $t=0$ by imposing $h(\al,0)= \al \quad  \tx { for all } \al \in \Rsp$. Hence the Lagrangian parametrization is the same as conformal parametrization at $t=0$. Define the variables
\[
\begin{array}{l l l}
 \Z(\ap,t) = \z\compose \hinv (\ap,t)  & \Zap(\ap,t) = \pap \Z(\ap,t) &  \quad  \tx{ Hence } \quad  \brac*[]{\dfrac{\zal}{\hal}} \compose \hinv = \Zap \\
  \Zt(\ap,t) = \zt\compose \hinv (\ap,t)  & \Ztap(\ap,t) = \pap \Zt(\ap,t) &  \quad   \tx{ Hence } \quad  \brac*[]{\dfrac{\ztal}{\hal}} \compose \hinv = \Ztap \\
 \Ztt(\ap,t) = \ztt\compose \hinv (\ap,t)  & \Zttap(\ap,t) = \pap \Ztt(\ap,t) &  \quad   \tx{ Hence } \quad  \brac*[]{\dfrac{\zttal}{\hal}} \compose \hinv = \Zttap \\
\end{array}
\]
Hence $\Z(\ap,t), \Zt(\ap,t)$ and $\Ztt(\ap,t)$ are the parameterizations of the boundary, the velocity and the acceleration in conformal coordinates and in particular $\Z(\cdot,t)$ is the boundary value of the conformal map $\Psi(\cdot,t)$. Note that as $\Z(\ap,t) = \z(\hinv(\ap,t),t)$ we see that  $\pt \Z \neq \Zt$. Similarly $\pt \Zt \neq \Ztt$.  The substitute for the time derivative is the material derivative. Define:
\begingroup
\allowdisplaybreaks
\begin{fleqn}
\begin{align}\label{eq:mainoperators}
\begin{split}
& \Dt =  \tx{material derivative} = \pt + \bvar\pap  \qquad \tx{ where } \bvar = \hvart \compose \hinv \\
& \Dap = \Dapfrac \qquad \Dapbar = \Dapbarfrac \qquad \Dapabs = \Dapabsfrac \\
& \Hil  =   \text{Hilbert transform  = Fourier multiplier with symbol } -sgn(\xi) \\
& \qquad  \Hil f(\alpha ' ) =  \frac{1}{i\pi} p.v. \int \frac{1}{\alpha ' - \beta'}f(\beta') \diff\beta' \\
& \Ph = \text{Holomorphic projection} = \frac{\Id + \Hil}{2} \\
& \Pa = \text{Antiholomorphic projection} = \frac{\Id - \Hil}{2} \\ 
& \papabs   = \ i\Hil \partial_{\alpha'} = \sqrt{-\Delta} = \text{ Fourier multiplier with symbol } |\xi| \ \\
& \papabs^{1/2}  = \text{ Fourier multiplier with symbol } \abs{\xi}^{1/2} \\
& \w =  \frac{\Zap}{\Zapabs} \\
\end{split}
\end{align}
\end{fleqn}
\endgroup
Now we have $\Dt \Z = \Zt$ and $\Dt \Zt = \Ztt$ and more generally $\Dt (f(\cdot,t)\compose \hinv) = (\pt f(\cdot,t)) \compose \hinv$ or equivalently $\pt (F(\cdot,t)\compose \h) = (\Dt F(\cdot,t)) \compose \h$. This means that $\Dt = U_h^{-1}\pt U_h$ i.e. $\Dt$ is the material derivative in conformal coordinates. 

Define $\U: \Pminusbar\times [0,T) \to \Csp $ and $\Pfrak: \Pminusbar \times [0,T) \to \Rsp$ as
\begin{align}\label{eq:UPfrak}
\U = \vboldbar \compose \Psi \qquad \Pfrak = P \compose \Psi
\end{align}
and observe that $\U(\cdot,t)$ is a holomorphic function on $\Pminus$. Also note that its boundary value is given by $ \Ztbar(\ap,t) = \U(\ap,t)$ for all $\ap\in \Rsp$. Hence we can write the Euler equations \eqref{eq:Euler} as equations on $\Pminus$ 
\begin{align}\label{eq:EulerRiem}
\begin{aligned}
& \Ut - \Psit \frac{\Uzp}{\Psizp} + \Ubar \frac{\Uzp}{\Psizp} = -\frac{1}{\Psizp}(\pxp - i\pyp)\Pfrak + ig & \tx{ on } \Pminus \\
& \U(\cdot,t) \tx{ is holomorphic}  & \tx{ on } \Pminus \\
& \Pfrak = 0  &  \tx{ on } \partial \Pminus \\
& \tx{Trace of } \onePsizp(\Ubar - \Psit) \tx{ is real valued}  & \tx{ on } \partial \Pminus
\end{aligned}
\end{align}
along with the condition that $\Psi(\cdot,t)$ is conformal and the decay conditions $\U \to 0$, $(\pxp - i\pyp) \Pfrak \to ig$, $\Psizp \to 1$ and $ \Psi_t \to 0$ as $\zp \to \infty$. The condition that the particles on the boundary stay on the boundary is equivalent to saying that the trace of $\onePsizp(\Ubar - \Psit)$ is real valued and in particular $\brac{\onePsizp(\Ubar - \Psit)}\Big\vert_{\partial \Pminus} = b$ where $\Dt = \pt + \bvar\pap$ is the material derivative on the boundary $\partial \Pminus$. Also it should be noted that the process of obtaining \eqref{eq:EulerRiem} from \eqref{eq:Euler} is reversible so long as the interface $\partial\Omega(t) = \cbrac{\Z(\ap,t) \suchthat \ap\in \Rsp}$ is non-self intersecting. See \cite{Ag20} for more details.

The Hilbert transform defined above in \eqref{eq:mainoperators} satisfies the following property.
\begin{lem}[\cite{Ti86}]\label{lem:Hilunbounded}
Let $1<p<\infty$ and let $F:\Pminus \to \Csp$ be a holomorphic function in the lower half plane with $F(z) \to 0$ as $z\to \infty$. Then the following are equivalent
\begin{enumerate}
\item $\dis \sup_{y<0} \norm[p]{F(\cdot + iy)} < \infty$
\item $F(z)$ has a boundary value $f$, non-tangentially almost everywhere with $f \in L^p$ and $\Hil(f) = f$. 
\end{enumerate}
\end{lem}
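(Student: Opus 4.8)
The plan is to prove the two implications separately, using the Cauchy integral in the lower half-plane together with the M.~Riesz theorem on $L^p$-boundedness of $\Hil$; this is the classical Paley--Wiener/Hardy-space characterization, so one could alternatively just invoke \cite{Ti86}. The computational starting point is the elementary identity
\[
-\frac{1}{2\pi i}\,\frac{1}{t-z}=\tfrac12 K_{-y}(x-t)-\tfrac{i}{2}\,Q_{-y}(x-t),\qquad z=x+iy\in\Pminus,\ t\in\Rsp,
\]
where $Q_\ep(x):=x/\bigl(\pi(\ep^2+x^2)\bigr)$ is the conjugate Poisson kernel. From it one reads off that the Cauchy integral $\Ccal g(z):=-\frac{1}{2\pi i}\int_\Rsp\frac{g(t)}{t-z}\diff t$ of any $g\in L^p$ is holomorphic on $\Pminus$, tends to $0$ as $z\to\infty$ (by Hölder, splitting $g$ into a compactly supported bulk and an $L^p$ tail), satisfies $\sup_{y<0}\norm[p]{\Ccal g(\cdot+iy)}\lesssim_p\norm[p]{g}$ (Young's inequality for $K_{-y}\conv g$, the M.~Riesz theorem for $Q_{-y}\conv g$), and has non-tangential boundary value $\tfrac12(g+\Hil g)=\Ph g$ by the Plemelj jump relation.

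For the implication $(2)\Rightarrow(1)$ I would argue as follows. If $F$ has non-tangential boundary value $f\in L^p$ with $\Hil f=f$, then $\Ph f=f$, so $\Ccal f$ and $F$ are both holomorphic on $\Pminus$, both vanish at infinity, and share the same non-tangential boundary trace; hence $F-\Ccal f$ is a holomorphic function on $\Pminus$ that vanishes at infinity and whose boundary trace is $0$ a.e. By the uniqueness theorem for such functions (it belongs to the Smirnov class and vanishes on a set of positive measure) one gets $F=\Ccal f$, and therefore $\sup_{y<0}\norm[p]{F(\cdot+iy)}\lesssim_p\norm[p]{f}<\infty$.

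For $(1)\Rightarrow(2)$ I would set $M:=\sup_{y<0}\norm[p]{F(\cdot+iy)}$ and, using reflexivity of $L^p$ for $1<p<\infty$, pick $y_n\uparrow 0$ with $F(\cdot+iy_n)\rightharpoonup f$ weakly in $L^p$, $\norm[p]{f}\le M$. Fixing $z_0=x_0+iy_0\in\Pminus$ and noting that for $y_0<y_n<0$ the restriction of $F$ to $\cbrac{\Imag z<y_n}$ is bounded and holomorphic (continuous up to $\cbrac{\Imag z=y_n}$ and vanishing at infinity), hence equals the Poisson integral of its trace, one has $F(z_0)=\int_\Rsp K_{y_n-y_0}(x_0-t)F(t+iy_n)\diff t$; letting $n\to\infty$ with $K_{y_n-y_0}(x_0-\cdot)\to K_{-y_0}(x_0-\cdot)$ strongly in $L^{p'}$ gives $F(z_0)=(K_{-y_0}\conv f)(x_0)$. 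So $F$ is the Poisson extension of $f\in L^p$, and Fatou's theorem gives the non-tangential boundary value $f$ a.e. Finally, taking the Fourier transform in $x$, $\widehat{F(\cdot+iy)}(\xi)=e^{y\abs{\xi}}\hat f(\xi)$, while $\partial_{\bar z}F=0$ forces $\partial_y\widehat{F(\cdot+iy)}=-\xi\,\widehat{F(\cdot+iy)}$; comparing the two gives $(\abs{\xi}+\xi)\hat f(\xi)=0$, i.e.\ $\supp\hat f\subseteq(-\infty,0]$, which is exactly $\Hil f=f$ for the multiplier $-\sgn(\xi)$.

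I expect the main obstacle to be not the calculus but the two identification steps: the M.~Riesz theorem (the uniform bound $\sup_\ep\norm[p]{Q_\ep\conv g}\lesssim_p\norm[p]{g}$, equivalently $L^p$-boundedness of $\Hil$), which is what confines the statement to $1<p<\infty$ and drives $(2)\Rightarrow(1)$; and the fact that $F$ must coincide with the Poisson (equivalently Cauchy) integral of its boundary data — obtained in $(1)\Rightarrow(2)$ from the weak-limit argument above and in $(2)\Rightarrow(1)$ from the Smirnov-class uniqueness theorem. These are the parts I would spell out most carefully.
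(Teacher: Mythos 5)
The paper itself gives no proof of this lemma (it is quoted from Titchmarsh [Ti86]), so the only question is whether your argument stands on its own. Your direction (1)$\Rightarrow$(2) is essentially correct and is the standard route: restricting $F$ to $\{\Imag z\le y_n\}$, where it is bounded, continuous and vanishes at infinity, to write it as a Poisson integral, then passing to a weak $L^p$ limit $f$ and invoking Fatou. The one soft spot is the final multiplier step for $2<p<\infty$: there $\hat f$ is only a tempered distribution, and the identities $\widehat{F(\cdot+iy)}=e^{y\abs{\xi}}\hat f$ and $(\abs{\xi}+\xi)\hat f=0$ involve multiplying it by the non-smooth function $e^{y\abs{\xi}}$, which is not defined without extra care. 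A cleaner finish is to prove $\Hil\bigl(F(\cdot+iy)\bigr)=F(\cdot+iy)$ for each fixed $y<0$ by contour integration (using again boundedness and decay of $F$ on $\{\Imag z\le y\}$) and then pass to the weak limit, using that $\Hil$ is bounded, hence weakly continuous, on $L^p$.

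The genuine gap is in (2)$\Rightarrow$(1). Having set $G=F-\Ccal f$, you conclude $G\equiv 0$ because $G$ ``belongs to the Smirnov class and vanishes on a set of positive measure.'' But nothing in hypothesis (2) places $F$ in the Smirnov (or Nevanlinna) class: $F$ is only assumed holomorphic with nontangential boundary values a.e., and membership in such a class is essentially the content of the implication being proved, so as written this step is circular. (Moreover $\Ccal f$ tends to $0$ at infinity only within $\{\Imag z\le-\delta\}$, not unrestrictedly, so ``vanishes at infinity'' is not available for $G$ either.) The conclusion is nevertheless true, and the correct tool is the Luzin--Privalov uniqueness theorem: a function meromorphic in the half-plane (transfer to the disc by a M\"obius map; the point at infinity is a single boundary point and hence negligible) whose nontangential limit is $0$ on a boundary set of positive measure vanishes identically. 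This needs no class assumption and is precisely what makes hypothesis (2), with nontangential rather than merely radial limits, strong enough. With that citation replacing the Smirnov-class remark your proof closes; the remaining ingredients — the Plemelj jump relation giving the boundary value $\Ph f=f$ of $\Ccal f$ in the paper's sign conventions, Young's inequality and the M.~Riesz theorem for the uniform $L^p$ bounds — are used correctly.
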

In particular this says if $\U$ decays appropriately at infinity, then the boundary value of $\U$ namely $\Ztbar$ will satisfy $\Hil \Ztbar = \Ztbar$. Similarly as $\Psi_z \to 1$ as $\z \to \infty$, we have that $\Hil\brac{\frac{1}{\Zap} - 1} = \frac{1}{\Zap} - 1$. Also note that as product of holomorphic functions is holomorphic, we can use the above lemma to conclude that several other functions on the boundary also satisfy similar identities e.g. $\Hil(\frac{\Ztbar}{\Zap}) = \frac{\Ztbar}{\Zap}$, $\Hil(\Dap \Ztbar) = \Dap\Ztbar $ etc.

For $n \geq 1$ and for functions $f_1,\cdots,f_n, g:\Rsp \to \Csp$ we define the function $\sqbrac{f_1,\cdots,f_n ; g}:\Rsp \to \Csp$ as 
\begin{align}\label{eq:fonefn}
\sqbrac{f_1, \cdots, f_n;  g}(\ap) = \frac{1}{i\pi} \int \brac{\frac{f_1(\ap) - f_1(\bp)}{\ap - \bp}}\cdots \brac{\frac{f_n(\ap) - f_n(\bp)}{\ap-\bp}} g(\bp) \diff \bp
\end{align}
Hence with this notation we see that
\begin{align*}
\sqbrac{f,\Hil}g = \frac{1}{i\pi}\int \frac{f(\ap) - f(\bp)}{\ap - \bp} g(\bp) \diff \bp = \sqbrac{f;g}
\end{align*}

\subsection{The system of equations on the boundary for the unbounded domain case}\label{sec:systembdryunbdd}

To solve the system \eqref{eq:Euler}, in \cite{Ag21} we obtained a  system for the variables $(\Zap,\Zt)$ which we then solve. The system is as follows:
\begin{align}\label{eq:systemone}
\begin{split}
\bvar & = \Real(\Id - \Hil)\brac{\frac{\Zt}{\Zap}} \\
\Ag & = \g - \Imag \sqbrac{\Zt,\Hil}\Ztapbar \\
(\pt + \bvar\pap)\Zap &= \Ztap - \bap\Zap  \\
(\pt + \bvar\pap)\Ztbar & = i\g -i\frac{\Ag}{\Zap} 
\end{split}
\end{align}
along with the condition that their harmonic extensions, namely $\Psizp(\cdot + iy) = K_{-y}\conv \Zap$ and $\U(\cdot + iy) = K_{-y}\conv \Ztbar$ for all $y<0$, \footnote{here $K_{-y}$ is the Poisson kernel \eqref{eq:Poissonkernel}} are holomorphic functions on $\Pminus$ and satisfy \footnote{We observe that for such a $\Psizp$  we can uniquely define $\log(\Psizp) : \Pminus \to \Csp$ such that $\log(\Psizp)$ is a continuous function with $\Psizp = \exp\cbrac{\log(\Psizp)}$ and $(\log(\Psizp))(\zp) \to 0$ as $\zp \to \infty$. }
\begin{align*}
\lim_{c \to \infty} \sup_{\abs{\zp}\geq c}\cbrac{\abs{\Psizp(\zp) - 1} + \abs{\U(\zp)}}  = 0 
\qquad \tx{ and } \quad \Psizp(\zp) \neq 0 \quad \tx{ for all } \zp \in \Pminus 
\end{align*}
After solving the above system one can obtain $\Z(\cdot,t)$ by the formula 
\begin{align*}
\Z(\ap,t) = \Z(\ap,0) +  \int_0^t \cbrac{\Zt(\ap,s) - \bvar(\ap,s)\Zap(\ap,s)} \diff s
\end{align*}
and hence $(\pt + \bvar\pap)\Z = \Dt\Z = \Zt$. Hence one can view the system being in variables $(\Z,\Zt)$ instead of the variables $(\Zap,\Zt)$. Note that once one has a solution to the above system, we can recover a solution to the system  \eqref{eq:EulerRiem} by letting $\Psi$ and $\U$ be defined as above and defining $\Pfrak$ as the unique solution to the equation
\begin{align}\label{eq:DeltaPfrak}
\Delta \Pfrak = -2\abs{\U_\zp}^2 \quad \tx{ on } \Pminus, \qq \Pfrak = 0 \quad \tx{ on } \partial \Pminus
\end{align}
along with the condition $(\pxp - i\pyp) \Pfrak \to ig$ as $\zp \to \infty$. See \cite{Wu19} for the details. 

We observe that the above system allows self intersecting interfaces. However if the interface is self-intersecting then it becomes nonphysical and so its relation to the Euler equation  \eqref{eq:Euler} is lost. See \cite{Ag21} for more details.

Now to get the function $\h(\al,t)$ (recall the definition \eqref{def:hunbddorig}), we solve the ODE
\begin{align}\label{eq:h}
\begin{split}
\frac{\diff h}{\diff t} &= \bvar(\h, t) \\
h(\al,0) &= \al
\end{split}
\end{align}
Observe that as long as $\sup_{[0,T]} \norm[\infty]{\bvarap}(t) <\infty$ we can solve this ODE uniquely and for any $t\in [0,T]$ we have that $h(\cdot,t)$ is a homeomorphism. Hence it makes sense to talk about the functions $z = \Z\compose \h, \zt = \Zt \compose \h$ which are Lagrangian parameterizations of the interface and the velocity on the boundary. We also note that the last  equation in \eqref{eq:systemone} can be written as 
\begin{align}\label{form:Zttbar}
\Zttbar -i\g = -i \frac{\Ag}{\Zap} 
\end{align}
We note here that from the calculations in \cite{Wu97}, we see that the gradient of the pressure on the boundary in conformal coordinates is (here $\hat{n}$ is the outward unit normal)
\begin{align}\label{eq:gradientpressure}
-\frac{\partial P}{\partial \hat{n}}\compose \hinv = \frac{\Ag}{\Zapabs}
\end{align}

\subsection{Notation for the bounded domain case}

Let the unit disc be $\Dsp = \cbrac{ (x,y) \in \Rsp^2 \suchthat x^2 + y^2 < 1}$ and let $\Sone = \partial \Dsp$. In the following we will identify functions $f: \Sone \to \Csp$ with their pullbacks $\tilde{f}: \Rsp \to \Csp$, where $\tilde{f}(\ap) = f(e^{i\ap})$. We will frequently abuse notation and for a function $f:\Sone \to \Csp$ we will usually write $f(\ap)$ instead of $f(e^{i\ap})$. In particular if there exists a function $F: \Dsp \to \Csp$ whose boundary value is $f: \Sone \to \Csp$, then by abuse of notation we will also say that the boundary value of $F$ is $\tilde{f}$. We will denote the boundary value of $F$ by $\Tr(F)$. If $g:\Rsp \to \Csp$ is a $2\pi$ periodic function, then in this paper whenever we use the $L^p$ or Sobolev norms of $g$, what we mean is that we are computing the norms by looking at $g$ as a function on $\Sone$ and not as a function on $\Rsp$. 

We define the Fourier transform for a function $f: \Sone \to \Csp$ as
\begin{align*}
\hat{f}(n) & = \frac{1}{2\pi}\int_0^{2\pi} f(\ap) e^{-in\ap} \diff \ap 
\end{align*}
and the inverse Fourier transform as
\begin{align*}
 f(\ap) & = \sum_{n = -\infty}^{\infty} \hat{f}(n) e^{in\ap}
\end{align*}
The $L^p$ norm of $f$ is defined as 
\begin{align*}
\norm[p]{f} = \brac{\int_0^{2\pi} \abs*{f(\ap)}^p  \diff \ap}^{\frac{1}{p}}
\end{align*}
and the Sobolev norms of $f$ are defined in the same way as in \secref{sec:Notationunbounded}. Hence in particular we observe that
\begin{align*}
\norm[\Hhalf]{f}^2 = \norm*[\big][2]{\papabs^\half f}^2 = \int_0^{2\pi} \fbar \papabs f \diff \ap
\end{align*}

Similar to \secref{sec:Notationunbounded}, compositions of functions are again only taken in spacial coordinates and we maintain the notation for the operator $U_g$ and convolution of functions.  We keep the notation $z = x+ iy$ for $z \in \Omega(t)$ and $\zp = \xp + i\yp$ for $\zp \in \Dsp$. We have the same definitions for $\pz, \pzp$ and we also suppress the time variables when writing norms. The Poisson kernel for the disc is given by
\begin{align*}
P_{r}(\theta) = \frac{1 - r^2}{1 -2r\cos(\theta) + r^2} \qq \tx{ for } 0\leq r < 1
\end{align*}

Let the interface be parametrized in Lagrangian coordinates by a $2\pi$ periodic counter-clockwise  parametrization $\z(\cdot,t): \Rsp \to \partial\Omega(t)$ satisfying $\z_{\al}(\al,t) \neq 0 $ for all  $\al \in \Rsp$. Hence we see that $\zt(\al,t) = u(\z(\al,t),t)$ is the velocity of the fluid on the interface and $\ztt(\al,t) = (u_t + (u\cdot\grad)u)(\z(\al,t),t)$ is the acceleration. 

We fix a point $x_0 \in \Omega(0)$ and let $X(x_0,t)$ be the Lagrangian trajectory of this particle. Hence $\pt X(x_0,t) = u(X(x_0,t),t)$. Let $\Psi(\cdot,t): \Dsp \to  \Omega(t)$ be conformal maps satisfying $ \Psi(0,t) = X(x_0,t)$  and $\Psi_{\zp} (0,t) > 0$.  Let $\Phi(\cdot,t):\Omega(t) \to \Dsp $ be the inverse of the map $\Psi(\cdot,t)$ and observe that $\Phi(\z(\al,t),t) \in \partial \Dsp$. Define $\h(\cdot,t):\Rsp \to \Rsp$ as
\begin{align}\label{def:hbddorig}
\h(\al,t) = -i\log\brac{\Phi(\z(\al,t),t)}.
\end{align}
where the ambiguity in the definition of $\log(z)$ is removed by ensuring that $\h$ is a continuous function on $\Rsp\times[0,T)$ and that $h(0,0) \in [0,2\pi)$. From this it is easy to see that $\h(\cdot,t)$ is an increasing function and is a homeomorphism on $\Rsp$ satisfying $h(\al + 2\pi,t) = h(\al,t) + 2\pi$. As we use both Lagrangian and conformal parameterizations, we will denote the Lagrangian parameter by $\al$ and the conformal parameter by $\ap$. Let $\hinv(\cdot,t)$ be its spacial inverse i.e.
\[
\h(\hinv(\ap,t),t) = \ap
\] 
From now on, we will fix our Lagrangian parametrization at $t=0$ by imposing
\begin{align*}
h(\al,0)= \al \quad  \tx { for all } \al \in \Rsp
\end{align*}
Hence the Lagrangian parametrization is the same as conformal parametrization at $t=0$. The functions $\Z, \Zt, \Ztt$ are defined as in \secref{sec:Notationunbounded}. Hence the functions $\Z(\cdot,t) : \Rsp \to \partial \Omega(t)$ and $\Zt(\cdot,t), \Ztt(\cdot,t) : \Rsp \to \Csp$ are $2\pi$ periodic functions and are the parameterizations of the boundary, the velocity and the acceleration in conformal coordinates. In particular we have $\Z(\ap,t) = \Psi(e^{i\ap},t)$. We define the material derivative $\Dt$ and the weighted derivatives $\Dap, \Dapbar, \Dapabs$ in the same way as in \secref{sec:Notationunbounded}. The variable $\w$ is also defined in the same way.

Define 
\begin{align*}
\Av(f) & = \frac{1}{2\pi}\int_0^{2\pi} f(\bp) \diff \bp \\
(\Hiltil f)(\ap) & = \frac{1}{2\pi i} \int_0^{2\pi} f(\bp) \cot\brac{\frac{\bp - \ap}{2}} \diff \bp 
\end{align*}
Observe that we have 
\begin{align*}
\Av(f) & = \hat{f}(0) \\
\widehat{(\Hiltil f)}(n)  & = \sgn(n) \hat{f}(n) \\
\papabs  & = -i\Hiltil \partial_{\alpha'} 
\end{align*}
where
\begin{align*}
\sgn(n) = 
\begin{cases}
1 \quad \tx{ if } n \geq 1 \\
0 \quad \tx{ if } n = 0 \\
-1 \quad \tx{ if } n \leq -1
\end{cases}
\end{align*}
We define the Hilbert transform as
\begin{align*}
\Hil = \Hiltil + \Av
\end{align*}
The projection operators $\Ph$ and $\Pa$ are defined in the same way as in \secref{sec:Notationunbounded}. Using the identity
\begin{align}\label{eq:trigidentity}
e^{i\ap} - e^{i\bp} = 2ie^{i\brac{\frac{\ap + \bp}{2}}}\sin\brac{\frac{\ap - \bp}{2}}
\end{align}
it is easily seen that
\begin{align*}
(\Hil f)(\ap) & =  \frac{1}{2\pi i} \int_0^{2\pi} f(\bp) \cot\brac{\frac{\bp - \ap}{2}} \diff \bp + \frac{1}{2\pi}\int_0^{2\pi} f(\bp) \diff \bp \\
& = \frac{e^{-i\frac{\ap}{2}}}{2\pi i} \int_{0}^{2\pi} \frac{f(\bp)}{\sin\brac{\frac{\bp - \ap}{2}}} e^{i\frac{\bp}{2}} \diff \bp \\
& = \frac{1}{i\pi} \int_{0}^{2\pi} \frac{f(\bp)}{e^{i\bp} - e^{i\ap}} ie^{i\bp} \diff \bp \\
\end{align*}
The Hilbert transform $\Hil$ defined above has the following property similar to \lemref{lem:Hilunbounded} and is proved in a similar manner. 
\begin{lemma}\label{lem:Hilbounded}
Let $1<p<\infty$ and let $F:\Dsp \to \Csp$ be a holomorphic function. Then the following are equivalent
\begin{enumerate}
\item $\sup_{0 < r < 1}\norm[L^p([0,2\pi], \diff \theta)]{F(r e^{i\theta})} < \infty$
\item $F(z)$ has a boundary value $f$, non-tangentially almost everywhere with $f \in L^p$ and $\Hil(f) = f$. 
\end{enumerate}
\end{lemma}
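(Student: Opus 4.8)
The plan is to treat this as the unit–disc counterpart of the half–plane statement \lemref{lem:Hilunbounded}, following Titchmarsh's argument with the Poisson kernel of $\Pminus$ replaced by the disc Poisson kernel $P_r$ and the decay hypothesis ``$F\to 0$ at $\infty$'' replaced by holomorphy of $F$ on all of $\Dsp$. The first thing I would record is the Fourier dictionary for $\Hil = \Hiltil + \Av$: one has $\widehat{\Hil f}(n) = \hat{f}(n)$ for $n\geq 0$ and $\widehat{\Hil f}(n) = -\hat{f}(n)$ for $n\leq -1$, so that $\Hil f = f$ is equivalent to $\hat{f}(n) = 0$ for all $n\leq -1$, i.e.\ to $f$ being the boundary value of a holomorphic function on $\Dsp$. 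With this in hand, both $(1)$ and $(2)$ are really characterizations of $F$ belonging to the Hardy space $H^p(\Dsp)$, and the whole point is to route through the Poisson representation $F(re^{i\theta}) = (P_r\conv f)(\theta)$. (One could alternatively transplant everything to $\Pminus$ via a M\"obius map and invoke \lemref{lem:Hilunbounded} directly, but the conformal weights and the $\Av$–term in $\Hil$ make a direct argument cleaner, so I would do it directly.)

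For $(1)\Rightarrow(2)$ I would argue as follows. From $M := \sup_{0<r<1}\norm[p]{F(re^{i\cdot})}<\infty$ and reflexivity of $L^p(\Sone)$ (here $1<p<\infty$ is used), extract a sequence $r_k\to 1$ with $F(r_k e^{i\cdot})\rightharpoonup f$ weakly for some $f\in L^p$ with $\norm[p]{f}\leq M$. For fixed $\rho<1$, the mean value property of the harmonic function $F$ gives the reproducing identity $F(\rho e^{i\theta}) = \brac{P_{\rho/r}\conv F(re^{i\cdot})}(\theta)$ for all $\rho<r<1$; sending $r=r_k\to1$ and using $P_\rho\in L^q(\Sone)$ with $q$ the conjugate exponent, one obtains $F(\rho e^{i\theta}) = \brac{P_\rho\conv f}(\theta)$, so $F$ is the Poisson integral of $f$. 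The standard Fatou theorem then yields that $F$ has non–tangential boundary values equal to $f$ almost everywhere, with $f\in L^p$. Finally, comparing the expansion $F(\rho e^{i\theta}) = \sum_n \rho^{\abs{n}}\hat{f}(n)e^{in\theta}$ with the Taylor expansion $\sum_{n\geq 0}a_n\rho^n e^{in\theta}$ of the holomorphic function $F$ forces $\hat{f}(n)=0$ for all $n\leq -1$, i.e.\ $\Hil f = f$ by the dictionary above.

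For $(2)\Rightarrow(1)$ I would start from $f\in L^p$ with $\hat{f}(n)=0$ for $n\leq-1$ and define $G$ by $G(re^{i\theta}) := (P_r\conv f)(\theta)$. Since $\abs{\hat{f}(n)}\lesssim \norm[p]{f}$, the series $G(re^{i\theta}) = \sum_{n\geq0}\hat{f}(n)(re^{i\theta})^n$ converges on $\Dsp$ and is holomorphic there, and $\norm[p]{G(re^{i\cdot})} = \norm[p]{P_r\conv f}\leq \norm[p]{f}$, which already gives the desired uniform bound for $G$. The remaining task is to identify $F$ with $G$: both are holomorphic on $\Dsp$ and, by Fatou applied to the Poisson integral $G$, both have the same non–tangential boundary value $f$ a.e., so $F-G$ is holomorphic on $\Dsp$ with angular limit $0$ a.e., hence $F\equiv G$ by the uniqueness theorem for holomorphic functions on the disc. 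Therefore $\sup_{0<r<1}\norm[p]{F(re^{i\cdot})}\leq\norm[p]{f}<\infty$, which is $(1)$.

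The steps I expect to require actual work, rather than just the formal outline, are the passage from the uniform $L^p$ bound to the Poisson representation — weak compactness combined with the reproducing identity — and, in $(2)\Rightarrow(1)$, the clean identification $F=G$ via the uniqueness statement for holomorphic functions with vanishing angular limits. Both are classical pieces of $H^p(\Dsp)$ theory and run in complete parallel to Titchmarsh's proof of \lemref{lem:Hilunbounded}, so I would state them with appropriate references rather than reprove them, and present the rest as the routine transcription of that argument to the disc.
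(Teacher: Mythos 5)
Your proof is correct and is essentially the argument the paper intends: the paper gives no details for this lemma, asserting only that it is proved ``in a similar manner'' to the half-plane statement \lemref{lem:Hilunbounded} (cited to Titchmarsh), and your write-up is exactly that classical $H^p(\Dsp)$ argument — Fourier characterization of $\Hil f = f$, weak compactness plus the Poisson reproducing identity and Fatou for $(1)\Rightarrow(2)$, and the Poisson integral of $f$ for $(2)\Rightarrow(1)$. The one step worth a precise citation is the identification $F\equiv G$, which is the Luzin--Privalov uniqueness theorem for \emph{non-tangential} limits (the radial-limit analogue is false), and this is legitimately available here since statement $(2)$ hypothesizes non-tangential boundary values.
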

The functions $\U : \Dspbar\times[0,T) \to \Csp$ and $\Pfrak : \Dspbar\times[0,T) \to \Rsp $ are again defined as in \eqref{eq:UPfrak} and we see that $U(\cdot,t)$ is a holomorphic function on $\Dsp$.  Also note that its boundary value is given by $ \Ztbar(\ap,t) = \U(e^{i\ap},t)$ for all $\ap\in \Rsp$. From \lemref{lem:Hilbounded} we therefore see that $\Hil \Ztbar = \Ztbar$. From \secref{sec:derivofeqnbdd} we also have that $\Hil\brac{\frac{e^{i\ap}}{\Zap}} = \frac{e^{i\ap}}{\Zap}$ however $\Hil\brac{\frac{1}{\Zap}} \neq \frac{1}{\Zap}$. Similarly we have $\Hil\brac{\frac{\Ztbar - \Avg(\Ztbar)}{\Zap}} = \frac{\Ztbar - \Avg(\Ztbar)}{\Zap}$ however $\Hil\brac{\frac{\Ztbar}{\Zap}} \neq \frac{\Ztbar}{\Zap}$. Though we still have $\Hil(\Dap\Ztbar) = \Dap\Ztbar$. See \secref{sec:derivofeqnbdd} for more details. 

In a similar way to \secref{sec:Notationunbounded}, we can now write the Euler equations \eqref{eq:Euler2} as equations in $\Dsp$ as follows
\begin{align}\label{eq:EulerRiem2}
\begin{aligned}
& \Ut - \Psit \frac{\Uzp}{\Psizp} + \Ubar \frac{\Uzp}{\Psizp} = -\frac{1}{\Psizp}(\pxp - i\pyp)\Pfrak & \tx{ on } \Dsp \\
& \U(\cdot,t) \tx{ is holomorphic}  & \tx{ on } \Dsp \\
& \Pfrak = 0  &  \tx{ on } \partial \Dsp \\
& \tx{Trace of } \frac{-i}{\zp\Psizp}(\Ubar - \Psit) \tx{ is real valued}  & \tx{ on } \partial \Dsp
\end{aligned}
\end{align}
along with the normalizing conditions $\partial_t \Psi(0,t) = \Ubar(0,t)$ and $\Psizp(0,t) > 0$. The condition that the particles on the boundary stay on the boundary is equivalent to saying that the trace of $ \frac{-i}{\zp\Psizp}(\Ubar - \Psit)$ is real valued and in particular from \secref{sec:derivofeqnbdd} we see that $\brac{ \frac{-i}{\zp\Psizp}(\Ubar - \Psit)}\Big\vert_{\partial \Dsp} = b$ where $\Dt = \pt + \bvar\pap$ is the material derivative on the boundary $\partial \Dsp$. Again note that the process of obtaining \eqref{eq:EulerRiem2} from \eqref{eq:Euler2} is reversible so long as the interface $\partial\Omega(t)$ is non-self intersecting.

For $f_1, f_2, f_3, g : \Sone \to \Csp$, we define the following functions
\begin{align}
[f_1;g](\ap) & = ([f_1,\Hiltil]g)(\ap) = \frac{1}{2\pi i} \int_0^{2\pi} (f_1(\ap) - f_1(\bp)) \cot\brac{\frac{\bp - \ap}{2}} g(\bp) \diff \bp \label{eq:sqfgbounded}\\
\sqbrac{f_1,f_2; g}(\ap) & = -\frac{1}{4\pi i} \int_0^{2\pi} \brac{\frac{f_1(\ap) - \f_1(\bp)}{\sin\brac*[\big]{\frac{\bp - \ap}{2}}}} \brac{\frac{f_2(\ap) - \f_2(\bp)}{\sin\brac*[\big]{\frac{\bp - \ap}{2}}}} g(\bp) \diff \bp \label{eq:sqfoneftwogbounded}
\end{align}
and
\begin{align}\label{eq:foneftwofthreegbounded}
\begin{split}
& [f_1, f_2, f_3; g](\ap) \\
& = \frac{1}{8\pi i} \int_0^{2\pi} \brac{\frac{f_1(\ap) - \f_1(\bp)}{\sin\brac*[\big]{\frac{\bp - \ap}{2}}}} \brac{\frac{f_2(\ap) - \f_2(\bp)}{\sin\brac*[\big]{\frac{\bp - \ap}{2}}}} \brac{\frac{f_3(\ap) - \f_3(\bp)}{\sin\brac*[\big]{\frac{\bp - \ap}{2}}}} \cos\brac*[\Big]{\frac{\bp - \ap}{2}}g(\bp) \diff \bp
\end{split}
\end{align}

\subsection{The system of equations on the boundary for the bounded domain case}\label{sec:systembdrybdd}

Similar to \secref{sec:systembdryunbdd}, to solve \eqref{eq:EulerRiem2} we obtain a system of equations on the boundary of the domain in the variables $(\Zap,\Zt)$. We derive these equations in \secref{sec:derivofeqnbdd}. The equations are:
\begin{align}\label{eq:systemonebdd}
\begin{split}
\bvar & = \Real(\Id - \Hiltil)\brac{\frac{\Zt - \Av(\Zt)}{\Zap} } \\
\Azero & = \Imag\sqbrac*[\big]{\Zt,\Hiltil}\Ztapbar \\
(\pt + \bvar\pap)\Zap &= \Ztap - \bap\Zap \\
(\pt + \bvar\pap)\Ztbar & = i\frac{\Azero}{\Zap} 
\end{split}
\end{align}
along with the condition that the functions
\begin{align*}
\Psizp(re^{i\theta},t) & = \frac{1}{2\pi}\int_0^{2\pi} P_r(\theta - \bp)\cbrac{-ie^{-i\bp}\Zap(\bp,t)} \diff \bp \\
\U(re^{i\theta},t) & = \frac{1}{2\pi}\int_0^{2\pi} P_r(\theta - \bp)\Ztbar(\bp,t) \diff \bp
\end{align*}
are holomorphic functions on $\Dsp$ and satisfy for $t\geq 0$
\begin{align*}
\Psizp(0,t) > 0 \qquad \tx{ and } \quad \Psizp(\zp,t) \neq 0 \quad \tx{ for all } \zp \in \Dsp
\end{align*}

We note that one can obtain $\Z(\cdot,t)$ by the formula 
\begin{align*}
\Z(\ap,t) = \Z(\ap,0) +  \int_0^t \cbrac{\Zt(\ap,s) - \bvar(\ap,s)\Zap(\ap,s)} \diff s
\end{align*}
In particular instead of the variables $(\Zap,\Zt)$, one can view the system being in the variables $(\Z,\Zt)$. Similar to the unbounded case we note that once one has a solution to the above system, we can recover a solution to the system  \eqref{eq:EulerRiem2} by letting $\Psi$ and $\U$ be defined as above and defining $\Pfrak$ as the unique solution to the equation
\begin{align}\label{eq:DeltaPfrakbdd}
\Delta \Pfrak = -2\abs{\U_\zp}^2 \quad \tx{ on } \Dsp, \qq \Pfrak = 0 \quad \tx{ on } \partial \Dsp
\end{align}

Similar to the unbounded case, we again observe that the above system allows for the interface to self intersect. Also similarly, we can recover the function $\h(\al,t)$ as the solution to the ODE
\begin{align}\label{eq:hbdd}
\begin{aligned}
\frac{\diff h}{\diff t} &= \bvar(\h, t) \\
h(\al,0) &= \al
\end{aligned}
\end{align}
where $\bvar$ is given by \eqref{eq:systemonebdd}. From this we easily see that as long as $\sup_{[0,T]} \norm[\infty]{\bvarap}(t) <\infty$ we can solve this ODE and for any $t\in [0,T]$ we have that $h(\cdot,t)$ is a homeomorphism. Also using the fact that $b(\cdot,t)$ is a $2\pi$ periodic function, it is also easy to see that $h$ satisfies $h(\al + 2\pi,t) = h(\al,t) + 2\pi$. Hence the functions $z = \Z\compose \h, \zt = \Zt \compose \h$ are $2\pi$ periodic and are the Lagrangian parameterizations of the interface and the velocity on the boundary. We also note that the last  equation in \eqref{eq:systemonebdd} can be written as 
\begin{align}\label{form:Zttbarbdd}
\Zttbar  = i \frac{\Azero}{\Zap} 
\end{align}
From the calculations from \secref{sec:derivofeqnbdd}, we see that the gradient of the pressure on the boundary in conformal coordinates is (here $\hat{n}$ is the outward unit normal)
\begin{align}\label{eq:gradientpressurebdd}
-\frac{\partial P}{\partial \hat{n}}\compose \hinv = \frac{\Azero}{\Zapabs}
\end{align}

\section{Main results}\label{sec:mainresults}

\subsection{The unbounded domain case}\label{sec:resultunbdd}

In this subsection we collect our main results for the model \eqref{eq:EulerRiem} which is the same as \eqref{eq:Euler} but written in conformal coordinates. We define the energies
\begin{align}\label{eq:EoneEtwoEthree}
\begin{aligned}
\Eone(t) & = \brac{\norm[2]{\Ztapbar}^2 + g}\norm[2]{\pap\frac{1}{\Zap}}^2\\
\Etwo(t) & =  \brac{\norm[2]{\Ztapbar}^2 + g}\cbrac{\norm[2]{\Dt\nobrac{\pap\frac{1}{\Zap}}}^2 + \norm[\Hhalf]{\frac{\sqrt{\Ag}}{\Zap}\pap\frac{1}{\Zap}}^2} \\
\Ethree(t) & = \brac{\norm[2]{\Ztapbar}^2 + g}\cbrac{\norm[2]{\Dt\Dap^2\Ztbar }^2 + \norm[\Hhalf]{\frac{\sqrt{\Ag}}{\Zap}\Dap^2\Ztbar}^2} 
\end{aligned}
\end{align}
and also define
\begin{align}
\Ea(t) & = \brac{\Eone(t)^2 + \Etwo(t)}^\half \label{def:Ea}\\
\E(t) & = \brac{\Eone(t)^3 + \Etwo(t)^{\frac{3}{2}} + \Ethree(t)}^{\frac{1}{3}} \label{def:E}
\end{align}


Note that $\Ea(t) \lesssim \E(t)$. If $(\Ea)_\lamb(t)$ and $\E_\lamb(t)$ are the energies defined by \eqref{def:Ea} and \eqref{def:E} respectively for the solution $((\Z_\lamb)(\ap,t), (\Z_{\lamb})_t(\ap,t))$ with gravity $g_\lamb$, then it is easy to see that
\begin{align}\label{eq:scalingenergy}
(\Ea)_\lamb(t) = \lamb^{2s} \Ea(\lamb^s t) \qq \tx{and } \quad \E_\lamb (t) = \lamb^{2s} \E(\lamb^s t)
\end{align}
Hence both $\Ea(t)$ and $\E(t)$ scale like $\norm[\infty]{\grad u}^2(t)$ under all the scaling transformations \eqref{eq:scaling}. Similarly the energies $\Eone(t), \Etwo(t)$ and $\Ethree(t)$ scale like $\norm[\infty]{\grad u}^2(t), \norm[\infty]{\grad u}^4(t)$ and $\norm[\infty]{\grad u}^6(t)$ respectively under these scalings.
 
We now state our first main result which is an a priori estimate for the energies $\Ea(t)$ and $\E(t)$. 

\begin{theorem}\label{thm:aprioriE}
Let $T>0$ and let $(\Z, \Zt )(t)$ be a solution to the gravity water wave equation \eqref{eq:systemone} with gravity parameter $g \geq 0$ in the time interval $[0,T]$ with $(\Zap - 1, \frac{1}{\Zap} - 1, \Zt) \in \Linfty([0,T], H^s(\Rsp)\times H^s(\Rsp)\times H^{s + \half}(\Rsp))$ for some $s \geq 4$. Then  $\E(t) < \infty$ for all $t\in [0,T]$ and there exists a universal constant $c >0$ so that for all $t \in [0,T)$ we have
\begin{align}\label{eq:mainEatime}
\frac{d\Ea(t)}{dt} \leq c \brac{\norm[\Linfty\cap\Hhalf]{\Dap\Ztbar}  +  \brac{\norm[2]{\Ztapbar} + \sqrt{g}}\norm[2]{\pap\frac{1}{\Zap}}} \Ea(t) \leq c^2 \Ea(t)^{3/2}
\end{align}
and also 
\begin{align}\label{eq:mainEtime}
\frac{d\E(t)}{dt} \leq c\Ea(t)^\half\E(t) \leq c^2 \E(t)^{3/2}
\end{align}
\end{theorem}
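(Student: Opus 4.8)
The plan is to reduce the two stated inequalities to three differential inequalities, one for each of $\Eone,\Etwo,\Ethree$, and then to recover \eqref{eq:mainEatime}--\eqref{eq:mainEtime} from these by the chain rule applied to $\Ea=(\Eone^2+\Etwo)^{1/2}$ and $\E=(\Eone^3+\Etwo^{3/2}+\Ethree)^{1/3}$, using $\Ea\lesssim\E$ and the scaling relations \eqref{eq:scalingenergy} (which dictate precisely which product of energies each error term must fit into). Abbreviating $A:=\frac{\Ag}{\Zapabs}=-\frac{\partial P}{\partial\hat n}$, the energies $\Etwo$ and $\Ethree$ are, up to the time-dependent prefactor $\norm[2]{\Ztapbar}^2+g$, of the form $\mathcal F[u]:=\norm[2]{\Dt u}^2+\norm[\Hhalf]{\frac{\sqrt{\Ag}}{\Zap}u}^2$ for the good unknowns $\Theta_1:=\pap\frac1{\Zap}$ and $\Theta_2:=\Dap^2\Ztbar$ respectively, while $\Eone$ is just the prefactor times $\norm[2]{\Theta_1}^2$. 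Since $\mathcal F[u]$ is exactly the natural energy of a wave-type equation $\Dt^2u+i\frac{\Ag}{\Zap}\papabs u=(\text{l.o.t.})$ --- the phase $\frac{\Zapabs}{\Zap}$ being what turns the weight $\sqrt{A}$ into $\frac{\sqrt{\Ag}}{\Zap}$ --- the whole argument is the energy method for such equations, the point being to run it with constants uniform as $g\to0$.

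First I would collect, from the assumed $H^s$-regularity and from finiteness of $\E(t)$, a battery of a priori bounds on auxiliary quantities, all tracked so as to be scaling invariant: the chord-arc constant of the interface, $\norm[\infty]{\Zap-1}$, $\norm[\infty]{\frac1{\Zap}-1}$, $\norm[\infty]{b_\ap}$, $\norm[\infty]{A}$, $\norm[\Linfty\cap\Hhalf]{\Dap\Ztbar}$, the relevant H\"older seminorms, and --- crucially --- the pointwise ratio estimate $\norm[\infty]{\frac{\Dt A}{A}}\lesssim\norm[\infty]{\Dap\Ztbar}+\norm[2]{\Ztapbar}\norm[2]{\pap\frac1{\Zap}}$ from the introduction, together with its companions controlling $\Dt$ of the weight $\frac{\sqrt{\Ag}}{\Zap}$. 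Here I would lean on the identities and commutator/multilinear estimates of \cite{Ag21}, using only $L^2$-type bounds on the interface --- in particular $\norm[2]{\pap\frac1{\Zap}}$, which admits crests of angle $<90\degree$ and cusps --- and never an $L^\infty$ bound on $\pap\frac1{\Zap}$. I would also observe that $\frac{d}{dt}\big(\norm[2]{\Ztapbar}^2+g\big)=\frac{d}{dt}\norm[2]{\Ztapbar}^2$ and, integrating by parts in $\pt=\Dt-b\pap$ and using $\Dt\Ztap=\Zttap-b_\ap\Zap$ from \eqref{eq:systemone}, that this is $\lesssim(\text{bracket})\big(\norm[2]{\Ztapbar}^2+g\big)$, so the prefactor is harmless in the product rule. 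Finiteness of $\E(t)$ itself then follows from the assumed regularity once the evolution equations below are in hand.

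Next I would differentiate the system \eqref{eq:systemone} to obtain the evolution equations for the good unknowns: $\Theta_1$ and $\Theta_2$ (more precisely, the appropriate second-order combinations built from them and their $\Dt$-derivatives) satisfy equations of the schematic wave form $\Dt^2\Theta+i\frac{\Ag}{\Zap}\papabs\Theta=R$, where $R$ consists of $[\Dt,\cdot]$-commutators, the quantities bounded above, and terms at least quadratic in the energy; one must show $R$ is admissible, i.e.\ that $\norm[2]{R}$ and the $\dot H^{1/2}$-contributions to the energy identity are $\le(\text{bracket})\sqrt{E_j}$ for the relevant one of $\Eone,\Etwo,\Ethree$. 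Then, for each unknown, one differentiates $\mathcal F[u]$ in time: the dangerous term $\int(\Dt A)\abs{\papabs^{1/2}u}^2=\int\frac{\Dt A}{A}\,A\,\abs{\papabs^{1/2}u}^2\le\norm[\infty]{\frac{\Dt A}{A}}\int A\,\abs{\papabs^{1/2}u}^2$ becomes, after reconciling $\int A\,\abs{\papabs^{1/2}u}^2$ with $\norm[\Hhalf]{\frac{\sqrt{\Ag}}{\Zap}u}^2$ up to commutators absorbed into $(\text{bracket})\mathcal F[u]$, at most $\norm[\infty]{\frac{\Dt A}{A}}\mathcal F[u]$, which is of the desired form. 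The remaining pieces of $\frac{d}{dt}\mathcal F[u]$ --- namely $2\Real\int\overline{\Dt u}\,R$, the commutators $[\Dt,i\frac{\Ag}{\Zap}\papabs]$ and $[\Dt,\papabs^{1/2}]$, and the term with $\Dt$ hitting the weight $\frac{\sqrt{\Ag}}{\Zap}$ inside the $\dot H^{1/2}$-norm --- are each bounded by $(\text{bracket})\mathcal F[u]$ using the a priori bounds. The case of $\Eone$ is the easiest, since $\frac{d}{dt}\norm[2]{\Theta_1}^2=2\Real\int\overline{\Theta_1}\Dt\Theta_1+\int b_\ap\abs{\Theta_1}^2$ and $\norm[2]{\Dt\Theta_1}$ is already carried by $\Etwo$.

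The step I expect to be the main obstacle is precisely the degeneracy of the Taylor coefficient $A$. Since $A\to g$ at spatial infinity and $A\to0$ everywhere when $g=0$, one cannot bound $\norm[\infty]{\Dt A}$ by the energy uniformly in $g$ and estimate $\int(\Dt A)\abs{\papabs^{1/2}u}^2$ that way; replacing this by the ratio bound $\norm[\infty]{\Dt A/A}$ is what saves the argument, but it forces the energy to carry the $\sqrt{\Ag}$-weighted $\dot H^{1/2}$-norms appearing in \eqref{eq:EoneEtwoEthree}, and hence every nonlinear and commutator error in the energy identities must be re-estimated \emph{carrying those $\sqrt{\Ag}$ weights}, uniformly as $g\to0$, compatibly with the scalings \eqref{eq:scaling}, and using only the $L^2$ interface bound so that angled crests and cusps stay admissible. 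Propagating these weights cleanly through the multilinear and commutator estimates --- above all through the reconciliation of $\int A\,\abs{\papabs^{1/2}u}^2$ with $\norm[\Hhalf]{\frac{\sqrt{\Ag}}{\Zap}u}^2$ and through the $R$-terms coming from \eqref{eq:systemone} --- is the technical heart of the proof. Granting the three resulting inequalities $\frac{dE_j}{dt}\le c\,(\text{bracket})\cdot(\text{product of }E_k\text{ of the matching homogeneity})$, the estimates \eqref{eq:mainEatime} and \eqref{eq:mainEtime} follow by the chain rule together with $\Ea\lesssim\E$, and the cruder bounds $\le c^2\Ea^{3/2}$ and $\le c^2\E^{3/2}$ follow because the bracket $\norm[\Linfty\cap\Hhalf]{\Dap\Ztbar}+\big(\norm[2]{\Ztapbar}+\sqrt{g}\big)\norm[2]{\pap\frac1{\Zap}}$ is itself $\lesssim\Ea^{1/2}$ by the a priori bounds.
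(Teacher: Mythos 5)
Your proposal follows essentially the same route as the paper: the same good unknowns $\pap\frac{1}{\Zap}$ and $\Dap^2\Ztbar$ satisfying quasilinear equations of the form $\brac{\Dt^2 + i\frac{\Ag}{\Zapabs^2}\pap}\Theta = R$, the same battery of scaling-invariant a priori bounds (including the crucial ratio estimate $\norm[\infty]{\Dt\Ag/\Ag}$ and the reconciliation of $\frac{\sqrt{\Ag}}{\Zapbar}\papabs\brac{\frac{\sqrt{\Ag}}{\Zap}f}$ with $i\frac{\Ag}{\Zapabs^2}\pap f$ up to admissible commutators), and the same chain-rule closure for $\Ea$ and $\E$ with the bracket controlled by $\Ea^{1/2}$. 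It is a correct outline of the paper's argument, though of course the bulk of the actual proof lies in carrying out the weighted commutator and multilinear estimates you flag as the technical heart.
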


We prove this theorem in \secref{sec:unboundedsection}, from \secref{sec:identities} to \secref{sec:closing}. Let us now highlight some salient features about the energies $\Ea(t)$ and $\E(t)$ and the above result:
\begin{enumerate}
\item The energies $\Ea(t)$ and $\E(t)$ are uniform in gravity. This is because the only place where the gravity parameter $g$ shows up in these energies is in the term $(\norm[2]{\Ztapbar}^2 + g)$ and in the variable $\Ag$. The variable $\Ag =  g + \Azero$ where $\Azero \geq 0$ and is independent of $g$ (see \eqref{eq:Ag} and \eqref{eq:Azero}). The above a priori estimate works for all $g \geq 0$, in particular even for $g = 0$, and the estimates are uniform in gravity. See \corref{cor:uniftimeSobolev} below. 

\item The energies $\Ea(t)$ and $\E(t)$ allow both smooth solutions and solutions with angled crests/cusps similar to \cite{Wu19}. More precisely they allow interfaces with corners of angle $\nu \pi$ for $0<\nu < \half$ and for cusps (which corresponds to $\nu = 0$). To see this, it is easier to look at the energy $\Ecal(t)$ defined below and observe from \lemref{lem:equivSobolevunbdd} that the energy $\Ecal(t)$ dominates the energy $\E(t)$. Then from \cite{Ag20} we easily see that for interfaces with angled crests/cusps, we have that the energy $\Ecal(t)$ is finite, which implies that the energy $\E(t)$ is also finite. See also \eqref{eq:Zasymptotics}.

\item From \eqref{eq:gradientpressure} we see that the gradient of the pressure on the boundary is given by 
\begin{align}\label{eq:Taylorsignterm}
-\frac{\partial P}{\partial \hat{n}}\compose \hinv = \frac{\Ag}{\Zapabs}
\end{align}
Now from \lemref{lem:decayatinfinity} and the formula \eqref{eq:Azero}, we see that if $\Ztap(\cdot, t) \in \Ltwo$ then $\Azero(\ap, t) \to 0$ as $\abs{\ap} \to \infty$. Also as $\Zap(\cdot, t) \to 1$ at infinity, we see that  $-\frac{\partial P}{\partial \hat{n}}\compose \hinv(\ap, t) \to g$ as $\abs{\ap} \to \infty$. If $g = 0$, the velocity $\Zt(\cdot, t)$ is not a constant function and satisfies $\Ztap(\cdot, t) \in \Ltwo$ and if the interface is $C^{1, \alpha}$ (to make sure that $\abs{\Zap} $ is bounded), then $-\frac{\partial P}{\partial \hat{n}}\compose \hinv(\ap, t) > 0$ for all $\ap \in \Rsp$ but $-\frac{\partial P}{\partial \hat{n}}\compose \hinv(\ap,t) \to 0$ as $\abs{\ap} \to \infty$. 

Hence if $g \to 0$, one does not have a uniform positive lower bound on the Taylor sign condition term \eqref{eq:Taylorsignterm}, even for smooth decaying initial data. On the other hand if the interface has a corner or cusp of angle $\nu \pi$ for $0\leq \nu <\half$, then one has $\frac{1}{\Zapabs} = 0 $ at that point and as $\Ag \in \Linfty$ (from the assumption $\Ztap \in \Ltwo$), we see that $-\frac{\partial P}{\partial \hat{n}}\compose \hinv = 0$ at the singularity, for any value of $g$. Hence in this case as well, we also don't have the Taylor sign condition satisfied. One of the features of the above a priori estimate is that it makes no assumptions on the Taylor sign condition term and works in both cases mentioned above.

\item The energy estimates \eqref{eq:mainEatime} and \eqref{eq:mainEtime} are invariant under all the scaling transformations \eqref{eq:scaling}. Scaling invariant estimates for $g = 1$ and $s = \half$ were first obtained in \cite{HuIfTa16} (see also \cite{AiIfTa24}). The estimates \eqref{eq:mainEatime} and \eqref{eq:mainEtime} are invariant not only for $s = \half$ but for all $s \in \Rsp$.

\end{enumerate}

An important consequence of the above a priori estimate is the following: 
\begin{cor}\label{cor:uniftimeSobolev}
Assume that the initial data $(\Z,\Zt)(0)$ satisfies $(\Zap - 1, \frac{1}{\Zap} - 1, \Zt)(0) \in H^{s}(\Rsp)\times H^s(\Rsp) \times H^{s + \half}(\Rsp)$ for some $s \geq 4$ and fix $g_0 > 0$. Then for $0<g \leq g_0$, there exists a time $T>0$ independent of $g$, such that on $[0,T]$ the initial value problem for \eqref{eq:systemone} with gravity $g$ has a unique solution $(\Z,\Zt)(t)$ satisfying $(\Zap - 1, \frac{1}{\Zap} - 1, \Zt) \in C^l([0,T], H^{s - l}(\Rsp)\times H^{s - l}(\Rsp) \times H^{s + \half - l}(\Rsp))$ for $l = 0,1$. 
\end{cor}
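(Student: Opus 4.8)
The plan is to obtain this from the a priori estimate of \thmref{thm:aprioriE}, combined with the equivalence between the energies and Sobolev norms (\lemref{lem:equivSobolevunbdd}) and an existence scheme for a regularized version of \eqref{eq:systemone}; alternatively it can be read off from the uniform local wellposedness result \thmref{thm:existencemain} once its hypotheses and conclusion are translated into Sobolev spaces. The first point I would nail down is the \emph{$g$-uniform bound on the initial energy}: since the only dependence of $\Ea$, $\E$ (and of the finitely many higher energies $\Ecalfourfivei$ needed to reach regularity $s$) on $g$ is through the monotone factor $\brac{\norm[2]{\Ztapbar}^2 + g}$ and through $\Ag = g + \Azero$ with $\Azero \ge 0$ independent of $g$, each of these energies at $t = 0$ is bounded, for $0 < g \le g_0$, by its value at $g = g_0$; by \lemref{lem:equivSobolevunbdd} this is in turn controlled by $\norm[H^s]{\Zap - 1} + \norm[H^s]{\frac{1}{\Zap} - 1} + \norm[H^{s+\half}]{\Zt}$ at $t = 0$ and by $g_0$ alone.

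\textbf{Uniform time.} Feeding that bound into $\frac{d}{dt}\E(t) \le c^2 \E(t)^{3/2}$ and comparing with the ODE $\dot y = c^2 y^{3/2}$ produces an existence time $T > 0$ depending only on $\E(0)$, hence only on $g_0$ and the size of the data; on $[0,T]$ the energy $\E$, and then, after running the analogous linear-in-top-order inequalities for the higher energies $\Ecalfourfivei$, the full hierarchy, stays bounded. By the equivalence lemma this means that the $H^s \times H^s \times H^{s+\half}$ norm of $(\Zap - 1, \frac{1}{\Zap} - 1, \Zt)(t)$ stays bounded on $[0,T]$, uniformly in $g \in (0, g_0]$.

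\textbf{Existence, uniqueness, regularity in time.} For the existence of an actual solution on $[0,T]$ I would use the $\ep$-regularization of \eqref{eq:systemone} employed in \cite{Wu19} and \cite{Ag21}: for each $\ep > 0$ and each $g$ the regularized system is locally wellposed in $H^s$ by standard ODE/quasilinear arguments, the a priori estimates above hold for the regularized flows with constants independent of $\ep$ and of $g \in (0, g_0]$, so all these solutions live on the common interval $[0,T]$ with uniform bounds; one then passes to a weak-$*$ limit as $\ep \to 0$, using the uniform bounds for the requisite compactness, and checks that the limit solves \eqref{eq:systemone} together with the holomorphicity and normalization constraints. Uniqueness is obtained from an energy estimate for the difference of two solutions carried out one derivative below the top level, to avoid the derivative loss. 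Finally $C^0([0,T], H^s \times H^s \times H^{s+\half})$ regularity follows from weak continuity together with continuity of the energies in $t$ (a Bona--Smith type argument), and the equations \eqref{eq:systemone}, which express $\pt$ of each unknown through one fewer spatial derivative, upgrade this to the claimed $C^1$ statement.

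\textbf{Main obstacle.} The difficulty is not the abstract iteration but making \emph{every} estimate in the hierarchy uniform as $g \to 0$: one must ensure that no inverse power of $g$ enters, and in particular that no positive lower bound on the Taylor sign term $\Ag/\Zapabs$ is ever invoked — which is precisely why the new bound on $\Dt(\Ag)/\Ag$ and the scaling/gravity-invariant form of \eqref{eq:mainEtime} are needed. Propagating this $g$-uniformity through all the higher-order energies $\Ecalfourfivei$ and through the $\ep$-regularized system, so that the common time $T$ genuinely does not shrink as $g \to 0$, is the delicate part; everything else is a routine adaptation of the scheme in \cite{Wu19}.
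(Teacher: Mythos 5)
Your core mechanism — the initial energies are monotone in $g$, hence bounded for $0<g\leq g_0$ by their value at $g_0$, and the ODE comparison from \eqref{eq:mainEtime} then yields a time $T$ depending only on the data and $g_0$ — is exactly the engine the paper uses. But the rest of your scheme diverges from the paper's proof and contains two genuine problems. First, you assert that on $[0,T]$ the $H^s\times H^s\times H^{s+\half}$ norm stays bounded \emph{uniformly in} $g\in(0,g_0]$. This does not follow from the tools available: the passage from the energy back to Sobolev norms, \eqref{eq:SobolevfromEcal} in \lemref{lem:equivSobolevunbdd}, carries a factor $1/g$ (it comes from bounds like $\norm[\infty]{\Dap\frac{1}{\Zap}}\lesssim \norm[\infty]{\frac{1}{\Ag}}\norm[\infty]{\Ag\Dap\frac{1}{\Zap}}$), and the paper explicitly notes that at $g=0$ the energy is \emph{not} equivalent to Sobolev norms because $\Azero$ decays at infinity. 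Fortunately the corollary does not need uniform Sobolev bounds — only a $g$-independent time — so this claim is an overreach rather than a fatal flaw, but as written your argument leans on it.

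Second, you propose to reach regularity $s$ by propagating a hierarchy of higher-order energies (your $\Ecalfourfivei$) uniformly in $g$, and to rebuild existence, uniqueness and time-continuity from scratch via an $\ep$-regularization, difference estimates and a Bona--Smith argument. None of that hierarchy is constructed or estimated in the paper, and none of it is needed. The paper's proof is much shorter: existence, uniqueness and the $C^l$ regularity for every $s\geq 4$ are quoted directly from the known Sobolev wellposedness result \thmref{thm:existenceSobolevunbdd} (essentially \cite{Wu97}), whose blow-up criterion involves only the fixed low-regularity quantities $\norm[H^2]{\Zap-1}$, $\norm[H^2]{\frac{1}{\Zap}-1}$, $\norm[H^{2.5}]{\Zt}$. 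These are controlled on $[0,T]$ by $\E(t)$ through \lemref{lem:equivSobolevunbdd} — with $g$-dependent constants, which is harmless since $T$ has already been fixed independently of $g$ — so the solution cannot blow up before $T$. If you want to salvage your route, either carry out the higher-energy hierarchy in full (a substantial piece of unwritten work) or, as the paper does, let the existing Sobolev theorem do the propagation of high regularity and use the new $g$-uniform a priori estimate only to rule out blow-up before the uniform time.
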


This corollary is proved in \secref{sec:mainexitencesection} right after \thmref{thm:existenceSobolevunbdd}. This result shows that the time of existence of the solutions is uniform even as $g \to 0$. In all previous results, for the type of initial data taken above, one would get that the time of existence $T \to 0$ as $g \to 0$. Note that for periodic boundary conditions, this result was already known as in that case, one does have a positive lower bound on $-\frac{\partial P}{\partial \hat{n}}\compose \hinv$ for $g = 0$ (assuming the initial velocity is not identically zero). This lower bound in the case of periodic solutions is due to compactness. However even in the periodic case, such an energy estimate will give a longer time of existence as compared to standard energy estimates (see the comments made after \thmref{thm:introbdd} in the introduction). 

More generally, using the a priori estimates of \thmref{thm:aprioriE}, one can prove a local wellposedness result for initial data which allows for interfaces with angled crests and cusps (and also allows smooth initial data). Now observe from the definition of the energy $\E(t)$, that it is not so clear on how to directly make sense of this energy if the interface has a corner or cusp (i.e. it is not clear whether the energy is well defined). To overcome this issue, we define an energy which is equivalent to it, but which makes sense for all such data. To do this, we use the fact that $Z(\ap,t)$ is the boundary value of the conformal map $\Psi(\zp,t)$ and that $\Ztbar(\ap,t)$ is the boundary value of the holomorphic function $U(\zp,t)$. We then define our main energy $\Ecal(t)$ as follows: 
\begingroup
\allowdisplaybreaks
\begin{align*}
\Ecalone(t) & =  \brac{\sup_{\yp<0} \norm[\Ltwo(\Rsp, \diff \ap)]{\pzp U(\cdot + i\yp,t)}^2 + g}\sup_{\yp<0} \norm[\Ltwo(\Rsp, \diff \ap)]{\pzp \frac{1}{\Psizp}(\cdot + i\yp,t)}^2 \\
\Ecaltwo(t) & =  \brac{\sup_{\yp<0} \norm[\Ltwo(\Rsp, \diff \ap)]{\pzp U(\cdot + i\yp,t)}^2 + g}\sup_{\yp<0}\norm[\Ltwo(\Rsp, \diff \ap)]{\frac{1}{\Psizp}\pzp\brac{\frac{1}{\Psizp}\pzp U}(\cdot + i\yp,t)}^2 \\*
& \qquad + \brac{\sup_{\yp<0} \norm[\Ltwo(\Rsp, \diff \ap)]{\pzp U(\cdot + i\yp,t)}^2 + g}^{\n 2}\sup_{\yp<0} \norm[\Hhalf(\Rsp, \diff \ap)]{\frac{1}{\Psizp}\pzp \frac{1}{\Psizp}(\cdot + i\yp,t)}^2 \\
\Ecalthree(t) & =  \brac{\sup_{\yp<0} \norm[\Ltwo(\Rsp, \diff \ap)]{\pzp U(\cdot + i\yp,t)}^2 + g}^{\n 3}\sup_{\yp<0}\norm[\Ltwo(\Rsp, \diff \ap)]{\frac{1}{\Psizp}\pzp\brac{\frac{1}{\Psizp}\pzp \frac{1}{\Psizp}}(\cdot + i\yp,t)}^2 \\*
& \quad +  \brac{\sup_{\yp<0} \norm[\Ltwo(\Rsp, \diff \ap)]{\pzp U(\cdot + i\yp,t)}^2 + g}^{\n 2}\sup_{\yp<0}\norm[\Hhalf(\Rsp, \diff \ap)]{\frac{1}{\Psizp^2}\pzp\brac{\frac{1}{\Psizp}\pzp \U}(\cdot + i\yp,t)}^2 \\
\Ecal(t) & = \brac{\Ecalone(t)^3 + \Ecaltwo(t)^{\frac{3}{2}} + \Ecalthree(t)}^{\frac{1}{3}}
\end{align*}
\endgroup
If $\Z$ and $\Zt$ are smooth enough then this energy is equivalent to 

\begin{align*}
\begin{aligned}
\Ecalone(t) & = \brac{\norm[2]{\Ztapbar}^2 + g}\norm[2]{\pap\frac{1}{\Zap}}^2\\
\Ecaltwo(t) & =  \brac{\norm[2]{\Ztapbar}^2 + g}\norm[2]{\Dap^2\Ztbar}^2 + \brac{\norm[2]{\Ztapbar}^2 + g}^2\norm[\Hhalf]{\Dap\frac{1}{\Zap}}^2 \\
\Ecalthree(t) & = \brac{\norm[2]{\Ztapbar}^2 + g}^3\norm[2]{\Dap^2\frac{1}{\Zap}}^2 +  \brac{\norm[2]{\Ztapbar}^2 + g}^2\norm[\Hhalf]{\frac{1}{\Zap}\Dap^2\Ztbar}^2
\end{aligned}
\end{align*}
For $g > 0$ these two energies $E(t)$ and $\Ecal(t)$ are equivalent and for $g = 0$, we have $E(t) \lesssim \Ecal(t)$. See \lemref{lem:equivSobolevunbdd} for the precise relation between these two energies.

We now write down the precise conditions on the initial data and the main existence result. The precise definition of the notion of solution is given in \defref{def:solution}. For $g > 0$ we prove uniqueness in a class of solutions called smoothly approximable solutions denoted by $\mathcal{SA}$, which are defined in \defref{def:solutionSA}. 

\medskip
\noindent\emph{Initial data:} Let $g \geq 0$ and let $(\U,\Psi)(0) : \Pminus \to \Csp$, $\Pfrak(0): \Pminus \to \Rsp$ be such that $\U(0)$ and $\Psi(0)$ are holomorphic functions with $\Psizp \neq 0$ for $\zp \in \Pminus$. We assume that $\Pfrak$ solves \eqref{eq:DeltaPfrak} and that $\lim_{\zp \to \infty} (\U,\Psizp, (\pxp - i\pyp) \Pfrak)(\zp,0) \to (0,1, ig)$. We also assume that 
\begin{align}\label{eq:czero}
c_0 := \nobrac{\sup_{\yp<0} \norm[H^1(\Rsp, \diff \ap)]{ U(\cdot + i\yp,0)} + \sup_{\yp<0} \norm[H^1(\Rsp, \diff \ap)]{ \frac{1}{\Psizp}(\cdot + i\yp,0) - 1} }  < \infty
\end{align}
and that $\Ecal(0) < \infty$. 

\begin{thm}\label{thm:existencemain}
Let $g\geq 0$ and let the initial data $(\U,\Psi,\Pfrak)(0)$ be as given above with $\Ecal(0) < \infty$. Then there exists a universal constant $c>0$ such that there exists $T \geq \frac{c}{\sqrt{\Ecal(0)}}$ so that there exists a solution $(\U,\Psi, \Pfrak)(t)$ to the water wave equation \eqref{eq:EulerRiem} in the time interval $[0,T]$ with the given initial data in the sense of \defref{def:solution}. 

If $g > 0$, then there exists a unique solution in the class $\mathcal{SA}$ (defined in \defref{def:solutionSA}) and which satisfies $\sup_{t \in [0,T]} \Ecal(t) < \infty$. Moreover in this case, if $T^*>0$ is the maximal time of existence, then either $T^* = \infty$ or $T^*<\infty$ and we have
\begin{align*}
\limsup_{t \to T^*} \brac{\norm[\Linfty\cap\Hhalf]{\Dap\Ztbar}(t)  +  \brac{\norm[2]{\Ztapbar}(t) + \sqrt{g}}\norm[2]{\pap\frac{1}{\Zap}}(t)} = \infty
\end{align*} 
\end{thm}
We prove this theorem in \secref{sec:mainexitencesection}. The above theorem is the rigorous version of \thmref{thm:intromain} mentioned in the introduction. See the comments made after \thmref{thm:intromain} for the important features of this theorem.

\begin{rmk}
The singular solutions established in \thmref{thm:existencemain} also have a rigidity property namely that the singularities (where singularities are defined as the set of all points where $\frac{1}{\Psizp}(\ap, t) = 0$) propagate via the Lagrangian flow, angled crests/cusps remain angled crested/cusped, the acceleration at the tip is $- ig$, and the angle of the crest does not change nor does it tilt. We refer the reader to \cite{Ag20} for the precise statement of the results (see also \thmref{thm:mainangle} in the next section). The results of \cite{Ag20} hold in exactly the same manner here as well, with the energy used in \cite{Ag20} replaced by $\Ecal(t)$. The proof of the rigidity results for $g > 0$ follow exactly in the same way and for $g = 0$, the proof goes through with some minor modifications.  
\end{rmk}


\subsection{The bounded domain case}\label{sec:resultbdd}

In this subsection we collect our main results for the model \eqref{eq:EulerRiem2} which is the same as \eqref{eq:Euler2} but written in conformal coordinates. The results of this subsection mimic the results established in the previous section \secref{sec:resultunbdd}. Similar to the a priori estimate \thmref{thm:aprioriE} we also prove an a priori estimate \thmref{thm:aprioriEbdd}. Let us now define the analogue of the energy $\Ecal(t)$ defined in the previous section. Define the energy
\begingroup
\allowdisplaybreaks
\begin{align*}
\Ecaltilone(t) & =  \sup_{0< r < 1} \norm[\Ltwo([0, 2\pi], \diff \theta)]{\pzp U(re^{i\theta},t)}^2\sup_{0< r < 1} \norm[\Ltwo([0, 2\pi], \diff \theta)]{\pzp \frac{1}{\Psizp}(re^{i\theta},t)}^2 \\*
& \quad +  \sup_{0< r < 1} \norm[\Ltwo([0, 2\pi], \diff \theta)]{\pzp U(re^{i\theta},t)}^2\sup_{0< r < 1} \norm[\Ltwo([0, 2\pi], \diff \theta)]{\frac{1}{\Psizp}(re^{i\theta},t)}^2 \\
\Ecaltiltwo(t) & =  \sup_{0< r < 1} \norm[\Ltwo([0, 2\pi], \diff \theta)]{\pzp U(re^{i\theta},t)}^2\sup_{0< r < 1}\norm[\Ltwo([0, 2\pi], \diff \theta)]{\frac{1}{\Psizp}\pzp\brac{\frac{1}{\Psizp}\pzp U}(re^{i\theta},t)}^2 \\*
& \qquad + \sup_{0< r < 1} \norm[\Ltwo([0, 2\pi], \diff \theta)]{\pzp U(re^{i\theta},t)}^4\sup_{0< r < 1} \norm[\Hhalf([0, 2\pi], \diff \theta)]{\frac{1}{\Psizp}\pzp \frac{1}{\Psizp}(re^{i\theta},t)}^2 \\
\Ecaltilthree(t) & =  \sup_{0< r < 1} \norm[\Ltwo([0, 2\pi], \diff \theta)]{\pzp U(re^{i\theta},t)}^6 \sup_{0< r < 1}\norm[\Ltwo([0, 2\pi], \diff \theta)]{\frac{1}{\Psizp}\pzp\brac{\frac{1}{\Psizp}\pzp \frac{1}{\Psizp}}(re^{i\theta},t)}^2 \\*
& \quad +  \sup_{0< r < 1} \norm[\Ltwo([0, 2\pi], \diff \theta)]{\pzp U(re^{i\theta},t)}^4 \sup_{0< r < 1}\norm[\Hhalf([0, 2\pi], \diff \theta)]{\frac{1}{\Psizp^2}\pzp\brac{\frac{1}{\Psizp}\pzp \U}(re^{i\theta},t)}^2 \\
\Ecaltil(t) & = \brac{\Ecaltilone(t)^3 + \Ecaltiltwo(t)^{\frac{3}{2}} + \Ecaltilthree(t)}^{\frac{1}{3}}
\end{align*}
\endgroup
If $\Z$ and $\Zt$ are smooth enough then this energy is equivalent to 

\begingroup
\allowdisplaybreaks
\begin{align*}
\Ecaltilone(t) & = \norm[2]{\Ztapbar}^2\brac{\norm*[\bigg][2]{\pap\brac*[\bigg]{\frac{e^{i\ap}}{\Zap}}}^2 + \norm[2]{\frac{1}{\Zap}}^2 }\\
\Ecaltiltwo(t) & =  \norm[2]{\Ztapbar}^2\norm[2]{\Dap^2\Ztbar}^2 + \norm[2]{\Ztapbar}^4\norm[\Hhalf]{\Dap\brac*[\bigg]{\frac{e^{i\ap}}{\Zap}}}^2 \\
\Ecaltilthree(t) & = \norm[2]{\Ztapbar}^6\norm[2]{\Dap^2\brac*[\bigg]{\frac{e^{i\ap}}{\Zap}}}^2 +  \norm[2]{\Ztapbar}^4\norm[\Hhalf]{\frac{e^{i\ap}}{\Zap}\Dap^2\Ztbar}^2
\end{align*}
\endgroup

We now write down the precise conditions on the initial data and the main existence result for the model \eqref{eq:EulerRiem2}. Similar to the unbounded case, we define a notion of solution in \defref{def:solutionbdd} and prove uniqueness in a class called smoothly approximable solutions denoted by $\mathcal{SA}$ which are defined in \defref{def:solutionSAbdd}. 

\medskip
\noindent\emph{Initial data:} Let $(\U,\Psi)(0) : \Dsp \to \Csp$, $\Pfrak(0): \Dsp \to \Rsp$ be such that $\U(0)$ and $\Psi(0)$ are holomorphic functions with $\Psizp(\zp, 0) \neq 0$ for $\zp \in \Dsp$ and $\Psizp(0,0) > 0$ and assume that $\Pfrak$ solves \eqref{eq:DeltaPfrakbdd}. We also assume that
\begin{align}\label{eq:czerobdd}
c_0 := \sup_{0< r < 1} \norm[H^1([0, 2\pi], \diff \theta)]{U(re^{i\theta},0)} + \sup_{0< r < 1} \norm[H^1([0, 2\pi], \diff \theta)]{\frac{1}{\Psizp}(re^{i\theta},0)}   < \infty
\end{align}
and that $\Ecaltil(0) < \infty$. 

\begin{thm}\label{thm:existencemainbdd}
Let the initial data $(\U,\Psi,\Pfrak)(0)$ be as given above with $\Ecaltil(0) < \infty$. Then there exists a universal constant $c>0$ such that there exists $T \geq \frac{c}{\sqrt{\Ecaltil(0)}}$ so that there exists a unique solution $(\U,\Psi, \Pfrak)(t)$ to the water wave equation \eqref{eq:EulerRiem2} with the given initial data in the time interval $[0,T]$ in the class $\mathcal{SA}$  and we have $\sup_{t \in [0,T]} \Ecaltil(t) < \infty$. Moreover if $T^*$ is the maximal time of existence then either $T^* = \infty$ or $T^* < \infty$ along with 
\begin{align*}
\limsup_{t \to T^*}\brac{\norm[\Linfty\cap\Hhalf]{\Dap\Ztbar}(t)  +  \norm[2]{\Ztapbar}(t)\brac{\norm[2]{\pap\frac{1}{\Zap}}(t) + \norm[2]{\frac{1}{\Zap}}(t) }} = \infty
\end{align*}
\end{thm}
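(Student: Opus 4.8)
The plan is to follow the regularization--compactness scheme of Wu \cite{Wu19}, with the new uniform-in-geometry a priori estimate \thmref{thm:aprioriEbdd} supplying all the quantitative control; as in \thmref{thm:existencemain}, once that estimate is in hand the remaining steps are essentially structural. First I would regularize by dilation: with $(\U,\Psi)(0):\Dsp\to\Csp$ the given holomorphic functions, set $\Uep(\zp,0):=\U((1-\ep)\zp,0)$, $\Psiep(\zp,0):=\Psi((1-\ep)\zp,0)$, and let $\Pfrakep(0)$ solve \eqref{eq:DeltaPfrakbdd} for this data. These are smooth up to $\overline{\Dsp}$, preserve the normalizations, and have $\Psizpep(\cdot,0)\neq 0$; since every term of $\Ecaltil$ and of $c_0$ is a $\sup_{0<r<1}$ of a quantity built from holomorphic functions, dilation can only shrink these quantities, so $\Ecaltil^\ep(0)\le\Ecaltil(0)$, $c_0^\ep\le c_0$, and $\Ecaltil^\ep(0)\to\Ecaltil(0)$ as $\ep\to 0$. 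For each such smooth, non-degenerate datum, classical local wellposedness for \eqref{eq:EulerRiem2} in high Sobolev spaces (the bounded-domain analogue of \cite{Wu99}, available because the Taylor sign term $\Azero/\Zapabs$ is bounded below for a smooth compact interface with nonzero velocity) produces a smooth solution $(\Uep,\Psiep,\Pfrakep)(t)$.

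Second I would invoke \thmref{thm:aprioriEbdd}. For a smooth solution $\Ecaltil^\ep(t)$ is equivalent to the displayed explicit energy in $(\Zapep,\Ztep)$, so the estimate gives $\frac{d}{dt}\Ecaltil^\ep(t)\le c\,\Ecaltil^\ep(t)^{3/2}$ and, together with a continuation criterion for the smooth solution phrased in terms of $\Ecaltil^\ep$ and $c_0^\ep$, existence on a common interval $[0,T]$ with $T\ge \frac{c}{\sqrt{\Ecaltil(0)}}$ and the uniform bound $\sup_{[0,T]}\Ecaltil^\ep(t)\lesssim\Ecaltil(0)$. A separate transport-type estimate propagates $c_0^\ep$, keeping $\sup_{0<r<1}\norm[H^1([0,2\pi],\diff\theta)]{\Uep(re^{i\theta},t)}$ and $\sup_{0<r<1}\norm[H^1([0,2\pi],\diff\theta)]{\frac{1}{\Psizpep}(re^{i\theta},t)}$ bounded on $[0,T]$ in terms of $c_0$ and $\sup_{[0,T]}\Ecaltil$. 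Armed with these two families of uniform bounds, an Arzel\`a--Ascoli argument on compact subsets of $\Dsp\times[0,T]$, Banach--Alaoglu, and a diagonal extraction produce a subsequence converging (locally uniformly inside $\Dsp$, weak-$*$ in the energy norms on the boundary) to $(\U,\Psi,\Pfrak)(t)$, which solves \eqref{eq:EulerRiem2}, lies in $\mathcal{SA}$ by construction (\defref{def:solutionbdd}), and satisfies $\sup_{[0,T]}\Ecaltil(t)<\infty$ by lower semicontinuity.

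Third, for uniqueness --- available here, unlike the $g=0$ unbounded case, because the domain is compact and $\Azero$ has no decay degeneracy --- I would close a Gronwall estimate for a low-order energy of the difference $(\Z_1-\Z_2,\Zt_1-\Zt_2)$ of two $\mathcal{SA}$ solutions with the same data, using the available bounds on $\Ecaltil$ and the structural identities of \secref{sec:derivofeqnbdd} and \cite{Ag21}, keeping careful track of the disc-specific facts $\Hil\big(\frac{e^{i\ap}}{\Zap}\big)=\frac{e^{i\ap}}{\Zap}$, $\Hil(\Dap\Ztbar)=\Dap\Ztbar$ and the average corrections in $\bvar$ and $\Azero$ in \eqref{eq:systemonebdd}. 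For the blow-up criterion, suppose $T^*<\infty$ while $\norm[\Linfty\cap\Hhalf]{\Dap\Ztbar}(t)+\norm[2]{\Ztapbar}(t)\brac{\norm[2]{\pap\frac{1}{\Zap}}(t)+\norm[2]{\frac{1}{\Zap}}(t)}$ remains bounded as $t\to T^*$; then the sharp form of \thmref{thm:aprioriEbdd}, which is linear in $\Ecaltil$ with this quantity as coefficient, plus Gronwall bounds $\Ecaltil(t)$ uniformly on $[0,T^*)$, the $c_0$-propagation estimate then keeps $c_0(t)$ bounded, and re-running the construction from a time just below $T^*$ extends the solution past $T^*$, contradicting maximality.

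The hard part will be the passage to the limit in the second step: showing that every nonlinear object --- $\bvar$, $\Azero$, holomorphicity of $\Uep$ and $\Psiep$, and above all the non-vanishing of $\Psizpep$, which may degenerate in the limit precisely at angled crests and cusps --- converges to the corresponding object of a genuine solution, and that the limit is independent of the approximating sequence. A secondary obstacle is the propagation estimate for $c_0(t)$: unlike $\Ecaltil(t)$ it is not controlled by the a priori energy estimate alone and needs its own argument, exploiting that $\norm[2]{\Ztbar}$- and $\norm[2]{\frac{1}{\Zap}}$-type quantities obey transport-type bounds along the flow of $\bvar$.
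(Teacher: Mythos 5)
Your overall scheme is the same as the paper's (dilate the data toward the boundary, run the smooth Sobolev theory on the regularized data, use \thmref{thm:aprioriEbdd} for a uniform time and uniform energy bound, pass to the limit into the class $\mathcal{SA}$, prove uniqueness by a Gronwall estimate for the difference, and continue past any time where the stated quantity stays bounded). However, there is a genuine gap in how you handle the Taylor sign weight $\Azero$, and this is exactly where the theorem's content lies. The a priori estimate of \thmref{thm:aprioriEbdd} controls $\Eabdd$ and $\Ebdd$, \emph{not} $\Ecaltil$; passing from $\Ebdd$ to $\Ecaltil$ (needed both to stay inside the class $\mathcal{SA}$ and, crucially, to restart the solution near $T^*$ with a time step bounded below) requires a quantitative bound on $\norm[\infty]{\frac{1}{\sqrt{\Azero}}}(t)$, as in \eqref{eq:controlEcalfromEbdd}. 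This bound is not free: it is seeded by $\norm[2]{\Ztap}(0)>0$ through \eqref{eq:Azerolowerbound} and then propagated in time by a Gronwall inequality with coefficient $\Eabdd^{1/2}$, cf.\ \eqref{eq:oneoversqrtAzerobdd} and \eqref{eq:ZtapbarL2norm2plusgestimatebdd} — and one must check this for the \emph{singular} solution, which the paper does by giving meaning to the lower-order energy at each fixed time, approximating, and using weak convergence/lower semicontinuity, and then iterating the resulting bound over finitely many subintervals of $[0,T^*)$ so that $\sup_{t<T^*}\Ecaltil(t)$ is finite and the restart time does not shrink. Your sentence ``the sharp form of \thmref{thm:aprioriEbdd} \ldots plus Gronwall bounds $\Ecaltil(t)$ uniformly on $[0,T^*)$'' elides all of this; done naively it reintroduces $\norm[\infty]{\frac{1}{\Azero}}$ into the blow-up criterion (which is the blow-up criterion one would get by following \cite{Wu19} directly), defeating the point of the theorem. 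The same issue appears already in your construction step: the continuation criterion for the smooth regularized solutions (\thmref{thm:existenceSobolevbdd}) contains $\norm[\infty]{\frac{1}{\Azero}}$, so getting a common interval $[0,T]$ with $T\gtrsim \Ecaltil(0)^{-1/2}$ requires the $\Azero^\ep(0)$ lower bound and its propagation, not just ``a continuation criterion phrased in terms of $\Ecaltil^\ep$ and $c_0^\ep$''.

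A second, smaller gap: the degenerate case $\norm[2]{\Ztap}(0)=0$ (constant initial velocity, so $\Azero\equiv 0$, $\Ecaltil(0)=0$, and the Taylor sign condition fails identically) is allowed by the hypotheses and must be treated separately — the solution is the trivially translating one and is global — both for existence and, more delicately, for uniqueness. For uniqueness in the nondegenerate case your difference-energy plan is right (it is the analogue of Theorem 3.7 of \cite{Wu19}, i.e.\ \thmref{thm:uniquenessbdd}), but its weight involves $1/\Azero$, so before applying it you must first prove that for an $\mathcal{SA}$ solution with $\norm[2]{\Ztap}(t_0)>0$ at some time one has $\Azero(\cdot,t)\geq c>0$ and $\norm[2]{\Ztap}(t)\geq c>0$ on the whole interval; this again uses \eqref{eq:Azerolowerbound} together with \eqref{eq:oneoversqrtAzerobdd} and the smooth approximations furnished by the definition of $\mathcal{SA}$, and it is not implied by compactness of the domain alone, contrary to your remark that ``$\Azero$ has no decay degeneracy''.
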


We prove this result in \secref{sec:existenceanduniqbdd}. This theorem is the rigorous version of the first part of \thmref{thm:introbdd} in the introduction. See the comments made after \thmref{thm:introbdd} for the important features of this theorem.

The singular solutions from \thmref{thm:existencemainbdd} satisfy a rigidity property similar to the unbounded case and \cite{Ag20}. As we will need this rigidity result for our later blow up result, we now state the rigidity result precisely.  

We define the singular set of the interface at time $t$ as
\begin{align*}
    S(t) = \cbrac{\ap\in\mathbb{R} \ \Big| \  \Tr\brac{\frac{1}{\Psizp}}(\ap,t) = 0}
\end{align*}
The non-singular set is then defined as $NS(t) = \mathbb{R}\backslash S(t)$. Observe that both $S(t)$ and $NS(t)$ are $2\pi$ periodic sets and that corners and cusps are indeed included in the singular set. The rigidity result is now as follows:

\begin{thm}\label{thm:mainangle}
Let  $(\U,\Psi,\Pfrak)(t)$ be a solution to the water wave equation \eqref{eq:EulerRiem2} in the time interval $[0,T]$ in the class $\mathcal{SA}$ with $\sup_{t \in [0,T]} \Ecaltil(t) < \infty$. Then
\begin{enumerate}[leftmargin =*, align=left]
\item $S(t) =  \cbrac{h(\al,t) \in \Rsp \suchthat \al \in S(0) }$ for all $t\in[0,T]$. Moreover $S(t)$ is a closed set of measure zero for all $t\in[0,T]$. 
\item For every fixed $t\in [0,T]$, the  functions  $\brac{\frac{1}{\Psizp}\partial_{\zp}\U}(\cdot,t)$ and $\brac{\frac{1}{\Psizp}\overline{\partial_{\zp}\U}}(\cdot,t)$ extend to continuous functions on $\Dspbar$ with 
\begin{align*}
\Tr\brac{\frac{1}{\Psizp}\partial_{\zp}\U}(\ap,t) = \Tr\brac{\frac{1}{\Psizp}\overline{\partial_{\zp}\U}}(\ap,t) = 0 \quad \forall  \ap \in S(t)
\end{align*}

\item If $\al_{0} \in S(0)$, and if $\{\al_n\}_{n \geq 1}$ is any sequence such that $\al_n \in NS(0)$ for all $n\geq 1$ with $\al_n \to \al_0$, then for any $t \in [0,T]$
\begin{align}\label{eq:rigidlimit}
\lim_{\al_n \to \al_{0}} \frac{\frac{\Zap}{\Zapabs}(h(\al_n,t),t)) }{\frac{\Zap}{\Zapabs}(\al_n,0)} = 1
\end{align}
\item For all $\ap \in S(t)$, we have $\Ztt(\ap,t) = 0$. 
\end{enumerate}
\end{thm}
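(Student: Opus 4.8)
The proof follows the rigidity analysis of \cite{Ag20}, now carried out for the bounded-domain system \eqref{eq:EulerRiem2} with the energy $\Ecaltil$ in place of the energy used there; throughout, write $M=\sup_{t\in[0,T]}\Ecaltil(t)<\infty$. \emph{Step 1 (regularity from the energy).} The backbone is that each of the holomorphic quantities whose $\Ltwo$- or $\Hhalf$-norms enter $\Ecaltil$ — namely $\frac1{\Psizp}$, $\frac1{\Psizp}\pz\U$, $\frac1{\Psizp}\pz\frac1{\Psizp}$, $\frac1{\Psizp}\pz\brac{\frac1{\Psizp}\pz\U}$, and so on — extends continuously (in fact Hölder continuously) to $\Dspbar$. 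Indeed, for each such function the bound $M<\infty$ gives a uniform-in-$0<r<1$ bound of its $\pz$-derivative in $\Ltwo([0,2\pi])$, so by \lemref{lem:Hilbounded} and classical Hardy-space theory the function lies in the Hardy--Sobolev class whose boundary trace belongs to $H^1(\Sone)\hookrightarrow C^{0,1/2}(\Sone)$, hence extends continuously to $\Dspbar$. (To split the products defining $\Ecaltil$ one uses that $\norm[2]{\Ztapbar}(t)$ is bounded below on $[0,T]$ for non-trivial solutions; the trivial solution is immediate.) The anti-holomorphic companion $\frac1{\Psizp}\overline{\pz\U}$ is treated the same way after rewriting it, via $\Delta\Pfrak=-2\abs{\U_\zp}^2$ and $\Pfrak|_{\partial\Dsp}=0$ (that is, \eqref{eq:DeltaPfrakbdd}), in terms of traces of holomorphic functions, as in \cite{Wu19,Ag20}. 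In particular $\tfrac1{\Zap}=\Tr\brac{\tfrac1{\Psizp}}$ is continuous, and a short computation shows that $\Dap\Ztbar$ is the trace of $\frac1{\Psizp}\pz\U$ and $-e^{2i\ap}\wbar^2\,\overline{\Dap\Ztbar}$ that of $\frac1{\Psizp}\overline{\pz\U}$.

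\emph{Step 2 (parts (1) and (2)).} Continuity of $\tfrac1{\Zap}$ makes $S(t)$ closed; since $\tfrac1{\Psizp}$ is a non-trivial holomorphic function on $\Dsp$, the F.\ and M.\ Riesz theorem gives $\log\abs{\tfrac1{\Zap}}\in\Lone(\Sone)$, so $S(t)$ has measure zero. For the propagation, $\z=\Z\circ h$ yields the identity $\tfrac1{\Zap}(h(\al,t),t)=\hal(\al,t)\,\tfrac1{\zal}(\al,t)$, and integrating in $t$ the relations $\tfrac{\pt\zal}{\zal}=(\partial_{\zbar}\vbold)\circ\z\cdot\tfrac{\zalbar}{\zal}$ and $\tfrac{\pt\hal}{\hal}=(\pap\bvar)\circ h$ — whose right-hand sides are controlled on $[0,T]$ by $M$ — shows that $\zal$ and $\hal$ stay finite and nonzero along each Lagrangian trajectory; since $h(\cdot,0)=\mathrm{id}$, this gives $\tfrac1{\Zap}(h(\al,t),t)=0\iff\tfrac1{\Zap}(\al,0)=0$, i.e.\ $S(t)=\{h(\al,t):\al\in S(0)\}$. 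For (2), $\frac1{\Psizp}\pz\U$ and $\frac1{\Psizp}\overline{\pz\U}$ extend continuously by Step 1, with traces as above (using $\Hil(\Dap\Ztbar)=\Dap\Ztbar$), so it remains to prove $\Dap\Ztbar(\ap,t)=0$ for $\ap\in S(t)$; for this one uses that the next level $\frac1{\Psizp}\pz\brac{\frac1{\Psizp}\pz\U}$ is also continuous on $\Dspbar$, together with the Hölder vanishing of $\tfrac1{\Psizp}$ on $S(t)$, and runs the boundary-uniqueness (Privalov-type) argument of \cite{Ag20}.

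\emph{Step 3 (parts (3) and (4)).} Fix $\al_0\in S(0)$ and $\al_n\in NS(0)$ with $\al_n\to\al_0$, and set $F_n(t)=\w(h(\al_n,t),t)/\w(\al_n,0)$, so $F_n(0)=1$. Dividing the third line of \eqref{eq:systemonebdd} by $\Zap$ gives $\Dt\log\Zap=\Dap\Zt-\bvarap$; since $\bvar$ is real, $\Dt\log\w=i\,\Imag(\Dt\log\Zap)=i\,\Imag(\Dap\Zt)=i\,\Imag\brac{\wbar^2\,\overline{\Dap\Ztbar}}$, and therefore $\tfrac{d}{dt}\log F_n(t)=(\Dt\log\w)(h(\al_n,t),t)$. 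As $n\to\infty$, $h(\al_n,t)\to h(\al_0,t)\in S(t)$ by part (1), while $\Dap\Ztbar$ is continuous on $\Dspbar$ and vanishes on $S(t)$ by part (2); hence the right-hand side $\to0$ uniformly on $[0,T]$, and passing to the limit gives $\lim_n F_n(t)=1$, which is \eqref{eq:rigidlimit}. For (4), \eqref{form:Zttbarbdd} reads $\Zttbar=i\Azero\cdot\tfrac1{\Zap}$; here $\Azero=\Imag\sqbrac{\Zt,\Hiltil}\Ztapbar$ (the first lines of \eqref{eq:systemonebdd}) is bounded on $[0,T]$ by a standard commutator estimate together with the $\Ecaltil$-control of $\norm[2]{\Ztapbar}$ and of $\Zt$ (via \eqref{eq:czerobdd}), so $\Zttbar(\ap,t)=0$, hence $\Ztt(\ap,t)=0$, whenever $\tfrac1{\Zap}(\ap,t)=0$, i.e.\ on $S(t)$.

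\emph{Main obstacle.} The two genuinely hard points both sit inside Steps 1--2: showing that the Lagrangian flow carries the singular set \emph{homeomorphically} (which needs the flow to be well-behaved at the boundary points where $\tfrac1{\Psizp}=0$, even though $\bvarap$ is only borderline integrable there), and showing that $\frac1{\Psizp}\pz\U$ — equivalently $\Dap\Ztbar$ — \emph{vanishes} on $S(t)$ rather than merely extending continuously. Both rely on the interlocking of consecutive levels of $\Ecaltil$ with boundary-uniqueness arguments. The extra difficulty relative to \cite{Ag20} is that the bounded-domain Hilbert transform $\Hiltil$ does not share all the commutation properties of the half-plane transform — e.g.\ $\Hil\brac{\tfrac1{\Zap}}\neq\tfrac1{\Zap}$ while $\Hil\brac{\tfrac{e^{i\ap}}{\Zap}}=\tfrac{e^{i\ap}}{\Zap}$ — so the algebraic identities underpinning these steps (precisely the ones making the statements about $\frac1{\Psizp}\pz\U$, $\Dap\Ztbar$ and $\Azero$ go through) must be re-derived in the periodic setting.
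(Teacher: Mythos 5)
Your overall plan coincides with the paper's, which offers no independent argument for this theorem: it states that the proof is exactly the one in \cite{Ag20}, with the energy there replaced by $\Ecaltil$. Your Steps for parts (3) and (4) (the ODE $\Dt\log\w=i\,\Imag(\Dap\Zt)$ integrated along Lagrangian trajectories, dominated convergence using the vanishing of $\Dap\Ztbar$ on $S(t)$, and $\Zttbar=i\Azero/\Zap$ with $\norm[\infty]{\Azero}\lesssim\norm[2]{\Ztapbar}^2$) are the correct \cite{Ag20} arguments, as is the flow/measure-zero skeleton for part (1).

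There is, however, a genuine gap in your Step 1, and it sits precisely under assertion (2). You claim that every quantity entering $\Ecaltil$ has a uniform-in-$r$ $\Ltwo$ bound on its \emph{unweighted} $\pz$-derivative, hence a boundary trace in $H^1(\Sone)\hookrightarrow C^{0,1/2}(\Sone)$. This is true for $\U$ and $\frac{1}{\Psizp}$, but not for $\frac{1}{\Psizp}\pz\U$ or $\frac{1}{\Psizp}\pz\frac{1}{\Psizp}$: the energy controls only the \emph{weighted} derivatives, e.g. $\frac{1}{\Psizp}\pz\brac{\frac{1}{\Psizp}\pz\U}$, i.e. $\norm[2]{\Dap^2\Ztbar}$ on the boundary, whereas the tangential derivative of the trace of $\frac{1}{\Psizp}\pz\U$ is $\Zap\Dap^2\Ztbar$, and $\abs{\Zap}$ blows up exactly on $S(t)$. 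So no $H^1(\Sone)$ bound (and hence no continuity, let alone the vanishing on $S(t)$) of $\Dap\Ztbar$ follows from the embedding argument; finiteness of $\Ecaltil$ alone does not let you discard the degenerate weight. The same objection applies to your Step 2 claim that the "next level" $\frac{1}{\Psizp}\pz\brac{\frac{1}{\Psizp}\pz\U}$ is continuous on $\Dspbar$: $\Ecaltil$ only gives $\Ltwo$/$\Hhalf$-type control at that level, and there is no higher-level term to feed your bootstrapping. These continuity-and-vanishing statements are exactly the weighted boundary estimates of \cite{Ag20} (proved there by exploiting holomorphy together with the weighted derivative bounds, not by Sobolev embedding of the trace), and they also underpin your part (1) (boundedness of $\Dap\Ztbar\circ h$ along trajectories, well-posedness of the ODE for $1/z_\al$ at singular points) and part (3). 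So the proposal is the right strategy, but the shortcut you substitute for the hard regularity step would fail; you must either quote the relevant lemmas of \cite{Ag20} or redo their weighted arguments in the periodic setting, as the paper does implicitly.
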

The first statement of the result says that the singularities are propagated by the Lagrangian flow. The second statement says that the gradient of the velocity extends continuous to the boundary and vanishes at the singularities. The third statement says that the nature of the singularity is preserved for later time i.e. a corner remains a corner and a cusp remains a cusp with the angle of the corner remaining the same with no tilting. The last statement says that the acceleration at the singularity is zero. The proof of the above rigidity result follows in exactly the same manner as the proof in \cite{Ag20} with virtually no changes and hence we skip the proof here. 

We can now state our result on blow up. 

%

\begin{theorem}\label{thm:blowup}
There exist initial data $(\U,\Psi, \Pfrak)(0)$ satisfying the assumptions of \thmref{thm:existencemainbdd} with $0 <\Ecaltil(0) < \infty$ and in addition satisfying the following:
\begin{enumerate}
\item The initial data is symmetric with respect to the x-axis, namely 
\begin{align*}
\Z(\ap,0) = \Zbar(-\ap,0) \tx{ and } \Zt(\ap,0) = \Ztbar(-\ap,0) \tx{ for } \ap \in [0,\pi]
\end{align*}
\item  We have $\Tr\brac{\frac{1}{\Psizp}}(0,0) = \Tr\brac{\frac{1}{\Psizp}}(\pi,0) = 0$. Let $d = \abs{\Z(0,0) - \Z(\pi,0)}$.
\item We have $\Zt(0,0) < 0$ and $\Zt(\pi,0) > 0$. Let $v = \abs{\Zt(0,0) - \Zt(\pi,0)}$. 
\end{enumerate}
For all such initial data, there exists $T^*>0$ satisfying $0< \frac{c}{\sqrt{\Ecaltil(0)}} \leq T^* \leq \frac{d}{v} < \infty$, where $c$ is a universal constant, so that the water wave equation \eqref{eq:EulerRiem2} has a unique solution $(\U,\Psi, \Pfrak)(t)$ in the class $\mathcal{SA}$ in the time interval $[0,T^*)$ with $\sup_{ t \in [0, T]} \Ecaltil(t) <\infty $ for all $T \in [0,T^*)$ and we have 
\begin{align*}
\limsup_{t \to T^*}\brac{\norm[\Linfty\cap\Hhalf]{\Dap\Ztbar}(t)  +  \norm[2]{\Ztapbar}(t)\brac{\norm[2]{\pap\frac{1}{\Zap}}(t) + \norm[2]{\frac{1}{\Zap}}(t) }} = \infty
\end{align*}
In particular $\limsup_{t \to T^*} \Ecaltil(t) = \infty$. 
\end{theorem}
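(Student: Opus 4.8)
The plan is to prove the theorem by a contradiction argument combining the local wellposedness \thmref{thm:existencemainbdd} with the rigidity \thmref{thm:mainangle}, realising the pinch‑off scenario of \figref{fig:blowup}. First I would construct the initial data explicitly: take a holomorphic pair $(\U,\Psi)(0)$ on $\Dsp$ with $\Psizp\neq0$ on $\Dsp$ and $\Psizp(0,0)>0$, for which the interface $\Z(\cdot,0)=\Tr(\Psi)(\cdot,0)$ is a Jordan curve symmetric about the real axis with exactly two singular points, at $\ap=0$ (its rightmost point) and $\ap=\pi$ (its leftmost point), each an angled crest of opening angle strictly less than $\frac{\pi}{2}$ or a cusp, so that $\frac{1}{\Psizp}$ vanishes to order $\geq\frac12$ there and $\Ecaltil(0)<\infty$ (cf. the discussion in \secref{sec:resultbdd} and section 5 of \cite{Ag20}); and prescribe a symmetric, non‑constant velocity, $\Zt(\ap,0)=\Ztbar(-\ap,0)$, normalised so that $\Zt(0,0)<0<\Zt(\pi,0)$ with $\Z(0,0)-\Z(\pi,0)=d>0$, so that $\Ecaltil(0)>0$ and the two tips approach each other; finally let $\Pfrak(0)$ solve \eqref{eq:DeltaPfrakbdd}. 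Applying \thmref{thm:existencemainbdd} to this data gives, for a universal $c>0$, a maximal existence time $T^*\geq c/\sqrt{\Ecaltil(0)}>0$, a unique solution $(\U,\Psi,\Pfrak)(t)$ in $\mathcal{SA}$ on $[0,T^*)$ with $\sup_{[0,T]}\Ecaltil<\infty$ for every $T<T^*$, and the blow‑up alternative at $T^*$. Since a solution can be restarted and hence extended as long as $\Ecaltil$ remains bounded, $T^*<\infty$ already forces $\limsup_{t\to T^*}\Ecaltil(t)=\infty$, so the whole theorem reduces to the bound $T^*\leq d/v$.

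To prove $T^*\leq d/v$, suppose not, so the solution exists with $\sup_{[0,d/v]}\Ecaltil<\infty$. By uniqueness in $\mathcal{SA}$, the reflected fields $\Zbar(-\ap,t),\ \Ztbar(-\ap,t)$ solve the same system with the same (symmetric) initial data, and therefore coincide with $(\Z,\Zt)(t)$; thus the symmetry $\Z(\ap,t)=\Zbar(-\ap,t)$, $\Zt(\ap,t)=\Ztbar(-\ap,t)$ is propagated on $[0,d/v]$. From the formula for $\bvar$ in \eqref{eq:systemonebdd} it follows that $\bvar(\cdot,t)$ is odd, so $\bvar(0,t)=\bvar(\pi,t)=0$; by uniqueness of the flow $\frac{dh}{dt}=\bvar(h,t)$, $h(\cdot,0)=\tx{id}$, this gives $h(0,t)=0$ and $h(\pi,t)=\pi$ for all $t$. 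Since $0,\pi\in S(0)$, part (1) of \thmref{thm:mainangle} yields $0,\pi\in S(t)$ for all $t$, and part (4) yields $\Ztt(0,t)=\Ztt(\pi,t)=0$. Because $\bvar$ vanishes at $\ap=0,\pi$, there $\Dt=\pt$, so $\pt\Zt(0,t)=\Ztt(0,t)=0$ and $\pt\Z(0,t)=\Zt(0,t)$; hence $\Zt(0,\cdot)\equiv\Zt(0,0)$ and $\Z(0,t)=\Z(0,0)+t\,\Zt(0,0)$, and likewise at $\ap=\pi$. These quantities are real by the symmetry, so $\Z(0,t)-\Z(\pi,t)=d-vt$, and at $t=d/v$ the two singular points of the interface collide: $\Z(0,d/v)=\Z(\pi,d/v)=:p$.

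It remains to derive a contradiction from this collision, which is the heart of the argument. At $t=d/v$ the map $\Psi(\cdot,d/v):\Dsp\to\Omega(d/v)$ is conformal, hence injective, with $\Psizp\neq0$ on $\Dsp$, and since $\Ecaltil(d/v)<\infty$ and $S(d/v)$ is closed of measure zero by \thmref{thm:mainangle}(1), the boundary curve $\Z(\cdot,d/v)$ is continuous, it is the limit of the Jordan curves $\Z(\cdot,t)$ as $t\uparrow d/v$, it is non‑self‑intersecting away from the singular set, and it has angled‑crest/cusp asymptotics at its singular points. Using the mirror symmetry together with $\Z(0,d/v)=\Z(\pi,d/v)=p$, the two sub‑arcs $\Z([0,\pi],d/v)$ and $\Z([\pi,2\pi],d/v)=\overline{\Z([0,\pi],d/v)}$ are loops based at $p$ enclosing disjoint regions, so a winding‑number argument forces $\Omega(d/v)=\Psi(\Dsp)$ to be disconnected, contradicting that it is the conformal image of the disc; equivalently, the thin neck present for $t$ slightly below $d/v$ is incompatible with $\Psizp$ being non‑vanishing and $\frac{1}{\Psizp}$ staying controlled by the (finite) energy. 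Therefore $T^*\leq d/v<\infty$, and by \thmref{thm:existencemainbdd} this gives $\limsup_{t\to T^*}\brac{\norm[\Linfty\cap\Hhalf]{\Dap\Ztbar}(t)+\norm[2]{\Ztapbar}(t)\brac{\norm[2]{\pap\frac{1}{\Zap}}(t)+\norm[2]{\frac{1}{\Zap}}(t)}}=\infty$, hence $\limsup_{t\to T^*}\Ecaltil(t)=\infty$ as well.

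\textbf{Main obstacle.} The hard part is precisely this last step: turning the geometric collision of the two singular points into an analytic contradiction with $\Ecaltil<\infty$. One has to extract enough regularity of the interface away from its finitely many singularities (from finiteness of $\Ecaltil$ and \thmref{thm:mainangle}) to exclude pathological self‑intersections, and then argue, by a topological/degree argument exploiting the reflection symmetry, that the limiting configuration at time $d/v$ cannot be a conformal image of $\Dsp$ — equivalently, that a pinching neck forces the energy to blow up strictly before $d/v$. A secondary technical point is the explicit construction in the first step, which must meet conditions (1)--(3) simultaneously with $0<\Ecaltil(0)<\infty$; this is routine in the present framework but still requires care with the crest angles.
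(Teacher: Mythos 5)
Your construction of the data and the reduction to $T^*\le d/v$ follow the paper's route: propagate the reflection symmetry, note that $\bvar$ is odd so $\ap=0,\pi$ are fixed points of the flow, invoke \thmref{thm:mainangle} to get $0,\pi\in S(t)$ and $\Ztt(0,t)=\Ztt(\pi,t)=0$, and conclude the two tips translate with constant velocities and collide at $t=d/v$. The genuine gap is exactly the step you flag as the heart of the argument. The contradiction you propose — that the two symmetric boundary loops based at the collision point force $\Psi(\Dsp)$ to be disconnected, hence $\Psi(\cdot,d/v)$ cannot be conformal — does not work in this framework: the system \eqref{eq:systemonebdd} is solved in conformal coordinates with no chord-arc or non-self-intersection hypothesis, and the paper explicitly notes that solutions in the class $\mathcal{SA}$ continue through self-intersections (splash-type configurations). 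The condition $\Psizp\neq 0$ only gives local injectivity; nothing in \defref{def:solutionbdd} guarantees that $\Psi(\cdot,d/v)$ is univalent onto a Jordan domain, so the coincidence $\Z(0,d/v)=\Z(\pi,d/v)$ carries no topological contradiction by itself, and your fallback remark that a pinching neck is "incompatible with $1/\Psizp$ being controlled by the energy" is the right intuition but not an argument.

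The paper closes this step with a short quantitative argument that your proposal is missing. First, since $\norm[2]{\Ztap}(0)>0$, the estimates behind \thmref{thm:existencemainbdd} (the analogue of \eqref{eq:ZtapbarL2norm2plusgestimatebdd}) give $\norm[2]{\Ztap}(t)\geq C_1>0$ on $[0,d/v]$, and combined with $\sup_{t\in[0,d/v]}\Ecaltil(t)<\infty$ this yields a uniform pointwise bound $\abs{1/\Psizp(\zp,t)}\leq C_2$, i.e. $\abs{\Psizp(\zp,t)}\geq 1/C_2>0$ for all $\zp\in\Dsp$ and $t\in[0,d/v]$. Second, by the reflection symmetry the restriction $g(x)=\Psi(x,d/v)$ to the diameter $[-1,1]$ is real valued, continuous on $[-1,1]$ and $C^1$ on $(-1,1)$, and the collision gives $g(1)=\Z(0,d/v)=\Z(\pi,d/v)=g(-1)$; Rolle's theorem then produces $z_0\in(-1,1)$ with $\Psizp(z_0,d/v)=g'(z_0)=0$, contradicting the lower bound. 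This uses only the interior of the disc, where everything is smooth, so none of the delicate boundary-regularity or degree-theoretic considerations you anticipate are needed; without this (or some equivalent quantitative mechanism) your proof is incomplete at its decisive step.
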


This theorem is proved in \secref{sec:existenceanduniqbdd}. The above theorem is the rigorous version of the second part of \thmref{thm:introbdd} in the introduction. See \figref{fig:blowup} for a picture for this blow up scenario and the comments made after \thmref{thm:introbdd} for the important features of this theorem.

 The above result follows essentially directly from \thmref{thm:existencemainbdd} and \thmref{thm:mainangle}, and we give the details of the proof in \secref{sec:existenceanduniqbdd}.  This result shows that the local in time solutions of \thmref{thm:existencemainbdd} cannot in general be extended to global solutions. This also shows that the time of existence obtained in \thmref{thm:existencemainbdd} is optimal in the following sense: Consider an initial data $(\U,\Psi, \Pfrak)(0)$ satisfying the conditions of the above theorem and replace the initial velocity by $\U_\ep(0) = \ep \U(0)$ for some $\ep >0$ small. It is clear that this initial data also satisfies the conditions of the theorem and we have $\Ecaltil_\ep(0) = \ep^2\Ecaltil(0)$ and $v_\ep = \ep v$ (here $v$ is the variable used in the above theorem). Hence from the result above we see that as $\ep \to 0$, we have that $T^*_\ep \sim \frac{1}{\ep}$. This shows that the lower bound on the time of existence $T \geq \frac{c}{\sqrt{\Ecaltil(0)}}$ from \thmref{thm:existencemainbdd} cannot be replaced with $T \geq \frac{c}{\Ecaltil(0)^{\half + \delta}}$ for any $\delta > 0$ even for initial data with small $\Ecaltil(0)$.

\section{Proof of main results for unbounded domain case}\label{sec:unboundedsection}

\subsection{Some useful identities}\label{sec:identities}

Here we first collect the main identities commonly used in this paper. The identity \eqref{eq:maineqcomDap} is from \cite{KiWu18} whereas the rest are from Section 4 of \cite{Ag21}. 

\begin{enumerate}[leftmargin =*, align=left, label=\alph*)]
\item We have
\[
\frac{\Zap}{\Zapabs}\pap\frac{1}{\Zap}   = \pap \frac{1}{\Zapabs} + \w\Dapabs \wbar
\]
Observe that $\dis \pap \frac{1}{\Zapabs}$ is real valued and $\dis \w\Dapabs \wbar$ is purely imaginary. From this we obtain 
\begin{align}\label{form:RealImagTh}
\Real\brac{\frac{\Zap}{\Zapabs}\pap\frac{1}{\Zap} } = \pap \frac{1}{\Zapabs} \quad \qquad \Imag\brac{\frac{\Zap}{\Zapabs}\pap\frac{1}{\Zap}} = i\brac{ \wbar\Dapabs \w} 
\end{align}

\item For any complex valued function $f$, we have $\Hil (\Real f) =  i\Imag(\Hil f)$ and  $\Hil(i \Imag f) = \Real (\Hil f)$. Hence we get the following identities
\begin{align}
(\Id + \Hil)(\Real f) = f -i\Imag (\Id - \Hil) f  \label{form:Real}\\
(\Id + \Hil)(i \Imag f) = f - \Real (\Id - \Hil)f  \label{form:Imag}
\end{align}

\item We have
\begin{align}\label{form:Aonenew}
\Ag = g + i\Zt\Ztapbar -i(\Id + \Hil)\cbrac{\Real(\Zt\Ztapbar)}
\end{align}

\item We have 
\begin{align*}
\bvar = \frac{\Zt}{\Zap} -i(\Id + \Hil)\cbrac{\Imag\brac{\frac{\Zt}{\Zap}}}
\end{align*}
and hence 
\begin{align}\label{form:bvarapnew}
\bvarap = \Dap\Zt + \Zt\pap\frac{1}{\Zap}  -i\pap(\Id + \Hil)\cbrac{\Imag\brac{\frac{\Zt}{\Zap}}}
\end{align}

\item We now record some frequently used commutator identities. 
\begin{align}\label{eq:commutator}
\begin{aligned}
 [\pap, \Dt ] &= \bap \pap   \qquad &[\Dapabs, \Dt] &= \Real{(\Dap \Zt)} \Dapabs = \Real{(\Dapbar \Ztbar) \Dapabs} \\   \relax 
 [\Dap, \Dt]  &=  \brac{\Dap \Zt} \Dap   \qquad &  [\Dapbar, \Dt] &= \brac{\Dapbar \Ztbar } \Dapbar 
\end{aligned}
\end{align}
Using these we also obtain the following formulae
\begin{align}
\Dt \Zapabs & = \Dt e^{\Real\log \Zap} =  \Zapabs \cbrac{\Real(\Dap\Zt) - \bvarap} \label{form:DtZapabs} \\ 
 \Dt \frac{1}{\Zap} & = \frac{-1}{\Zap}(\Dap\Zt - \bvarap) = \frac{1}{\Zap}\cbrac{(\bvarap - \Dap\Zt - \Dapbar \Ztbar) + \Dapbar \Ztbar} \label{form:DtoneoverZap}
\end{align}
Observe that $(\bvarap - \Dap\Zt - \Dapbar \Ztbar)$ is real valued and this fact will be useful later on. Using the above commutator relations and \eqref{form:Zttbar} we also get 
\begin{align}\label{eq:maineqcomDap}
\sqbrac{\Dt^2 + i\frac{\Ag}{\Zapabs^2}\pap, \Dap} = -2(\Dap\Ztt)\Dap - 2(\Dap\Zt)\Dt\Dap
\end{align}

\medskip

\end{enumerate}

We now obtain some identities related to the function $\Ag$. These will be very important in proving the a priori estimate. From \eqref{eq:systemone} we see that (here $\Azero$ is $\Ag$ with $g = 0$)
\begin{align}\label{eq:Ag}
\Ag = g + \Azero 
\end{align}
where from a calculation from \cite{Wu97} and using the definition \eqref{eq:fonefn} we have
\begin{align}\label{eq:Azero}
\begin{split}
\Azero(\ap)  = -(\Imag\sqbrac{\Zt,\Hil}\Ztapbar)(\ap) & = -\Imag\cbrac{\frac{1}{i\pi} \int \frac{\Zt(\ap)-\Zt(\bp)}{\ap-\bp} \Ztbarbp(\bp) \diff\bp } \\ 
 & = \frac{1}{2\pi}\int \abs{\frac{\Zt(\ap) - \Zt(\bp)}{\ap-\bp}}^2 \diff\bp \\
 & = \frac{1}{2\pi}\norm[\Ltwo(\diff \bp)]{\frac{\Zt(\ap) - \Zt(\bp)}{\ap - \bp}}^2 \\
 & = \frac{i}{2} \sqbrac{\Zt,\Ztbar;1}(\ap)
 \end{split}
\end{align}
Hence $\Ag \geq g \geq 0$. Moreover if $\Zt$ is not a constant function, then $\Azero > 0$ everywhere and hence $\Ag > 0$ everywhere. On the other hand if $\Ztap \in \Ltwo$, then $\Azero(\ap) \to 0$ as $\abs{\ap} \to \infty$ from \lemref{lem:decayatinfinity}. Hence $\Azero$ is positive everywhere but does not have a uniform positive lower bound.  

Let us now compute the material derivative of $\Ag$. 

\begin{lemma}\label{lem:DtAgoverAg}
We have
\begin{align*}
 \frac{\Dt \Ag}{\Ag} & = \frac{\Imag\cbrac{-2\sqbrac{\Zt,\Zttbar;1} + \sqbrac{\Zt,\Zt;\Dap\Ztbar}}}{\Ag} + 2\Real(\Dap\Zt) - \bvarap \\
& = 2\Real\cbrac{\sqbrac{\Zt,\frac{1}{\Zap}; 1}(\ap)} + \frac{1}{\Ag}\Imag\cbrac{2i\sqbrac{\Zt,\Ag; \frac{1}{\Zap}}(\ap) + \sqbrac{\Zt,\Zt; \Dap\Ztbar}(\ap) } \\
& \quad + 2\Real(\Dap\Zt) - \bvarap
\end{align*}
\end{lemma}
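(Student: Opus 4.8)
The plan is as follows. Since $g$ is constant in space and time, $\Dt\Ag = \Dt\Azero$, so by \eqref{eq:Ag}--\eqref{eq:Azero} it suffices to compute $\Dt$ of $\Azero = \frac{i}{2}\sqbrac{\Zt,\Ztbar;1} = \frac{1}{2\pi}\int\big|\frac{\Zt(\ap)-\Zt(\bp)}{\ap-\bp}\big|^2\diff\bp$. First I would establish a Leibniz-type rule for $\Dt$ on such bilinear singular integrals: writing $\Dt = U_\h^{-1}\pt U_\h$, pulling the integral back to Lagrangian coordinates via $\bp = \h(\beta)$, $\ap = \h(\al)$ so that it becomes $\frac{1}{2\pi}\int\frac{(\zt(\al)-\zt(\beta))(\ztbar(\al)-\ztbar(\beta))}{(\h(\al)-\h(\beta))^2}\,\h_\beta\diff\beta$, differentiating in $t$ at fixed $\al$, and pushing forward using $\pt\h = \bvar\comp\h$ and $\pt\h_\beta = (\bvarap\comp\h)\h_\beta$. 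The $t$-derivative lands on the two numerator factors (contributing $\Ztt$- and $\Zttbar$-terms), on the kernel $(\ap-\bp)^{-2}$ (contributing a triple bracket with weight $\bvar$), and on the Jacobian $\h_\beta$ (contributing a bracket with weight $\bvarap$). Using that $\Dt\Azero$ is real valued together with the conjugation identity $\overline{\sqbrac{\Ztt,\Ztbar;1}} = -\sqbrac{\Zt,\Zttbar;1}$, this yields
\[\Dt\Azero = -\Imag\sqbrac{\Zt,\Zttbar;1} - i\,\sqbrac{\Zt,\Ztbar,\bvar;1} + \frac{i}{2}\sqbrac{\Zt,\Ztbar;\bvarap}.\]

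Next I would integrate by parts in $\bp$ inside $\sqbrac{\Zt,\Ztbar;\bvarap}$, using $\bvarap(\bp)=\pbp\bvar(\bp)$ and $\pbp(\ap-\bp)^{-2}=2(\ap-\bp)^{-3}$; splitting $\bvar(\bp)=\bvar(\ap)-(\bvar(\ap)-\bvar(\bp))$ reproduces the triple bracket $2\sqbrac{\Zt,\Ztbar,\bvar;1}$, which cancels the one above, and what remains regroups into brackets $\sqbrac{\Ztbar,\bvar;\Ztap}$ and $\sqbrac{\Zt,\bvar;\Ztapbar}$. The conjugation identity $\overline{\sqbrac{\Ztbar,\bvar;\Ztap}} = -\sqbrac{\Zt,\bvar;\Ztapbar}$ then collapses these to the intermediate formula
\[\Dt\Azero = -\Imag\sqbrac{\Zt,\Zttbar;1} + \Imag\sqbrac{\Zt,\bvar;\Ztapbar}.\]
(The same intermediate formula can instead be reached from \eqref{form:Aonenew}, by differentiating $\Azero = -\Imag\big(\sqbrac{\Zt,\Hil}\Ztapbar\big) = -\Imag\big((\Id-\Hil)(\Zt\Ztapbar)\big)$ — the last equality using $\Hil\Ztapbar = \Ztapbar$ — and invoking $[\Dt,\Hil]f = [\bvar,\Hil]\pap f$ and \eqref{form:Real}--\eqref{form:Imag}.)

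To pass from the intermediate formula to the first claimed identity, I would expand $\bvar$ inside $\sqbrac{\Zt,\bvar;\Ztapbar}$ using $\bvar = \frac{\Zt}{\Zap} - i(\Id+\Hil)\Imag\!\big(\frac{\Zt}{\Zap}\big)$ from \secref{sec:identities}. On the first piece, the splitting rule $\sqbrac{\Zt,PQ;g} = P\,\sqbrac{\Zt,Q;g} + \sqbrac{\Zt,P;Qg}$ (immediate from $\frac{P(\ap)Q(\ap)-P(\bp)Q(\bp)}{\ap-\bp} = P(\ap)\frac{Q(\ap)-Q(\bp)}{\ap-\bp} + \frac{P(\ap)-P(\bp)}{\ap-\bp}Q(\bp)$) with $P=\Zt$, $Q=\frac{1}{\Zap}$ gives $\sqbrac{\Zt,\frac{\Zt}{\Zap};\Ztapbar} = \Zt\,\sqbrac{\Zt,\frac{1}{\Zap};\Ztapbar} + \sqbrac{\Zt,\Zt;\Dap\Ztbar}$, which produces the desired bilinear term $\Imag\sqbrac{\Zt,\Zt;\Dap\Ztbar}$. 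The leftover contributions $\Imag\{\Zt\,\sqbrac{\Zt,\frac{1}{\Zap};\Ztapbar}\}$ and $-\Real\sqbrac{\Zt,(\Id+\Hil)\Imag(\frac{\Zt}{\Zap});\Ztapbar}$ I would then rewrite using that $\frac{1}{\Zap}-1$, $\Ztapbar$ and $(\Id+\Hil)\Imag(\frac{\Zt}{\Zap})$ are boundary values of holomorphic functions (so $\Hil$ acts as the identity on them), together with \eqref{form:Real}--\eqref{form:Imag}, one further integration by parts, and the formula \eqref{form:bvarapnew} for $\bvarap$; the outcome I must land on is that they collapse to precisely $-\Imag\sqbrac{\Zt,\Zttbar;1} + (2\Real(\Dap\Zt)-\bvarap)\Ag$, and combining this with the intermediate formula gives the first identity. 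This reorganization — isolating the structural cancellations so that the non-bilinear remainder is exactly the multiplicative factor $2\Real(\Dap\Zt)-\bvarap = -(\bvarap-\Dap\Zt-\Dapbar\Ztbar)$ (the real-valued quantity noted after \eqref{form:DtoneoverZap}) — is the step I expect to be the main obstacle, since it is where the quadratic structure of $\Azero$ must be used to full effect.

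The second identity then follows from the first by algebra. Substituting $\Zttbar = i\g - i\frac{\Ag}{\Zap}$ from \eqref{form:Zttbar} into $\sqbrac{\Zt,\Zttbar;1}$, the constant $i\g$ drops out of the bracket, and the splitting rule with $P=\Ag$, $Q=\frac{1}{\Zap}$ gives $\sqbrac{\Zt,\Zttbar;1} = -i\Ag\,\sqbrac{\Zt,\frac{1}{\Zap};1} - i\,\sqbrac{\Zt,\Ag;\frac{1}{\Zap}}$. Since $\Ag$ is real valued, $-2\Imag\big(-i\Ag\,\sqbrac{\Zt,\frac{1}{\Zap};1}\big) = 2\Ag\,\Real\sqbrac{\Zt,\frac{1}{\Zap};1}$ and $-2\Imag\big(-i\,\sqbrac{\Zt,\Ag;\frac{1}{\Zap}}\big) = \Imag\big(2i\,\sqbrac{\Zt,\Ag;\frac{1}{\Zap}}\big)$; dividing through by $\Ag$ converts the first identity into the second.
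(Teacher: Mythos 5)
Your overall strategy is genuinely different from the paper's, and the parts you carry out are correct: differentiating the explicit quadratic formula \eqref{eq:Azero} for $\Azero$ (either through \propref{prop:tripleidentity} in Lagrangian pullback form or through \eqref{form:Aonenew} and $[\Dt,\Hil]f=[\bvar,\Hil]\pap f$) does give the intermediate identity $\Dt\Azero = -\Imag\sqbrac{\Zt,\Zttbar;1} + \Imag\sqbrac{\Zt,\bvar;\Ztapbar}$, and your passage from the first displayed identity to the second, via \eqref{form:Zttbar} and the product-splitting rule, coincides with what the paper does. The paper, by contrast, never differentiates $\Azero$ directly: it writes $\Ag = \brac{\avar\zalabs^2/\hal}\compose\hinv$, so that $\frac{\Dt\Ag}{\Ag} = \frac{\avar_t}{\avar}\compose\hinv + 2\Real(\Dap\Zt) - \bvarap$, imports the formula for $\frac{\avar_t}{\avar}\compose\hinv$ from \cite{Wu97}, and only needs the short integration by parts showing $\Imag\cbrac{\sqbrac{\Zt,\Hil}\Zttapbar + \sqbrac{\Ztt,\Hil}\Ztapbar} = \Imag\sqbrac{\Zt,\Zttbar;1}$.

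The gap is precisely the step you flag as the expected main obstacle. To pass from your intermediate identity to the first identity of the lemma you must prove
\begin{align*}
\Imag\sqbrac{\Zt,\bvar;\Ztapbar} \;=\; -\Imag\sqbrac{\Zt,\Zttbar;1} + \Imag\sqbrac{\Zt,\Zt;\Dap\Ztbar} - \Ag\brac{\bvarap - \Dap\Zt - \Dapbar\Ztbar},
\end{align*}
but your proposal only asserts that the leftover terms $\Imag\cbrac{\Zt\sqbrac{\Zt,\frac{1}{\Zap};\Ztapbar}}$ and $-\Real\sqbrac{\Zt,(\Id+\Hil)\Imag(\frac{\Zt}{\Zap});\Ztapbar}$ ``must'' collapse to the required expression, listing tools (holomorphicity, \eqref{form:Real}--\eqref{form:Imag}, \eqref{form:bvarapnew}, one further integration by parts) without assembling them into an argument. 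This is not a routine verification: given your intermediate formula, the displayed identity is equivalent to the content of the $\avar_t/\avar$ formula of \cite{Wu97} (see also \cite{Wu16}) that the paper imports, i.e.\ it carries essentially all of the analytic content of the lemma's first identity. In particular its right-hand side contains the pointwise product of the nonlocal quantity $\Ag$ with the real quantity $\bvarap - \Dap\Zt - \Dapbar\Ztbar$, and extracting such a product from the commutator brackets on the left is a multi-step manipulation that occupies a nontrivial computation in Wu's work; appealing to ``the outcome I must land on'' is circular, since that outcome is what the lemma asserts. Until this identity is actually derived (or you revert to the paper's route of quoting Wu's formula together with $\Ag = \brac{\avar\zalabs^2/\hal}\compose\hinv$), the first identity of the lemma is not proved; the second identity, which you do prove correctly, only follows once the first is in hand.
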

\begin{proof}

Following  \cite{Wu97} (see also \cite{Wu16}), the variable $\avar$ is defined such that we have the identity 
\begin{align*}
\Ag = \brac*[\bigg]{\frac{\avar\zalabs^2}{\hal}}\compose \hinv
\end{align*}
Hence we see that 
\begin{align}\label{eq:DtAbyAgcomatbya}
 \frac{\Dt \Ag}{\Ag}  = \cbrac{\frac{\avar_t}{\avar} + 2\Real\brac{\frac{\ztal}{\zal}} - \frac{\h_{t \al}}{\hal}}\compose\hinv = \frac{\avar_t}{a}\compose\hinv + 2\Real(\Dap\Zt) - \bvarap
\end{align}
Now in \cite{Wu97} (see also \cite{Wu16}), the following formula was derived 
\begin{align*}
 \frac{\avar_t}{a}\compose\hinv =  \frac{-\Imag\brac{2\sqbrac{\Zt,\Hil}\Zttbarap + 2\sqbrac{\Ztt,\Hil}\pap\Ztbar - \sqbrac{\Zt,\Zt; \Dap\Ztbar} }}{\Ag}
\end{align*}
The above formula was derived for $g = 1$ but the same proof works for $g \geq 0$ as well. Now we simplify the first two terms of the numerator of $\frac{\avar_t}{\avar}\compose\hinv$
\begin{align*}
& \Imag\cbrac{ \sqbrac{\Zt,\Hil}\Zttapbar + \sqbrac{\Ztt,\Hil}\pap\Ztbar  } \\
& = \Imag\cbrac{ \frac{1}{i\pi} \int \frac{\Zt(\ap)-\Zt(\bp)}{\ap-\bp}\Zttbpbar(\bp) \diff\bp + \frac{1}{i\pi} \int \frac{\Ztt(\ap)-\Ztt(\bp)}{\ap-\bp}\Ztbpbar(\bp) \diff\bp }\\
& = -\frac{1}{\pi}\Real\cbrac{  \int \frac{\Zt(\ap)-\Zt(\bp)}{\ap-\bp}\Zttbpbar(\bp) \diff\bp +  \int \frac{\Ztt(\ap)-\Ztt(\bp)}{\ap-\bp}\Ztbpbar(\bp) \diff\bp } \\
& = -\frac{1}{\pi}\Real\cbrac{ \int \pbp\brac{\frac{\Zt(\ap)-\Zt(\bp)}{\ap-\bp}}\brac{\Zttbar(\ap) - \Zttbar(\bp)}\diff\bp +  \int \frac{\Ztt(\ap)-\Ztt(\bp)}{\ap-\bp}\Ztbpbar(\bp) \diff\bp } \\
& = -\frac{1}{\pi}\Real \int\brac{\frac{\Zt(\ap)-\Zt(\bp)}{\ap-\bp}}\brac{\frac{\Zttbar(\ap)-\Zttbar(\bp)}{\ap-\bp}} \diff\bp \\
& = -\Real\cbrac{i\sqbrac{\Zt,\Zttbar; 1}} \\
& = \Imag \sqbrac{\Zt,\Zttbar;1}
\end{align*}
Hence we have
\begin{align*}
 \frac{\Dt \Ag}{\Ag} = \frac{\Imag\cbrac{-2\sqbrac{\Zt,\Zttbar;1} + \sqbrac{\Zt,\Zt;\Dap\Ztbar}}}{\Ag} + 2\Real(\Dap\Zt) - \bvarap 
\end{align*}
Now from \eqref{form:Zttbar} we get
\begin{align}\label{eq:Zttbarfrac}
\begin{split}
& \frac{\Zttbar(\ap)-\Zttbar(\bp)}{\ap-\bp}  \\
& = -i\cbrac{\Ag(\ap)\brac{\frac{\frac{1}{\Zap}(\ap) - \frac{1}{\Zap}(\bp)}{\ap - \bp}} + \brac{\frac{\Ag(\ap) - \Ag(\bp)}{\ap - \bp}}\frac{1}{\Zap}(\bp)}
\end{split}
\end{align}
Hence
\begin{align*}
\sqbrac{\Zt,\Zttbar;1}(\ap) = -i\Ag(\ap)\sqbrac{\Zt, \frac{1}{\Zap}; 1}(\ap) -i \sqbrac{\Zt,\Ag;\frac{1}{\Zap}}(\ap)
\end{align*}
Therefore
\begin{align*}
 \frac{\Dt \Ag}{\Ag}  & = 2\Real\cbrac{\sqbrac{\Zt,\frac{1}{\Zap}; 1}(\ap)} + \frac{1}{\Ag}\Imag\cbrac{2i\sqbrac{\Zt,\Ag; \frac{1}{\Zap}}(\ap) + \sqbrac{\Zt,\Zt; \Dap\Ztbar}(\ap) } \\
& \quad + 2\Real(\Dap\Zt) - \bvarap
\end{align*}

\end{proof}

\subsection{The main equations}\label{sec:maineqn}

We will now derive our main equations used to obtain the energy estimates. Define 
\begin{align}\label{form:Jzero}
\Jzero = \Dt(\bvarap - \Dap\Zt - \Dapbar\Ztbar)
\end{align}
Observe that $\Jzero$ is real valued. Apply $\Dt$ to the formula for $\Dt\frac{1}{\Zap}$ in \eqref{form:DtoneoverZap} to get
\begin{align*}
\Dt^2\frac{1}{\Zap} = \frac{1}{\Zap}\cbrac{(\bvarap-\Dap\Zt)^2 + \Jzero + \Dt\Dapbar\Ztbar }
\end{align*}
Now from \eqref{form:Zttbar} we see that 
\begin{align*}
\Dt\Dapbar\Ztbar & = -(\Dapbar\Ztbar)^2 + \Dapbar\Zttbar \\
& = -(\Dapbar\Ztbar)^2 -\frac{i}{\Zapabs^2}\pap\Ag -i\Ag\Dapbar\frac{1}{\Zap}
\end{align*}
Define 
\begin{align}\label{form:Qzero}
\Qzero = (\bvarap-\Dap\Zt)^2 - (\Dapbar\Ztbar)^2 -\frac{i}{\Zapabs^2}\pap\Ag
\end{align}
Hence we see that 
\begin{align*}
\brac{\Dt^2 + i\frac{\Ag}{\Zapabs^2}\pap}\frac{1}{\Zap} = \frac{1}{\Zap}(\Jzero + \Qzero)
\end{align*}
Now we apply $\pap$ and commute. First using $[\pap,\Dt] = \bvarap\pap$ we see that
\begin{align*}
\sqbrac{\pap,\Dt^2}\frac{1}{\Zap} & = [\pap,\Dt]\Dt\frac{1}{\Zap} + \Dt[\pap,\Dt]\frac{1}{\Zap} \\
& = \bvarap\brac{\pap\Dt\frac{1}{\Zap}} + \Dt\brac{\bvarap\pap\frac{1}{\Zap}} \\
& =  \bvarap\brac{\pap\Dt\frac{1}{\Zap} + \Dt\pap\frac{1}{\Zap}} + (\Dt\bvarap)\brac{\pap\frac{1}{\Zap}}
\end{align*}
Hence we get our main equation as
\begin{align}\label{eq:paponeoverZap}
\brac{\Dt^2 + i\frac{\Ag}{\Zapabs^2}\pap}\pap\frac{1}{\Zap} = \Dap\Jzero + \Rzero
\end{align}
where
\begin{align}\label{form:Rzero}
\begin{split}
\Rzero & = \brac{\pap\frac{1}{\Zap}}(\Jzero+\Qzero) + \Dap\Qzero -  \bvarap\brac{\pap\Dt\frac{1}{\Zap} + \Dt\pap\frac{1}{\Zap}} \\
& \quad - (\Dt\bvarap)\brac{\pap\frac{1}{\Zap}} -2i\Ag\brac{\Dapabs\frac{1}{\Zapabs}}\brac{\pap\frac{1}{\Zap}} \\
& \quad - i\brac{\frac{1}{\Zapabs^2}\pap\Ag}\brac{\pap\frac{1}{\Zap}}
\end{split}
\end{align}

We will also need another equation for $\Dap^2\Ztbar$. To obtain it, we first apply $\Dt$ to \eqref{form:Zttbar} and then use \eqref{form:DtoneoverZap} to get
\begin{align*}
\Dt^2\Ztbar & = -i \frac{\Dt\Ag}{\Zap} -i\Ag \Dt\frac{1}{\Zap} \\
& = -i \frac{\Ag}{\Zap} \Dapbar\Ztbar -i \frac{1}{\Zap}\cbrac{\Dt\Ag + \Ag\brac{\bap - \Dap\Zt - \Dapbar\Ztbar}}
\end{align*}
Define 
\begin{align}\label{eq:Jone}
\Jone = \Dt\Ag + \Ag\brac{\bap - \Dap\Zt - \Dapbar\Ztbar}
\end{align}
Observe that $\Jone$ is real valued. The above equation can now be written as
\begin{align*}
\brac{\Dt^2 + i\frac{\Ag}{\Zapabs^2}\pap}\Ztbar = -i\frac{\Jone}{\Zap}
\end{align*}
Now applying $\Dap^2$ to both sides and using \eqref{eq:maineqcomDap} we get
\begin{align*}
& \brac{\Dt^2 + i\frac{\Ag}{\Zapabs^2}\pap}\Dap^2\Ztbar \\
& = \sqbrac{\Dt^2 + i\frac{\Ag}{\Zapabs^2}\pap, \Dap^2}\Ztbar -i \Dap^2\brac{\frac{\Jone}{\Zap}} \\
& = \Dap \sqbrac{\Dt^2 + i\frac{\Ag}{\Zapabs^2}\pap, \Dap} \Ztbar + \sqbrac{\Dt^2 + i\frac{\Ag}{\Zapabs^2}\pap, \Dap}\Dap\Ztbar \\
& \quad -i\Dap\brac{\Jone\Dap\frac{1}{\Zap} + \frac{ \wbar^2}{\Zapabs^2}\pap\Jone} \\
& = \Dap\cbrac{-2(\Dap\Ztt)(\Dap\Ztbar) - 2(\Dap\Zt)(\Dt\Dap\Ztbar)} -2(\Dap\Ztt)(\Dap^2\Ztbar) \\
& \quad - 2(\Dap\Zt)(\Dt\Dap^2\Ztbar)  -i(\Dap\Jone) \brac{\Dap\frac{1}{\Zap}} -i\Jone\Dap^2\frac{1}{\Zap} \\
& \quad -2i\wbar(\Dap\wbar)\brac{\frac{1}{\Zapabs^2}\pap\Jone} -i \frac{\wbar^3}{\Zapabs}\pap\brac{\frac{1}{\Zapabs^2}\pap\Jone} 
\end{align*}
Hence we get the equation
\begin{align}\label{eq:mainDapDapZtbar}
\brac{\Dt^2 + i\frac{\Ag}{\Zapabs^2}\pap}\Dap^2\Ztbar = \Rone  -i \frac{\wbar^3}{\Zapabs}\pap\brac{\frac{1}{\Zapabs^2}\pap\Jone} 
\end{align}
where
\begin{align}\label{eq:Rone}
\begin{split}
\Rone & = -2(\Dap^2\Ztt)(\Dap\Ztbar) -4(\Dap\Ztt)(\Dap^2\Ztbar) -2(\Dap^2\Zt)(\Dt\Dap\Ztbar) \\
& \quad -2(\Dap\Zt)(\Dap\Dt\Dap\Ztbar)  - 2(\Dap\Zt)(\Dt\Dap^2\Ztbar)  -i(\Dap\Jone) \brac{\Dap\frac{1}{\Zap}} \\
& \quad  -i\Jone\Dap^2\frac{1}{\Zap}  -2i\wbar(\Dap\wbar)\brac{\frac{1}{\Zapabs^2}\pap\Jone}
\end{split}
\end{align}

\subsection{Quantities controlled by the energy $\Ea(t)$}\label{sec:quantEa}

To prove the energy estimate for the energy $\Ea(t)$ namely \eqref{eq:mainEatime}, we need to prove several estimates first before we can start taking the time derivative of the energy. In this subsection, we collect some of these estimates which are then used in \secref{sec:closing} to prove \eqref{eq:mainEatime}.

Some of the estimates in this subsection are from \cite{Ag21} and for them we simply refer to \cite{Ag21} for a proof. The rest of the estimates are new and among the new estimates in this section, the most important are the ones for 
\begin{align*}
\norm[2]{\frac{1}{\sqrt{\Ag}}\Dapabs\Ag}, \norm[\infty]{\frac{\Dt\Ag}{\Ag}}  \tx{ and } \enspace \norm[2]{\frac{\Dapabs\Dt\Ag}{\sqrt{\Ag}}}
\end{align*}
which are controlled in the estimates \eqref{eq:controlhardestimatel2forAg}, \eqref{eq:controlhardDtAgLinfty} and \eqref{eq:controlhardDapabsDtAgL2} respectively. These three estimates are very delicate as in each of these, there is either a $\Ag$ or $\sqrt{\Ag}$ in the denominator, and if gravity $g = 0$ then $\Ag \to 0$ as $\abs{\ap} \to \infty$. We overcome this difficulty by using the special structure of the function $\Ag$.  

We will frequently use the quantity $B(t)$ defined as
\begin{align*}
B(t) = \norm[\Linfty\cap\Hhalf]{\Dap\Ztbar} + \brac{\norm[2]{\Ztapbar} + \sqrt{g}}\norm[2]{\pap\frac{1}{\Zap}} 
\end{align*}
This quantity is the one introduced in \eqref{eq:Bt} and is the quantity which appears in our main results namely \eqref{eq:mainEatime} in \thmref{thm:aprioriE} and also \thmref{thm:existencemain}. One important estimate that we prove in the following estimates is that of $B(t) \lesssim \Ea(t)^{1/2}$ which is proved in \eqref{eq:controllBt}. We now begin our estimates:

\begin{enumerate}[widest = 99, leftmargin =*, align=left, label=\arabic*)]
\item We have
\begin{align*}
\norm[\Hhalf]{\Ag} + \norm[\Linfty\cap\Hhalf]{\Azero} & \lesssim \norm[2]{\Ztapbar}^2 \\
\norm[\infty]{\Ag} & \lesssim g + \norm[2]{\Ztapbar}^2 \\
\norm[\infty]{\sqrt{\Ag}} & \lesssim \sqrt{g} + \norm[2]{\Ztapbar} 
\end{align*}
Proof: From \eqref{eq:Ag} and \eqref{eq:Azero} we see that $\Ag = g  + \Azero$ where $\Azero =  -\Imag\sqbrac{\Zt,\Hil}\Ztapbar $. The estimates now follow from \propref{prop:commutator}.
\medskip

\medskip
\item $\dis \norm*[\bigg][2]{\pap\frac{1}{\Zapabs}} + \norm[2]{\Dapabs \w} \lesssim \norm[2]{\pap\frac{1}{\Zap}}$ 
\medskip \\
Proof: This follows directly from \eqref{form:RealImagTh}. 
\medskip

\medskip

\item We have
\begin{align}\label{eq:papPaZtZap}
 \norm*[\Big][\infty]{\pap\Pa\brac*[\Big]{\frac{\Zt}{\Zap}}}  \lesssim \norm[\infty]{\Dap\Ztbar} + \norm[2]{\Ztapbar}\norm*[\Big][2]{\pap\frac{1}{\Zap}} \lesssim B(t)
\end{align}
Proof: See estimate 7 in section 5.1 in \cite{Ag21}. 
\medskip

\medskip

\item  We have 
\begin{align*}
\norm[\Linfty]{\bvarap}  & \lesssim  \norm[\Linfty]{\Dap\Ztbar} +  \norm[2]{\Ztapbar}\norm[2]{\pap\frac{1}{\Zap}} \lesssim B(t) \\
 \norm[\Linfty\cap\Hhalf]{\bvarap} &  \lesssim  \norm[\Linfty\cap\Hhalf]{\Dap\Ztbar} +  \norm[2]{\Ztapbar}\norm[2]{\pap\frac{1}{\Zap}} \lesssim B(t)
\end{align*}
Proof: See the proof of estimate 14 in section 5.1 in \cite{Ag21}. 
\medskip

\medskip

\item We have
\begin{align*}
\norm[2]{\Dapabs(\bvarap - \Dap\Zt - \Dapbar\Ztbar)} & \lesssim  \norm*[\Big][2]{\pap\frac{1}{\Zap}}^2\norm[2]{\Ztapbar} + \norm*[\Big][2]{\pap\frac{1}{\Zap}}\norm[\infty]{\Dap\Ztbar} \\
& \lesssim \norm[2]{\pap\frac{1}{\Zap}}B(t)
\end{align*}
Proof: Observe that as $(\bvarap -\Dap\Zt - \Dapbar\Ztbar)$ is real valued we have
\begin{align}\label{eq:DapabsbapDapZtDapbarZtbar}
\begin{aligned}
\Dapabs(\bvarap -\Dap\Zt - \Dapbar\Ztbar) & = \Real \cbrac{\frac{\w}{\Zap}(\Id - \Hil)\pap(\bvarap -\Dap\Zt - \Dapbar\Ztbar)}  \\
 & = \Real \cbrac{\w(\Id - \Hil)\Dap(\bvarap -\Dap\Zt - \Dapbar\Ztbar)} \\
 & \quad  -\Real\cbrac{\w\sqbrac{\frac{1}{\Zap},\Hil}\pap(\bvarap -\Dap\Zt - \Dapbar\Ztbar)} 
\end{aligned}
\end{align}
From \eqref{form:bvarapnew} we have $\dis \bvarap = \Dap\Zt + \Zt\pap\frac{1}{\Zap}  -i\pap(\Id + \Hil)\cbrac{\Imag\brac{\frac{\Zt}{\Zap}}}$. Hence we get
\begin{align}\label{eq:IdminusHDapbvarapDapZt}
\begin{aligned}
& (\Id -\Hil)\Dap(\bvarap -\Dap\Zt - \Dapbar\Ztbar) \\
& = (\Id - \Hil)\cbrac{-\Dap\Dapbar\Ztbar + (\Dap\Zt)\brac*[\Big]{\pap \frac{1}{\Zap}} + \frac{\Zt}{\Zap}\pap \brac*[\Big]{\pap \frac{1}{\Zap}} } \\
& = (\Id - \Hil)\cbrac{-\Dap\Dapbar\Ztbar + (\Dap\Zt)\brac*[\Big]{\pap \frac{1}{\Zap}}} + \sqbrac{\Pa\brac{\frac{\Zt}{\Zap}},\Hil}\pap \brac*[\Big]{\pap \frac{1}{\Zap}}
\end{aligned}
\end{align}
Now observe that as $\Dap\Dapbar\Ztbar = \Dap(\w^2 \Dap\Ztbar)$ we have
\begin{align*}
(\Id - \Hil)\Dap\Dapbar\Ztbar & = (\Id - \Hil)\cbrac{2w(\Dap\w)\Dap\Ztbar} + (\Id - \Hil)\cbrac{\w^2\Dap^2\Ztbar} \\
& = (\Id - \Hil)\cbrac{2w(\Dap\w)\Dap\Ztbar} + \sqbrac{\frac{\w^2}{\Zap}, \Hil}\pap\Dap\Ztbar
\end{align*}
Hence as $\norm[2]{\Dapabs\w} \lesssim \norm[2]{\pap\frac{1}{\Zap}}$ we have from \propref{prop:commutator}
\begin{align*}
& \norm[2]{\Dapabs(\bvarap - \Dap\Zt - \Dapbar\Ztbar)} \\
&  \lesssim  \norm*[\Big][2]{\pap\frac{1}{\Zap}}\cbrac{ \norm[\infty]{\Dap\Ztbar}  +  \norm*[\Big][\infty]{\pap\Pa\brac{\frac{\Zt}{\Zap}}} + \norm[\infty]{\bvarap} } \\
& \lesssim  \norm*[\Big][2]{\pap\frac{1}{\Zap}}^2\norm[2]{\Ztapbar} + \norm*[\Big][2]{\pap\frac{1}{\Zap}}\norm[\infty]{\Dap\Ztbar}
\end{align*}
\medskip

\medskip

\item We have
\begin{align}\label{eq:Dap2Ztbarandmore}
\begin{aligned}
&  \norm[2]{\Dap^2\Ztbar} + \norm[2]{\Dapabs^2\Ztbar} + \norm[2]{\Dapbar^2\Ztbar} + \norm[2]{\frac{1}{\Zapabs^2}\pap\Ztapbar} \\
&  \lesssim \norm*[\Big][2]{\pap\frac{1}{\Zap}}^2\norm[2]{\Ztapbar} + \norm*[\Big][2]{\pap\frac{1}{\Zap}}\norm[\infty]{\Dap\Ztbar} + \norm[2]{\Dt\brac{\pap\frac{1}{\Zap}}} \\
& \lesssim \norm[2]{\pap\frac{1}{\Zap}}B(t) + \norm[2]{\Dt\brac{\pap\frac{1}{\Zap}}}
\end{aligned}
\end{align}
Proof: Recall from  \eqref{form:DtoneoverZap} that $\Dt\frac{1}{\Zap} = \frac{1}{\Zap}(\bvarap - \Dap\Zt)$ and hence
\begin{align*}
\pap\Dt\frac{1}{\Zap} = \Dap(\bvarap - \Dap\Zt - \Dapbar\Ztbar) + \Dap\Dapbar\Ztbar + (\bvarap -\Dap\Zt)\brac{\pap\frac{1}{\Zap}}
\end{align*}
Now using $[\pap,\Dt] = \bvarap\pap$ we see that
\begin{align}\label{eq:DtpaponeoverZap}
\Dt\pap\frac{1}{\Zap} = \Dap(\bvarap - \Dap\Zt - \Dapbar\Ztbar) + \Dap\Dapbar\Ztbar  -\Dap\Zt\brac*[\Big]{\pap\frac{1}{\Zap}}
\end{align}
Hence we see that 
\begin{align*}
& \norm[2]{\Dapbar^2\Ztbar} \\
& =  \norm[2]{\Dap\Dapbar\Ztbar} \\
& \lesssim \norm[2]{\Dap(\bvarap - \Dap\Zt - \Dapbar\Ztbar)} + \norm[2]{\pap\frac{1}{\Zap}}\norm[\infty]{\Dap\Ztbar} + \norm[2]{\Dt\pap\frac{1}{\Zap}} \\
& \lesssim \norm*[\Big][2]{\pap\frac{1}{\Zap}}^2\norm[2]{\Ztapbar} + \norm*[\Big][2]{\pap\frac{1}{\Zap}}\norm[\infty]{\Dap\Ztbar} + \norm[2]{\Dt\pap\frac{1}{\Zap}}
\end{align*}
To get the estimate for $\norm[2]{\Dap^2\Ztbar}$ observe that 
\begin{align*}
\Dap^2\Ztbar = \Dap(\wbar^2\Dapbar\Ztbar) = 2\wbar(\Dap\wbar)\Dapbar\Ztbar + \wbar^2\Dap\Dapbar\Ztbar
\end{align*}
The estimate now follows from previous estimates. The other estimates are proven similarly. 
\medskip

\medskip

\medskip

\item We have
\begin{align}\label{eq:DapZtbarandothers}
\begin{aligned}
& \norm[\Linfty\cap\Hhalf]{\Dap\Ztbar} + \norm[\Linfty\cap\Hhalf]{\Dapabs\Ztbar} +  \norm[\Linfty\cap\Hhalf]{\Dapbar\Ztbar} \\
& \lesssim  \norm[\Linfty\cap\Hhalf]{\Dap\Ztbar} +  \norm*[\Big][2]{\pap\frac{1}{\Zap}}\norm[2]{\Ztapbar} \\
& \lesssim \norm*[\Big][2]{\pap\frac{1}{\Zap}}\norm[2]{\Ztapbar} + \norm[2]{\Dt\brac{\pap\frac{1}{\Zap}}}^\half\norm[2]{\Ztapbar}^\half \\
& \lesssim \Ea(t)^\half
\end{aligned}
\end{align}
Hence we can now bound the quantity $B(t)$ in terms of the energy 
\begin{align}\label{eq:controllBt}
B(t) \lesssim \Ea(t)^\half + \brac{\norm[2]{\Ztapbar} + \sqrt{g}}\norm[2]{\pap\frac{1}{\Zap}}  \lesssim \Ea(t)^\half
\end{align}
Proof: Using \propref{prop:Hhalfweight} with $f = \Ztapbar$, $w = \frac{1}{\Zap} $ and $h = \w$ we obtain
\begin{align*}
\norm[\Hhalf]{\Dapabs\Ztbar} & \lesssim \norm[\infty]{\w}\norm[\Hhalf]{\Dap\Ztbar} + \norm[2]{\pap\frac{1}{\Zapabs}}\norm[2]{\Ztapbar} + \norm[\infty]{\w}\norm[2]{\pap\frac{1}{\Zap}}\norm[2]{\Ztapbar} \\
& \lesssim \norm[\Hhalf]{\Dap\Ztbar} + \norm[2]{\pap\frac{1}{\Zap}}\norm[2]{\Ztapbar}
\end{align*}
We can control $\norm[\Hhalf]{\Dapbar\Ztbar}$ similarly and so the first inequality follows. To prove the second inequality, we use \propref{prop:LinftyHhalf} with $f = \Dap\Ztbar$ and $w = \frac{1}{\Zap}$
\begin{align*}
& \norm[\Linfty\cap\Hhalf]{\Dap\Ztbar}^2 \\
& \lesssim \norm[2]{\Ztapbar}\norm[2]{\Dap^2\Ztbar} + \norm[2]{\Ztapbar}^2\norm[2]{\pap\frac{1}{\Zap}}^2 \\
& \lesssim \norm*[\Big][2]{\pap\frac{1}{\Zap}}^2\norm[2]{\Ztapbar}^2 + \norm[2]{\Ztapbar}\norm*[\Big][2]{\pap\frac{1}{\Zap}}\norm[\infty]{\Dap\Ztbar} + \norm[2]{\Dt\pap\frac{1}{\Zap}}\norm[2]{\Ztapbar}
\end{align*}
Hence using $\dis ab\leq \frac{a^2}{2\ep} + \frac{\ep b^2}{2}$ for $\ep $ small on the second term we get the required estimate. 
\medskip

\medskip

\item $\dis \norm[2]{\Dapabs\bvarap} \lesssim  \norm*[\Big][2]{\pap\frac{1}{\Zap}} B(t) +  \norm[2]{\Dt\brac{\pap\frac{1}{\Zap}}}$
\medskip \\
Proof: Obvious from the previous estimate for $\norm[2]{\Dapabs(\bvarap - \Dap\Zt - \Dapbar\Ztbar)}$.

\medskip

\item We have
\begin{align}\label{eq:controlhardestimatel2forAg}
\begin{aligned}
\norm[2]{\frac{1}{\sqrt{\Ag}}\Dapabs\Ag} & \lesssim \norm[\Hhalf]{\Dapabs\Zt} + \norm[2]{\Ztapbar}\norm[2]{\pap\frac{1}{\Zapabs}} \\
 & \lesssim \norm[\Linfty\cap\Hhalf]{\Dap\Ztbar} + \norm[2]{\Ztapbar}\norm[2]{\pap\frac{1}{\Zap}} \\
 & \lesssim B(t)
\end{aligned}
\end{align}
Proof: Recall from \eqref{eq:Ag} and \eqref{eq:Azero}  that $\Ag = g + \Azero = g +  \frac{i}{2} \sqbrac{\Zt,\Ztbar;1}$. Hence from \propref{prop:tripleidentity} we obtain
\begin{align*}
& \Dapabs\Ag \\
& = \frac{i}{2}\cbrac{\sqbrac{\Dapabs\Zt,\Ztbar;1} + \sqbrac{\Zt,\Dapabs\Ztbar;1} + \sqbrac{\Zt,\Ztbar;\pap\frac{1}{\Zapabs}} -2 \sqbrac{\frac{1}{\Zapabs}, \Zt,\Ztbar;1}}
\end{align*}
\begin{enumerate}
\item We see that 
\begin{align*}
\abs{\sqbrac{\Dapabs\Zt,\Ztbar;1}(\ap)} \lesssim \norm[\Ltwo(\diff \bp)]{\frac{\Dapabs\Zt(\ap) - \Dapabs\Zt(\bp)}{\ap - \bp}}\norm[\Ltwo(\diff \bp)]{\frac{\Ztbar(\ap) - \Ztbar(\bp)}{\ap - \bp}}
\end{align*}
Therefore from \eqref{eq:Azero}, using $\Ag \geq \Azero$ and \propref{prop:Hardy}  we have
\begin{align*}
\norm[2]{\frac{1}{\sqrt{\Ag}}\sqbrac{\Dapabs\Zt,\Ztbar;1}}  \lesssim \norm[\Ltwo(\diff \bp \diff \ap)]{\frac{\Dapabs\Zt(\ap) - \Dapabs\Zt(\bp)}{\ap - \bp}} \lesssim \norm[\Hhalf]{\Dapabs\Zt}
\end{align*}
Similarly
\begin{align*}
\norm[2]{\frac{1}{\sqrt{\Ag}}\sqbrac{\Zt, \Dapabs\Ztbar;1}}  \lesssim \norm[\Hhalf]{\Dapabs\Zt}
\end{align*}
\item Observe that using \eqref{eq:Azero} and $\Ag \geq \Azero$ we obtain
\begin{align*}
& \abs{\sqbrac{\Zt,\Ztbar;\pap\frac{1}{\Zapabs}} (\ap)} \\ 
& \lesssim \norm[\Ltwo(\diff \bp)]{\frac{\Zt(\ap) - \Zt(\bp)}{\ap - \bp}} \norm[\Ltwo(\diff \bp)]{\brac{\frac{\Ztbar(\ap) - \Ztbar(\bp)}{\ap - \bp}}\brac{\pap\frac{1}{\Zapabs}}(\bp)} \\
& \lesssim \sqrt{\Ag(\ap)}\norm*[\bigg][2]{\pap\frac{1}{\Zapabs}}\norm[\Linfty(\diff \bp)]{\frac{\Ztbar(\ap) - \Ztbar(\bp)}{\ap - \bp}}
\end{align*}
Hence from \propref{prop:Hardy}
\begin{align*}
\norm[2]{\frac{1}{\sqrt{\Ag}}\sqbrac{\Zt,\Ztbar;\pap\frac{1}{\Zapabs}}} \lesssim \norm[2]{\Ztapbar}\norm[2]{\pap\frac{1}{\Zapabs}}
\end{align*}
\item We have from \eqref{eq:Azero}, $\Ag \geq \Azero$ and \propref{prop:Hardy}
\begin{align*}
& \abs{\sqbrac{\frac{1}{\Zapabs}, \Zt,\Ztbar;1}(\ap)} \\
& \lesssim  \norm[\Ltwo(\diff \bp)]{\frac{\Zt(\ap) - \Zt(\bp)}{\ap - \bp}} \norm[\Ltwo(\diff \bp)]{\brac{\frac{\frac{1}{\Zapabs}(\ap) - \frac{1}{\Zapabs}(\bp)}{\ap - \bp}} \brac{\frac{\Ztbar(\ap) - \Ztbar(\bp)}{\ap - \bp}}} \\
& \lesssim \sqrt{\Ag(\ap)} \norm[2]{\pap\frac{1}{\Zapabs}}\norm[\Linfty(\diff \bp)]{\frac{\Ztbar(\ap) - \Ztbar(\bp)}{\ap - \bp}}
\end{align*}
Hence from \propref{prop:Hardy} we have
\begin{align*}
\norm[2]{\frac{1}{\sqrt{\Ag}} \sqbrac{\frac{1}{\Zapabs}, \Zt,\Ztbar;1}}  \lesssim \norm[2]{\Ztapbar}\norm[2]{\pap\frac{1}{\Zapabs}}
\end{align*}
\end{enumerate}
Hence 
\begin{align*}
\norm[2]{\frac{1}{\sqrt{\Ag}}\Dapabs\Ag} & \lesssim \norm[\Hhalf]{\Dapabs\Zt} + \norm[2]{\Ztapbar}\norm[2]{\pap\frac{1}{\Zapabs}}
\end{align*}
The second estimate follows from this from estimates proved earlier for $\norm[2]{\pap\frac{1}{\Zapabs}}$ and $\norm[\Hhalf]{\Dapabs\Zt}$. 
\medskip

\medskip

\item We have
\begin{align*}
\norm[2]{\Dapabs\Ag} + \norm[2]{\Zttapbar}  \lesssim \brac{\norm[2]{\Ztapbar} + \sqrt{g}}B(t)
\end{align*}
Proof: The estimate for $\norm[2]{\Dapabs\Ag}$ follows directly from previous estimates. The estimate for $\norm[2]{\Zttapbar}$ then follows from \eqref{form:Zttbar}. 
\medskip

\medskip

\item For any $n \in \Zsp$ we have 
\begin{align*}
& \norm[\Hhalf]{\w^n\frac{\sqrt{\Ag}}{\Zapabs}\pap\frac{1}{\Zap}} + \norm[\Hhalf]{\w^n\frac{\sqrt{\Ag}}{\Zapabs}\pap\frac{1}{\Zapabs}} +  \norm[\Hhalf]{\w^n\frac{\sqrt{\Ag}}{\Zapabs^2}\pap \w}\\
& \lesssim_n  \norm[\Hhalf]{\frac{\sqrt{\Ag}}{\Zap}\pap\frac{1}{\Zap}} + \norm[2]{\pap\frac{1}{\Zap}}B(t)
\end{align*}
Proof: For the first estimate we use the weighted $\Hhalf$ estimate \propref{prop:Hhalfweight} with $\dis f = \pap\frac{1}{\Zap}$ weight $\dis w = \frac{\sqrt{\Ag}}{\Zap}$ and $\dis h = \w^{n + 1}$ to get
\begingroup
\allowdisplaybreaks
\begin{align*}
& \norm[\Hhalf]{\w^n\frac{\sqrt{\Ag}}{\Zapabs}\pap\frac{1}{\Zap}} \\
& \lesssim_n \norm[\Hhalf]{\frac{\sqrt{\Ag}}{\Zap}\pap\frac{1}{\Zap}} + \norm[2]{\pap\frac{1}{\Zap}}\norm[2]{\pap\brac{\frac{\sqrt{\Ag}}{\Zap}}} + \norm[2]{\pap\frac{1}{\Zap}}\norm[2]{\frac{\sqrt{\Ag}}{\Zapabs}\pap\w} \\
& \lesssim_n  \norm[\Hhalf]{\frac{\sqrt{\Ag}}{\Zap}\pap\frac{1}{\Zap}} + \norm[2]{\pap\frac{1}{\Zap}}^2\cbrac{\norm[2]{\Ztapbar} + \sqrt{g}} + \norm[2]{\pap\frac{1}{\Zap}}\norm[\Hhalf]{\Dap\Ztbar}
\end{align*}
\endgroup
Now from \eqref{form:RealImagTh} we have the relations
\begin{align*}
\Real\brac{\frac{\Zap}{\Zapabs}\pap\frac{1}{\Zap} } = \pap \frac{1}{\Zapabs} \quad \qquad \Imag\brac{\frac{\Zap}{\Zapabs}\pap\frac{1}{\Zap}} = i\brac{ \wbar\Dapabs \w} 
\end{align*}
Hence from the first estimate we see that 
\begin{align*}
& \norm[\Hhalf]{\frac{\sqrt{\Ag}}{\Zapabs}\pap\frac{1}{\Zapabs}} +  \norm[\Hhalf]{\wbar\frac{\sqrt{\Ag}}{\Zapabs^2}\pap \w} \\
& \lesssim  \norm[\Hhalf]{\frac{\sqrt{\Ag}}{\Zap}\pap\frac{1}{\Zap}} + \norm[2]{\pap\frac{1}{\Zap}}^2\cbrac{\norm[2]{\Ztapbar} + \sqrt{g}} + \norm[2]{\pap\frac{1}{\Zap}}\norm[\Hhalf]{\Dap\Ztbar}
\end{align*}
The other estimates follow similarly from \propref{prop:Hhalfweight}. 
\medskip

\medskip

\item For any $n \in \Zsp$ we have 
\begin{align*}
& \norm[\Hhalf]{\w^n\frac{\Ag}{\Zapabs}\pap\frac{1}{\Zap}} + \norm[\Hhalf]{\w^n\frac{\Ag}{\Zapabs}\pap\frac{1}{\Zapabs}} +  \norm[\Hhalf]{\w^n\frac{\Ag}{\Zapabs^2}\pap \w}  \\
& \lesssim_n   \cbrac{\norm[2]{\Ztapbar} + \sqrt{g}}\norm[\Hhalf]{\frac{\sqrt{\Ag}}{\Zap}\pap\frac{1}{\Zap}}  + B^2(t)
\end{align*}
Proof: Obvious from weighted $\Hhalf$ estimate \propref{prop:Hhalfweight} with $\dis f = \w^n \sqrt{\Ag}\pap\frac{1}{\Zap}$, $\dis f = \w^n \sqrt{\Ag}\pap\frac{1}{\Zapabs}$ or $\dis f = \w^n \frac{\sqrt{\Ag}}{\Zapabs}\pap\w$ and weight $\dis w = \frac{1}{\Zapabs}$ and $\dis h = \sqrt{\Ag}$.
\medskip

\medskip

\item We have
\begin{align*}
 \norm[\Linfty\cap\Hhalf]{\frac{1}{\Zapabs^2}\pap\Ag}  \lesssim B^2(t)  + \brac{\norm[2]{\Ztapbar} + \sqrt{g} }\norm[2]{\Dt\brac{\pap\frac{1}{\Zap}}}
\end{align*}
Proof: From estimate 12 of section 5.1 of \cite{Ag21}, we have the estimate
\begin{align*}
\norm[\Linfty\cap\Hhalf]{\frac{\wbar^2}{\Zapabs}(\Id - \Hil)\Dapabs\Ag} & \lesssim \norm[2]{\Ztapbar}\brac{ \norm[2]{\pap\frac{\Ztapbar}{\Zap^2}} + \norm[2]{\frac{1}{\Zap^2}\pap\Ztapbar} } \\
& \quad + \norm[2]{\pap\frac{1}{\Zap}}\norm[2]{\Dapabs\Ag}
\end{align*}
Now $\Real\cbrac{(\Id - \Hil)\Dapabs\Ag} = \Dapabs\Ag$ as $\Dapabs\Ag$ is real valued. Therefore the required $\Linfty$ estimate now follows easily. To get the $\Hhalf$ estimate, we use \propref{prop:Hhalfweight} with $f = (\Id - \Hil)\Dapabs\Ag$, $w = \frac{\wbar^2}{\Zapabs}$ and $h = \w^2$ to get
\begin{align*}
& \lesssim \norm[\Hhalf]{\frac{1}{\Zapabs}(\Id - \Hil)\Dapabs\Ag} \\
& \lesssim \norm[\Hhalf]{\frac{\wbar^2}{\Zapabs}(\Id - \Hil)\Dapabs\Ag} + \norm[2]{(\Id - \Hil)\Dapabs\Ag}\norm[2]{\pap\frac{1}{\Zap}}
\end{align*}
The required $\Hhalf$ estimate now follows from the fact that $\Dapabs\Ag$ is real valued and that the Hilbert transform is bounded on $\Ltwo$.

\medskip

\item Hence we have the estimate
\begin{align*}
\norm*[\bigg][2]{\Dapabs\brac*[\bigg]{\frac{1}{\Zapabs^2} \pap\Ag}}  \lesssim B^2(t)\norm[2]{\pap\frac{1}{\Zap}}  + B(t)\norm[2]{\Dt\brac{\pap\frac{1}{\Zap}}}
\end{align*}
Proof: From estimate 13 of section 5.1 of \cite{Ag21} we have
\begin{align*}
 \norm*[\bigg][2]{\Dapabs\brac*[\bigg]{\frac{1}{\Zapabs^2} \pap\Ag}} & \lesssim \norm[2]{\frac{1}{\Zap^2}\pap\Ztapbar}\brac{\norm[\infty]{\Dap\Zt} + \norm[\infty]{\pap\Pa\brac{\frac{\Zt}{\Zap}}}} \\
 & \quad + \norm[2]{\pap\frac{\Ztap}{\Zap^2}}\norm[\infty]{\Dap\Ztbar} + \norm[2]{\pap\frac{1}{\Zap}}\norm[\infty]{\frac{1}{\Zapabs^2}\pap\Ag}
\end{align*}
The required estimate now follows from previous estimates. 

\bigskip

\item We have
\begin{align}\label{eq:controlhardDtAgLinfty}
\norm[\infty]{\frac{\Dt\Ag}{\Ag}} \lesssim \norm[\infty]{\Dap\Ztbar} + \norm[2]{\Ztapbar}\norm[2]{\pap\frac{1}{\Zap}} \lesssim B(t)
\end{align}
Proof: From \lemref{lem:DtAgoverAg} we have that
\begin{align*}
 \frac{\Dt \Ag}{\Ag} & = 2\Real\cbrac{\sqbrac{\Zt,\frac{1}{\Zap}; 1}(\ap)} + \frac{1}{\Ag}\Imag\cbrac{2i\sqbrac{\Zt,\Ag; \frac{1}{\Zap}}(\ap) + \sqbrac{\Zt,\Zt; \Dap\Ztbar}(\ap) } \\
& \quad + 2\Real(\Dap\Zt) - \bvarap
\end{align*}
Observe from \eqref{eq:Ag}, \eqref{eq:Azero} that
\begin{align*}
\abs{\sqbrac{\Zt,\Zt; \Dap\Ztbar}(\ap)}  \lesssim \norm[\infty]{\Dap\Ztbar}\Ag(\ap)
\end{align*}
and also from \propref{prop:Hardy} we get
\begin{align*}
\norm[\infty]{\sqbrac{\Zt,\frac{1}{\Zap}; 1}} + \norm[\infty]{\bvarap} \lesssim \norm[\infty]{\Dap\Ztbar} + \norm[2]{\Ztapbar}\norm[2]{\pap\frac{1}{\Zap}}
\end{align*}
We now observe from \eqref{eq:Ag}, \eqref{eq:Azero} that
\begin{align*}
\abs{\sqbrac{\Zt,\Ag;\frac{1}{\Zap}}(\ap)} \lesssim \sqrt{\Ag(\ap)} \norm[\Ltwo(\diff \bp)]{\frac{\Ag(\ap) - \Ag(\bp)}{\ap - \bp}\frac{1}{\Zap}(\bp)}
\end{align*}
Hence it is enough to show that for all $\ap \in \Rsp$ we have
\begin{align}\label{eq:Agfrac}
\begin{split}
& \norm[\Ltwo(\diff \bp)]{\frac{\Ag(\ap) - \Ag(\bp)}{\ap - \bp}\frac{1}{\Zap}(\bp)} \\
& \lesssim \brac{\norm[\infty]{\Dap\Ztbar} + \norm[2]{\Ztapbar}\norm[2]{\pap\frac{1}{\Zap}}}\sqrt{\Ag(\ap)}
\end{split}
\end{align}
Now from \eqref{eq:Ag}, \eqref{eq:Azero} and the fact that $\Hil(\Ztapbar) = \Ztapbar$ we get
\begin{align}
\begin{aligned}\label{eq:Agfractemp1}
& \frac{\Ag(\ap) - \Ag(\bp)}{\ap-\bp} \\
& = -\frac{1}{\ap-\bp}\Imag\cbrac{\frac{1}{i\pi} \int \sqbrac{\frac{\Zt(\ap)-\Zt(s)}{\ap- s} - \frac{\Zt(\bp)-\Zt(s)}{\bp- s} } \Ztbarbp(s) \diff s  } \\ 
& = -\frac{1}{\ap-\bp}\Imag\cbrac{\frac{1}{i\pi} \int \brac{\Zt(\ap) - \Zt(s)}\brac{\frac{1}{\ap-s} - \frac{1}{\bp-s}} \Ztbarbp(s) \diff s  } \\
& \quad -\frac{1}{\ap-\bp}\Imag\cbrac{\frac{1}{i\pi} \int \brac{\frac{\Zt(\ap)-\Zt(\bp)}{\bp-s}} \Ztbarbp(s) \diff s  } \\
& = \Imag\cbrac{\frac{1}{i\pi} \int \frac{1}{\bp-s}\frac{\Zt(\ap)-\Zt(s)}{\ap-s}\Ztbpbar(s) \diff s - \frac{\Zt(\ap)-\Zt(\bp)}{\ap-\bp} \Ztbpbar(\bp)} 
\end{aligned}
\end{align}
Define 
\begin{align*}
\Zt(\ap,\bp) = \frac{\Zt(\ap)-\Zt(\bp)}{\ap-\bp}
\end{align*}
Hence for fixed $\ap$ we have
\begin{align*}
& \frac{\Ag(\ap) - \Ag(\bp)}{\ap-\bp} \\
& = \Imag\cbrac{\Hil\brac{\Zt(\ap,\bp)\Ztapbar(\bp)} - \Zt(\ap,\bp)\Ztapbar(\bp) } \\
& = -\Imag\cbrac{(\Id - \Hil)\brac{\Zt(\ap,\bp)\Ztapbar(\bp)}}
\end{align*}
where we consider the functions as functions of $\bp$ and $\ap$ is some fixed number.  Now observe that
\begin{align*}
& \abs{\frac{1}{\Zap}(\bp) \brac{\frac{\Ag(\ap) - \Ag(\bp)}{\ap - \bp}}} \\
& \lesssim \abs{\sqbrac{\frac{1}{\Zap}(\bp),\Hil}\Zt(\ap,\bp)\Ztapbar(\bp)} + \abs{(\Id - \Hil)\brac{\Zt(\ap,\bp)\Dap\Ztbar(\bp)}}
\end{align*}
Now we have from \propref{prop:commutator} and \eqref{eq:Ag}, \eqref{eq:Azero}
\begin{align*}
& \norm[\Ltwo(\diff \bp)]{\sqbrac{\frac{1}{\Zap}(\bp),\Hil}\Zt(\ap,\bp)\Ztapbar(\bp)} \\
& \lesssim \norm[2]{\pap\frac{1}{\Zap}}\norm[\Lone(\diff \bp)]{\Zt(\ap,\bp)\Ztapbar(\bp)} \\
& \lesssim  \norm[2]{\pap\frac{1}{\Zap}}\norm[\Ltwo(\diff \bp)]{\Zt(\ap,\bp)}\norm[2]{\Ztapbar} \\
& \lesssim \norm[2]{\Ztapbar}\norm[2]{\pap\frac{1}{\Zap}}\sqrt{\Ag(\ap)}
\end{align*}
and also from \eqref{eq:Ag}, \eqref{eq:Azero}
\begin{align*}
& \norm[\Ltwo(\diff \bp)]{(\Id - \Hil)\brac{\Zt(\ap,\bp)\Dap\Ztbar(\bp)}} \\
& \lesssim  \norm[\Ltwo(\diff \bp)]{\Zt(\ap,\bp)\Dap\Ztbar(\bp)} \\
& \lesssim  \norm[\infty]{\Dap\Ztbar}\norm[\Ltwo(\diff \bp)]{\Zt(\ap,\bp)} \\
& \lesssim \norm[\infty]{\Dap\Ztbar}\sqrt{\Ag(\ap)}
\end{align*}
Hence \eqref{eq:Agfrac} is shown thereby proving the main estimate.
\medskip

\medskip

\item We have
\begin{align}\label{eq:controlhardDapabsDtAgL2}
\begin{aligned}
& \norm[2]{\frac{\Dapabs\Dt\Ag}{\sqrt{\Ag}}} \\
& \lesssim B^2(t)  + \brac{\norm[2]{\Ztapbar} + \sqrt{g}}\brac{\norm[2]{\Dt\brac{\pap\frac{1}{\Zap}}} + \norm[\Hhalf]{\frac{\sqrt{\Ag}}{\Zap}\pap\frac{1}{\Zap}}}
\end{aligned}
\end{align}
Proof: From \lemref{lem:DtAgoverAg} we see that
\begin{align*}
\Dt\Ag = \Imag\cbrac{-2\sqbrac{\Zt,\Zttbar;1} + \sqbrac{\Zt,\Zt;\Dap\Ztbar}} + \Ag\cbrac{ 2\Real(\Dap\Zt) - \bvarap}
\end{align*}
Hence we have
\begin{align*}
\frac{\Dapabs\Dt\Ag}{\sqrt{\Ag}} & = \frac{1}{\sqrt{\Ag}}\Dapabs\Imag\cbrac{-2\sqbrac{\Zt,\Zttbar;1} + \sqbrac{\Zt,\Zt;\Dap\Ztbar}} \\
& \quad + \brac{\frac{1}{\sqrt{\Ag}}\Dapabs\Ag}\cbrac{ 2\Real(\Dap\Zt) - \bvarap} + \sqrt{\Ag}\Dapabs\cbrac{ 2\Real(\Dap\Zt) - \bvarap}
\end{align*}
Using previous estimates it is easy to see that
\begin{align*}
& \norm[2]{\brac{\frac{1}{\sqrt{\Ag}}\Dapabs\Ag}\cbrac{ 2\Real(\Dap\Zt) - \bvarap}} + \norm[2]{\sqrt{\Ag}\Dapabs\cbrac{ 2\Real(\Dap\Zt) - \bvarap}} \\
& \lesssim  \brac{ \norm[\Linfty\cap\Hhalf]{\Dap\Ztbar} + \brac{\norm[2]{\Ztapbar} + \sqrt{g}}\norm[2]{\pap\frac{1}{\Zap}}}^2 + \brac{\norm[2]{\Ztapbar} + \sqrt{g}}\norm[2]{\Dt\brac{\pap\frac{1}{\Zap}}}
\end{align*}
Now let us control the other terms. 
\begin{enumerate}
\item From \propref{prop:tripleidentity} we see that
\begin{align*}
\Dapabs\sqbrac{\Zt,\Zt;\Dap\Ztbar}  & = 2\sqbrac{\Dapabs\Zt, \Zt; \Dap\Ztbar} + \sqbrac{\Zt, \Zt; \pap\brac*[\bigg]{\frac{1}{\Zapabs}\Dap\Ztbar}} \\
& \quad - 2\sqbrac{\frac{1}{\Zapabs}, \Zt,\Zt; \Dap\Ztbar}
\end{align*}
Now using \eqref{eq:Ag} and \eqref{eq:Azero} we have
\begin{align*}
& \abs{\sqbrac{\Dapabs\Zt, \Zt; \Dap\Ztbar}(\ap)}\\
& \lesssim \sqrt{\Ag(\ap)}\norm[\Ltwo(\diff \bp)]{\frac{\Dapabs\Zt(\ap) - \Dapabs\Zt(\bp)}{\ap - \bp} \Dap\Ztbar(\bp)}
\end{align*}
Hence from \propref{prop:Hardy}
\begin{align*}
\norm[2]{\frac{1}{\sqrt{\Ag}}\sqbrac{\Dapabs\Zt, \Zt; \Dap\Ztbar}} \lesssim \norm[\infty]{\Dap\Ztbar}\norm[\Hhalf]{\Dapabs\Zt}
\end{align*}
We also see using \eqref{eq:Ag}, \eqref{eq:Azero} that
\begin{align*}
& \abs{\sqbrac{\Zt, \Zt; \pap\brac*[\bigg]{\frac{1}{\Zapabs}\Dap\Ztbar}} (\ap)} \\
& \lesssim \sqrt{\Ag(\ap)}\norm[\Ltwo(\diff \bp)]{\frac{\Zt(\ap) - \Zt(\bp)}{\ap - \bp}\pbp\brac{\frac{1}{\Zapabs}\Dap\Ztbar}(\bp)}
\end{align*}
Hence using \propref{prop:Hardy}
\begin{align*}
& \norm[2]{\frac{1}{\sqrt{\Ag}}\sqbrac{\Zt, \Zt; \pap\brac*[\bigg]{\frac{1}{\Zapabs}\Dap\Ztbar}}} \\
& \lesssim \norm[2]{\Ztapbar}\norm[2]{\pap\brac{\frac{1}{\Zapabs}\Dap\Ztbar}} \\
& \lesssim \brac{\norm[\Linfty\cap\Hhalf]{\Dap\Ztbar}  +  \brac{\norm[2]{\Ztapbar} + \sqrt{g}}\norm[2]{\pap\frac{1}{\Zap}}}^2 + \norm[2]{\Ztapbar}\norm[2]{\Dt\brac{\pap\frac{1}{\Zap}}}
\end{align*}
Finally we consider the last term and observe using \eqref{eq:Ag}, \eqref{eq:Azero}
\begin{align*}
& \abs{\sqbrac{\frac{1}{\Zapabs}, \Zt,\Zt; \Dap\Ztbar} (\ap)} \\
& \lesssim \sqrt{\Ag(\ap)}\norm[\Ltwo(\diff \bp)]{\frac{\frac{1}{\Zapabs}(\ap) - \frac{1}{\Zapabs}(\bp)}{\ap - \bp} \frac{\Zt(\ap) - \Zt(\bp)}{\ap - \bp} \Dap\Ztbar(\bp)}
\end{align*}
Hence using \propref{prop:Hardy} we obtain
\begin{align*}
\norm[2]{\frac{1}{\sqrt{\Ag}} \sqbrac{\frac{1}{\Zapabs}, \Zt,\Zt; \Dap\Ztbar} } \lesssim \norm[\infty]{\Dap\Ztbar}\norm[2]{\pap\frac{1}{\Zapabs}}\norm[2]{\Ztapbar}
\end{align*}

\item From \propref{prop:tripleidentity} we see that
\begin{align*}
\Dapabs\sqbrac{\Zt,\Zttbar;1} & = \sqbrac{\Dapabs\Zt, \Zttbar; 1} + \sqbrac{\Zt,\Dapabs\Zttbar; 1} + \sqbrac{\Zt,\Zttbar; \pap\frac{1}{\Zapabs}} \\
& \quad - 2\sqbrac{\frac{1}{\Zapabs}, \Zt,\Zttbar ;1 }
\end{align*}
We have from \eqref{eq:Zttbarfrac}, \propref{prop:Hardy} and \eqref{eq:Agfrac}
\begin{align*}
& \abs{\sqbrac{\Dapabs\Zt, \Zttbar; 1}(\ap)} \\
& \lesssim \norm[\Ltwo(\diff \bp)]{\frac{\Dapabs\Zt(\ap) - \Dapabs\Zt(\bp)}{\ap - \bp}}\norm[\Ltwo(\diff \bp)]{\frac{\Zttbar(\ap) - \Zttbar(\bp)}{\ap - \bp}} \\
& \lesssim \sqrt{\Ag(\ap)}\brac{\norm[\Linfty\cap\Hhalf]{\Dap\Ztbar} + \brac{\norm[2]{\Ztapbar} + \sqrt{g}}\norm[2]{\pap\frac{1}{\Zap}}} \norm[\Ltwo(\diff \bp)]{\frac{\Dapabs\Zt(\ap) - \Dapabs\Zt(\bp)}{\ap - \bp}}
\end{align*}
Therefore from \propref{prop:Hardy}
\begin{align*}
\norm[2]{\frac{1}{\sqrt{\Ag}}\sqbrac{\Dapabs\Zt, \Zttbar; 1} } \lesssim \brac{\norm[\Linfty\cap\Hhalf]{\Dap\Ztbar} + \norm[2]{\Ztapbar}\norm[2]{\pap\frac{1}{\Zap}}} \norm[\Hhalf]{\Dapabs\Zt}
\end{align*}
Now we see from \eqref{form:Zttbar}
\begin{align*}
\Dapabs\Zttbar = - i \Ag\Dapabs\frac{1}{\Zap} - \frac{i}{\Zap}\Dapabs\Ag
\end{align*}
Hence
\begin{align*}
 \sqbrac{\Zt,\Dapabs\Zttbar; 1} = -i\sqbrac{\Zt,\Ag\Dapabs\frac{1}{\Zap}; 1} - i\sqbrac{\Zt,\frac{1}{\Zap}\Dapabs\Ag; 1}
\end{align*}
Now from \eqref{eq:Ag}, \eqref{eq:Azero}
\begin{align*}
\abs{\sqbrac{\Zt,\Ag\Dapabs\frac{1}{\Zap}; 1}(\ap)} \lesssim \sqrt{\Ag(\ap)}\norm[\Ltwo(\diff \bp)]{\frac{\Ag\Dapabs\frac{1}{\Zap}(\ap) - \Ag\Dapabs\frac{1}{\Zap}(\bp)}{\ap - \bp}}
\end{align*}
Therefore from \propref{prop:Hardy}
\begin{align*}
& \norm[2]{\frac{1}{\sqrt{\Ag}}\sqbrac{\Zt,\Ag\Dapabs\frac{1}{\Zap}; 1}} \\
& \lesssim \norm[\Hhalf]{\Ag\Dapabs\frac{1}{\Zap}} \\
& \lesssim \brac{\norm[2]{\Ztapbar} + \sqrt{g}}\norm[\Hhalf]{\frac{\sqrt{\Ag}}{\Zap}\pap\frac{1}{\Zap}} + \brac{ \norm[\Linfty\cap\Hhalf]{\Dap\Ztbar} + \brac{\norm[2]{\Ztapbar} + \sqrt{g}}\norm[2]{\pap\frac{1}{\Zap}} }^2
\end{align*}

Now we also have from \eqref{eq:Ag}, \eqref{eq:Azero}
\begin{align*}
\abs{\sqbrac{\Zt,\frac{1}{\Zap}\Dapabs\Ag; 1}(\ap)} \lesssim \sqrt{\Ag(\ap)}\norm[\Ltwo(\diff \bp)]{\frac{\frac{1}{\Zap}\Dapabs\Ag(\ap) - \frac{1}{\Zap}\Dapabs\Ag(\bp) }{\ap - \bp}}
\end{align*}
Hence by \propref{prop:Hardy}
\begin{align*}
\norm[2]{\frac{1}{\sqrt{\Ag}} \sqbrac{\Zt,\frac{1}{\Zap}\Dapabs\Ag; 1}} & \lesssim \norm[\Hhalf]{\frac{\wbar}{\Zapabs^2}\pap\Ag} 
\end{align*}
Now using \propref{prop:Hhalfweight} with $\dis f = \frac{1}{\sqrt{\Ag}}\Dapabs\Ag$, $\dis w = \frac{\sqrt{\Ag}}{\Zapabs}$ and $\dis h = \wbar$, we get
\begin{align*}
& \norm[2]{\frac{1}{\sqrt{\Ag}} \sqbrac{\Zt,\frac{1}{\Zap}\Dapabs\Ag; 1}} \\
& \lesssim  \norm[\Hhalf]{\frac{1}{\Zapabs^2}\pap\Ag} + \brac{ \norm[\Linfty\cap\Hhalf]{\Dap\Ztbar} + \brac{\norm[2]{\Ztapbar} + \sqrt{g}}\norm[2]{\pap\frac{1}{\Zap}} }^2 \\
& \lesssim \brac{\norm[2]{\Ztapbar} + \sqrt{g} }\norm[2]{\Dt\brac{\pap\frac{1}{\Zap}}} + \brac{ \norm[\Linfty\cap\Hhalf]{\Dap\Ztbar} + \brac{\norm[2]{\Ztapbar} + \sqrt{g}}\norm[2]{\pap\frac{1}{\Zap}} }^2
\end{align*}
Let us now come back to the third term. We see from \eqref{eq:Ag}, \eqref{eq:Azero} that
\begin{align*}
\abs{\sqbrac{\Zt,\Zttbar; \pap\frac{1}{\Zapabs}}(\ap)} \lesssim \sqrt{\Ag(\ap)}\norm[\Ltwo(\diff \bp)]{\frac{\Zttbar(\ap) - \Zttbar(\bp)}{\ap - \bp}\brac{\pap\frac{1}{\Zapabs}}(\bp) }
\end{align*}
Hence from \propref{prop:Hardy}
\begin{align*}
\norm[2]{\frac{1}{\sqrt{\Ag}} \sqbrac{\Zt,\Zttbar; \pap\frac{1}{\Zapabs}} } & \lesssim \norm[2]{\pap\frac{1}{\Zapabs}}\norm[2]{\Zttapbar} \\
& \lesssim \brac{ \norm[\Linfty\cap\Hhalf]{\Dap\Ztbar} + \brac{\norm[2]{\Ztapbar} + \sqrt{g}}\norm[2]{\pap\frac{1}{\Zap}} }^2
\end{align*}
Finally let us control the last term. We see from \eqref{eq:Ag}, \eqref{eq:Azero} that
\begin{align*}
\abs{\sqbrac{\frac{1}{\Zapabs}, \Zt,\Zttbar ;1 }(\ap)} \lesssim \sqrt{\Ag(\ap)}\norm[2]{\Zttapbar}\norm[\Linfty(\diff \bp)]{\frac{\frac{1}{\Zapabs}(\ap) - \frac{1}{\Zapabs}(\bp)}{\ap - \bp}}
\end{align*}
Hence from \propref{prop:Hardy}
\begin{align*}
\norm[2]{\frac{1}{\sqrt{\Ag}} \sqbrac{\frac{1}{\Zapabs}, \Zt,\Zttbar ;1 }} & \lesssim \norm[2]{\Zttapbar}\norm[2]{\pap\frac{1}{\Zapabs}} \\
& \lesssim \brac{ \norm[\Linfty\cap\Hhalf]{\Dap\Ztbar} + \brac{\norm[2]{\Ztapbar} + \sqrt{g}}\norm[2]{\pap\frac{1}{\Zap}} }^2
\end{align*}
\end{enumerate}
Hence proved. 
\medskip

\medskip

\item We have
\begin{align*}
& \norm[\Linfty\cap\Hhalf]{\Jzero} \\
& = \norm[\Linfty\cap\Hhalf]{ \Dt(\bvarap - \Dap\Zt - \Dapbar\Ztbar)} \\
& \lesssim  B^2(t) +  \brac{\norm[2]{\Ztapbar} + \sqrt{g}}\norm[2]{\Dt\brac{\pap\frac{1}{\Zap}}}
\end{align*}
and
\begin{align*}
& \norm[\Hhalf]{\Dt\bvarap} + \norm[\Hhalf]{\pap\Dt\bvar} \\
& \lesssim  B^2(t)  +  \brac{\norm[2]{\Ztapbar} + \sqrt{g}}\brac{\norm[2]{\Dt\brac{\pap\frac{1}{\Zap}}} + \norm[\Hhalf]{\frac{\sqrt{\Ag}}{\Zap}\pap\frac{1}{\Zap}}}
\end{align*}
Proof: From estimate 20 of section 5.1 of \cite{Ag21} we have
\begin{align*}
\norm[\Linfty\cap\Hhalf]{ \Dt(\bvarap - \Dap\Zt - \Dapbar\Ztbar)} & \lesssim \norm[2]{\Zttapbar}\norm[2]{\pap\frac{1}{\Zap}} + \norm[2]{\Ztapbar}\norm[2]{\pap\Dt\frac{1}{\Zap}} \\
& \quad + \norm[\infty]{\bvarap}\norm[2]{\Ztapbar}\norm[2]{\pap\frac{1}{\Zap}}
\end{align*}
The first estimate now follows easily from this. Now using \propref{prop:Leibniz} we have
\begin{align*}
& \norm[\Hhalf]{\Dt\bvarap} + \norm[\Hhalf]{\pap\Dt\bvar}  \\
& \lesssim \norm[\Hhalf]{\Dt\bvarap} + \norm[\Linfty\cap\Hhalf]{\bvarap}^2 \\
& \lesssim \norm[\Hhalf]{\Dt(\bvarap - \Dap\Zt - \Dapbar\Ztbar)} + \norm[\Hhalf]{\Dt\Dapbar\Ztbar} + \norm[\Linfty\cap\Hhalf]{\bvarap}^2 \\
&  \lesssim \norm[\Hhalf]{\Dt(\bvarap - \Dap\Zt - \Dapbar\Ztbar)} + \norm[\Linfty\cap\Hhalf]{\Dapbar\Ztbar}^2  + \norm[\Hhalf]{\Dapbar\Zttbar} + \norm[\Linfty\cap\Hhalf]{\bvarap}^2
\end{align*}
Hence we only need to control $\norm[\Hhalf]{\Dapbar\Zttbar}$. Applying $\Dapbar$ to \eqref{form:Zttbar} we get
\begin{align*}
\norm[\Hhalf]{\Dapbar\Zttbar} \lesssim \norm[\Hhalf]{\frac{1}{\Zapabs^2}\pap\Ag} + \norm[\Hhalf]{\w\frac{\Ag}{\Zapabs}\pap\frac{1}{\Zap}}
\end{align*}
The required estimate now follows from previously proved estimates. 

\medskip

\medskip

\item We have
\begin{align*}
 \norm[\Linfty\cap\Hhalf]{\Qzero}  \lesssim B^2(t) + \brac{\norm[2]{\Ztapbar} + \sqrt{g}}\norm[2]{\Dt\brac{\pap\frac{1}{\Zap}}}
\end{align*}
and
\begin{align*}
 \norm[2]{\Dapabs\Qzero}  \lesssim B^2(t)\norm[2]{\pap\frac{1}{\Zap}} + B(t)\norm[2]{\Dt\brac{\pap\frac{1}{\Zap}}}
\end{align*}
Proof: Recall from \eqref{form:Qzero} that 
\begin{align*}
\Qzero = (\bvarap-\Dap\Zt)^2 - (\Dapbar\Ztbar)^2 -\frac{i}{\Zapabs^2}\pap\Ag
\end{align*}
The estimates now follow easily from previous estimates. 

\medskip

\item We have
\begin{align*}
\norm[2]{\Rzero}  \lesssim B^2(t)\norm[2]{\pap\frac{1}{\Zap}}  + B(t)\brac{\norm[2]{\Dt\brac{\pap\frac{1}{\Zap}}}  + \norm[\Hhalf]{\frac{\sqrt{\Ag}}{\Zap}\nobrac{\pap\frac{1}{\Zap}}}}
\end{align*}
\medskip\\
Proof: Recall from \eqref{form:Rzero} that 
\begin{align*}
\begin{split}
\Rzero & = \brac{\pap\frac{1}{\Zap}}(\Jzero+\Qzero) + \Dap\Qzero -  \bvarap\brac{\pap\Dt\frac{1}{\Zap} + \Dt\pap\frac{1}{\Zap}} \\
& \quad - (\Dt\bvarap)\brac{\pap\frac{1}{\Zap}} -2i\Ag\brac{\Dapabs\frac{1}{\Zapabs}}\brac{\pap\frac{1}{\Zap}} \\
& \quad - i\brac{\frac{1}{\Zapabs^2}\pap\Ag}\brac{\pap\frac{1}{\Zap}}
\end{split}
\end{align*}
\begin{enumerate}

\item We have
\begin{align*}
&   \norm[2]{\pap\frac{1}{\Zap}}\brac{\norm[\infty]{\Jzero} + \norm[\infty]{\Qzero}}  + \norm[2]{\Dapabs\Qzero} +  \norm[\infty]{\bvarap}\brac{\norm[2]{\pap\Dt\frac{1}{\Zap}} + \norm[2]{\Dt\pap\frac{1}{\Zap}}} \\
& \lesssim B^2(t)\norm[2]{\pap\frac{1}{\Zap}}  + B(t)\norm[2]{\Dt\brac{\pap\frac{1}{\Zap}}}
\end{align*}
\item We see from \eqref{form:Jzero}, \eqref{form:Zttbar} and the fact that $\Ag$ is real valued
\begin{align*}
\Dt\bvarap & = \Jzero + \Dt\Dap\Zt + \Dt\Dapbar\Ztbar \\
& = \Jzero - (\Dap\Zt)^2 - (\Dapbar\Ztbar)^2 + 2\Real\brac{\Dapbar\Zttbar} \\
& = \Jzero - (\Dap\Zt)^2 - (\Dapbar\Ztbar)^2 + 2\Real\brac{ \frac{-i}{\Zapabs^2}\pap\Ag -i \frac{\w\Ag}{\Zapabs}\pap\frac{1}{\Zap} } \\
& = \Jzero - (\Dap\Zt)^2 - (\Dapbar\Ztbar)^2 + 2\Real\brac{ -i \frac{\w\Ag}{\Zapabs}\pap\frac{1}{\Zap} }
\end{align*}
Now using the second estimate of \propref{prop:Hhalfweight} with $\dis f = g = \pap\frac{1}{\Zap}$ and $\dis w = \frac{\sqrt{\Ag}}{\Zapabs}$ we get
\begin{align*}
& \norm[2]{\frac{\sqrt{\Ag}}{\Zapabs}\brac{\pap\frac{1}{\Zap}}\brac{\pap\frac{1}{\Zap}}} \\
& \lesssim \norm[\Hhalf]{\frac{\sqrt{\Ag}}{\Zapabs}\brac{\pap\frac{1}{\Zap}}}\norm[2]{\pap\frac{1}{\Zap}} \\
& \quad + \norm[2]{\pap\frac{1}{\Zap}}^2\brac{\norm[\Linfty\cap\Hhalf]{\Dap\Ztbar} + \brac{\norm[2]{\Ztapbar} + \sqrt{g}}\norm[2]{\pap\frac{1}{\Zap}} }
\end{align*}

Hence 
\begin{align*}
& \norm[2]{(\Dt\bvarap)\brac{\pap\frac{1}{\Zap}}} \\
& \lesssim B^2(t)\norm[2]{\pap\frac{1}{\Zap}}  + B(t)\brac{\norm[2]{\Dt\brac{\pap\frac{1}{\Zap}}}  + \norm[\Hhalf]{\frac{\sqrt{\Ag}}{\Zap}\nobrac{\pap\frac{1}{\Zap}}}}
\end{align*}
\end{enumerate}
The last two terms of $\Rzero$ are controlled similarly and hence the estimate follows. 
\medskip

\medskip

\item We have
\begin{align*}
& \norm[2]{(\Id - \Hil)\Dt^2\brac{\pap\frac{1}{\Zap}}} \\
& \lesssim B^2(t)\norm[2]{\pap\frac{1}{\Zap}}  + B(t)\brac{\norm[2]{\Dt\brac{\pap\frac{1}{\Zap}}}  + \norm[\Hhalf]{\frac{\sqrt{\Ag}}{\Zap}\nobrac{\pap\frac{1}{\Zap}}}}
\end{align*}
Proof: For a function $f$ satisfying $\Pa f = 0$ we have from \propref{prop:tripleidentity}
\begin{align}\label{eq:IminusHDt2f}
\begin{split}
(\Id - \Hil)\Dt^2f & = \sqbrac{\Dt,\Hil}\Dt f + \Dt\sqbrac{\Dt,\Hil}f \\
& = \sqbrac{\bvar,\Hil}\pap\Dt f + \Dt\sqbrac{\bvar,\Hil}\pap f \\
& = 2\sqbrac{\bvar,\Hil}\pap\Dt f + \sqbrac{\Dt\bvar,\Hil}\pap f - \sqbrac{\bvar, \bvar ; \pap f}
\end{split}
\end{align}
Hence we have from \propref{prop:commutator} and \propref{prop:triple}
\begin{align*}
& \norm[2]{(\Id - \Hil)\Dt^2\brac{\pap\frac{1}{\Zap}}} \\
&  \lesssim \norm[\Hhalf]{\bvarap}\norm[2]{\Dt\pap\frac{1}{\Zap}} + \norm[\Hhalf]{\pap\Dt\bvar}\norm[2]{\pap\frac{1}{\Zap}} + \norm[\infty]{\bvarap}^2\norm[2]{\pap\frac{1}{\Zap}} \\
& \lesssim B^2(t)\norm[2]{\pap\frac{1}{\Zap}}  + B(t)\brac{\norm[2]{\Dt\brac{\pap\frac{1}{\Zap}}}  + \norm[\Hhalf]{\frac{\sqrt{\Ag}}{\Zap}\nobrac{\pap\frac{1}{\Zap}}}}
\end{align*}

\medskip

\item We have
\begin{align*}
& \norm[2]{(\Id - \Hil)\cbrac{i\frac{\Ag}{\Zapabs^2}\pap\brac{\pap\frac{1}{\Zap}}}} \\
& \lesssim B^2(t)\norm[2]{\pap\frac{1}{\Zap}} + B(t)\brac{\norm[2]{\Dt\brac{\pap\frac{1}{\Zap}}}  + \norm[\Hhalf]{\frac{\sqrt{\Ag}}{\Zap}\nobrac{\pap\frac{1}{\Zap}}}}
\end{align*}
Proof: Observe that 
\begin{align*}
(\Id - \Hil)\cbrac{i\frac{\Ag}{\Zapabs^2}\pap\brac{\pap\frac{1}{\Zap}}} = i\sqbrac{\frac{\Ag}{\Zapabs^2},\Hil}\pap\brac{\pap\frac{1}{\Zap}}
\end{align*}
Hence we have from \propref{prop:commutator}
\begin{align*}
& \norm[2]{(\Id - \Hil)\cbrac{i\frac{\Ag}{\Zapabs^2}\pap\brac{\pap\frac{1}{\Zap}}}} \\
& \lesssim \brac{\norm[\Hhalf]{\frac{1}{\Zapabs^2}\pap\Ag} + \norm[\Hhalf]{\frac{\Ag}{\Zapabs}\pap\frac{1}{\Zapabs}} }\norm[2]{\pap\frac{1}{\Zap}} \\
& \lesssim B^2(t)\norm[2]{\pap\frac{1}{\Zap}} + B(t)\brac{\norm[2]{\Dt\brac{\pap\frac{1}{\Zap}}}  + \norm[\Hhalf]{\frac{\sqrt{\Ag}}{\Zap}\nobrac{\pap\frac{1}{\Zap}}}}
\end{align*}

\medskip

\item We have
\begin{align*}
& \norm[2]{\Dapabs\Jzero} \\
& \lesssim B^2(t)\norm[2]{\pap\frac{1}{\Zap}}  + B(t)\brac{\norm[2]{\Dt\brac{\pap\frac{1}{\Zap}}}  + \norm[\Hhalf]{\frac{\sqrt{\Ag}}{\Zap}\nobrac{\pap\frac{1}{\Zap}}}}
\end{align*}
Proof: As $\Jzero$ given by \eqref{form:Jzero} is real valued, we see that
\begin{align*}
\Dapabs\Jzero & = \Real\cbrac{\frac{\w}{\Zap}(\Id - \Hil)\pap\Jzero} \\
& = \Real\cbrac{\w(\Id-\Hil)\Dap\Jzero} - \Real\cbrac{\w\sqbrac{\frac{1}{\Zap},\Hil}\pap\Jzero}
\end{align*}
From equation \eqref{eq:paponeoverZap} we have
\begin{align*}
\brac{\Dt^2 + i\frac{\Ag}{\Zapabs^2}\pap}\pap\frac{1}{\Zap} = \Dap\Jzero + \Rzero
\end{align*}
Applying $(\Id - \Hil)$ to the above equation and using \propref{prop:commutator} we obtain the estimate
\begin{align*}
& \norm[2]{\Dapabs\Jzero} \\
& \lesssim \norm[2]{(\Id - \Hil)\Dap\Jzero} + \norm[2]{\pap\frac{1}{\Zap}}\norm[\infty]{\Jzero} \\
& \lesssim \norm[2]{(\Id - \Hil)\cbrac{\Dt^2\brac{\pap\frac{1}{\Zap}}}} + \norm[2]{(\Id - \Hil)\cbrac{i\frac{\Ag}{\Zapabs^2}\pap\brac{\pap\frac{1}{\Zap}}}} \\
& \quad + \norm[2]{\Rzero} + \norm[2]{\pap\frac{1}{\Zap}}\norm[\infty]{\Jzero} \\
& \lesssim B^2(t)\norm[2]{\pap\frac{1}{\Zap}} + B(t)\brac{\norm[2]{\Dt\brac{\pap\frac{1}{\Zap}}}  + \norm[\Hhalf]{\frac{\sqrt{\Ag}}{\Zap}\nobrac{\pap\frac{1}{\Zap}}}}
\end{align*}

\end{enumerate}

\medskip

\subsection{Quantities controlled by the energy $\E(t)$}\label{sec:quantE}

This subsection is similar to \secref{sec:quantEa} but for the energy $\E(t)$ instead of $\Ea(t)$. Here we collect some of the estimates required to prove the energy estimate for $\E(t)$ namely \eqref{eq:mainEtime}. These estimates are then used in \secref{sec:closing} to complete the proof of \eqref{eq:mainEtime}. 

\begin{enumerate}[widest = 99, leftmargin =*, align=left, label=\arabic*)]
\item We have
\begin{align*}
& \norm[2]{\Dap\Dt\Dap\Ztbar} + \norm[2]{\Dap^2\Zttbar} + \norm[2]{\Ag\Dap^2\frac{1}{\Zap}}  + \norm[2]{\Dap^2\Ztt} \\
& \lesssim \norm[2]{\Dt\Dap^2\Ztbar} + B^2(t)\norm[2]{\pap\frac{1}{\Zap}} + B(t)\brac{\norm[2]{\Dt\brac{\pap\frac{1}{\Zap}}} + \norm[\Hhalf]{\frac{\sqrt{\Ag}}{\Zap}\pap\frac{1}{\Zap}}} \\
& \lesssim \norm[2]{\Dt\Dap^2\Ztbar} + \Ea(t)^\half\brac{\norm[2]{\Dt\brac{\pap\frac{1}{\Zap}}} + \norm[\Hhalf]{\frac{\sqrt{\Ag}}{\Zap}\pap\frac{1}{\Zap}}} + \Ea(t)\norm[2]{\pap\frac{1}{\Zap}}
\end{align*}
Proof: We see that
\begin{align}\label{eq:DapDtDapZtbar}
\Dap\Dt\Dap\Ztbar = (\Dap\Zt)\Dap^2\Ztbar + \Dt\Dap^2\Ztbar
\end{align}
and also
\begin{align}\label{eq:Dap2Zttbar1}
\begin{aligned}
\Dap^2\Zttbar & = \Dap\brac{(\Dap\Zt)\Dap\Ztbar + \Dt\Dap\Ztbar} \\
& = (\Dap^2\Zt)\Dap\Ztbar + (\Dap\Zt)\Dap^2\Ztbar + \Dap\Dt\Dap\Ztbar
\end{aligned}
\end{align}
From these identities we see that 
\begin{align*}
 \norm[2]{\Dap\Dt\Dap\Ztbar} + \norm[2]{\Dap^2\Zttbar} \lesssim \norm[2]{\Dt\Dap^2\Ztbar} + \norm[\infty]{\Dap\Ztbar}\brac{\norm[2]{\Dap^2\Ztbar} + \norm[2]{\Dap^2\Zt}}
\end{align*}
and hence the required estimates for these two quantities follow from previously proved estimates. Now we observe from \eqref{form:Zttbar}
\begin{align}\label{eq:Dap2Zttbar2}
\begin{aligned}
\Dap^2\Zttbar & = -i\Dap^2\brac{\frac{\Ag}{\Zap}} \\
& = -i\Dap\cbrac{\Ag\Dap\frac{1}{\Zap} + \frac{\wbar^2}{\Zapabs^2}\pap\Ag } \\
& = -i\brac{\frac{1}{\Zap^2}\pap\Ag}\pap\frac{1}{\Zap} -i\Ag\Dap^2\frac{1}{\Zap} - 2i(\wbar\Dap\wbar)\brac{\frac{1}{\Zapabs^2}\pap\Ag} \\
& \quad -i\wbar^3\Dapabs\brac{\frac{1}{\Zapabs^2}\pap\Ag}
\end{aligned}
\end{align}
Therefore we get
\begin{align*}
\norm[2]{\Ag\Dap^2\frac{1}{\Zap}} \lesssim \norm[2]{\Dap^2\Zttbar} + \norm[\infty]{\frac{1}{\Zapabs^2}\pap\Ag}\norm[2]{\pap\frac{1}{\Zap}} + \norm[2]{\Dapabs\brac{\frac{1}{\Zapabs^2}\pap\Ag}}
\end{align*}
The required estimate now follows from previously proved estimates. Finally using \eqref{form:Zttbar} we get that
\begin{align*}
\Dapbar^2\Zttbar & = \w^2\Dap(\w^2\Dap\Zttbar) \\
& = 2\w^3(\Dap\w)(\Dap\Zttbar) + \w^4\Dap^2\Zttbar \\
& = -2i\w^3(\Dap\w)\brac{\frac{1}{\Zap^2}\pap\Ag + \Ag\Dap\frac{1}{\Zap}} + \w^4\Dap^2\Zttbar \\
& = -2i\w^3(\Dap\w)\brac{\frac{1}{\Zap^2}\pap\Ag} -2i\w(\Dapabs\w)\brac{\frac{\Ag}{\Zapabs}\pap\frac{1}{\Zap}} + \w^4\Dap^2\Zttbar
\end{align*}
Hence we get 
\begin{align*}
\norm[2]{\Dap^2\Ztt} \lesssim \norm[2]{\pap\frac{1}{\Zap}}\norm[\infty]{\frac{1}{\Zapabs^2}\pap\Ag} + \norm[2]{(\Dapabs\w)\brac{\frac{\Ag}{\Zapabs}\pap\frac{1}{\Zap}}} + \norm[2]{\Dap^2\Zttbar}
\end{align*}
All terms other than the middle term have already been controlled. For that term we use \propref{prop:Hhalfweight} with $f = \Dapabs\w$, $g = \pap\frac{1}{\Zap}$ and $w = \frac{\Ag}{\Zapabs}$ to get
\begin{align*}
 & \norm[2]{(\Dapabs\w)\brac{\frac{\Ag}{\Zapabs}\pap\frac{1}{\Zap}}} \\
 & \lesssim \norm[\Hhalf]{\frac{\Ag}{\Zapabs^2}\pap\w}\norm[2]{\pap\frac{1}{\Zap}} + \norm[\Hhalf]{\frac{\Ag}{\Zapabs}\pap\frac{1}{\Zap}}\norm[2]{\pap\frac{1}{\Zap}} \\
 & \quad + \brac{\norm[\Linfty\cap\Hhalf]{\Dap\Ztbar}  +  \brac{\norm[2]{\Ztapbar} + \sqrt{g}}\norm[2]{\pap\frac{1}{\Zap}}}^2\norm[2]{\pap\frac{1}{\Zap}}
\end{align*}
Hence by combining these estimates, we get the required estimate for $\norm[2]{\Dap^2\Ztt}$.

\medskip

\item We have the estimate
\begin{align*}
\norm[\infty]{\Dap\Zttbar} + \norm[\infty]{\Ag\Dap\frac{1}{\Zap}} +  \norm[\infty]{\Dt\Dap\Ztbar} \lesssim \Ea(t)^\onebyfour\Ethree(t)^\onebyfour + \Ea(t) \lesssim \E(t)
\end{align*}
Proof: Letting $f = \Dap\Zttbar$ and $w = \frac{1}{\Zap}$ in \propref{prop:LinftyHhalf} and by using previously proved estimates we get
\begin{align*}
\norm[\Linfty\cap\Hhalf]{\Dap\Zttbar}^2 & \lesssim \norm[2]{\Zttapbar}\norm[2]{\Dap^2\Zttbar} + \norm[2]{\Zttapbar}^2\norm[2]{\pap\frac{1}{\Zap}}^2 \\
& \lesssim \Ea(t)^\onebytwo\Ethree(t)^\onebytwo + \Ea(t)^2
\end{align*}
This proves the estimate for $\norm[\infty]{\Dap\Zttbar}$. Using \eqref{form:Zttbar} we see that
\begin{align*}
\Dap\Zttbar = -i\Ag\Dap\frac{1}{\Zap} -i\frac{1}{\Zap^2}\pap\Ag
\end{align*}
Hence we get
\begin{align*}
\norm[\infty]{\Ag\Dap\frac{1}{\Zap}} \lesssim \norm[\infty]{\Dap\Zttbar} + \norm[\infty]{\frac{1}{\Zapabs^2}\pap\Ag}  \lesssim \Ea(t)^\onebyfour\Ethree(t)^\onebyfour + \Ea(t) \lesssim \E(t)
\end{align*}
Now as $\Dap\Zttbar = (\Dap\Zt)\Dap\Ztbar + \Dt\Dap\Ztbar$ we see that
\begin{align*}
\norm[\infty]{\Dt\Dap\Ztbar} \lesssim \norm[\infty]{\Dap\Zttbar} + \norm[\infty]{\Dap\Ztbar}^2 \lesssim \Ea(t)^\onebyfour\Ethree(t)^\onebyfour + \Ea(t)
\end{align*}

\item We have the estimates
\begin{align*}
\norm[\infty]{\frac{\Jone}{\Ag}} \lesssim B(t) \lesssim \Ea(t)^\half
\end{align*}
and
\begin{align*}
\norm[2]{\frac{1}{\sqrt{\Ag}}\Dapabs\Jone} & \lesssim  B^2(t)  + \brac{\norm[2]{\Ztapbar} + \sqrt{g}}\brac{\norm[2]{\Dt\brac{\pap\frac{1}{\Zap}}} + \norm[\Hhalf]{\frac{\sqrt{\Ag}}{\Zap}\pap\frac{1}{\Zap}}} \\
& \lesssim \Ea(t)
\end{align*}
Proof: From \eqref{eq:Jone} we see that
\begin{align*}
\norm[\infty]{\frac{\Jone}{\Ag}}  \lesssim \norm[\infty]{\frac{\Dt\Ag}{\Ag}} + \norm[\infty]{\bvarap} + \norm[\infty]{\Dap\Ztbar} \lesssim B(t)
\end{align*}
Now again from \eqref{eq:Jone} we obtain
\begin{align*}
\frac{1}{\sqrt{\Ag}}\Dapabs\Jone & = \frac{1}{\sqrt{\Ag}}\Dapabs\Dt\Ag + \brac{\frac{1}{\sqrt{\Ag}}\Dapabs\Ag }(\bvarap - \Dap\Zt - \Dapbar\Ztbar) \\
& \quad + \sqrt{\Ag}\Dapabs(\bvarap - \Dap\Zt - \Dapbar\Ztbar)
\end{align*}
Hence we get
\begin{align*}
\norm[2]{\frac{1}{\sqrt{\Ag}}\Dapabs\Jone} & \lesssim \norm[2]{\frac{1}{\sqrt{\Ag}}\Dapabs\Dt\Ag} + \norm[2]{\frac{1}{\sqrt{\Ag}}\Dapabs\Ag} \brac{\norm[\infty]{\bvarap} + \norm[\infty]{\Dap\Ztbar}} \\
& \quad + \norm[\infty]{\Ag}^\half\norm[2]{\Dapabs(\bvarap - \Dap\Zt - \Dapbar\Ztbar)}
\end{align*}
The required estimate now follows. 

\item We have the estimate
\begin{align*}
\norm[2]{(\Id - \Hil)\Dt^2\Dap^2\Ztbar} \lesssim \Ea(t)^\half\norm[2]{\Dt\Dap^2\Ztbar} + \Ea(t)\norm[2]{\Dt\brac{\pap\frac{1}{\Zap}}} + \Ea(t)^\threebytwo\norm[2]{\pap\frac{1}{\Zap}}
\end{align*}
Proof: As $\Pa \Dap^2\Ztbar = 0$, we see from \eqref{eq:IminusHDt2f} that
\begin{align*}
(\Id - \Hil)\Dt^2\Dap^2\Ztbar = 2\sqbrac{\bvar,\Hil}\pap\Dt \Dap^2\Ztbar + \sqbrac{\Dt\bvar,\Hil}\pap\Dap^2\Ztbar - \sqbrac{\bvar, \bvar ; \pap \Dap^2\Ztbar}
\end{align*}
Hence we have from \propref{prop:commutator} and \propref{prop:triple}
\begin{align*}
& \norm[2]{(\Id - \Hil)\Dt^2\Dap^2\Ztbar} \\
& \lesssim \norm[\Hhalf]{\bvarap}\norm[2]{\Dt\Dap^2\Ztbar} + \norm[\Hhalf]{\pap\Dt\bvar}\norm[2]{\Dap^2\Ztbar} + \norm[\infty]{\bvarap}^2\norm[2]{\Dap^2\Ztbar} 
\end{align*}
The required estimate now follows from previously proved estimates. 

\item We have the estimate
\begin{align*}
\norm[2]{(\Id - \Hil)\cbrac{i\frac{\Ag}{\Zapabs^2}\pap\Dap^2\Ztbar}} \lesssim \Ea(t)\norm[2]{\Dt\brac{\pap\frac{1}{\Zap}}} + \Ea(t)^\threebytwo\norm[2]{\pap\frac{1}{\Zap}}
\end{align*}
Proof: Observe that
\begin{align*}
(\Id - \Hil)\cbrac{i\frac{\Ag}{\Zapabs^2}\pap\Dap^2\Ztbar} = i\sqbrac{\frac{\Ag}{\Zapabs^2},\Hil}\pap\Dap^2\Ztbar
\end{align*}
Hence we have from \propref{prop:commutator}
\begin{align*}
& \norm[2]{(\Id - \Hil)\cbrac{i\frac{\Ag}{\Zapabs^2}\pap\Dap^2\Ztbar}} \\
& \lesssim \brac{\norm[\Hhalf]{\frac{1}{\Zapabs^2}\pap\Ag} + \norm[\Hhalf]{\frac{\Ag}{\Zapabs}\pap\frac{1}{\Zapabs}} }\norm[2]{\Dap^2\Ztbar}
\end{align*}
The required estimate now follows. 

\item We have the temporary estimate
\begin{align*}
\norm[2]{\Rone} & \lesssim \Ea(t)^\half\norm[2]{\Dt\Dap^2\Ztbar} + \E(t)\brac{\norm[2]{\Dt\brac{\pap\frac{1}{\Zap}}} + \norm[\Hhalf]{\frac{\sqrt{\Ag}}{\Zap}\pap\frac{1}{\Zap} }} \\
& \quad + \Ea(t)^\half\E(t)\norm[2]{\pap\frac{1}{\Zap}} + \norm[\infty]{\frac{1}{\Zapabs^2}\pap\Jone}\norm[2]{\pap\frac{1}{\Zap}}
\end{align*}
Proof: Recall from \eqref{eq:Rone} that
\begin{align*}
\Rone & = -2(\Dap^2\Ztt)(\Dap\Ztbar) -4(\Dap\Ztt)(\Dap^2\Ztbar) -2(\Dap^2\Zt)(\Dt\Dap\Ztbar) \\
& \quad -2(\Dap\Zt)(\Dap\Dt\Dap\Ztbar)  - 2(\Dap\Zt)(\Dt\Dap^2\Ztbar)  -i(\Dap\Jone) \brac{\Dap\frac{1}{\Zap}} \\
& \quad  -i\Jone\Dap^2\frac{1}{\Zap}  -2i\wbar(\Dap\wbar)\brac{\frac{1}{\Zapabs^2}\pap\Jone}
\end{align*}
Hence we see that
\begin{align*}
 \norm[2]{\Rone} & \lesssim \norm[\infty]{\Dap\Ztbar}\brac{\norm[2]{\Dap^2\Ztt} + \norm[2]{\Dap\Dt\Dap\Ztbar} + \norm[2]{\Dt\Dap^2\Ztbar} } \\
& \quad + \brac{\norm[\infty]{\Dap\Ztt} + \norm[\infty]{\Dt\Dap\Ztbar}}\brac{\norm[2]{\Dap^2\Ztbar} + \norm[2]{\Dap^2\Zt}} \\
& \quad + \norm[\infty]{\frac{1}{\Zapabs^2}\pap\Jone}\norm[2]{\pap\frac{1}{\Zap}} + \norm[\infty]{\frac{\Jone}{\Ag}}\norm[2]{\Ag\Dap^2\frac{1}{\Zap}}
\end{align*}
The estimate now follows using previous estimates. 

\item We have the estimate
\begin{align*}
& \norm[2]{\Dapabs\brac{\frac{1}{\Zapabs^2}\pap\Jone}} \\
&  \lesssim \Ea(t)^\half\norm[2]{\Dt\Dap^2\Ztbar} + \E(t)\brac{\norm[2]{\Dt\brac{\pap\frac{1}{\Zap}}} + \norm[\Hhalf]{\frac{\sqrt{\Ag}}{\Zap}\pap\frac{1}{\Zap} }} \\
& \quad + \Ea(t)^\half\E(t)\norm[2]{\pap\frac{1}{\Zap}} 
\end{align*}
Proof: Using the fact that $\Jone$ is real valued, we observe that
\begin{align*}
& \norm[2]{\Dapabs\brac{\frac{1}{\Zapabs^2}\pap\Jone}} \\
& \lesssim \norm[2]{\frac{\wbar^3}{\Zapabs}(\Id - \Hil)\pap\brac{\frac{1}{\Zapabs^2}\pap\Jone}} \\
& \lesssim \norm[2]{\sqbrac{\frac{\wbar^3}{\Zapabs},\Hil }\pap\brac{\frac{1}{\Zapabs^2}\pap\Jone}} + \norm[2]{(\Id - \Hil)\cbrac{\frac{\wbar^3}{\Zapabs}\pap\brac{\frac{1}{\Zapabs^2}\pap\Jone} }}
\end{align*}
Now using \propref{prop:commutator} and \eqref{eq:mainDapDapZtbar} we see that
\begingroup
\allowdisplaybreaks
\begin{align*}
& \norm[2]{\Dapabs\brac{\frac{1}{\Zapabs^2}\pap\Jone}} \\
& \lesssim \norm[2]{\pap\frac{1}{\Zap}}\norm[\infty]{\frac{1}{\Zapabs^2}\pap\Jone} + \norm[2]{\Rone} + \norm[2]{(\Id - \Hil)\Dt^2\Dap^2\Ztbar} \\*
& \quad + \norm[2]{(\Id - \Hil)\cbrac{i\frac{\Ag}{\Zapabs^2}\pap\Dap^2\Ztbar}} \\
& \lesssim \Ea(t)^\half\norm[2]{\Dt\Dap^2\Ztbar} + \E(t)\brac{\norm[2]{\Dt\brac{\pap\frac{1}{\Zap}}} + \norm[\Hhalf]{\frac{\sqrt{\Ag}}{\Zap}\pap\frac{1}{\Zap} }} \\*
& \quad + \Ea(t)^\half\E(t)\norm[2]{\pap\frac{1}{\Zap}} + \norm[\infty]{\frac{1}{\Zapabs^2}\pap\Jone}\norm[2]{\pap\frac{1}{\Zap}}
\end{align*}
\endgroup
Now in \propref{prop:LinftyHhalf} we put $f = \frac{1}{\Zapabs^2}\pap\Jone$ and $w = \frac{1}{\Zapabs}$ to get
\begin{align}\label{eq:oneZapabs2papJonetemp}
\begin{split}
& \norm[\infty]{\frac{1}{\Zapabs^2}\pap\Jone}^2 \\
& \lesssim \norm[2]{\Dapabs\Jone}\norm[2]{\Dapabs\brac{\frac{1}{\Zapabs^2}\pap\Jone}} + \norm[2]{\Dapabs\Jone}^2\norm[2]{\pap\frac{1}{\Zapabs}}^2 \\
& \lesssim \norm[\infty]{\Ag}^\half\norm[2]{\frac{1}{\sqrt{\Ag}}\Dapabs\Jone}\norm[2]{\Dapabs\brac{\frac{1}{\Zapabs^2}\pap\Jone}} + \norm[\infty]{\Ag}\norm[2]{\frac{1}{\sqrt{\Ag}}\Dapabs\Jone}^2\norm[2]{\pap\frac{1}{\Zap}}^2 \\
& \lesssim \Ea(t)\brac{\norm[2]{\Ztapbar} + \sqrt{g}}\norm[2]{\Dapabs\brac{\frac{1}{\Zapabs^2}\pap\Jone}} + \Ea(t)^3
\end{split}
\end{align}
Using this estimate in the previous estimate we get
\begin{align*}
& \norm[2]{\Dapabs\brac{\frac{1}{\Zapabs^2}\pap\Jone}} \\
& \lesssim \Ea(t)^\half\norm[2]{\Dt\Dap^2\Ztbar} + \E(t)\brac{\norm[2]{\Dt\brac{\pap\frac{1}{\Zap}}} + \norm[\Hhalf]{\frac{\sqrt{\Ag}}{\Zap}\pap\frac{1}{\Zap} }} \\
& \quad + \Ea(t)^\half\E(t)\norm[2]{\pap\frac{1}{\Zap}} + \Ea(t)^\half\brac{\norm[2]{\Ztapbar} + \sqrt{g}}^\half\norm[2]{\Dapabs\brac{\frac{1}{\Zapabs^2}\pap\Jone}}^\half\norm[2]{\pap\frac{1}{\Zap}}
\end{align*}
Now using the estimate $ab \leq \frac{a^2}{2\ep} + \frac{\ep b^2}{2}$ with $\ep$ small on the last term and absorbing it on the left hand side we get the required estimate. 

%
\item We have the estimate
\begin{align*}
\norm[\infty]{\frac{1}{\Zapabs^2}\pap\Jone} \lesssim \Ea(t)^\half E(t)
\end{align*}
This in particular implies that we have
\begin{align*}
\norm[2]{\Rone} & \lesssim \Ea(t)^\half\norm[2]{\Dt\Dap^2\Ztbar} + \E(t)\brac{\norm[2]{\Dt\brac{\pap\frac{1}{\Zap}}} + \norm[\Hhalf]{\frac{\sqrt{\Ag}}{\Zap}\pap\frac{1}{\Zap} }} \\
& \quad + \Ea(t)^\half\E(t)\norm[2]{\pap\frac{1}{\Zap}} 
\end{align*}
Proof: The first estimate follows directly from the estimate for $ \norm[2]{\Dapabs\brac{\frac{1}{\Zapabs^2}\pap\Jone}}$ and \eqref{eq:oneZapabs2papJonetemp}. The second estimate then follows immediately from the temporary estimate proved for $\norm[2]{\Rone}$. 

\end{enumerate}

\subsection{Closing the energy estimate}\label{sec:closing}

We first note the following lemma from Sec 5.2 of \cite{Ag21}.
\begin{lem} \label{lem:timederiv}
Let $T>0$ and let $f,\bvar \in C^2([0,T), H^2(\Rsp))$ with $\bvar$ being real valued. Let $\Dt = \pt + \bvar\pap$. Then 
\begin{enumerate}[leftmargin =*, align=left]
\item $\dis \frac{\diff }{\diff t} \int f \diff \ap = \int \Dt f \diff\ap + \int \bvarap f \diff\ap$
\item $\dis \abs*[\Big]{ \frac{d}{dt}\int \abs{f}^2 \diff \ap - 2\Real \int \bar{f} (\Dt f) \diff \ap} \lesssim \norm[2]{f}^2 \norm[\infty]{\bvarap}$
\item $\dis \abs{ \frac{d}{dt}\int (\papabs\bar{f})f \diff\ap- 2\Real \cbrac{\int (\papabs\bar{f})\Dt f \diff \ap}} \lesssim   \norm[\Hhalf]{f}^2 \norm[\infty]{\bvarap}$
\end{enumerate}
\end{lem}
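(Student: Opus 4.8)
\textbf{Proof proposal for Lemma~\ref{lem:timederiv}.}

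The plan is to establish the three statements by direct computation, using only the definition $\Dt = \pt + \bvar\pap$, the product rule, and integration by parts in $\ap$; the hypothesis $f, \bvar \in C^2([0,T), H^2(\Rsp))$ guarantees all manipulations (differentiating under the integral sign, integrating by parts with no boundary terms) are justified.

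For part (1), I would write $\frac{\diff}{\diff t}\int f \diff\ap = \int \pt f \diff\ap = \int (\Dt f - \bvar\pap f)\diff\ap$, and then integrate the last term by parts: $\int \bvar\pap f \diff\ap = -\int \bvarap f \diff\ap$ since there are no boundary contributions ($f(\cdot,t)\in H^2$ and similarly $\bvar$, so the product decays). This immediately gives the claimed identity.

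For part (2), I would compute $\frac{d}{dt}\int |f|^2 \diff\ap = 2\Real\int \bar f\,\pt f \diff\ap = 2\Real\int \bar f(\Dt f)\diff\ap - 2\Real\int \bar f\,\bvar\pap f \diff\ap$. The error term is $2\Real\int \bvar\,\bar f\pap f\diff\ap = \int \bvar\pap(|f|^2)\diff\ap = -\int\bvarap |f|^2\diff\ap$, whose absolute value is $\lesssim \norm[\infty]{\bvarap}\norm[2]{f}^2$. This is the desired bound.

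Part (3) is the main obstacle, since $\papabs$ is nonlocal and does not interact as cleanly with multiplication by $\bvar$. I would start from $\frac{d}{dt}\int(\papabs\bar f)f\diff\ap = \int(\papabs\pt\bar f)f + (\papabs\bar f)\pt f \diff\ap = 2\Real\int(\papabs\bar f)\pt f\diff\ap$, using that $\papabs$ is self-adjoint and real so $\int(\papabs\pt\bar f)f = \overline{\int(\papabs\bar f)\pt f}$ after conjugating. Substituting $\pt f = \Dt f - \bvar\pap f$, the error term to control is $2\Real\int(\papabs\bar f)\bvar\pap f\diff\ap$. Here I would use the self-adjointness of $\papabs$ to rewrite $\int(\papabs\bar f)\bvar\pap f = \int \bar f\,\papabs(\bvar\pap f)$, and then insert the commutator: $\papabs(\bvar\pap f) = \bvar\papabs\pap f + [\papabs,\bvar]\pap f$. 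The piece with $\bvar\papabs\pap f$ combines with its conjugate to produce, after an integration by parts, a term of the form $-\int\bvarap\,\papabs(|{\cdot}|)$-type expression bounded by $\norm[\infty]{\bvarap}\norm[\Hhalf]{f}^2$ (using $\papabs = i\Hil\pap$ and that $\Hil$ is skew-adjoint and commutes with $\pap$, so $\Real\int\bar f\bvar\papabs\pap f$ can be symmetrized). The commutator piece $\int\bar f\,[\papabs,\bvar]\pap f$ is handled by the standard Calderón commutator estimate: $[\papabs,\bvar]\pap f = [\Hil,\bvar]\pap^2 f$ (up to constants and lower order) is bounded in $\Ltwo$ by $\norm[\infty]{\bvarap}\norm[2]{\pap f}$ — but since we only want $\norm[\Hhalf]{f}^2$ on the right, I would instead split $\papabs\pap f$ and use that $[\papabs, \bvar]$ maps $\Hhalf$ to $\Ltwo$ with norm $\lesssim\norm[\infty]{\bvarap}$, pairing against $\bar f\in\Hhalf$ via a duality/interpolation argument (or citing the commutator estimate of \propref{prop:commutator}). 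Collecting the bounds yields the stated inequality. Since this lemma is quoted from \cite{Ag21}, I expect the proof there follows exactly this symmetrization-plus-commutator scheme, and I would reproduce it at that level of detail.
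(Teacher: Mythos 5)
Parts (1) and (2) are correct and are exactly the routine computation (differentiate under the integral, substitute $\pt f=\Dt f-\bvar\pap f$, integrate by parts). Note also that the paper itself gives no proof of this lemma — it is quoted from \cite{Ag21} — so the only question is whether your argument for part (3) closes, and as written it does not.

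The gap is in the commutator step. After your (correct) symmetrization, the term to control is $S=2\Real\int(\papabs\bar f)\,\bvar\,\pap f\,\diff\ap$, and you propose $\int\bar f\,\papabs(\bvar\pap f)=\int\bar f\,\bvar\,\papabs\pap f+\int\bar f\,[\papabs,\bvar]\pap f$. Neither piece is separately bounded by $\norm[\infty]{\bvarap}\norm[\Hhalf]{f}^2$. For the first, the integration by parts you invoke gives $2\Real\int\bar f\,\bvar\,\papabs\pap f=-2\Real\int\bvarap\,\bar f\,\papabs f-S$, so you are left with $\Real\int\bvarap\,\bar f\,\papabs f$ plus a copy of $S$ itself: the derivative count is $L^2\times\dot H^1$, and rebalancing it to $\Hhalf\times\Hhalf$ would require commuting $\bvarap$ past $\papabs^{1/2}$, which needs regularity of $\bvarap$ beyond $\Linfty$. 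For the second, the claimed mapping property ``$[\papabs,\bvar]:\Hhalf\to\Ltwo$ with norm $\lesssim\norm[\infty]{\bvarap}$'' is false: take $f$ with Fourier support near frequency $\epsilon\to0$ and $\bvar$ containing an oscillation $\delta\sin(N\ap)$ with $\delta N\le1$; then $\norm[2]{[\papabs,\bvar]f}\sim\norm[2]{f}$ while $\norm[\Hhalf]{f}\sim\epsilon^{1/2}\norm[2]{f}$. So your decomposition returns you to terms exactly as hard as $S$, and the ``duality/interpolation'' step is where the real content is missing.

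The missing idea is to split the derivative in half \emph{before} commuting. Write $\int(\papabs\bar f)\,\bvar\,\pap f=\int(\papabs^{1/2}\bar f)\,\papabs^{1/2}(\bvar\,\pap f)$ and then $\papabs^{1/2}(\bvar\,\pap f)=\bvar\,\pap\papabs^{1/2}f+[\papabs^{1/2},\bvar]\papabs^{1/2}w$, where $w=-i\Hil\papabs^{1/2}f$ (so that $\papabs^{1/2}w=\pap f$ and $\norm[2]{w}\le\norm[\Hhalf]{f}$). The main term telescopes: $2\Real\int\bvar\,(\papabs^{1/2}\bar f)\,\pap(\papabs^{1/2}f)=-\int\bvarap\abs{\papabs^{1/2}f}^2$, which is $\lesssim\norm[\infty]{\bvarap}\norm[\Hhalf]{f}^2$. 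The commutator term is handled by \propref{prop:commutator}(4), $\norm[2]{[\bvar,\papabs^{1/2}](\papabs^{1/2}w)}\lesssim\norm[BMO]{\papabs\bvar}\norm[2]{w}$, together with $\norm[BMO]{\papabs\bvar}=\norm[BMO]{\Hil\bvarap}\lesssim\norm[\infty]{\bvarap}$; pairing against $\papabs^{1/2}\bar f$ gives the stated bound. Alternatively one can avoid commutators altogether: use the double-integral representation of \propref{prop:Hardy}(4) and pull the integral back along the flow generated by $\bvar$; the time derivative then only produces difference quotients $(\bvar(\ap)-\bvar(\bp))/(\ap-\bp)$ and factors $\bvarap$, all bounded pointwise by $\norm[\infty]{\bvarap}$, which yields (3) (and (2)) directly. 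Either of these completes your sketch.
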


Now using the above lemma we see that
\begin{align}\label{eq:timederivEone}
\begin{split}
& \frac{\diff }{\diff t}\Eone(t)  \\
& = \frac{\diff }{\diff t} \brac{\norm[2]{\Ztapbar}^2 + g}\norm[2]{\pap\frac{1}{\Zap}}^2 \\
& \lesssim \norm[\infty]{\bvarap}\brac{\norm[2]{\Ztapbar}^2 + g}\norm[2]{\pap\frac{1}{\Zap}}^2 + \norm[2]{\Ztapbar}\norm[2]{\Zttapbar}\norm[2]{\pap\frac{1}{\Zap}}^2 \\
& \quad + \brac{\norm[2]{\Ztapbar}^2 + g}\norm[2]{\pap\frac{1}{\Zap}}\norm[2]{\Dt\pap\frac{1}{\Zap}} \\
& \lesssim B(t) \Ea(t)
\end{split}
\end{align}
Now let us control $\Etwo(t)$. We first need to control
\begin{align*}
& \pt\cbrac{\norm[2]{\Dt\brac{\pap\frac{1}{\Zap}}}^2 + \norm[\Hhalf]{\frac{\sqrt{\Ag}}{\Zap}\pap\frac{1}{\Zap}}^2} 
\end{align*}
Let $\dis f = \pap\frac{1}{\Zap}$. Note that here $\Hil f = f$. Hence we need to control the time derivative of 
\begin{align*}
 \norm[2]{\Dt f}^2 + \norm[\Hhalf]{\frac{\sqrt{\Ag}}{\Zap} f}^2 
\end{align*}
for a function $f$ satisfying $\Hil f = f$. This will also help us in controlling $\Ethree$. Such a type of estimate was done in \cite{Ag21} but the estimate proved there strongly used the fact that $g = 1$ and does not work here. We will crucially use the new estimates proved in \secref{sec:quantEa} to obtain the estimate \eqref{eq:temptimederivfull}. The estimates from \secref{sec:quantEa} and \secref{sec:quantE} will then be further used to close the energy estimates. 

We see from \lemref{lem:timederiv} that 
\begin{align}\label{eq:ddttemp23}
 \abs{\frac{d}{dt} \int \abs{\Dt f}^2 \diff\ap - 2\Real \int (\Dt^2 f)(\Dt \bar{f}) \diff\ap}  \lesssim B(t)\norm[2]{\Dt f}^2
\end{align}
We also see from \lemref{lem:timederiv} that 
\begin{align*}
& \abs{\frac{d}{dt}  \int \abs*[\bigg]{ \papabs^\half \brac*[\bigg]{\frac{\sqrt{\Ag}}{\Zap} f}}^2\difff\ap -  2\Real \int  \cbrac{\papabs\brac*[\bigg]{\frac{\sqrt{\Ag}}{\Zap} f}} \Dt\brac*[\bigg]{\frac{\sqrt{\Ag}}{\Zapbar} \bar{f}} \difff\ap} \\
& \lesssim B(t)\norm[\Hhalf]{\frac{\sqrt{\Ag}}{\Zap} f}^2
\end{align*}
Now observe from \eqref{form:DtoneoverZap}
\begin{align*}
 \Dt\brac*[\bigg]{\frac{\sqrt{\Ag}}{\Zapbar} \bar{f}}  = \cbrac{\frac{\Dt\Ag}{2\Ag} + \bvarap - \Dapbar\Ztbar }\frac{\sqrt{\Ag}}{\Zapbar}\bar{f} + \frac{\sqrt{\Ag}}{\Zapbar}\Dt\bar{f}
\end{align*}
Hence
\begin{align*}
& \abs{2\Real \int  \cbrac{\papabs\brac*[\bigg]{\frac{\sqrt{\Ag}}{\Zap} f}} \Dt\brac*[\bigg]{\frac{\sqrt{\Ag}}{\Zapbar} \bar{f}} \difff\ap - 2\Real \int  \cbrac{\papabs\brac*[\bigg]{\frac{\sqrt{\Ag}}{\Zap} f}} \brac{\frac{\sqrt{\Ag}}{\Zapbar}\Dt\bar{f}} \difff\ap} \\
& \lesssim \norm[\Hhalf]{\frac{\sqrt{\Ag}}{\Zap} f}\norm[\Hhalf]{\cbrac{\frac{\Dt\Ag}{2\Ag} + \bvarap - \Dapbar\Ztbar }\frac{\sqrt{\Ag}}{\Zapbar}\bar{f}}
\end{align*}
From \propref{prop:Hhalfweight} with $f = \bar{f}$, $\dis w = \frac{\sqrt{\Ag}}{\Zapbar}$ and $\dis h = \cbrac{\frac{\Dt\Ag}{2\Ag} + \bvarap - \Dapbar\Ztbar }$ we get
\begingroup
\allowdisplaybreaks
\begin{align*}
& \norm[\Hhalf]{\cbrac{\frac{\Dt\Ag}{2\Ag} + \bvarap - \Dapbar\Ztbar }\frac{\sqrt{\Ag}}{\Zapbar}\bar{f}} \\
& \lesssim \norm[\infty]{\frac{\Dt\Ag}{2\Ag} + \bvarap - \Dapbar\Ztbar}\norm[\Hhalf]{\frac{\sqrt{\Ag}}{\Zapbar}\bar{f}} +  \norm[\infty]{\frac{\Dt\Ag}{2\Ag} + \bvarap - \Dapbar\Ztbar}\norm[2]{\pap\brac{\frac{\sqrt{\Ag}}{\Zapbar}}}\norm[2]{\bar{f}} \\*
& \quad + \norm[2]{\bar{f}}\norm[2]{\frac{\sqrt{\Ag}}{\Zapbar}\pap\cbrac{\frac{\Dt\Ag}{2\Ag} + \bvarap - \Dapbar\Ztbar }} \\
& \lesssim B(t)\norm[\Hhalf]{\frac{\sqrt{\Ag}}{\Zapbar}\bar{f}}  + B^2(t) \norm[2]{\bar{f}} \\*
& \quad +  \brac{\norm[2]{\Ztapbar} + \sqrt{g}}\brac{\norm[2]{\Dt\brac{\pap\frac{1}{\Zap}}} + \norm[\Hhalf]{\frac{\sqrt{\Ag}}{\Zap}\pap\frac{1}{\Zap}}}\norm[2]{\bar{f}}
\end{align*}
\endgroup
Therefore 
\begin{align}\label{eq:ddttemp1}
\begin{aligned}
& \abs{\frac{d}{dt}  \int \abs*[\bigg]{ \papabs^\half \brac*[\bigg]{\frac{\sqrt{\Ag}}{\Zap} f}}^2\difff\ap -  2\Real \int  \cbrac{\papabs\brac*[\bigg]{\frac{\sqrt{\Ag}}{\Zap} f}} \brac*[\bigg]{\frac{\sqrt{\Ag}}{\Zapbar} \Dt\bar{f}} \difff\ap} \\
& \lesssim B(t)\norm[\Hhalf]{\frac{\sqrt{\Ag}}{\Zap} f}^2  + B^2(t) \norm[2]{\bar{f}}\norm[\Hhalf]{\frac{\sqrt{\Ag}}{\Zap} f} \\
& \quad + \brac{\norm[2]{\Ztapbar} + \sqrt{g}}\brac{\norm[2]{\Dt\brac{\pap\frac{1}{\Zap}}} + \norm[\Hhalf]{\frac{\sqrt{\Ag}}{\Zap}\pap\frac{1}{\Zap}}}\norm[2]{\bar{f}}\norm[\Hhalf]{\frac{\sqrt{\Ag}}{\Zap} f}
\end{aligned}
\end{align}
We simplify further using $\papabs = i\Hil\pap$ and $\Hil f = f$
\begingroup
\allowdisplaybreaks
\begin{align*}
&\frac{\sqrt{\Ag}}{\Zapbar}\papabs\brac*[\bigg]{\frac{\sqrt{\Ag}}{\Zap} f} \\
& = i\sqbrac{\frac{\sqrt{\Ag}}{\Zapbar} ,\Hil}\pap\brac{\frac{\sqrt{\Ag}}{\Zap}f} + i\Hil\cbrac{\frac{\sqrt{\Ag}}{\Zapbar}\pap\brac{\frac{\sqrt{\Ag}}{\Zap}}f + \frac{\Ag}{\Zapabs^2}\pap f  } \\
& =  i\sqbrac{\frac{\sqrt{\Ag}}{\Zapbar} ,\Hil}\pap\brac{\frac{\sqrt{\Ag}}{\Zap}f} + i\Hil\cbrac*[\Bigg]{\frac{1}{2}\brac*[\Bigg]{\frac{1}{\Zapabs^2}\pap\Ag}f + \Ag\brac{\Dapbar\frac{1}{\Zap}}f  } \\*
& \quad - i\sqbrac{\frac{\Ag}{\Zapabs^2},\Hil}\pap f + i\frac{\Ag}{\Zapabs^2}\pap f
\end{align*}
\endgroup
Observe that in the second part of \propref{prop:Hhalfweight} by letting $f = f$, $g = \sqrt{\Ag}\w\pap\frac{1}{\Zap}$ and $w = \frac{\sqrt{\Ag}}{\Zapabs}$ we get
\begin{align*}
 \norm[2]{\Ag\brac{\Dapbar\frac{1}{\Zap}}f} \lesssim B(t)\norm[\Hhalf]{\frac{\sqrt{\Ag}}{\Zapabs}f } + \norm[\Hhalf]{\frac{\Ag}{\Zapbar}\pap\frac{1}{\Zap}}\norm[2]{f}  + B^2(t)\norm[2]{f}
\end{align*}

Hence we have the following estimate by using \propref{prop:commutator}
\begin{align*}
 & \norm*[\bigg][2]{\frac{\sqrt{\Ag}}{\Zapbar}\papabs\brac*[\bigg]{\frac{\sqrt{\Ag}}{\Zap} f} -  i\frac{\Ag}{\Zapabs^2}\pap f} \\
& \lesssim B(t)\norm[\Hhalf]{\frac{\sqrt{\Ag}}{\Zap}f}  + B^2(t) \norm[2]{f} \\
& \quad + \brac{\norm[2]{\Ztapbar} + \sqrt{g}}\brac{\norm[2]{\Dt\brac{\pap\frac{1}{\Zap}}} + \norm[\Hhalf]{\frac{\sqrt{\Ag}}{\Zap}\pap\frac{1}{\Zap}}}\norm[2]{f}
\end{align*}
Therefore combining the above estimate with \eqref{eq:ddttemp1} we get
\begin{align*}
& \abs{\frac{d}{dt}  \int \abs*[\bigg]{ \papabs^\half \brac*[\bigg]{\frac{\sqrt{\Ag}}{\Zap} f}}^2\difff\ap -  2\Real \int  \brac*[\bigg]{i\frac{\Ag}{\Zapabs^2}\pap f} (\Dt\bar{f}) \diff\ap} \\
& \lesssim B(t)\norm[\Hhalf]{\frac{\sqrt{\Ag}}{\Zap} f}\brac{\norm[\Hhalf]{\frac{\sqrt{\Ag}}{\Zap} f} + \norm[2]{\Dt\bar{f}}}  + B^2(t) \norm[2]{\bar{f}}\brac{\norm[\Hhalf]{\frac{\sqrt{\Ag}}{\Zap} f} + \norm[2]{\Dt\bar{f}}} \\
& \quad +  \brac{\norm[2]{\Ztapbar} + \sqrt{g}}\brac{\norm[2]{\Dt\brac{\pap\frac{1}{\Zap}}} + \norm[\Hhalf]{\frac{\sqrt{\Ag}}{\Zap}\pap\frac{1}{\Zap}}}\norm[2]{\bar{f}}\brac{\norm[\Hhalf]{\frac{\sqrt{\Ag}}{\Zap} f} + \norm[2]{\Dt\bar{f}}}
\end{align*}
Finally combining the above estimate with \eqref{eq:ddttemp23} we obtain
\begin{align}\label{eq:temptimederivfull}
\begin{split}
& \abs{\frac{d}{dt}\cbrac{\norm[2]{\Dt f}^2 + \norm[\Hhalf]{\frac{\sqrt{\Ag}}{\Zap} f}^2} - 2 \Real \int \brac{ \Dt^2 f +i\frac{\Ag}{\Zapabs^2}\pap f }(\Dt\bar{f}) \diff\ap} \\
& \lesssim  B(t)\brac{\norm[\Hhalf]{\frac{\sqrt{\Ag}}{\Zap} f}^2 + \norm[2]{\Dt\bar{f}}^2}  + B^2(t) \norm[2]{\bar{f}}\brac{\norm[\Hhalf]{\frac{\sqrt{\Ag}}{\Zap} f} + \norm[2]{\Dt\bar{f}}} \\
& \quad +  \brac{\norm[2]{\Ztapbar} + \sqrt{g}}\brac{\norm[2]{\Dt\brac{\pap\frac{1}{\Zap}}} + \norm[\Hhalf]{\frac{\sqrt{\Ag}}{\Zap}\pap\frac{1}{\Zap}}}\norm[2]{\bar{f}}\Bigg\{\norm[\Hhalf]{\frac{\sqrt{\Ag}}{\Zap} f}  \\
& \quad + \norm[2]{\Dt\bar{f}}\Bigg\}
\end{split}
\end{align}
We are now ready to control $\Etwo(t)$. Recall from \eqref{eq:paponeoverZap} that
\begin{align*}
\brac{\Dt^2 + i\frac{\Ag}{\Zapabs^2}\pap}\pap\frac{1}{\Zap} = \Dap\Jzero + \Rzero
\end{align*}
Hence for $\dis f = \pap\frac{1}{\Zap}$ we have
\begin{align*}
& \abs{ 2 \Real \int \brac{ \Dt^2 f +i\frac{\Ag}{\Zapabs^2}\pap f }(\Dt\bar{f}) \diff\ap} \\
& \lesssim \brac{\norm[2]{\Dapabs\Jzero} + \norm[2]{\Rzero}}\norm[2]{\Dt\brac{\pap\frac{1}{\Zap}}} \\
&  \lesssim B^2(t)\norm[2]{\pap\frac{1}{\Zap}}\norm[2]{\Dt\brac{\pap\frac{1}{\Zap}}}  + B(t)\cbrac{\norm[2]{\Dt\brac{\pap\frac{1}{\Zap}}}^2  + \norm[\Hhalf]{\frac{\sqrt{\Ag}}{\Zap}\brac{\pap\frac{1}{\Zap}}}^2}
\end{align*}
Therefore by using the above estimate and  putting $\dis f = \pap\frac{1}{\Zap}$ in \eqref{eq:temptimederivfull} we get
\begin{align}\label{eq:Elestimatemainproof}
\begin{aligned}
& \abs{ \frac{d}{dt}\cbrac{\norm[2]{\Dt \brac{\pap\frac{1}{\Zap}}}^2 + \norm[\Hhalf]{\frac{\sqrt{\Ag}}{\Zap} \pap\frac{1}{\Zap}}^2}} \\
&  \lesssim B^2(t)\norm[2]{\pap\frac{1}{\Zap}}\Bigg\{\norm[2]{\Dt\brac{\pap\frac{1}{\Zap}}}  + \norm[\Hhalf]{\frac{\sqrt{\Ag}}{\Zap}\brac{\pap\frac{1}{\Zap}}}\Bigg\} \\
& \quad + B(t) \cbrac{\norm[2]{\Dt\brac{\pap\frac{1}{\Zap}}}^2   + \norm[\Hhalf]{\frac{\sqrt{\Ag}}{\Zap}\brac{\pap\frac{1}{\Zap}}}^2}
\end{aligned}
\end{align}
Therefore we get from \lemref{lem:timederiv} the estimate
\begin{align}\label{eq:timederivEtwo}
\begin{split}
& \frac{\diff}{\diff t} \Etwo(t) \\
& = \frac{\diff}{\diff t}\cbrac{\brac{\norm[2]{\Ztapbar}^2 + g}\cbrac{\norm[2]{\Dt\brac{\pap\frac{1}{\Zap}}}^2 + \norm[\Hhalf]{\frac{\sqrt{\Ag}}{\Zap}\pap\frac{1}{\Zap}}^2}} \\
& \lesssim \brac{\norm[\infty]{\bvarap}\norm[2]{\Ztapbar}^2 + \norm[2]{\Zttapbar}\norm[2]{\Ztapbar}}\cbrac{\norm[2]{\Dt\brac{\pap\frac{1}{\Zap}}}^2 + \norm[\Hhalf]{\frac{\sqrt{\Ag}}{\Zap}\pap\frac{1}{\Zap}}^2} \\
& \quad + \brac{\norm[2]{\Ztapbar}^2 + g}\frac{\diff}{\diff t}\cbrac{\norm[2]{\Dt\brac{\pap\frac{1}{\Zap}}}^2 + \norm[\Hhalf]{\frac{\sqrt{\Ag}}{\Zap}\pap\frac{1}{\Zap}}^2} \\
& \lesssim B(t)\Etwo(t)^\half\Ea(t)
\end{split}
\end{align}
Combining this estimate with \eqref{eq:timederivEone} we see that
\begin{align*}
& \frac{\diff}{\diff t} \Ea(t) \\
& = \half \brac{\frac{2\Eone(t) \partial_t \Eone(t) + \partial_t \Etwo(t)}{(\Eone(t)^2 + \Etwo(t))^\half}} \\
& \lesssim B(t)\Ea(t) \\
& \lesssim \Ea(t)^\threebytwo
\end{align*}
thereby proving \eqref{eq:mainEatime}. 

Let us now control $\Ethree(t)$. Let $f = \Dap^2\Ztbar$ and observe that $\Hil f = f$. We see from \eqref{eq:mainDapDapZtbar} that 
\begin{align*}
& \abs{ 2 \Real \int \brac{ \Dt^2 f +i\frac{\Ag}{\Zapabs^2}\pap f }(\Dt\bar{f}) \diff\ap} \\
& \lesssim \brac{\norm[2]{\Rone} + \norm[2]{\Dapabs\brac{\frac{1}{\Zapabs^2}\pap\Jone}}}\norm[2]{\Dt \Dap^2\Ztbar} \\
& \lesssim \Ea(t)^\half\norm[2]{\Dt\Dap^2\Ztbar}^2 + \E(t)\brac{\norm[2]{\Dt\brac{\pap\frac{1}{\Zap}}} + \norm[\Hhalf]{\frac{\sqrt{\Ag}}{\Zap}\pap\frac{1}{\Zap} }}\norm[2]{\Dt\Dap^2\Ztbar} \\
& \quad + \Ea(t)^\half\E(t)\norm[2]{\pap\frac{1}{\Zap}}\norm[2]{\Dt\Dap^2\Ztbar}
\end{align*}
Hence from \eqref{eq:temptimederivfull} we see that
\begin{align*}
&  \abs{ \frac{d}{dt}\cbrac{\norm[2]{\Dt \Dap^2\Ztbar }^2 + \norm[\Hhalf]{\frac{\sqrt{\Ag}}{\Zap} \Dap^2\Ztbar }^2} } \\
& \lesssim \Ea(t)^\half\brac{\norm[2]{\Dt\Dap^2\Ztbar}^2 + \norm[\Hhalf]{\frac{\sqrt{\Ag}}{\Zap}\Dap^2\Ztbar}^2 } \\
& \quad  + \E(t)\brac{\norm[2]{\Dt\brac{\pap\frac{1}{\Zap}}} + \norm[\Hhalf]{\frac{\sqrt{\Ag}}{\Zap}\pap\frac{1}{\Zap} }}\brac{\norm[2]{\Dt\Dap^2\Ztbar} + \norm[\Hhalf]{\frac{\sqrt{\Ag}}{\Zap}\Dap^2\Ztbar}} \\
& \quad + \Ea(t)^\half\E(t)\norm[2]{\pap\frac{1}{\Zap}}\brac{ \norm[2]{\Dt\Dap^2\Ztbar} + \norm[\Hhalf]{\frac{\sqrt{\Ag}}{\Zap}\Dap^2\Ztbar}}
\end{align*}
Therefore we get from \lemref{lem:timederiv} the estimate
\begingroup
\allowdisplaybreaks
\begin{align*}
& \frac{\diff}{\diff t} \Ethree(t) \\
& = \frac{\diff}{\diff t}\cbrac{\brac{\norm[2]{\Ztapbar}^2 + g}\cbrac{\norm[2]{\Dt \Dap^2\Ztbar }^2 + \norm[\Hhalf]{\frac{\sqrt{\Ag}}{\Zap} \Dap^2\Ztbar }^2}} \\
& \lesssim \brac{\norm[\infty]{\bvarap}\norm[2]{\Ztapbar}^2 + \norm[2]{\Zttapbar}\norm[2]{\Ztapbar}}\cbrac{\norm[2]{\Dt \Dap^2\Ztbar }^2 + \norm[\Hhalf]{\frac{\sqrt{\Ag}}{\Zap} \Dap^2\Ztbar }^2} \\*
& \quad + \brac{\norm[2]{\Ztapbar}^2 + g}\frac{\diff}{\diff t}\cbrac{\norm[2]{\Dt \Dap^2\Ztbar }^2 + \norm[\Hhalf]{\frac{\sqrt{\Ag}}{\Zap} \Dap^2\Ztbar }^2} \\
& \lesssim \Ea(t)^\half\Ethree(t) + \Ea(t)\E(t)\Ethree(t)^\half \\
& \lesssim \Ea(t)^\half\E(t)^3
\end{align*}
Hence by combining this estimate with \eqref{eq:timederivEone} and \eqref{eq:timederivEtwo} we get
\begin{align*}
& \frac{\diff}{\diff t} \E(t) \\
& = \onebythree \brac{\frac{3\Eone(t)^2 \partial_t \Eone(t) + \threebytwo\Etwo(t)^\half\partial_t \Etwo(t) + \partial_t \Ethree(t)}{(\Eone(t)^3 + \Etwo(t)^\threebytwo + \Ethree(t))^\twobythree }} \\
& \lesssim \Ea(t)^\half \E(t) \\
& \lesssim \E(t)^\threebytwo
\end{align*}
\endgroup
thereby proving \eqref{eq:mainEtime}. This completes the proof of \thmref{thm:aprioriE}. 

\subsection{Proof of \thmref{thm:existencemain}}\label{sec:mainexitencesection}

In this section we complete the proof of \thmref{thm:existencemain}. Let us first define the notion of solution. The definition of the solution given below is very similar to the definition in \cite{Wu19} though we make some changes. These changes are important as we also prove a blow up criterion in this paper and so need to work with solutions which may exist for a very long time, or even exist globally (this is in contrast to \cite{Wu19}). 
\begin{definition}\label{def:solution}
Let $g \geq 0$ and let the initial data $(\U,\Psi,\Pfrak)(0)$ be given as in the paragraph above \thmref{thm:existencemain}. Let $ T>0$ and let $\U, \Psi: \Pminus \times \sqbrac{0,T} \to  \Csp$ and $\Pfrak : \Pminus \times \sqbrac{0,T} \to  \Rsp$. We say that $(\U,\Psi, \Pfrak)(t)$ solves the Cauchy problem for the system \eqref{eq:EulerRiem} with gravity $g$ in the time interval $[0,T]$ if the following holds:
\begin{enumerate}
\item $\U,\Psi,\Pfrak$ extend continuously to $\overline{\Pminus}\times \sqbrac{0,T}$ and $(\U,\Psi) \in C^1(\Pminus\times (0,T))$. Also for each $t \in [0,T]$ we have $\Pfrak(\cdot,t) \in C^1(\Pminus)$. 

\item $\U(\cdot,t), \Psi(\cdot,t)$ are holomorphic maps for each $t \in [0,T]$, $\Psi_{\zp}(\zp,t) \neq 0$ for $(\zp,t) \in \Pminus\times [0,T]$ and  $\lim_{\zp \to \infty}(\U, \Psi_t,\Psizp, (\pxp - i\pyp) \Pfrak)(\zp,t) \rightarrow (0, 0, 1, ig)$  for any $t\in\sqbrac{0,T}$. Moreover $\frac{1}{\Psizp}$ and $\frac{\Psi_t}{\Psizp}$ extend continuously to $\overline{\Pminus}\times \sqbrac{0,T}$ (and we will continue to denote these extensions as $\frac{1}{\Psizp}$ and $\frac{\Psi_t}{\Psizp}$ respectively). 

\item We have
\begin{align*}
\sup_{t \in [0,T]}\cbrac{\sup_{\yp<0} \norm[H^1(\Rsp, \diff \ap)]{ U(\cdot + i\yp,t)} + \sup_{\yp<0} \norm[H^1(\Rsp, \diff \ap)]{ \frac{1}{\Psizp}(\cdot + i\yp,t) - 1} } < \infty
\end{align*}
\item $(\U,\Psi,\Pfrak)$ solves \eqref{eq:EulerRiem} in $\Pminus\times [0,T]$ with the given initial data and $\Pfrak$ solves \eqref{eq:DeltaPfrak}. 
\end{enumerate}
\end{definition}

The above definition gives a good notion of solution for us to work with. Unfortunately we do not know whether solutions are unique in this very general class. The problem persists even if we assume that $\sup_{t\in [0,T]} \Ecal(t) < \infty$ for the solution. 

The fundamental issue is that even with the condition $\sup_{t\in [0,T]} \Ecal(t) < \infty$, we do not know whether there always exist smooth solutions which approximate the given solution in the above class appropriately. Hence we will now define a subclass of the above solutions which we call smoothly approximable solutions (denoted by $\mathcal{SA}$), for which there does exists smooth solutions which approximate the given solution in an appropriate manner. We only define this class of solutions for $g > 0$, as for $g = 0$ this strategy to prove uniqueness does not work properly. In particular we only prove uniqueness for $g > 0$.  

To give such a definition, as a priori the time of existence of the given solution could be very large (or even global), we only impose the condition of having smooth approximate solutions of the given solution locally in time.  So for any $\widetilde{T} \in [0,T]$, we impose that there exists a small time interval $[T_1, T_2] \subset [0,T]$ containing $\widetilde{T}$, such that there exists a sequence of smooth solutions converging to the given solution in the time interval $[T_1, T_2]$. Note that if $\widetilde{T} \in (0,T)$, then we impose that $T_1 < \widetilde{T} < T_2$. Obviously if $\widetilde{T} = 0$, then we should put $T_1 = 0$ and $T_2 >0$ and similarly when $\widetilde{T} = T$ we impose $T_1 < T$ and $T_2 = T$. This flexibility of being able to work only locally in time is used later on to prove the blow up criterion. Note that the definition used in \cite{Wu19} in contrast imposes global in time convergence of these approximate solutions.

To make precise on how the smooth solutions converge to the given solution, we also need to define the appropriate norm to take the difference. To do this, let $(\Z,\Zt)_a$ and $(\Z,\Zt)_b$ be two solutions of the water wave equation \eqref{eq:systemone}.  Let $h_a, h_b$ be the homeomorphisms from \eqref{eq:h} for the respective solutions and define
\begin{align}\label{eq:htilandUtilunbdd}
\htil = h_b \compose h_a^{-1} \quad \tx{ and } \quad \Util = U_{\htil} = U_{h_a}^{-1}U_{h_b}
\end{align}
While taking the difference of the two solutions, we will subtract in Lagrangian coordinates and then bring it to the conformal coordinate system of solution $A$. The operator $\Util$ takes a function in the conformal coordinate system of $B$ to the conformal coordinate system of $A$. We define 
\begin{align}\label{eq:Deltafunbdd}
\Delta (f) = f_a - \Util(f_b)
\end{align}
For example we have $\Delta(\Z) = \Z_a - \Util(\Z_b)$. We will usually write $\Util(f_b)$ as $\Util(f)_b$ for convenience. Define
\begin{align}\label{eq:Fcal}
\begin{aligned}
& \Fcal_{\Delta}((\Z,\Zt)_a, (\Z,\Zt)_b)(t) \\
& = \norm[\Hhalf]{\Delta(\Zt)} + \norm[\Hhalf]{\Delta(\Ztt)} + \norm[\Hhalf]{\Delta\brac{\frac{1}{\Zap}}} + \norm[2]{\htil_\ap - 1} \\
& \quad + \norm[2]{\Delta(\Dap\Zt)} + \norm[2]{\Delta(\Ag)} + \norm[2]{\Delta(\bvarap)}
\end{aligned}
\end{align}
This is the norm which is used to compute the difference between two solutions and was introduced in \cite{Wu19}. The usefulness of this norm comes from the following result:

\begin{thm}[\cite{Wu19}]\label{thm:Wudiffenergyestimate}
Let $g > 0$, $T>0$ and let $s \geq 4$. Let $(\Z,\Zt)_a$ and $(\Z,\Zt)_b$ be two solutions of the water wave equation \eqref{eq:systemone} in the time interval $[0,T]$ and assume that for both of these solutions we have $(\Zap - 1, \frac{1}{\Zap} - 1, \Zt) \in C^l([0,T], H^{s - l}(\Rsp)\times H^{s - l}(\Rsp) \times H^{s + \half - l}(\Rsp))$ for $l = 0,1$. Define the constant $L>0$ as
\begin{align*}
L & = \sup_{t \in [0,T]} \Ecal_a(t) + \sup_{t \in [0,T]} \Ecal_b(t) + T + g + \frac{1}{g} + \norm[H^1]{(\Zt)_a}(0) + \norm[H^1]{(\Zt)_b}(0) \\
& \quad + \norm[H^1]{\frac{1}{(\Zap)_a} - 1}(0) + \norm[H^1]{\frac{1}{(\Zap)_b} - 1}(0)
\end{align*}
where $\Ecal_a(t), \Ecal_b(t)$ is the energy $\Ecal(t)$ evaluated for the solutions $(\Z,\Zt)_a$ and $(\Z,\Zt)_b$ respectively. Then there exists a constant $C(L)>0$ depending only on $L$ so that
\begin{align*}
\sup_{t \in [0,T]}  \Fcal_{\Delta}((\Z,\Zt)_a, (\Z,\Zt)_b)(t)
\leq C(L) \Fcal_{\Delta}((\Z,\Zt)_a, (\Z,\Zt)_b)(0)
\end{align*}
\end{thm}
\begin{proof}
This is Theorem 3.7 of \cite{Wu19} and was proved there, though there is a slight difference in the formulation of the statement of the theorem. The quantity $\Fcal_{\Delta}(t)$ is exactly the same as the one used in Theorem 3.7 of \cite{Wu19}, however the energy $\Ecal(t)$ used in this paper is different from the one used in \cite{Wu19}. However this is not an issue as our energy $\Ecal(t)$ controls the energy used in \cite{Wu19}. To see this, first observe that from the first part of \lemref{lem:equivSobolevunbdd} we see that $\sup_{t \in [0,T]} E(t) \lesssim_L 1$ for both the solutions and hence by \eqref{eq:SobolevlowerfromEgzero} we get
\begin{align*}
\sup_{t \in [0,T]} \cbrac{ \norm[2]{(\Ztap)_a}(t) + \norm[2]{(\Ztap)_b}(t)+  \norm[\infty]{\frac{1}{(\Zap)_a}}(t) +  \norm[\infty]{\frac{1}{(\Zap)_a}}(t) } \lesssim_L 1
\end{align*}
Furthermore as $g>0$, from the definition of the energy $\Ecal(t)$ and \eqref{eq:DapZtbarandothers} we get
\begin{align*}
 \sup_{t \in [0,T]}\cbrac{\norm[2]{\pap\frac{1}{\Zap}}^2(t) +  \norm[\Hhalf]{\Dap\Ztbar}^2(t) + \norm[2]{\Dap^2\Ztbar}^2(t) + \norm[2]{\Dap^2\frac{1}{\Zap}}^2 + \norm[\Hhalf]{\frac{1}{\Zap}\Dap^2\Ztbar}^2 }  \lesssim_L 1
\end{align*}
for both the solutions. Hence the energy $\Ecal(t)$ as defined in equation (3.13) of \cite{Wu19} is controlled by $L$ and so the theorem now follows from Theorem 3.7 of \cite{Wu19}. 
\end{proof}

We are now ready to define the subclass of smoothly approximable solutions. 

\begin{definition}\label{def:solutionSA}
Let $(\U,\Psi, \Pfrak)(t)$ be a solution to \eqref{eq:EulerRiem} with gravity $g > 0$ in the time interval $[0,T]$ in the sense of \defref{def:solution}. We say that this solution lies in the smoothly approximable class $\mathcal{SA}$ if the following holds:

For any $\widetilde{T} \in [0,T]$, there exists $T_1, T_2$ such that $0\leq T_1 \leq \widetilde{T} \leq T_2 \leq T$ satisfying $T_2 - T_1>0$ and if $\widetilde{T} \in (0,T)$  then we have $T_1 < \widetilde{T} < T_2$,  and a sequence of smooth functions $(\Z^{(n)}, \Zt^{(n)}): \mathbb{R}\times \sqbrac{T_1,T_2} \rightarrow \mathbb{C}$ for $n \geq 1$ satisfying:
\begin{enumerate}[label=(\alph*)]
\item For each $n\in \mathbb{N}$ we have $\Zap^{(n)}(\ap,t) \neq 0$ for all $(\ap,t) \in \Rsp \times [T_1,T_2]$ and $\Bigl(\Zt^{(n)}, \Zap^{(n)} - 1, \frac{1}{\Zap^{(n)} - 1}\Bigr) \in C^l([T_1,T_2], H^{s + \half - l}(\Rsp), H^{s - l}(\Rsp), H^{s - l}(\Rsp))$ for all $s \geq 4$ and $l = 0,1$. 
\item We have 
\begin{align*}
\sup_{n \geq 1, t\in [T_1,T_2]} \cbrac*[\bigg]{\norm*[\big][H^1]{\Zt^{(n)}}(t) + \norm*[\bigg][H^1]{\frac{1}{\Zap^{(n)}} - 1}(t) } < \infty
\end{align*}
\item There exist holomorphic functions $(\U^{(n)}, \Psi^{(n)})(\cdot,t): \Pminus \to \Csp$ whose boundary values are $(\Ztbar^{(n)},\Z^{(n)})(\cdot,t)$ and which satisfy for each $n$ the property $\Psi_{\zp}^{(n)}(\zp,t) \neq 0$ for all $(\zp,t) \in \Pminus\times [T_1,T_2]$. Moreover for each $t \in [T_1,T_2]$ we have that $\lim_{\zp \to \infty}(\U^{(n)}, \Psi_t^{(n)}, \Psizp^{(n)})(\zp,t) = (0, 0, 1)$. Let $\Pfrak^{(n)}:\Pminus\times [T_1,T_2] \to \Rsp$ be the function satisfying 
\begin{align*}
\Delta \Pfrak^{(n)} = -2\abs*{\U_\zp^{(n)}}^2 \quad \tx{ on } \Pminus, \qq \Pfrak^{(n)} = 0 \quad \tx{ on } \partial \Pminus
\end{align*}
along with $(\pxp - i\pyp) \Pfrak^{(n)} \to ig$ as $\zp \to \infty$. Then $(\U^{(n)}, \Psi^{(n)}, \Pfrak^{(n)}, \frac{1}{\Psizp^{(n)}}) \to (\U, \Psi, \Pfrak, \frac{1}{\Psizp})$ uniformly on compact subsets of $\Pminusbar\times [T_1, T_2]$ as $n \to \infty$.

\item For each $n\in \mathbb{N}$, $(\Z^{(n)}, \Zt^{(n)})$ solves the system \eqref{eq:systemone} with gravity $g$ and we have $\sup_{n \in \Nsp, t \in [T_1, T_2]}\cbrac{ \Ecal_n(t) + \norm*[\big][2]{\Ztap^{(n)}}(t) } < \infty$. In addition if we fix the Lagrangian parameterizations for these solutions by imposing $h^{(n)}(\ap, \widetilde{T}) = h(\ap, \widetilde{T})$ for all $\ap \in \Rsp$ and $n \in \Nsp$, then $\sup_{t \in [T_1, T_2]}\Fcal_\Delta((\Z^{(n)}, \Zt^{(n)}), (\Z, \Zt))(t) \to 0$ as $n \to \infty$. 

\end{enumerate}
Furthermore we say that $(\U,\Psi, \Pfrak)(t)$ belongs in the class $\mathcal{SA}$ in the time interval $[0,T)$, if for any $0< T^* < T$ the solution belongs in the class $\mathcal{SA}$ in the time interval $[0, T^*]$. 
\end{definition}



We have the following relations between the different energies:
\begin{lemma}\label{lem:equivSobolevunbdd}
Let $T>0$ and let $(\Z, \Zt )(t)$ be a solution to the gravity water wave equation \eqref{eq:systemone} with gravity parameter $g \geq 0$ in the time interval $[0,T]$ with $(\Zap - 1, \frac{1}{\Zap} - 1, \Zt) \in \Linfty([0,T], H^s(\Rsp)\times H^s(\Rsp)\times H^{s + \half}(\Rsp))$ for some $s \geq 4$. Then we have the following:
\begin{enumerate}
\item There exists a universal constant $M_1>0$ such that $\E(t) \leq M_1\Ecal(t)$ for all $t \in [0,T]$. Moreover there exists a universal increasing function $C_1:[0,\infty) \to [0,\infty)$ such that for $g>0$ we have
\begin{align}\label{eq:controlEcalfromE}
\Ecal(t) \leq C_1\brac{\E(t) + g + \frac{1}{g} + \norm[2]{\Ztapbar}(t)} \quad \tx{for all } t\in [0,T]
\end{align}
and also
\begin{align}\label{eq:controlEcalfromEsup}
\sup_{t \in [0,T]}\Ecal(t) \leq C_1\brac{\sup_{t \in [0,T]}\E(t) + g +  \frac{1}{g} + T  + \norm[2]{\Ztapbar}(0)}
\end{align}


\item Let $c_1 = \norm[2]{\Ztap}^2(0) + g$ and assume that $c_1 > 0$. Then there exists a universal increasing function $C_2:[0,\infty) \to [0,\infty)$ so that for all $g \geq 0$ we have
\begin{align}\label{eq:SobolevlowerfromEgzero}
\begin{aligned}
&\sup_{t \in [0,T]} \cbrac{ \norm[H^1]{\Zt}(t) + \norm[H^1]{\frac{1}{\Zap} - 1} + \frac{1}{\norm[2]{\Ztap}^2(t) + g}} \\
&\leq C_2\brac{\sup_{t \in [0,T]}\E(t) + T + g + \frac{1}{c_1} + \norm[H^1]{\Zt}(0) + \norm[2]{\frac{1}{\Zap} - 1}(0)}
\end{aligned}
\end{align}

\item There exists a universal increasing function $C_3:[0,\infty) \to [0,\infty)$ such that for $g>0$ we have
\begin{align}\label{eq:SobolevfromEcal}
\begin{aligned}
& \sup_{t\in [0,T]}\cbrac{\norm[H^{2.5}]{\Zt}(t) + \norm[H^2]{\Zap - 1}(t) + \norm[H^2]{\frac{1}{\Zap} - 1}(t)} \\
& \leq C_3\brac{\sup_{t \in [0,T]}\E(t) + g + \frac{1}{g} + T + \norm[H^1]{\Zt}(0) + \norm[\infty]{\Zap}(0) + \norm[2]{\frac{1}{\Zap} - 1}(0) }
\end{aligned}
\end{align}

\end{enumerate}
\end{lemma}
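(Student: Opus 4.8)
\textbf{Proof plan for Lemma~\ref{lem:equivSobolevunbdd}.} The plan is to establish the four parts in the order listed, with each part being a translation between the ``geometric'' energy $\Ecal(t)$ (expressed via $\sup_{\yp<0}$ of $\Ltwo$ norms of holomorphic functions in $\Pminus$), the ``conformal'' energy $\E(t)$ (expressed via $\Ztbar$, $\tfrac{1}{\Zap}$ and their derivatives on $\Rsp$), the ``elliptic'' energy $\Ecal_b(t)$, and finally the ordinary Sobolev norms. The one-sided bound $\E(t)\le M_1\Ecal(t)$ in part (1) is the easy direction: since the boundary values of $\pz U$ and $\pz\tfrac{1}{\Psizp}$ are $\Ztapbar$ and $\pap\tfrac{1}{\Zap}$ (up to the usual conformal weights), and since $\Dap=\tfrac{1}{\Zap}\pap$ acts on boundary values in the way that $\tfrac{1}{\Psizp}\pz$ acts on the interior holomorphic functions, each term of $\E(t)$ is bounded by the corresponding $\sup_{\yp<0}$ term of $\Ecal(t)$ using that the $L^2$ norm of a boundary value of a holomorphic function is $\le \sup_{\yp<0}$ of the $L^2$ norms (Lemma~\ref{lem:Hilunbounded}), plus the product/Leibniz estimates of the appendix to handle the $\sqrt{\Ag}$-weighted $\Hhalf$ terms. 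The reverse inequality \eqref{eq:controlEcalfromE} is where the dependence on $g$ and $\tfrac1g$ enters: to recover $\sup_{\yp<0}\norm[2]{\pz U(\cdot+i\yp)}$ etc.\ from boundary data one uses that a holomorphic function's $\sup_{\yp<0}$ norm is comparable to its boundary $L^2$ norm, but the terms of $\Ecal(t)$ carry powers of $\big(\sup_{\yp<0}\norm[2]{\pz U}^2+g\big)$ rather than $\norm[2]{\Ztapbar}^2+g$, so one must bound $\sup_{\yp<0}\norm[2]{\pz U}$ in terms of $\norm[2]{\Ztapbar}$ (again Lemma~\ref{lem:Hilunbounded}) and then compare $\norm[2]{\Ztapbar}^2+g$ with a constant depending on $g$; the appearance of $\tfrac1g$ is because when one divides by $\Ag\ge g$ to isolate $\tfrac{1}{\Zap}$-type terms, a lower bound $\Ag\ge g>0$ is used. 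Then \eqref{eq:controlEcalfromEsup} follows by integrating the a priori estimate \eqref{eq:mainEatime}/\eqref{eq:mainEtime} of Theorem~\ref{thm:aprioriE} to control $\norm[2]{\Ztapbar}(t)$ by $\norm[2]{\Ztapbar}(0)$ plus $T$ times energy quantities, and then invoking \eqref{eq:controlEcalfromE}.

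For part (2), the equivalence $\tfrac{1}{M_2}\Ecal_b(t)\le \E(t)\le M_2\Ecal_b(t)$ should follow by comparing the terms of $\Ecal_b(t)$ in \eqref{eq:Ecalb} with those of $\E(t)$ in \eqref{def:E}. The key observation is that $\pap\big(\tfrac{\Ztapbar}{\Zap^2}\big)$ and $\Dt\big(\pap\tfrac{1}{\Zap}\big)$ differ by lower-order terms: from \eqref{form:DtoneoverZap} and \eqref{eq:DtpaponeoverZap} one has $\Dt\pap\tfrac{1}{\Zap}=\Dap(\bvarap-\Dap\Zt-\Dapbar\Ztbar)+\Dap\Dapbar\Ztbar-\Dap\Zt\,(\pap\tfrac1{\Zap})$, and each of these extra pieces is controlled by $\Ea(t)^{1/2}$ times $\norm[2]{\pap\tfrac1{\Zap}}$ using the estimates of \secref{sec:quantEa}; similarly $\pap\big(\tfrac{\Ag}{\Zap^2}\pap\tfrac1{\Zap}\big)$ matches $\Dt\Dap^2\Ztbar$ up to such errors (this is exactly the discussion in the Remark following \thmref{thm:existencemain}, using $\Dt\pap\tfrac1{\Zap}\approx\w^2\Dap^2\Ztbar$). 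So the proof is a careful bookkeeping exercise: write each $\Ecal_b$ term as the matching $\E$ term plus errors, bound the errors, and note all weights $\norm[2]{\Ztapbar}^2+g$ are identical on both sides, hence universal constants only.

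For part (3), the goal is to recover the full $H^1$ control of $\Zt$ and $\tfrac1{\Zap}-1$ together with an upper bound on $\tfrac{1}{\norm[2]{\Ztap}^2(t)+g}$. The plan is: first, $\E(t)$ (equivalently $\Ecal_b(t)$) controls $\norm[2]{\pap\tfrac1{\Zap}}$ weighted by $\sqrt{\norm[2]{\Ztapbar}^2+g}$, so as long as we have a lower bound $c_1'(t):=\norm[2]{\Ztap}^2(t)+g$ bounded below we get $\norm[2]{\pap\tfrac1{\Zap}}$; the quantity $c_1'(t)$ is controlled below by controlling its time derivative --- $\tfrac{d}{dt}\norm[2]{\Ztap}^2$ is bounded by $\norm[\infty]{\bvarap}\norm[2]{\Ztap}^2+\norm[2]{\Ztap}\norm[2]{\Zttap}$, which by \secref{sec:quantEa} estimates is $\lesssim \Ea(t)^{1/2}\cdot c_1'(t) + \Ea(t)\cdot c_1'(t)^{1/2}$, so a Gronwall argument gives $c_1'(t)\ge c_1 e^{-C\int\Ea^{1/2}}$ minus corrections, i.e.\ a lower bound in terms of $c_1=c_1'(0)$, $T$, and $\sup\E$. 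Then once $\norm[2]{\pap\tfrac1{\Zap}}$ is controlled, $\tfrac1{\Zap}-1=\int$ of it (using $\tfrac1{\Zap}-1\to0$ at infinity and $\Hil(\tfrac1{\Zap}-1)=\tfrac1{\Zap}-1$) is controlled in $L^2$, hence $H^1$; similarly $\norm[2]{\Ztapbar}$ is controlled by the definition of $\Ecal_b$ and $c_1'$, and $\Zt$ in $L^2$ is controlled because $\Ztbar$ is the boundary value of $U$ with $U\to0$ at infinity, giving $\norm[2]{\Zt}\lesssim\norm[2]{\Ztap}$ by a Hardy/Poincar\'e-type inequality (Prop.~\ref{prop:Hardy} in the appendix).

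For part (4), the higher Sobolev bound \eqref{eq:SobolevfromEcal}, the plan is to bootstrap from part (3): once we have $H^1$ control and a lower bound on $\Ag\ge g>0$, the $\Hhalf$-with-weight-$\sqrt{\Ag}$ terms in $\Etwo(t),\Ethree(t)$ become honest $\Hhalf$ (and then $H^{1/2}\hookrightarrow$) bounds on $\pap\tfrac1{\Zap}$ and $\Dap^2\Ztbar$, losing a power of $\tfrac1g$; combined with the $L^2$ bounds on $\Dt\big(\pap\tfrac1{\Zap}\big)$ and $\Dt\Dap^2\Ztbar$ and the commutator relations \eqref{eq:commutator}, one converts material derivatives into $\pap$ derivatives (since $\pt=\Dt-\bvar\pap$ and $\bvarap$ is controlled), thereby getting $\norm[H^2]{\tfrac1{\Zap}-1}$, then $\norm[H^2]{\Zap-1}$ (using $\Zap=1/(\tfrac1{\Zap})$ and that $\tfrac1{\Zap}$ is bounded away from $0$ --- which needs $\norm[\infty]{\Zap}(0)$ to start the bootstrap and propagation of the bound), and finally $\norm[H^{2.5}]{\Zt}$ from $\norm[\Hhalf]{\Dap\Ztbar}\lesssim\E^{1/2}$ plus the $L^2$ control on $\Dap^2\Ztbar$ type quantities. \textbf{The main obstacle} I anticipate is the reverse direction in part (1), specifically tracking exactly how the constants degenerate as $g\to0$ and as $\norm[2]{\Ztapbar}\to\infty$ --- i.e.\ producing the precise form \eqref{eq:controlEcalfromEsup} with an increasing function $C_1$ of $\E$, $g$, $\tfrac1g$, $T$ and $\norm[2]{\Ztapbar}(0)$ --- because one must simultaneously (i) relate the $\sup_{\yp<0}$ interior norms to boundary norms, (ii) absorb the discrepancy between weights $\sup_{\yp<0}\norm[2]{\pz U}^2+g$ and $\norm[2]{\Ztapbar}^2+g$, and (iii) feed in the a priori growth of $\norm[2]{\Ztapbar}$ from Theorem~\ref{thm:aprioriE}, all without circular dependence; by contrast parts (2)--(4) are largely mechanical once the appendix estimates and \secref{sec:quantEa}--\secref{sec:quantE} are in hand.
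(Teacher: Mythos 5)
Most of your outline tracks the paper's actual proof: the easy direction $\E\lesssim\Ecal$ via the boundary formulation and the weighted $\Hhalf$ estimates, the appearance of $1/g$ through the lower bound $\Ag\ge g$ when inverting the weight, the Gronwall bound on $\norm[2]{\Ztapbar}^2+g$ (both for \eqref{eq:controlEcalfromEsup} and for the lower bound $1/(\norm[2]{\Ztap}^2(t)+g)$ in part (3)), the term-by-term comparison $\pap\brac{\frac{\Ztapbar}{\Zap^2}}\approx\Dap^2\Ztbar$, $\pap\brac{\frac{\Ag}{\Zap^2}\pap\frac1{\Zap}}\approx\Ag\Dap^2\frac1{\Zap}$ for part (2), and the propagation of $\norm[\infty]{\Zap}$ through $\Dt\Zap=\Zap(\Dap\Zt-\bvarap)$ in part (4). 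These are all in line with Steps 1--3 and 5 of the paper's argument.

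However, there is a genuine gap in your treatment of the lower-order norms in part (3) (which also feeds into part (4)). You claim that $\norm[2]{\frac1{\Zap}-1}$ is controlled by $\norm[2]{\pap\frac1{\Zap}}$ "using $\frac1{\Zap}-1\to0$ at infinity", and that $\norm[2]{\Zt}\lesssim\norm[2]{\Ztap}$ by a "Hardy/Poincar\'e-type inequality" from Proposition~\ref{prop:Hardy}. Neither statement is true on $\Rsp$: there is no Poincar\'e inequality $\norm[2]{f}\lesssim\norm[2]{f'}$ on the whole line (a slowly varying bump of height one over a long interval has small $\Ltwo$ derivative but huge $\Ltwo$ norm), decay at infinity does not repair this, and Proposition~\ref{prop:Hardy} only yields difference-quotient, BMO and $\Hhalf$ bounds, never $\norm[2]{f}\lesssim\norm[2]{f'}$. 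This is precisely why the right-hand side of \eqref{eq:SobolevlowerfromEgzero} carries the initial-data terms $\norm[H^1]{\Zt}(0)$, $\norm[2]{\frac1{\Zap}-1}(0)$ and the factor $T$: the low norms cannot be recovered from the energy at a fixed time, they must be propagated in time. The paper does this by setting $f(t)=\norm[2]{\frac1{\Zap}-1}^2+\norm[2]{\Zt}^2+1$ and proving a differential inequality $\pt f\lesssim_L f$ (following Lemma 6.2 of \cite{Ag21}), using the equations $\Dt\Zt=\Ztt$, $\Dt\frac1{\Zap}=\frac1{\Zap}(\bvarap-\Dap\Zt)$ together with the $\Linfty$ and $\Ltwo$ bounds on $\bvar$, $\bvarap$, $\Ztt$ already supplied by the energy, and then closing with Gronwall. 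You need to replace your spatial argument by such a time-integration argument; the same remark applies to the final step of part (4), where $\norm[2]{\Zt}$, $\norm[2]{\frac1{\Zap}-1}$ and $\norm[2]{\Zap-1}$ are obtained from \eqref{eq:SobolevlowerfromEgzero} and the propagated bound on $\norm[\infty]{\Zap}$, not from spatial decay.
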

\begin{proof}
\textbf{Step 1:} We first prove that $\E(t) \lesssim \Ecal(t)$. Note that $\Eone(t) \lesssim \Ecal(t)$ as it is the same as $\Ecalone(t)$. Now from \eqref{eq:DapZtbarandothers} we see that $\norm[\infty]{\Dap\Ztbar}^2 \lesssim \Ecal(t)$. Hence from \eqref{eq:DtpaponeoverZap} we see that 
\begin{align*}
\brac{\norm[2]{\Ztapbar} + \sqrt{g}}\norm[2]{\Dt\pap\frac{1}{\Zap}} \lesssim \Ecal(t)
\end{align*}
Now using \propref{prop:Hhalfweight} with $h = \sqrt{\Ag}$, $f = \pap\frac{1}{\Zap}$ and $w = \frac{1}{\Zap}$ we get
\begin{align*}
& \norm[\Hhalf]{\frac{\sqrt{\Ag}}{\Zap}\pap\frac{1}{\Zap}} \\
& \lesssim \norm[\infty]{\sqrt{\Ag}}\norm[\Hhalf]{\Dap\frac{1}{\Zap}} + \norm[2]{\pap\frac{1}{\Zap}}\norm[2]{\pap\brac{\frac{\sqrt{\Ag}}{\Zap}}} + \norm[\infty]{\sqrt{\Ag}}\norm[2]{\pap\frac{1}{\Zap}}^2 \\
& \lesssim \brac{\norm[2]{\Ztapbar} + \sqrt{g}}\norm[\Hhalf]{\Dap\frac{1}{\Zap}} + \norm[2]{\pap\frac{1}{\Zap}}B(t)
\end{align*}
Hence we see that $\Etwo(t)^\half \lesssim \Ecal(t)$. The same argument as above also shows that 
\begin{align*}
& \norm[\Hhalf]{\frac{\sqrt{\Ag}}{\Zap}\Dap^2\Ztbar} \\
& \lesssim \brac{\norm[2]{\Ztapbar} + \sqrt{g}}\norm[\Hhalf]{\frac{1}{\Zap}\Dap^2\Ztbar} + \norm[2]{\Dap^2\Ztbar}B(t)
\end{align*}
Now from \eqref{eq:DapDtDapZtbar}, \eqref{eq:Dap2Zttbar1} and \eqref{eq:Dap2Zttbar2} and from the argument there, it is easy to see that
\begin{align*}
& \norm[2]{\Dt\Dap^2\Ztbar} \\
& \lesssim \norm[2]{\Ag\Dap^2\frac{1}{\Zap}} + \Ea(t)^\half\brac{\norm[2]{\Dt\brac{\pap\frac{1}{\Zap}}} + \norm[\Hhalf]{\frac{\sqrt{\Ag}}{\Zap}\pap\frac{1}{\Zap}}} + \Ea(t)\norm[2]{\pap\frac{1}{\Zap}} \\
 & \lesssim \brac{\norm[2]{\Ztapbar}^2 + g}\norm[2]{\Dap^2\frac{1}{\Zap}} + \Ea(t)^\half\brac{\norm[2]{\Dt\brac{\pap\frac{1}{\Zap}}} + \norm[\Hhalf]{\frac{\sqrt{\Ag}}{\Zap}\pap\frac{1}{\Zap}}} \\
 & \quad + \Ea(t)\norm[2]{\pap\frac{1}{\Zap}} 
\end{align*}
Now from this and previously proved estimates we therefore get $\Ethree(t)^\half \lesssim \Ecal(t)^\threebytwo$. Hence proved. 

\textbf{Step 2:} We now prove \eqref{eq:controlEcalfromE} and \eqref{eq:controlEcalfromEsup}. Clearly $\Ecalone(t) \lesssim \E(t)$ as it is the same as $\Eone(t)$. Now from \eqref{eq:Dap2Ztbarandmore} we see that
\begin{align*}
\brac{\norm[2]{\Ztapbar} + \sqrt{g}}\norm[2]{\Dap^2\Ztbar} \lesssim \Ea(t)^\half \lesssim \E(t)^\half
\end{align*}
Using \propref{prop:Hhalfweight} with $h = \frac{1}{\sqrt{\Ag}}$, $w = \frac{1}{\Zap}$ and $f = \sqrt{\Ag}\pap\frac{1}{\Zap}$ we get
\begingroup
\allowdisplaybreaks
\begin{align*}
& \norm[\Hhalf]{\Dap\frac{1}{\Zap}} \\
& \lesssim \norm[\infty]{\frac{1}{\sqrt{\Ag}}}\norm[\Hhalf]{\frac{\sqrt{\Ag}}{\Zap}\pap\frac{1}{\Zap}} + \norm[2]{\pap\brac{\frac{1}{\Zap\sqrt{\Ag}}}}\norm[2]{\sqrt{\Ag}\pap\frac{1}{\Zap}} \\*
& \quad + \norm[\infty]{\frac{1}{\sqrt{\Ag}}}\norm[2]{\pap\frac{1}{\Zap}}^2\norm[\infty]{\sqrt{\Ag}} \\
& \lesssim \frac{1}{\sqrt{g}}\norm[\Hhalf]{\frac{\sqrt{\Ag}}{\Zap}\pap\frac{1}{\Zap}} + \frac{1}{\sqrt{g}}\brac{\norm[2]{\Ztapbar} + \sqrt{g}}\norm[2]{\pap\frac{1}{\Zap}}^2  + \frac{1}{g}B^2(t)
\end{align*}
\endgroup
Hence we see that $\Ecaltwo(t)^\half \lesssim C_1(\E(t) + g + \frac{1}{g} + \norm[2]{\Ztapbar}(t))$. Now by using the same proof as above we also get the estimate
\begin{align*}
& \norm[\Hhalf]{\frac{1}{\Zap}\Dap^2\Ztbar} \\
& \lesssim \frac{1}{\sqrt{g}}\norm[\Hhalf]{\frac{\sqrt{\Ag}}{\Zap}\Dap^2\Ztbar} + \frac{1}{\sqrt{g}}\brac{\norm[2]{\Ztapbar} + \sqrt{g}}\norm[2]{\pap\frac{1}{\Zap}}\norm[2]{\Dap^2\Ztbar} \\
& \quad + \frac{1}{g} B(t)\brac{\norm[2]{\Ztapbar} + \sqrt{g}}\norm[2]{\Dap^2\Ztbar}
\end{align*}
Hence $\brac{\norm[2]{\Ztapbar} + \sqrt{g}} \norm[\Hhalf]{\frac{1}{\Zap}\Dap^2\Ztbar} \lesssim C_1(\E(t) + g + \frac{1}{g} + \norm[2]{\Ztapbar}(t))$. Finally note that we have already proved the estimate
\begin{align*}
& \norm[2]{\Ag\Dap^2\frac{1}{\Zap}} \\
& \lesssim  \norm[2]{\Dt\Dap^2\Ztbar} + \Ea(t)^\half\brac{\norm[2]{\Dt\brac{\pap\frac{1}{\Zap}}} + \norm[\Hhalf]{\frac{\sqrt{\Ag}}{\Zap}\pap\frac{1}{\Zap}}} + \Ea(t)\norm[2]{\pap\frac{1}{\Zap}}
\end{align*}
From this we easily see that 
\begin{align*}
\brac{\norm[2]{\Ztapbar} + \sqrt{g}}\norm[2]{\Dap^2\frac{1}{\Zap}} \lesssim \frac{1}{g}\E(t)^\threebytwo
\end{align*}
Hence \eqref{eq:controlEcalfromE} now follows. To prove \eqref{eq:controlEcalfromEsup}, we first observe from \lemref{lem:timederiv} that
\begin{align}\label{eq:ddtZtapbarLtwo}
\begin{aligned}
 \frac{\diff}{\diff t} \brac{\norm[2]{\Ztapbar}^2 + g} 
& \lesssim \norm[\infty]{\bvarap}\norm[2]{\Ztapbar}^2 + \norm[2]{\Ztapbar}\norm[2]{\Zttapbar} \\
& \lesssim B(t)  \brac{\norm[2]{\Ztapbar}^2 + g} \\
& \lesssim \Ea(t)^\half  \brac{\norm[2]{\Ztapbar}^2 + g}
\end{aligned}
\end{align}
Hence there exists a universal constant $C_4>0$ such that
\begin{align}\label{eq:ZtapbarL2norm2plusgestimate}
\begin{aligned}
& \exp\cbrac{-C_4\int_0^t \Ea(s)^\half \diff s} \brac{\norm[2]{\Ztapbar}^2(0) + g} \\
& \leq \norm[2]{\Ztapbar}^2(t) + g \\
& \leq \exp\cbrac{C_4\int_0^t \Ea(s)^\half \diff s} \brac{\norm[2]{\Ztapbar}^2(0) + g}
\end{aligned}
\end{align}
Hence combining this with \eqref{eq:controlEcalfromE} proves \eqref{eq:controlEcalfromEsup}.

\textbf{Step 3:} We now prove \eqref{eq:SobolevlowerfromEgzero}. Let 
\begin{align*}
L = \sup_{t \in [0,T]}\E(t) + T + g + \frac{1}{c_1} + \norm[H^1]{\Zt}(0) + \norm[2]{\frac{1}{\Zap} - 1}(0)
\end{align*}
Now from \eqref{eq:ZtapbarL2norm2plusgestimate} we see that
\begin{align*}
\norm[2]{\Ztapbar}^2(t) + g + \frac{1}{\norm[2]{\Ztapbar}^2(t) + g} \lesssim_L 1
\end{align*}
Hence we see that 
\begin{align*}
\sup_{t \in [0,T]}\cbrac{\norm[2]{\Ztapbar}(t) + \norm[\infty]{\Dap\Ztbar}(t) + \norm[2]{\pap\frac{1}{\Zap}}(t)} \lesssim_L 1
\end{align*}
Now let 
\begin{align*}
f(t) = \norm[2]{\frac{1}{\Zap} - 1}^2 + \norm[2]{\Zt}^2 + 1
\end{align*}
Therefore we see that $f(0) \lesssim_L 1$. Now by following the proof of Lemma 6.2 in \cite{Ag21}, we see that $\pt f \lesssim_L f$. Hence \eqref{eq:SobolevlowerfromEgzero} therefore follows. 

\textbf{Step 4:} We now prove \eqref{eq:SobolevfromEcal}. Let 
\begin{align*}
M = \sup_{t \in [0,T]}\E(t) + g + \frac{1}{g} + T + \norm[H^1]{\Zt}(0) + \norm[\infty]{\Zap}(0) + \norm[2]{\frac{1}{\Zap} - 1}(0)
\end{align*}
As $g > 0$, by using the definition of $\Ecal(t)$ and \eqref{eq:controlEcalfromE} we therefore see that
\begin{align}\label{eq:tempestimateCM1}
 \sup_{t \in [0,T]}\cbrac{\norm[2]{\pap\frac{1}{\Zap}}^2(t) + \norm[2]{\Dap^2\Ztbar}^2(t) + \norm[2]{\Dap^2\frac{1}{\Zap}}^2 + \norm[\Hhalf]{\frac{1}{\Zap}\Dap^2\Ztbar}^2 }  \lesssim_M 1
\end{align}
Now we have 
\begin{align*}
\norm[\infty]{\Dap\frac{1}{\Zap}} \lesssim \norm[\infty]{\frac{1}{\Ag}}\norm[\infty]{\Ag\Dap\frac{1}{\Zap}} \lesssim \frac{1}{g}\E(t)
\end{align*}
Hence 
\begin{align*}
\sup_{t \in [0,T]} \norm[\infty]{\Dap\frac{1}{\Zap}} \lesssim_M 1
\end{align*}
Now observe that
\begin{align*}
(\pt + \bvar\pap)\Zap = \Dt\Zap = \Ztap - \bvarap\Zap = \Zap \brac{\Dap\Zt - \bvarap}
\end{align*}
As $\norm[\infty]{\Dap\Zt} + \norm[\infty]{\bvarap} \lesssim \Ea(t)^\half$, we see that for all $t \in [0,T]$ we have
\begin{align*}
\norm[\infty]{\Zap}(0) \lesssim_M \norm[\infty]{\Zap}(t) \lesssim_M \norm[\infty]{\Zap}(0)
\end{align*}
Combining this with \eqref{eq:tempestimateCM1} we easily get
\begin{align*}
\sup_{t \in [0,T]} \cbrac{\norm[H^1]{\pap\frac{1}{\Zap}}(t) + \norm[H^1]{\pap\Zap}(t) + \norm[H^1]{\Ztapbar}(t)} \lesssim_M 1
\end{align*}
Now observe that
\begin{align*}
\frac{1}{\Zap}\Dap^2\Ztbar = (\Dap\Ztbar)\Dap\frac{1}{\Zap} + \frac{1}{\Zap^3}\pap\Ztapbar
\end{align*}
From \propref{prop:Leibniz} we clearly have the estimate
\begin{align*}
\norm[\Hhalf]{ (\Dap\Ztbar)\Dap\frac{1}{\Zap}} \lesssim \norm[\Linfty\cap\Hhalf]{\Dap\Ztbar}\norm[\Linfty\cap\Hhalf]{\Dap\frac{1}{\Zap}} \lesssim_M 1  
\end{align*}
This implies that for all $t \in [0,T]$
\begin{align*}
\norm[\Hhalf]{\frac{1}{\Zap^3}\pap\Ztapbar}(t) \lesssim_M 1
\end{align*}
Hence again by using \propref{prop:Leibniz} we get for all $t \in [0,T]$
\begin{align*}
\norm[\Hhalf]{\pap\Ztapbar}(t) &\lesssim \norm[\infty]{\Zap}^3(t)\norm[\Hhalf]{\frac{1}{\Zap^3}\pap\Ztapbar}(t) +  \norm[\infty]{\Zap}^2(t)\norm[2]{\pap\Zap}(t)\norm[2]{\pap\Ztapbar}(t) \\
& \lesssim_M 1
\end{align*}
So we are left to prove only the lower order estimates, namely proving the estimate $\norm[2]{\Zt}(t) + \norm[2]{\frac{1}{\Zap} - 1}(t) + \norm[2]{\Zap - 1}(t) \lesssim_M 1$ for $t \in [0,T]$. This directly follows from \eqref{eq:SobolevlowerfromEgzero} as $L \leq M$ and from the fact that $\norm[\infty]{\Zap}(t) \lesssim_M 1$. Hence proved. 

\end{proof}

We now note down an existence result in regular enough Sobolev spaces.
\begin{thm}[\cite{Wu97}]\label{thm:existenceSobolevunbdd}
Let $g > 0$ and let $s \geq 4$. Assume that the initial data $(\Z,\Zt)(0)$ satisfies $(\Zap - 1, \frac{1}{\Zap} - 1, \Zt)(0) \in H^{s}(\Rsp)\times H^s(\Rsp) \times H^{s + \half}(\Rsp)$. Then there exists a $T > 0$ such that on $[0,T]$ the initial value problem for \eqref{eq:systemone} has a unique solution $(\Z,\Zt)(t)$ satisfying $(\Zap - 1, \frac{1}{\Zap} - 1, \Zt) \in C^l([0,T], H^{s - l}(\Rsp)\times H^{s - l}(\Rsp) \times H^{s + \half - l}(\Rsp))$ for $l = 0,1$. Moreover if $T^{*}$ is the maximal time of existence, then either $T^* = \infty$ or $T^{*} < \infty$ and 
\begin{align*}
\sup_{t \in [0,T^*)} \cbrac{\norm[H^2]{\Zap - 1}(t) + \norm[H^2]{\frac{1}{\Zap} - 1}(t) + \norm[H^{2 + \half}]{\Zt}(t) } = \infty
\end{align*}
\end{thm}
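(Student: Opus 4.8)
The plan is to obtain this as the conformal-coordinate version of Wu's local wellposedness theorem \cite{Wu97}; the only substantive work is to run the argument around the system \eqref{eq:systemone} and the energies of this paper rather than around Wu's original Lagrangian/arc-length formulation. First I would record the structural fact that makes the $H^s$ theory classical here: for $g>0$ the Taylor sign condition holds with a uniform lower bound. Indeed by \eqref{eq:Ag} and \eqref{eq:Azero}, $\Ag = g + \Azero \geq g > 0$, so \eqref{eq:gradientpressure} gives $-\frac{\partial P}{\partial \hat n}\compose\hinv = \frac{\Ag}{\Zapabs} \geq \frac{g}{\Zapabs}$, and the hypothesis $\frac{1}{\Zap}-1 \in H^s \subset H^2 \hookrightarrow C^{1,\alpha}$ keeps $\Zapabs$ bounded above, hence the right-hand side is bounded below by a positive constant.

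Next I would build a hierarchy of energies $E_n(t)$ for $n = 1,\dots,N$ (with $N$ depending on $s$, namely $N = s+1$) of the form indicated in the Remark following \thmref{thm:existencemain}, $E_n(t) \approx \brac{\norm[2]{\Ztapbar}^2 + g}\cbrac{\norm[2]{\Dt^{(n-1)}\nobrac{\pap\frac{1}{\Zap}}}^2 + \norm[\Hhalf]{\Dt^{(n-2)}\brac{\frac{\sqrt{\Ag}}{\Zap}\pap\frac{1}{\Zap}}}^2}$, together with $\E^{(N)}(t) = \brac{\sum_n E_n(t)^{N/n}}^{1/N}$. Since $g>0$ is fixed and lower-order norms are under control, the weighted $\Hhalf$-norms control honest $\Hhalf$-norms, and arguing as in \lemref{lem:equivSobolevunbdd} (in particular as in the proof of \eqref{eq:SobolevfromEcal}) one sees that $\E^{(N)}(t)$ is comparable, for solutions with the stated regularity, to $\norm[H^{s-1}]{\pap(\Zap - 1)}^2 + \norm[H^{s-1}]{\pap(\frac{1}{\Zap} - 1)}^2 + \norm[H^{s-\half}]{\Ztap}^2$ plus lower-order terms. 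The commutator identities \eqref{eq:commutator}, \eqref{eq:maineqcomDap}, the equations \eqref{eq:paponeoverZap}, \eqref{eq:mainDapDapZtbar} and their repeatedly $\Dt$- and $\Dap$-differentiated forms, together with the commutator, Hardy and $\Hhalf$-product estimates collected in \secref{sec:appendix}, then yield the a priori estimate $\frac{d}{dt}\E^{(N)}(t) \leq C_g\brac{\E^{(N)}(t)}$ for a continuous increasing $C_g$; crucially the estimate is tame, with the top-order quantities entering linearly and their coefficients depending only on the low-order energy $\E(t)$ (controlled by \thmref{thm:aprioriE}) and on $\norm[2]{\Ztapbar}$.

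With the a priori estimates in hand, existence follows by the standard regularization and compactness scheme: mollify the initial data, solve the mollified system (which, given the estimates, is solvable for each fixed mollification parameter by Picard iteration in the relevant Sobolev space, or by Wu's iteration), extract uniform-in-parameter bounds from the a priori estimates on a time interval $[0,T]$ with $T$ depending only on the initial energy, and pass to the limit using standard compactness arguments (Aubin--Lions, Arzel\`a--Ascoli); the limit solves \eqref{eq:systemone}, recovers $\Pfrak$ via \eqref{eq:DeltaPfrak}, and has the stated $C^l$-in-time regularity (the $l=1$ regularity coming from reading $\pt$ of the solution off the equations at the cost of one spatial derivative). Uniqueness is an energy estimate on the difference of two solutions in a low-order norm, e.g.\ the norm $\Fcal_\Delta$ of \eqref{eq:Fcal}, again using $\Ag \geq g$. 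For the blow-up dichotomy: if $T^* < \infty$ and $M := \sup_{[0,T^*)}\brac{\norm[H^2]{\Zap - 1} + \norm[H^2]{\frac{1}{\Zap} - 1} + \norm[H^{2.5}]{\Zt}} < \infty$, then the $C^{1,\alpha}$-embedding and $\Ag \geq g$ bound all lower-order coefficients in the tame estimate by a constant depending on $M$ and $g$, so Gronwall gives $\sup_{[0,T^*)}\E^{(N)}(t) < \infty$, i.e.\ the full $H^s$ norm stays bounded up to $T^*$; restarting the local existence statement from a time arbitrarily close to $T^*$ then contradicts maximality.

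The main obstacle is verifying the tame structure of the high-order a priori estimate in conformal coordinates: one must check that when \eqref{eq:paponeoverZap} and \eqref{eq:mainDapDapZtbar} are differentiated $\Dt$- and $\Dap$-many times, every commutator and every nonlinear interaction can be arranged so that at most one factor carries the top number of derivatives while all remaining factors are absorbed into the lower-order energy. Most of this bookkeeping, and the commutator/product/Hardy estimates it needs, is already available from \cite{Ag21} and \secref{sec:appendix}; the hypothesis $g>0$ enters only to make the weight $\Ag$ uniformly elliptic, which is exactly what reduces the scheme to the classical one of \cite{Wu97}.
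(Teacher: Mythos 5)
Your route is genuinely different from the paper's, and it is worth being clear about what the paper actually does here: it does not reprove this theorem. It observes that, since $\Ag = g + \Azero \geq g > 0$ and, by \eqref{form:Zttbar}, $\Zttbar = -i\brac{\frac{1}{\Zap}-1}\Ag - i\Ag + ig$, the stated hypotheses on $(\Zap-1,\frac{1}{\Zap}-1,\Zt)(0)$ are equivalent to Wu's hypotheses on $(\Zt,\Ztt)(0)$ together with the strict Taylor sign condition; it then quotes Theorem 5.11 of \cite{Wu97} for existence and uniqueness, and obtains the blow-up criterion with only $H^2\times H^2\times H^{2.5}$ control either by modifying Wu's proof or by invoking Theorem 3.6 of \cite{Wu19}. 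You instead propose to rebuild the whole $H^s$ theory in conformal coordinates with a hierarchy of energies $E_n$ and a tame top-order estimate; your structural observation that $\Ag\geq g$ makes the weights uniformly elliptic (so weighted $\Hhalf$ norms control unweighted ones) is exactly the right point, and your Gronwall-plus-restart argument for the blow-up dichotomy is the standard and correct way to get the stated criterion.

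However, as a self-contained proof your proposal has two genuine gaps. First, the construction of solutions: a quasilinear system such as \eqref{eq:systemone} is not solvable by naive Picard iteration even for mollified data; one needs a quasilinearization and an iteration scheme of the kind Wu sets up (or a regularized system with its own wellposedness proof), and writing ``or by Wu's iteration'' concedes that the existence mechanism is still Wu's, at which point the paper's shorter reduction is the efficient argument. Second, the top-order tame estimate for your $\E^{(N)}$ --- which you yourself flag as the main obstacle --- is asserted rather than proved: the paper only carries out the estimates at the level of $\Eone,\Etwo,\Ethree$, and the Remark about higher $E_n$ is itself only a statement, so there is real bookkeeping to verify before the Gronwall step is available at the level $N=s+1$. (A minor slip: the upper bound on $\Zapabs$, needed for $-\frac{\partial P}{\partial \hat{n}}\compose\hinv \geq \frac{g}{\Zapabs} \geq c>0$, comes from $\Zap-1\in H^s$, not from $\frac{1}{\Zap}-1\in H^s$, which gives the lower bound; both are assumed, so the conclusion stands.) If you fill these two steps you obtain an independent proof; otherwise the theorem should be treated as what the paper treats it as, namely a translation of \cite{Wu97} (with \cite{Wu19} for the sharpened blow-up criterion) into the variables $(\Zap,\Zt)$.
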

\begin{proof}
The existence part of the result is just a reformulation of Theorem 5.11 of \cite{Wu97}. The result written there is for $g = 1$ but the same result holds for $g>0$. Also the assumptions on the initial data are given there in terms of $\Zt$ and $\Ztt$, but from \eqref{form:Zttbar} we see that
\begin{align*}
\Zttbar = -i\brac{\frac{1}{\Zap} - 1}\Ag -i\Ag + ig
\end{align*} 
Now from \eqref{eq:systemone} we see that $\Ag = \g - \Imag \sqbrac{\Zt,\Hil}\Ztapbar \geq g >0$. From this it is easy to see that $(\Zap - 1, \frac{1}{\Zap} - 1, \Zt)(0) \in H^{s}(\Rsp)\times H^s(\Rsp) \times H^{s + \half}(\Rsp)$ is equivalent to $(\Zt,\Ztt)(0) \in H^{s + \half}(\Rsp) \times H^s(\Rsp)$ along with $-\frac{\partial P}{\partial \hat{n}} \compose \hinv(0) \geq  c >0$ for some $c>0$ (Recall from \eqref{eq:gradientpressure} that $-\frac{\partial P}{\partial \hat{n}} \compose \hinv  = \frac{\Ag}{\Zapabs}$). The blow up criterion written here is not directly implied by Theorem 5.11 of \cite{Wu97} as there the blow up criterion is in terms of one higher derivative. However by modifying the proof there one easily gets the above blow up criterion. Another way to see the blow up criterion is that it is implied by the blowup criterion of Theorem 3.6 in \cite{Wu19}. 
\end{proof}

We are now ready to prove \corref{cor:uniftimeSobolev} and \thmref{thm:existencemain}. 

\begin{proof}[Proof of \corref{cor:uniftimeSobolev}]

As the initial data $(\Z,\Zt)(0)$ satisfies $(\Zap - 1, \frac{1}{\Zap} - 1, \Zt)(0) \in H^{s}(\Rsp)\times H^s(\Rsp) \times H^{s + \half}(\Rsp)$ for some $s \geq 4$, by \thmref{thm:existenceSobolevunbdd} there exists a time $T_g>0$ depending on the gravity parameter $g$ such that the we have a unique solution $(\Z,\Zt)(t)$ in the time interval $[0,T_g]$ to the initial value problem for \eqref{eq:systemone} with the solution satisfying $(\Zap - 1, \frac{1}{\Zap} - 1, \Zt) \in C^l([0,T], H^{s - l}(\Rsp)\times H^{s - l}(\Rsp) \times H^{s + \half - l}(\Rsp))$ for $l = 0,1$. Moreover again by \thmref{thm:existenceSobolevunbdd}, if $T^*_g>0$ is the maximal time of existence, then either $T^*_g = \infty$ or $T^*_g < \infty$ and 
\begin{align*}
\sup_{t \in [0,T^*_g)} \cbrac{\norm[H^2]{\Zap - 1}(t) + \norm[H^2]{\frac{1}{\Zap} - 1}(t) + \norm[H^{2 + \half}]{\Zt}(t) } = \infty
\end{align*}
Now recall that the energy $\Ecal(t)$ defined in \secref{sec:mainresults} for smooth enough data was given by
\begin{align*}
\begin{aligned}
\Ecalone(t) & = \brac{\norm[2]{\Ztapbar}^2 + g}\norm[2]{\pap\frac{1}{\Zap}}^2\\
\Ecaltwo(t) & =  \brac{\norm[2]{\Ztapbar}^2 + g}\norm[2]{\Dap^2\Ztbar}^2 + \brac{\norm[2]{\Ztapbar}^2 + g}^2\norm[\Hhalf]{\Dap\frac{1}{\Zap}}^2 \\
\Ecalthree(t) & = \brac{\norm[2]{\Ztapbar}^2 + g}^3\norm[2]{\Dap^2\frac{1}{\Zap}}^2 +  \brac{\norm[2]{\Ztapbar}^2 + g}^2\norm[\Hhalf]{\frac{1}{\Zap}\Dap^2\Ztbar}^2
\end{aligned}
\end{align*}
and $\Ecal(t)  = \brac{\Ecalone(t)^3 + \Ecaltwo(t)^{\frac{3}{2}} + \Ecalthree(t)}^{\frac{1}{3}}$. From the assumptions of our initial data, we see that there exists a constant $L>0$ such that $\Ecal(0) \leq L$ for all $0< g\leq g_0$. Hence by the first part of \lemref{lem:equivSobolevunbdd} we see that $E(0) \leq M_1L$ for all $0< g\leq g_0$, where $M_1>0$ is a universal constant. Now by \eqref{eq:mainEtime} of \thmref{thm:aprioriE} we know that for all $t \in [0,T^*_g)$ we have
\begin{align*}
\frac{d\E(t)}{dt} \lesssim \E(t)^{3/2}
\end{align*}
As $E(0) \leq M_1L$ for all $0< g\leq g_0$, therefore there exists $M ,T >0$ both independent of $g$ so that for all $0< g \leq g_0$ we have
\begin{align*}
\sup_{t \in [0, \min\cbrac{T, T^*_g})} E(t) \leq M
\end{align*}
Hence by \eqref{eq:SobolevfromEcal} of \lemref{lem:equivSobolevunbdd}, we see that any fixed $0< g \leq g_0$ we have
\begin{align*}
\sup_{t \in [0,\min\cbrac{T, T^*_g})} \cbrac{\norm[H^2]{\Zap - 1}(t) + \norm[H^2]{\frac{1}{\Zap} - 1}(t) + \norm[H^{2 + \half}]{\Zt}(t) } < \infty
\end{align*}
This implies that $T^*_g \geq T$ thereby completing the proof of \corref{cor:uniftimeSobolev}. 

\end{proof}

\begin{proof}[Proof of \thmref{thm:existencemain}]
Let $0<\ep < 1$. Define 
\begin{align*}
\Psiep(\zp,0) = \Psi(\zp - i\ep, 0), \quad \Uep(\zp,0) = \U(\zp - i\ep,0) \quad \tx{ and }  \quad \hep(\al, 0) = \al 
\end{align*}
As $\U(\cdot,0)$ and $\Psi(\cdot,0)$ are holomorphic functions in $\Pminus$, we see that the functions $\Zep(\cdot,0)$ and $\Ztbarep(\cdot,0)$ defined by
\begin{align*}
\Zep(\ap,0) = \Psiep(\ap,0) \quad \tx{ and } \quad \Ztepbar(\ap,0) = \Uep(\ap,0)
\end{align*}
are smooth functions on $\Rsp$ for all $0<\ep < 1$. Moreover as $\Psizp \neq 0$ in $\Pminus$, we see that $\Zapep(\ap,0) \neq 0$ for all $\ap \in \Rsp$ and $0<\ep < 1$. Also from the initial data it is clear that for all $0<\ep < 1$ we have $\Zapep(\ap,0) \to 1$ and $\frac{1}{\Zapep}(\ap,0) \to 1$ as $\abs{\ap} \to \infty$. We also see from the initial data \eqref{eq:czero} that for all $0<\ep < 1$ we have
\begin{align}\label{eq:ZtZlowerepest}
\norm[H^1]{\Ztbarep}(0) + \norm[H^1]{\frac{1}{\Zapep} - 1}(0) \leq c_0
\end{align}

\medskip
\noindent\textbf{Step 1:} We first consider the case of $g>0$. We observe that $(\Zep, \Ztep)(0)$ satisfies the assumptions of \thmref{thm:existenceSobolevunbdd} and therefore there exists a time $T_\ep >0$ such that the initial value problem to \eqref{eq:systemone} has a unique smooth solution $(\Zep, \Ztep)(t)$ in the time interval $[0,T_\ep]$ so that for all $s \geq 4$ we have 
\begin{align}\label{eq:ZapZtSobolevlocalextest}
\sup_{t \in [0,T_\ep]} \cbrac{\norm[H^s]{\Zapep - 1}(t) + \norm[H^s]{\frac{1}{\Zapep} - 1}(t) + \norm[H^{s + \half}]{\Ztep}(t) } < \infty
\end{align}
Now from the definition of $\Ecal(t)$, it is clear that $\Ecal(\Zep, \Ztep)(0) \leq \Ecal(0)$. Also from \lemref{lem:equivSobolevunbdd} there exists a universal constant $M>0$ such that $\E(\Zep,\Ztep)(0) \leq M \Ecal(\Zep, \Ztep)(0) \leq M \Ecal(0)$. Hence from the same argument as in \corref{cor:uniftimeSobolev} we see that there exists $T, C_1>0$ depending only on $\Ecal(0)$ with $T\gtrsim \frac{1}{\sqrt{\Ecal(0)}}$ so that the smooth solution $(\Zep, \Ztep)(t)$ exists in $[0,T]$ and we have $\sup_{t \in [0,T]}\E(\Zep,\Ztep)(t) \leq C_1$. Now from \eqref{eq:controlEcalfromEsup} we see that there exists a constant $C_2$ depending only on $\Ecal(0), c_0$ and $g$ so that $\sup_{t \in [0,T]} \Ecal(\Zep,\Ztep)(t) \leq C_2$. Let $\Dapep = \frac{1}{\Zapep}\pap$ and $\abs{\Dapep} = \frac{1}{\abs*{\Zapep}}\pap$. Now from \eqref{eq:ZtapbarL2norm2plusgestimate} and the definition of $\Ecal(t)$ we see that there exists a constant $C_3>0$ depending only on $\Ecal(0), c_0$ and $g$ so that for all $t \in [0,T]$ we have
\begin{align*}
C_3 & \geq \norm[2]{\Ztapep}^2(t) + \norm[2]{\pap\frac{1}{\Zapep}}^2(t) + \norm[2]{(\Dapep)^2 \Ztbarep}^2(t) + \norm[2]{(\Dapep)^2\frac{1}{\Zapep}}^2(t) \\
& \quad + \norm[\Hhalf]{\frac{1}{\Zapep}(\Dapep)^2\Ztbarep}^2(t) 
\end{align*}
By using $\Hil(\Dapep \Ztbarep) = \Dapep \Ztbarep$ we also get
\begin{align*}
\norm[\Hhalf]{\Dapep\Ztbarep}^2 = i\int (\Dapep\Ztbarep)\brac{\pap\Dapep \Ztbarep} \diff\ap \leq \norm[2]{\Ztapep} \norm[2]{(\Dapep)^2\Ztbarep} \leq C_3
\end{align*}
Now as $g>0$, we see from \eqref{eq:SobolevlowerfromEgzero} that there exists a constant $C_4$ depending only on $\Ecal(0)$, $c_0$ and $g$ so that for all $t\in [0,T]$ we have
\begin{align*}
\norm[2]{\Ztep}(t) + \norm[2]{\frac{1}{\Zapep} - 1}(t) \leq C_4
\end{align*}
Hence we can now simply follow the proof of Theorem 3.9 of \cite{Wu19} to see that we have a solution $(\U,\Psi,\Pfrak)(t)$ to \eqref{eq:EulerRiem} in the time interval $[0,T]$ with $\sup_{t \in [0,T]}\Ecal(t) \leq C_2$. From the proof of Theorem 3.9 of \cite{Wu19}, we easily see that this solution satisfies essentially all the properties of the class $\mathcal{SA}$, except the final condition $(d)$ in \defref{def:solutionSA}. Now by construction we see that $\Fcal_\Delta((\Z^{(\ep)}, \Zt^{(\ep)}), (\Z, \Zt))(0) \to 0$ as $\ep \to 0$, and hence now by \thmref{thm:Wudiffenergyestimate}, we see that  $\sup_{t \in [T_1, T_2]}\Fcal_\Delta((\Z^{(\ep)}, \Zt^{(\ep)}), (\Z, \Zt))(t) \to 0$ as $\ep \to 0$. Hence the solution $(\U,\Psi,\Pfrak)(t)$ lies in the class $\mathcal{SA}$. 

For uniqueness, suppose we have two distinct solutions $(\U^a,\Psi^a, \Pfrak^a)(t)$ and $(\U^b,\Psi^b, \Pfrak^b)(t)$ in the class $\mathcal{SA}$ with the same initial data. Define the time
\begin{align*}
T_u = \inf\cbrac{t \in [0,T] \suchthat \exists \ap \in \Rsp, \tx{ such that } (\Z^a,\Zt^a)(\ap, t) \neq (\Z^b,\Zt^b)(\ap, t)}
\end{align*}
By the continuity of the functions $\Z, \Zt$ we see that $T_u \in [0, T)$ and that the two solutions are equal at time $T_u$.  Now from the definition of the class $\mathcal{SA}$, we see that there exists $T_u < T_2\leq T$ such that on the interval $[T_u, T_2]$ there exists smooth solutions $(\Z^{a, (n)}, \Zt^{a, (n)})$ and $(\Z^{b, (n)}, \Zt^{b, (n)})$ converging to $(\Z^a,\Zt^a)$ and $(\Z^b,\Zt^b)$ respectively in the norm of $\Fcal_\Delta$. Now observe that if a sequence of smooth solutions $(\Z^{(n)}, \Zt^{(n)})$ solves the system \eqref{eq:systemone} with gravity $g>0$ in the time interval $[T_1, T_2]$,  $\sup_{n \in \Nsp, t \in [T_1, T_2]}\cbrac{ \Ecal_n(t) + \norm*[\big][2]{\Ztap^{(n)}}(t) } \leq C$, then by using the definition of the energy $\Ecal(t)$ and the argument used above, we see that for all $n \in \Nsp$ and $t \in [T_1, T_2]$ we have
\begin{align*}
1 & \gtrsim_{C, g}  \norm[2]{\Ztap^{(n)}}^2(t) + \norm[2]{\pap\frac{1}{\Zap^{(n)}}}^2(t) + \norm[2]{(\Dap^{(n)})^2 \Ztbar^{(n)}}^2(t) + \norm[2]{(\Dap^{(n)})^2\frac{1}{\Zap^{(n)}}}^2(t) \\
& \quad + \norm[\Hhalf]{\Dap^{(n)}\Ztbar^{(n)}}^2(t) +  \norm[\Hhalf]{\frac{1}{\Zap^{(n)}}(\Dap^{(n)})^2\Ztbar^{(n)}}^2(t) 
\end{align*}
where $\Dap^{(n)} = \frac{1}{\Zap^{(n)}}\pap$. Hence we can now directly apply \thmref{thm:Wudiffenergyestimate} to see that we have $\sup_{t \in [T_u, T_2]}\Fcal_\Delta((\Z^{a, (n)}, \Zt^{a, (n)}), (\Z^{b, (n)}, \Zt^{b, (n)}))(t) \to 0$ as $n \to \infty$ thereby proving that $(\U^a,\Psi^a, \Pfrak^a)(t) = (\U^b,\Psi^b, \Pfrak^b)(t)$ for all $t \in [T_u , T_2]$. This proves uniqueness.

\medskip
\noindent\textbf{Step 2:} Let us now consider the case of $g = 0$. Observe that if $\norm[2]{\Ztapbar}(0) = 0$, then as $\Zt(0) \in H^1(\Rsp)$, we see that $\Zt(\ap ,0 ) = 0$ for all $\ap \in \Rsp$. As $g = 0$, this implies that the trivial solution namely $\Z(\ap, t) = \Z(\ap,0)$ and $\Zt(\ap,t) = 0$ for all $\ap \in \Rsp$, $t \in [0, \infty)$, is a global solution to the water wave equation. Observe that in this case we have $\Ecal(t) = 0$ for all $t \geq 0$. Hence assume that $\norm[2]{\Ztapbar}(0) = c_2 >0$. We mollify the initial data in the same way as in the $g>0$ case and so we still have \eqref{eq:ZtZlowerepest}. It is clear that $\norm*[\big][2]{\Ztapepbar}(0) \to c_2$ as $\ep \to 0$, and so let $0<\ep_0 \leq \min\cbrac{c_2^2, 1} $ be small enough so that for all $0<\ep\leq \ep_0$ we have $c_2/2 \leq \norm*[\big][2]{\Ztapepbar}(0) \leq c_2$. Let $g_\ep = \ep$. Hence for the initial data $(\Zep, \Ztep)(0)$ we have a unique smooth solution to  \eqref{eq:systemone} with gravity $g_\ep$ in a time interval $T_\ep>0$ satisfying \eqref{eq:ZapZtSobolevlocalextest}. By the choice of $\ep_0$ and $g_\ep$, it is clear that  for all $0<\ep \leq \ep_0$ we have
\begin{align*}
\frac{c_2^2}{4} \leq \norm[2]{\Ztapepbar}^2(0) + g_\ep \leq 2c_2^2
\end{align*}
and hence $\Ecal(\Zep, \Ztep)(0) \lesssim \Ecal(0)$. Hence from the same argument as above for $g > 0$, we see that there exists $T, C_5>0$ depending only on $\Ecal(0)$ with $T\gtrsim \frac{1}{\sqrt{\Ecal(0)}}$ so that the smooth solution $(\Zep, \Ztep)(t)$ exists in $[0,T]$ and we have $\sup_{t \in [0,T]}\E(\Zep,\Ztep)(t) \leq C_5$. 

Now from  \eqref{eq:SobolevlowerfromEgzero} we see that for all $t \in [0,T]$ and $0<\ep \leq \ep_0$ we have
\begin{align}\label{eq:Ztlowerepczeroztwo}
\norm[H^1]{\Ztep}(t) + g_\ep + \norm[H^1]{\frac{1}{\Zapep} - 1}(t) + \frac{1}{\norm*[\big][2]{\Ztapepbar}^2(t) + g_\ep}  \lesssim_{c_0, c_2, \Ecal(0)} 1
\end{align}
Let $\bvar^\ep$ and $\Ag^\ep$ be given by \eqref{eq:systemone} with $\Zt$, $\Zap$ and $g$ replaced by $\Ztep$, $\Zapep$ and $g_\ep$ respectively. Let $\Dt^\ep = \pt + \bvar^\ep\pap$.  From the quantities controlled in \secref{sec:quantEa} and we see that for all $t \in [0,T]$ and $0<\ep\leq \ep_0$ we have
\begin{align*}
\norm[\infty]{\Ag^\ep}(t) + \norm[\infty]{\Dapep\Ztep}(t) + \norm[\infty]{\bap^\ep}(t) + \norm[\infty]{\frac{\Dt^\ep\Ag^\ep}{\Ag^\ep}}(t) + \norm[2]{\frac{1}{\sqrt{\Ag^\ep}}\abs{\Dap^\ep}\Ag^\ep }(t) \lesssim_{c_2, \Ecal(0)} 1 
\end{align*}
Now from \eqref{form:DtoneoverZap} we see that 
\begin{align*}
\norm[\infty]{\Dt^\ep \frac{1}{\Zapep}} \lesssim \norm[\infty]{\frac{1}{\Zapep}}\brac{\norm[\infty]{\Dap^\ep\Ztep} + \norm[\infty]{\bvarap^\ep}}
\end{align*}
and from \eqref{form:Zttbar} we obtain
\begin{align*}
\norm[\infty]{\Dt^\ep \Zttep} \lesssim \norm[\infty]{\Ag^\ep}\norm[\infty]{\Dt^\ep \frac{1}{\Zapep}} + \norm[\infty]{\frac{1}{\Zapep}}\norm[\infty]{\Ag^\ep} \norm[\infty]{\frac{\Dt^\ep\Ag^\ep}{\Ag^\ep}}
\end{align*}
Hence we see that for all $t \in [0,T]$ and $0<\ep \leq \ep_0$ we have
\begin{align*}
1 & \gtrsim_{c_0, c_2, \Ecal(0)} \norm[\infty]{\Ztep}(t) + \norm[2]{\Ztapep}(t) + \norm[\infty]{\Zttep}(t) + \norm[2]{\Zttap^\ep}(t) + \norm[\infty]{\Dt^\ep\Zttep}(t)  \\
& \qquad + \norm[\infty]{\frac{1}{\Zapep}}(t) + \norm[2]{\pap\frac{1}{\Zapep}}(t) + \norm[\infty]{\Dt^\ep\frac{1}{\Zapep}}(t)
\end{align*}
Now from \eqref{eq:systemone} it is easy to see that 
\begin{align*}
\norm[H^1]{\bvar^\ep} \lesssim \norm[H^1]{\Ztep}\norm[H^1]{\frac{1}{\Zapep} - 1}
\end{align*}
From \eqref{eq:systemone}, the relation $\Hil \Zt = - \Zt$ and the fact that $\frac{1}{\Zapep} \to 1$ as $\abs{\ap} \to \infty$, we see that
\begin{align*}
\bvar^\ep = \Real\sqbrac{\Ztep,\Hil}\brac*[\bigg]{\frac{1}{\Zapep} - 1} + 2\Real(\Ztep)
\end{align*}
Now using \propref{prop:tripleidentity} we see that
\begin{align*}
\norm[\infty]{\Dt^\ep\bvar^\ep} & \lesssim \norm[\infty]{\Zttep} + \norm[\infty]{\sqbrac{\Zttep, \Hil}\brac*[\bigg]{\frac{1}{\Zapep} - 1} } + \norm[\infty]{\sqbrac{\Ztep, \Hil}\pap\cbrac{\bvar^\ep\brac*[\bigg]{\frac{1}{\Zapep} - 1} }} \\
& \quad + \norm[\infty]{\sqbrac{\bvar^\ep, \Ztep ; \brac*[\bigg]{\frac{1}{\Zapep} - 1}}}
\end{align*}
Therefore  from \propref{prop:commutator} and \propref{prop:triple} we obtain
\begin{align*}
\norm[\infty]{\Dt^\ep\bvar^\ep} & \lesssim \norm[\infty]{\Zttep} + \norm[2]{\Zttap^\ep}\norm[2]{\frac{1}{\Zapep} - 1} + \norm[2]{\Ztap^\ep}\norm[H^1]{\bvarap^\ep}\norm[H^1]{\frac{1}{\Zapep} - 1} 
\end{align*}
Hence we see that for all $t \in [0,T]$ and $0<\ep \leq \ep_0$ we have
\begin{align*}
 \norm[\infty]{\bvar^\ep}(t) + \norm[2]{\bvarap^\ep}(t) + \norm[\infty]{\Dt^\ep\bvar^\ep}(t) \lesssim_{c_0, c_2, \Ecal(0)} 1
\end{align*}
Now we can follow the proof of Theorem 3.9 of \cite{Wu19} to see that there exists a solution $(\U,\Psi,\Pfrak)(t)$ to \eqref{eq:EulerRiem} with zero gravity in the sense of \defref{def:solution}. 

\medskip
\noindent\textbf{Step 3:} In Step 3 and Step 4 we prove the blowup criterion for $g>0$. Let $(\U,\Psi, \Pfrak)(t)$ be a solution to \eqref{eq:EulerRiem} with gravity $g$ in $[0,T]$ in the class $\mathcal{SA}$ with $\sup_{t \in [0,T]}\Ecal(t) < \infty$. To prove the blowup criterion we need to prove a version of \eqref{eq:mainEatime}, so first we need to be able to make sense of the energy $\Ea(t)$ for this singular solution. Fix $t \in [0,T]$ and as usual let  $\Ztbar(\ap,t) = \U(\ap,t)$ and $\frac{1}{\Zap}(\ap,t) = \frac{1}{\Psizp}(\ap,t)$ be the boundary values of $\U$ and $\frac{1}{\Psizp}$. From the definition of the class  $\mathcal{SA}$, we see that $(\Ztbar, \frac{1}{\Zap} - 1)(\cdot,t) \in H^1(\Rsp)\times H^1(\Rsp)$. Hence all quantities in $\Ea(t)$ except for $\Dt\pap\frac{1}{\Zap}$ are well defined. We now give an equivalent definition of $\Dt\pap\frac{1}{\Zap}$ which makes sense for this singular solution and which agrees with the standard definition for smooth solutions. 

For smooth solutions, a straightforward calculation using \eqref{form:DtoneoverZap} gives us
\begin{align}\label{eq:DtpapfraconeoverZapnew}
\begin{aligned}
\Dt \pap\frac{1}{\Zap} & = -\wbar^2\brac{\pap\frac{1}{\Zap}}\Dapbar\Zt -2\w^2\brac{\pap\frac{1}{\Zap}}\Dap\Ztbar + \brac{\pap\frac{1}{\Zapbar}}\Dap\Ztbar \\
& \quad \wbar\Dapabs\brac{\bvarap - \Dap\Zt - \Dapbar\Ztbar} + \w^2\pap\brac{\frac{1}{\Zap}\Dap\Ztbar}
\end{aligned}
\end{align}
where from \eqref{eq:DapabsbapDapZtDapbarZtbar} and \eqref{eq:IdminusHDapbvarapDapZt} we see that
\begin{align}\label{eq:DapabsbvarDapZtnew}
\begin{aligned}
& \Dapabs\brac{\bvarap - \Dap\Zt - \Dapbar\Ztbar} \\
& = \Real \cbrac{\w(\Id - \Hil)\Dap(\bvarap -\Dap\Zt - \Dapbar\Ztbar)}  \\
& \quad  -\Real\cbrac{\w\sqbrac{\frac{1}{\Zap},\Hil}\pap(\bvarap -\Dap\Zt - \Dapbar\Ztbar)} \\
& = \Real\cbrac{\w(\Id - \Hil)\brac{ - \Dap\Dapbar\Ztbar + (\Dap\Zt)\brac{\pap\frac{1}{\Zap}}}} \\
& \quad + \Real\cbrac{\w\sqbrac{\Pa\brac{\frac{\Zt}{\Zap}}, \Hil}\pap\brac{\pap\frac{1}{\Zap}} } \\
& \quad  -\Real\cbrac{\w\sqbrac{\frac{1}{\Zap},\Hil}\pap(\bvarap -\Dap\Zt - \Dapbar\Ztbar)} 
\end{aligned}
\end{align}
Here from \eqref{form:bvarapnew} and the fact that $\bvarap$ is real valued we get
\begin{align}\label{eq:bvarapminusDapZtnew}
\bvarap - \Dap\Zt - \Dapbar\Ztbar = \Real\cbrac{\sqbrac{\frac{1}{\Zap},\Hil}\Ztap  + \sqbrac{\Zt,\Hil}\brac*[\Big]{\pap \frac{1}{\Zap}}}
\end{align}
and a straightforward computation gives us
\begin{align}\label{eq:DapDapbarZtbarnew}
\Dap\Dapbar\Ztbar = -2\w^2\brac{\pap\frac{1}{\Zap}}\Dap\Ztbar + \brac{\pap\frac{1}{\Zapbar}}\Dap\Ztbar + \w^2\pap\brac{\frac{1}{\Zap}\Dap\Ztbar}
\end{align}
With these equation we can now make sense of $\Dt\pap\frac{1}{\Zap}$ for a solution with $\Ecal(t) < \infty$. To see this, observe that $\frac{1}{\Zap}\Dap\Ztbar$ is the boundary value of $\frac{1}{\Psizp^2}\pzp \U$ and by the using the fact that $\Ecal(t) < \infty$ we see that $\frac{1}{\Zap}\Dap\Ztbar(\cdot,t ) \in H^1(\Rsp)$. Now define
\begin{align*}
S(t) = \cbrac{\ap \in \Rsp \suchthat \frac{1}{\Psizp}(\ap,t) = 0} \quad \tx{ and } \quad NS(t) = \Rsp\backslash S(t)
\end{align*}
Now as shown in \cite{Ag20}, we see that $S(t)$ is a closed bounded set of measure zero and hence $\w$ is a well defined function on $NS(t)$ with $\abs{\w} = 1$. It was also shown there that $\frac{1}{\Psizp} \U_z$ and $\frac{1}{\Psizp} \Ubar_z$ extend continuously to the boundary and hence $\Dap\Ztbar$ and $\Dap\Zt$ are well defined bounded continuous functions. From the definition of the class $\mathcal{SA}$, we clearly have $\Ztapbar(\cdot,t) , \pap\frac{1}{\Zap}(\cdot,t) \in \Ltwo(\Rsp)$ and it is also easy to see that the estimate \eqref{eq:papPaZtZap} holds for these solutions. Hence all terms are now justified and hence $\Dt\pap\frac{1}{\Zap}(\cdot,t)$ is a well defined function in $\Ltwo(\Rsp)$. 
\medskip

\noindent\textbf{Claim:}
Consider a solution in time $[0,T]$ in the class $\mathcal{SA}$ with $\sup_{t \in [0,T]} \Ecal(t) < \infty$ and let $M>0$ be such that
\begin{align*}
\sup_{t \in [0,T]} \brac{\norm[\Linfty\cap\Hhalf]{\Dap\Ztbar}(t)  +  \brac{\norm[2]{\Ztapbar}(t) + \sqrt{g}}\norm[2]{\pap\frac{1}{\Zap}}(t)} \leq M
\end{align*} 
Then there exists a universal constant $C>0$ such that for all $t \in [0,T]$ we have $\Ea(t) \leq e^{CMt} \Ea(0)$.

Let us now prove this claim. Assume that $L>0$ is such that $\sup_{t \in [0,T]} \Ecal(t) \leq L$. Assume that this claim is proved for all $0\leq t \leq T_1$ and we will now prove it for $t \in [T_1 , T_1 + \delta]$ where $\delta$ depends only on $L$ and $M$. Let $(\Z^\ep,\Zt^\ep)(\ap,T_1) = (\Psi, \Ubar)(\ap - i\ep, T_1) $ and consider the smooth solutions $(\Z^\ep, \Zt^\ep)(t)$ for $t \geq T_1$. From the definition of $\Ecal(t)$, it is clear that $\Ecal(\Z^\ep,\Zt^\ep)(T_1) \leq \Ecal(T_1) \leq L$ and hence there exists a $\delta_1 > 0$ depending only on $L$ such that these smooth solutions exist in $[T_1, T_1 + \delta_1]$ with $\sup_{t \in [T_1, T_1 + \delta_1]}\Ecal(\Z^\ep,\Zt^\ep)(t) \leq C(L)$, for some constant $C(L)$ depending only on $L$.  

We now show that $\lim_{\ep \to 0} \Ea(\Z^\ep,\Zt^\ep)(T_1) = \Ea(T_1)$. To see this, we see from the definition of $\Z^\ep, \Zt^\ep$ and the energy $\Ecal(t)$ that
\begingroup
\allowdisplaybreaks
\begin{align*}
\Ztapbar^\ep(\cdot,T_1) \to \Ztapbar, \quad \pap\frac{1}{\Zap^\ep}(\cdot, T_1) \to \pap\frac{1}{\Zap}(\cdot, T_1) \quad \tx{ in } \Ltwo(\Rsp) \\
\pap\brac{\frac{1}{\Zap^\ep}\Dap^\ep\Ztbar^\ep}(\cdot, T_1) \to \pap\brac{\frac{1}{\Zap}\Dap\Ztbar}(\cdot, T_1) \quad \tx{ in } \Ltwo(\Rsp) \\
 \frac{1}{\Zap^\ep}\pap\frac{1}{\Zap^\ep}(\cdot, T_1) \to \frac{1}{\Zap}\pap\frac{1}{\Zap}(\cdot, T_1) \quad \tx{ in } \Hhalf(\Rsp)
\end{align*}
\endgroup
This already shows that $\E_1(\Z^\ep, \Zt^\ep)(T_1) \to \E_1(T_1)$. Now as mentioned before, $\frac{1}{\Psizp} \U_z$ extends continuously to the boundary and so $\Dap\Ztbar$ is a continuous function. Also as $\Dap\Ztbar(\ap,t) \to 0$ as $\abs{\ap} \to \infty$, this implies that $\Dap\Ztbar(\cdot, T_1)$ is a uniformly continuous function and so $\Dap^\ep\Ztbar^\ep(\cdot, T_1) \to \Dap\Ztbar(\cdot, T_1)$  in $\Linfty(\Rsp)$. Now the same argument used in \cite{Ag20} used to prove the continuity of $\Dap\Ztbar$ also works exactly in the same was for the function $\frac{1}{\Zap}\pap\frac{1}{\Zap}$ and hence we also get that $\frac{1}{\Zap^\ep}\pap\frac{1}{\Zap^\ep}(\cdot, T_1) \to \frac{1}{\Zap}\pap\frac{1}{\Zap}(\cdot, T_1)$ in $\Linfty(\Rsp)$. Using the fact that $\Ztapbar^\ep(\cdot, T_1) \to \Ztapbar(\cdot, T_1)$ in $\Ltwo(\Rsp)$ and the fact that $\Ag^\ep \geq g>0$, it is easy to see from \propref{prop:commutator}, \propref{prop:Leibniz} and  \propref{prop:Hardy} that $\sqrt{\Ag^\ep}(\cdot,T_1) \to \sqrt{\Ag}(\cdot, T_1)$ in $\Linfty\cap\Hhalf(\Rsp)$.  Hence from \propref{prop:Leibniz} we get
\begin{align*}
\frac{\sqrt{\Ag^\ep}}{\Zap^\ep}\pap\frac{1}{\Zap^\ep}(\cdot, T_1) \to \frac{\sqrt{\Ag}}{\Zap}\pap\frac{1}{\Zap} \quad \tx{ in } \Hhalf(\Rsp)  
\end{align*}
As $\frac{1}{\Psizp}(\cdot, T_1)$ is continuous on $\Pminusbar$, it is clear that $\frac{1}{\Zap^\ep}(\cdot, T_1) \to \frac{1}{\Zap}(\cdot, T_1) $ in  $\Linfty(\Rsp)$, and hence $\w^\ep \to \w$ in $\Linfty(\Rsp)$ on the set $O_{\beta, T_1} = \cbrac{\ap \in \Rsp \suchthat \frac{1}{\abs{\Zap}}(\ap,T_1) > \beta}$ for any $\beta > 0$. As $\Dap\Ztbar(\ap,t) = 0$ for $\ap \in S(t)$ and as $S(t)$ is a compact set of measure zero, this then also implies that $\Dap^\ep\Zt^\ep(\cdot, T_1) \to \Dap\Zt(\cdot, T_1) $ in $\Linfty(\Rsp)$. Now using the fact that $\w^\ep \to \w$ pointwise on $NS(t)$ and by using dominated convergence theorem, it is easy to see that $\Dt^\ep\pap\frac{1}{\Zap^\ep}(\cdot, T_1) \to \Dt\pap\frac{1}{\Zap}(\cdot, T_1) $ in $\Ltwo(\Rsp)$. Therefore $\lim_{\ep \to 0} \Ea(\Z^\ep,\Zt^\ep)(T_1) = \Ea(T_1)$. 

Now observe that for all $\ep>0$, we have
\begin{align*}
\nobrac{\norm[\Linfty\cap\Hhalf]{\Dap^\ep\Ztbar^\ep}(T_1)  +  \brac{\norm[2]{\Ztapbar^\ep}(T_1) + \sqrt{g}}\norm[2]{\pap\frac{1}{\Zap^\ep}}(T_1)} \leq M
\end{align*}
Using \lemref{lem:timederiv} and \propref{prop:DtLinfty} we see that for all $t \in [T_1, T_1 + \delta_1)$ we have
\begin{align*}
& \frac{\diff}{\diff t} \brac{\norm[\Linfty\cap\Hhalf]{\Dap^\ep\Ztbar^\ep}  +  \brac{\norm[2]{\Ztapbar^\ep} + \sqrt{g}}\norm[2]{\pap\frac{1}{\Zap^\ep}}} \\
& \lesssim \norm[\infty]{\bvarap^\ep} \brac{\norm[\Linfty\cap\Hhalf]{\Dap^\ep\Ztbar^\ep}  +  \brac{\norm[2]{\Ztapbar^\ep} + \sqrt{g}}\norm[2]{\pap\frac{1}{\Zap^\ep}}} \\
& \quad + \norm[\Linfty\cap\Hhalf]{\Dt^\ep\Dap^\ep\Ztbar^\ep} + \norm[2]{\Dt^\ep\Ztapbar^\ep}\norm[2]{\pap\frac{1}{\Zap^\ep}} + \brac{\norm[2]{\Ztapbar^\ep} + \sqrt{g}}\norm[2]{\Dt^\ep\pap\frac{1}{\Zap^\ep}} \\
& \leq C(L)
\end{align*}
Hence there exists an $0<\delta < \delta_1$ depending only on $M $ and $L$ so that for all $t \in [T_1, T_1 + \delta]$ and $\ep>0$ we have
\begin{align*}
\nobrac{\norm[\Linfty\cap\Hhalf]{\Dap^\ep\Ztbar^\ep}(t)  +  \brac{\norm[2]{\Ztapbar^\ep}(t) + \sqrt{g}}\norm[2]{\pap\frac{1}{\Zap^\ep}}(t)} \leq 2M
\end{align*}
Hence using \eqref{eq:mainEatime} we see that there exists a universal constant $C>0$ so that for all $t \in [T_1, T_1 + \delta]$ and $\ep>0$ we have
\begin{align*}
\Ea(\Z^\ep, \Zt^\ep)(t) \leq e^{CM(t - T_1)}\Ea(\Z^\ep, \Zt^\ep)(T_1)
\end{align*}
Let $T_2 = T_1 + \delta$. As $\lim_{\ep \to 0} \Ea(\Z^\ep,\Zt^\ep)(T_1) = \Ea(T_1)$, to prove the claim it is therefore enough to show that we have $\limsup_{\ep \to 0} \Ea(\Z^\ep,\Zt^\ep)(T_2) \geq \Ea(T_2)$.

Now from the proof of Theorem 3.9 of \cite{Wu19}, we see that 
\begin{align}\label{eq:ZtpaponeZapconverHhalf}
\norm[\Hhalf]{\Zt^\ep - \Zt}(T_2) + \norm[\Hhalf]{\frac{1}{\Zap^\ep} - \frac{1}{\Zap}}(T_2)  \to 0
\end{align}
as $\ep \to 0$ and moreover there exists a subsequence $\ep_n \to 0$ such that
\begin{align*}
\frac{1}{\Zap^{\ep_n}}(\cdot, T_2) \to \frac{1}{\Zap}(\cdot, T_2) \quad \tx{ and }   \quad  \Zt^{\ep_n}(\cdot, T_2) \to \Zt(\cdot, T_2)
\end{align*}
uniformly on compact subsets of $\Rsp$. As all the quantities in the energy $\Ea(\Z^\ep,\Zt^\ep)(T_2)$ are uniformly bounded, we can assume by going to a subsequence if necessary that the sequences
\begin{align*}
\norm[2]{\Ztapbar^{\ep_n}}, \norm[2]{\pap\frac{1}{\Zap^{\ep_n}}}, \norm[2]{\Dt^{\ep_n}\pap\frac{1}{\Zap^{\ep_n}}} \quad \tx{ and } \norm[\Hhalf]{\frac{\sqrt{\Ag^{\ep_n}}}{\Zap^{\ep_n}}\pap\frac{1}{\Zap^{\ep_n}}} 
\end{align*}
are all convergent sequences. Now as $\Ztapbar^{\ep_n}(\cdot, T_2) \to \Ztapbar(\cdot, T_2)$ and also $\pap\frac{1}{\Zap^{\ep_n}}(\cdot, T_2) \to \pap\frac{1}{\Zap}(\cdot, T_2)$ in distributions, this then implies that they converge weakly in $\Ltwo(\Rsp)$ and hence we have $\limsup_{n \to \infty} \Eone(\Z^{\ep_n}, \Zt^{\ep_n})(T_2) \geq \Eone(T_2)$. Now by the same argument used in Step 2 of this proof, we see that 
\begin{align*}
\frac{\sqrt{\Ag^{\ep_n}}}{\Zap^{\ep_n}}\pap\frac{1}{\Zap^{\ep_n}}(\cdot, T_2) \to \frac{\sqrt{\Ag}}{\Zap}\pap\frac{1}{\Zap}(\cdot, T_2)
\end{align*}
weakly in $\Hhalf(\Rsp)$. Hence 
\begin{align*}
\limsup_{n \to \infty} \norm[\Hhalf]{\frac{\sqrt{\Ag^{\ep_n}}}{\Zap^{\ep_n}}\pap\frac{1}{\Zap^{\ep_n}}}(T_2) \geq \norm[\Hhalf]{\frac{\sqrt{\Ag}}{\Zap}\pap\frac{1}{\Zap}}(T_2)
\end{align*}
Now observe that the set $O_{\beta, T_2} = \cbrac{\ap \in \Rsp \suchthat \frac{1}{\Zapabs}(\ap,T_2) > \beta }$ is an open set and that the sequences 
\begin{align*}
\norm[\Ltwo(O_{\beta, T_2})]{\pap\Ztapbar^{\ep_n}}(T_2) \tx{ and } \norm[\Ltwo(O_{\beta, T_2})]{\pap^2\frac{1}{\Zap^{\ep_n}}}(T_2) 
\end{align*}
are uniformly bounded. Hence by the Arzela Ascoli theorem, there exists a subsequence (which by abuse of notation we still call $\ep_n$) so that
\begin{align*}
\Ztapbar^{\ep_n}(\cdot, T_2) \to \Ztapbar(\cdot, T_2) \qq \tx{ and } \pap\frac{1}{\Zap^{\ep_n}} \to \pap\frac{1}{\Zap}(\cdot, T_2)
\end{align*}
uniformly on compact subsets of $O_{\beta, T_2}$. Now as $NS(T_2) = \cup_{n \in \Nsp } O_{ \beta_n, T_2}$ for $\beta_n = \frac{1}{n}$, and by a diagonalization argument we get a subsequence (again calling it $\ep_n$) so that
\begin{align*}
\Ztapbar^{\ep_n}(\cdot, T_2) \to \Ztapbar(\cdot, T_2) \qq \tx{ and } \pap\frac{1}{\Zap^{\ep_n}} \to \pap\frac{1}{\Zap}(\cdot, T_2)
\end{align*}
uniformly on compact subsets of $NS(T_2)$. Also clearly $\w^{\ep_n} \to \w$ uniformly on compact subsets of $NS(T_2)$. From this it is clear that 
\begin{align*}
(\wbar^{\ep_n})^2\brac{\pap\frac{1}{\Zap^{\ep_n}}}\Dapbar^{\ep_n}\Zt^{\ep_n}(\cdot, T_2) \to \wbar^2\brac{\pap\frac{1}{\Zap}}\Dapbar\Zt (\cdot, T_2)
\end{align*}
weakly in $\Ltwo(NS(T_2))$. However as the measure of $S(T_2)$ is zero, this implies that this sequence is convergent in fact on $\Ltwo(\Rsp)$ (As all the terms are uniformly bounded on $\Ltwo(\Rsp)$). The same argument works for all terms of \eqref{eq:DapDapbarZtbarnew} and \eqref{eq:DtpapfraconeoverZapnew} except for the term $\wbar\Dapabs\brac{\bvarap - \Dap\Zt - \Dapbar\Ztbar}$. 

For this term, first observe that from \eqref{eq:DapabsbvarDapZtnew} and the above argument, it is easy to see that
\begin{align*}
\w^{\ep_n}(\Id - \Hil)\Dap^{\ep_n}(\bvarap^{\ep_n} -\Dap^{\ep_n}\Zt^{\ep_n} - \Dapbar^{\ep_n}\Ztbar^{\ep_n})(\cdot, T_2) \to \w(\Id - \Hil)\Dap(\bvarap -\Dap\Zt - \Dapbar\Ztbar)(\cdot, T_2)
\end{align*}
weakly in $\Ltwo(\Rsp)$. For the last term of \eqref{eq:DapabsbvarDapZtnew}, we first note that
\begin{align}\label{eq:bvarapDapZtLtwo}
\brac{\bvarap^{\ep_n} - \Dap^{\ep_n}\Zt^{\ep_n} - \Dapbar^{\ep_n}\Ztbar^{\ep}}(\cdot, T_2) \to \brac{\bvarap - \Dap\Zt - \Dapbar\Ztbar}(\cdot, T_2)
\end{align}
in $\Ltwo(\Rsp)$ from \eqref{eq:ZtpaponeZapconverHhalf}. Now using \eqref{eq:bvarapDapZtLtwo}, \eqref{eq:ZtpaponeZapconverHhalf} and the uniform bounds on 
\begin{align*}
\norm[\Hhalf\cap\Linfty]{\bvarap^{\ep_n} - \Dap^{\ep_n}\Zt^{\ep_n} - \Dapbar^{\ep_n}\Ztbar^{\ep}}(T_2) \qq \tx{ and  } \quad \norm[2]{\pap\frac{1}{\Zap^{\ep_n}}}(T_2)
\end{align*}
it is easy to see that 
\begin{align*}
\cbrac{\sqbrac{\frac{1}{\Zap^{\ep_n}},\Hil}\pap(\bvarap^{\ep_n} -\Dap^{\ep_n}\Zt^{\ep_n} - \Dapbar^{\ep_n}\Ztbar^{\ep_n})}(\cdot, T_2) \\
\to \cbrac{\sqbrac{\frac{1}{\Zap},\Hil}\pap(\bvarap -\Dap\Zt - \Dapbar\Ztbar)}(\cdot, T_2)
\end{align*}
in distributions. However as the sequence is uniformly bounded on $\Ltwo(\Rsp)$, we see that the convergence is in fact weakly in $\Ltwo(\Rsp)$. A similar argument works for the second term on the right hand side of \eqref{eq:DapabsbvarDapZtnew} as well and hence by combining this with the uniform convergence of $\w^{\ep_n} \to \w$ on compact subsets of $NS(T_2)$, we see that 
\begin{align*}
\Dt^{\ep_n}\pap\frac{1}{\Zap^{\ep_n}}(\cdot, T_2) \to \Dt\pap\frac{1}{\Zap}(\cdot, T_2)
\end{align*}
weakly in $\Ltwo(\Rsp)$. Hence this shows that $\limsup_{n \to \infty} \Etwo(\Z^{\ep_n}, \Zt^{\ep_n})(T_2) \geq \Etwo(T_2)$, thereby proving the claim. 

\medskip
\noindent\textbf{Step 4:} We are now ready to prove the blow up criterion for $g > 0$. We will prove this via contradiction. Assume that $0<T^* < \infty$ is the maximal time of existence and that
\begin{align*}
\sup_{t \in [0,T^*)} \brac{\norm[\Linfty\cap\Hhalf]{\Dap\Ztbar}(t)  +  \brac{\norm[2]{\Ztapbar}(t) + \sqrt{g}}\norm[2]{\pap\frac{1}{\Zap}}(t)} \leq M < \infty
\end{align*} 
From the definition of a solution in the class $\mathcal{SA}$ in time $[0, T^*)$, we see that for any $\delta>0$ small enough, $\sup_{t \in [0,T^* - \delta]} \Ecal(t) < \infty$, and hence by the claim proved in Step 3 we see that there exists a universal constant $C>0$ so that for all $t \in [0,T^*)$ we have $\Ea(t) \leq e^{CMt} \Ea(0)$. Define 
\begin{align*}
Q = e^{CMT^*} \Ea(0) < \infty
\end{align*}
Let $T \in [0, T^*)$ and as usual let $(\Z^\ep,\Zt^\ep)(\ap,T) = (\Psi, \Ubar)(\ap - i\ep, T) $ and consider the smooth solutions $(\Z^\ep, \Zt^\ep)(t)$ for $t \geq T$. By Step 3 we know that $\lim_{\ep \to 0} \Ea(\Z^\ep,\Zt^\ep)(T) = \Ea(T)$ and hence let $\ep_0 > 0$ be small enough so that for all $0<\ep \leq \ep_0$ we have
\begin{align*}
 \Ea(\Z^\ep,\Zt^\ep)(T) \leq 2\Ea(T) \leq 2Q
\end{align*}
Now by the estimates \eqref{eq:mainEatime}, \eqref{eq:mainEtime}, \eqref{eq:SobolevfromEcal} and the blow up criterion of \thmref{thm:existenceSobolevunbdd},  there exists $\delta_1 > 0$ depending only on $Q$ such that the smooth solutions $(\Z^\ep, \Zt^\ep)(t)$ exist in $[T, T + \delta_1]$ and 
\begin{align*}
 \Ea(\Z^\ep,\Zt^\ep)(t) \leq 4Q \qq \tx{ for all } t \in [T, T+\delta_1]
\end{align*}
Now from \eqref{eq:mainEtime} and part 1 of \lemref{lem:equivSobolevunbdd} we see that there exists a constant $Q_1>0$ depending only on $Q$ so that 
\begin{align*}
 \E(\Z^\ep,\Zt^\ep)(t) \leq Q_1  \Ecal(\Z^\ep,\Zt^\ep)(T) \leq Q_1  \Ecal(T) \qq \tx{ for all } t \in [T, T+\delta_1]
\end{align*}
Now from \eqref{eq:ZtapbarL2norm2plusgestimate} and an approximation argument, we see that there exists a constant $Q_2$ depending only on $Q, T^*, g$ and $c_0$ so that 
\begin{align*}
\norm[2]{\Ztapbar^\ep}(t)  \leq Q_2 \qq \tx{ for all } t \in [T, T+\delta_1]
\end{align*}
Hence from \eqref{eq:controlEcalfromE} we see that 
\begin{align*}
 \Ecal(\Z^\ep,\Zt^\ep)(t) \leq C_1\brac{Q_1 \Ecal(T) + g + \frac{1}{g} + Q_2} \qq \tx{ for all } t \in [T, T+\delta_1]
\end{align*}
Therefore by the uniqueness of solutions, we get that for all $ t \in [T, T+\delta_1]$ we have 
\begin{align}\label{eq:Ecaldeltaoneest}
\Ecal(t) \leq \liminf_{\ep \to 0}  \Ecal(\Z^\ep,\Zt^\ep)(t) \leq  C_1\brac{Q_1 \Ecal(T) + g + \frac{1}{g} + Q_2}
\end{align}
Define a function $J:[0,\infty) \to [0,\infty) $ given by
\begin{align*}
J(x) = C_1\brac{Q_1x + g + \frac{1}{g} + Q_2}
\end{align*}
Using \eqref{eq:Ecaldeltaoneest} repeatedly we see that for all $t \in [0,T^*)$ we have
\begin{align*}
\Ecal(t) \leq J^{(N)}(\Ecal(0))
\end{align*}
where $N$ is an integer such that $N > \frac{T^*}{\delta_1}$ and $J^{(N)}$ is the function $J$ composed with itself $N$ times. From this and the lower bound on the time of existence from Step 1, we clearly see that we can extend the solution beyond $T^*$. This contradicts the maximality of $T^*$. Hence the blow up criterion is proven.

\end{proof}

\section{Proof of the main results for bounded domain case}\label{sec:bounded}

\subsection{Derivation of equations}\label{sec:derivofeqnbdd}

In this subsection we derive the main equations and the various formulae used. We closely follow \cite{BiMiShWu17} though there are some important differences in the formulae we derive.

Note that from \lemref{lem:Hilbounded},  a function $f :\Sone \to \Csp$ satisfies $\Hil f = f$ if and only if $f = \Tr(F)$ where $F: \Dsp \to \Csp$ is holomorphic. Using this and the definition of $\Hiltil$ we see that a function $f :\Sone \to \Csp$ satisfies $\Hiltil f = f$ if and only if $f = \Tr(F)$ where $F: \Dsp \to \Csp$ is a holomorphic function with $F(0) = 0$. Let us now write down some common functions which satisfy these properties:
\begin{enumerate}
\item As $\Ztbar(\ap,t) = \U(e^{i\ap},t)$, we see that $\Ztbar(\ap,t) = \Tr(\U(z,t))$. Hence $\Hil \Ztbar = \Ztbar$. 
\item We first observe that since $\Phi(\Z(\ap,t),t) = e^{i\ap}$, by taking a derivative we obtain
\begin{align}\label{eq:PhizoverPhi}
\Phi_z(\Z(\ap,t),t)\Zap(\ap,t) = ie^{i\ap} 
\end{align}
Now as $\Phi(\Psi(z,t),t) = z$ for all $z \in \Dsp$, we see that
\begin{align}\label{eq:PhizPsi}
\Phi_z(\Psi(z,t),t) = \frac{1}{\Psi_z(z,t)}
\end{align}
Hence 
\begin{align}\label{eq:oneoverZapTr}
\frac{1}{\Zap(\ap,t)} = \frac{1}{ie^{i\ap}\Psi_z(e^{i\ap},t)} = \Tr\brac{\frac{1}{i z \Psi_z(z,t)}}  
\end{align}
Therefore we see that
\begin{align*}
\frac{e^{i\ap}}{\Zap(\ap,t)}  = \Tr\brac{\frac{1}{i \Psi_z(z,t)}} 
\end{align*}
and consequently $\Hil\brac{\frac{e^{i\ap}}{\Zap}} = \frac{e^{i\ap}}{\Zap}$. Using this we also see that $\Hil\brac{e^{-i\ap}\pap\brac{\frac{e^{i\ap}}{\Zap}}} = e^{-i\ap}\pap\brac{\frac{e^{i\ap}}{\Zap}}$. Now observe that
\begin{align*}
e^{-i\ap}\pap\brac{\frac{e^{i\ap}}{\Zap}} = \pap\frac{1}{\Zap} + \frac{i}{\Zap}
\end{align*}
Therefore by using the fact that $\Avg\brac{\pap\frac{1}{\Zap}} = 0$, we obtain
\begin{align}\label{eq:HiltilpaponeoverZap}
\Hiltil\cbrac{\pap\frac{1}{\Zap} + i\brac{\frac{1}{\Zap} - \Avg\brac{\frac{1}{\Zap}} } } = \pap\frac{1}{\Zap} + i\brac{\frac{1}{\Zap} - \Avg\brac{\frac{1}{\Zap}} } 
\end{align}

\item We see that 
\begin{align*}
\frac{\Ztbar - \Av(\Ztbar)}{\Zap}(\ap,t) = \Tr\brac{\frac{\U(z,t) - \U(0,t)}{i z \Psi_z(z,t)}}
\end{align*}
Hence we have $\Hil\brac{\frac{\Ztbar - \Av(\Ztbar)}{\Zap}} = \frac{\Ztbar - \Av(\Ztbar)}{\Zap}$. 
\item As $\Ztbar(\ap,t) = \U(e^{i\ap},t)$, we see that
\begin{align*}
\Ztapbar(\ap,t) = ie^{i\ap}\U_z(e^{i\ap},t) = \Tr\brac{i z \U_z(z,t)}
\end{align*}
Therefore $\Hiltil \Ztapbar = \Ztapbar$. 
\item From the above calculations we observe that 
\begin{align*}
(\Dap\Ztbar)(\ap,t) = \frac{\Ztapbar}{\Zap}(\ap,t) = \Tr\brac{\frac{\U_z(z,t)}{\Psi_z(z,t)}}
\end{align*}
Hence $\Hil (\Dap\Ztbar) = \Dap\Ztbar$. 
\end{enumerate}

Let us now derive some important equations. As $z(\cdot,t)$ is a counterclockwise parametrization of $\partial \Omega(t)$, the unit outward normal is $\hat{n} = -i \frac{\zal}{\abs{\zal}}$. Now as $P = 0$ on $\partial \Omega(t)$ we see that $\grad P (z,t) = i\avar \zal$, where
\begin{align*}
\avar = - \frac{1}{\abs{\zal}}\frac{\partial P}{\partial \hat{n}}
\end{align*}
Hence we see that the Euler equations on the boundary can be written as
\begin{align*}
\ztt = - i\avar\zal 
\end{align*}
Taking a time derivative and complex conjugate, we get
\begin{align*}
\ztttbar - i\avar\ztalbar = i\avar_t \zalbar
\end{align*}
Define $\A = (\avar\hal) \compose \hinv$ and $\Azero = \A\Zapabs^2 $.  Hence by precomposing the above equations with $\hinv$ we obtain
\begin{align}\label{eq:Zttbarbdd}
\Zttbar = i \frac{\Azero}{\Zap}
\end{align}
and 
\begin{align}\label{eq:mainbdd}
\brac{\Dt^2 -i\frac{\Azero}{\Zapabs^2}\pap} \Ztbar = i\brac{\frac{\avar_t}{\avar}\compose \hinv} \frac{\Azero}{\Zap}
\end{align}

\subsubsection{Formulae of $\Azero$ and $\frac{\avar_t}{\avar}\compose \hinv$}

We know that $\ztbar(\al, t) = \vboldbar(\z(\al,t),t)$ and hence we have the following identities:
\begin{enumerate}[leftmargin =*, align = left]
\item $\Dal\ztbar(\al,t) = \vboldbar_z(\z(\al,t),t)$ and $\Dal^2\ztbar(\al,t) = \vboldbar_{zz}(\z(\al,t),t)$
\item $\zttbar(\al,t) = \vboldbar_t(\z(\al,t),t) + \vboldbar_z(\z(\al,t),t)\zt(\al,t) $ and hence 
\begin{align*}
(\zttbar - (\Dal\ztbar)\zt)(\al,t) = \vboldbar_t(\z(\al,t),t)
\end{align*}
\item $\Dal(\zttbar - (\Dal\ztbar)\zt)(\al,t) = \vboldbar_{tz}(\z(\al,t),t)$
\item We also have 
\begin{align*}
\ztttbar(\al,t) & = \vboldbar_{tt}(\z(\al,t),t) + 2\vboldbar_{tz}(\z(\al,t),t)\zt(\al,t) + \vboldbar_{zz}(\z(\al,t),t)\zt^2(\al,t)  \\
& \quad + \vboldbar_z(\z(\al,t),t)\ztt(\al,t)
\end{align*}
\end{enumerate}
Precomposing the second identity above with $\hinv$ we get
\begin{align*}
(\Zttbar - (\Dap\Ztbar)\Zt)(\ap,t) = \Tr(\vboldbar_t(\Psi(z,t),t))
\end{align*}
Now multiplying both sides with $\Zap$ and using \eqref{eq:Zttbarbdd} and \eqref{eq:oneoverZapTr} we obtain
\begin{align*}
(i\Azero - \Zt\Ztapbar)(\ap,t) = \Tr(i z \Psi_z(z,t)\vboldbar_t(\Psi(z,t),t) )
\end{align*}
Note that the quantity inside the trace is a holomorphic function which vanishes at $z = 0$ and that $\Azero$ is real valued. Hence applying $\Id - \Hiltil$ to the above equation and taking imaginary part we get the formula for $\Azero$
\begin{align}\label{eq:Azerobdd}
\Azero = \Imag \sqbrac*[\big]{\Zt, \Hiltil} \Ztapbar
\end{align}

Now precomposing the last identity above for $\ztttbar$ with $\hinv$ and using the previous identities we get
\begin{align*}
\Ztttbar = \vboldbar_{tt}\compose \Z + 2\Dap(\Zttbar - (\Dap\Ztbar)\Zt)\Zt + (\Dap^2\Ztbar)\Zt^2 + (\Dap\Ztbar)\Ztt
\end{align*}
Hence using \eqref{eq:mainbdd} and \eqref{eq:Zttbarbdd}  we get
\begin{align}\label{eq:atoveraint}
\begin{aligned}
& i\brac{\frac{\avar_t}{\avar}\compose \hinv}\Azero \\
& =  \Zap \Ztttbar -i\frac{\Azero}{\Zapbar}\Ztapbar \\
& = \Zap \Ztttbar + \Ztt\Ztapbar \\
& = \Zap\vboldbar_{tt}\compose \Z + 2\Zt\pap(\Zttbar - (\Dap\Ztbar)\Zt) + \Zt^2(\pap\Dap\Ztbar) + 2\Ztt\Ztapbar
\end{aligned}
\end{align}
Now as $\Zap(\ap,t) = \Tr\cbrac{iz\Psi_z(z,t)}$ from \eqref{eq:oneoverZapTr} we have
\begin{enumerate}
\item $ (\Zap\vboldbar_{tt}\compose \Z)(\ap,t) = \Tr\cbrac{iz\Psi_z(z,t)\vboldbar_{tt}(\Psi(z,t),t)}$. Hence we see that
\begin{align*}
(\Id - \Hiltil)(\Zap\vboldbar_{tt}\compose \Z) = 0 
\end{align*}
\item $\pap(\Zttbar - (\Dap\Ztbar)\Zt)(\ap,t) = \Tr\cbrac{iz\Psi_z(z,t)\vboldbar_{tz}(\Psi(z,t),t)}$. Hence we see that
\begin{align*}
(\Id - \Hiltil)\pap(\Zttbar - (\Dap\Ztbar)\Zt) = 0 
\end{align*}
\item $\pap\Dap\Ztbar =  \Tr\cbrac{iz\Psi_z(z,t)\vboldbar_{zz}(\Psi(z,t),t)}$. Hence we see that
\begin{align*}
(\Id - \Hiltil)(\pap\Dap\Ztbar) = 0 
\end{align*}
\item $\Ztapbar =  \Tr\cbrac{iz\Psi_z(z,t)\vboldbar_{z}(\Psi(z,t),t)}$. Hence we see that
\begin{align*}
(\Id - \Hiltil)(\Ztapbar) = 0 
\end{align*}
\end{enumerate}
Hence applying $(\Id - \Hiltil)$ to \eqref{eq:atoveraint} and using \propref{prop:tripleidentity} we obtain
\begin{align*}
& (\Id - \Hiltil)\cbrac{i\brac{\frac{\avar_t}{\avar}\compose \hinv}\Azero} \\
& = 2\sqbrac*[\big]{\Zt,\Hiltil}\pap(\Zttbar - (\Dap\Ztbar)\Zt) + \sqbrac*[\big]{\Zt^2,\Hiltil}\pap\Dap\Ztbar + 2\sqbrac*[\big]{\Ztt,\Hiltil}\Ztapbar \\
& = 2\sqbrac*[\big]{\Zt,\Hiltil}\Zttapbar  + 2\sqbrac*[\big]{\Ztt,\Hiltil}\Ztapbar + \sqbrac*[\big]{\Zt^2,\Hiltil}\pap\Dap\Ztbar - 2\sqbrac*[\big]{\Zt,\Hiltil}\pap((\Dap\Ztbar)\Zt) \\
& = 2\sqbrac*[\big]{\Zt,\Hiltil}\Zttapbar  + 2\sqbrac*[\big]{\Ztt,\Hiltil}\Ztapbar - [\Zt,\Zt; \Dap\Ztbar]
\end{align*}
Taking imaginary part, we get
\begin{align}\label{eq:atovera}
\frac{\avar_t}{\avar}\compose \hinv = \frac{\Imag\cbrac*[\big]{2\sqbrac*[\big]{\Zt,\Hiltil}\Zttapbar  + 2\sqbrac*[\big]{\Ztt,\Hiltil}\Ztapbar - [\Zt,\Zt; \Dap\Ztbar]}}{\Azero}
\end{align}

\subsubsection{Formula of $\bvar$}

Let us first note some useful formulae
\begin{enumerate}
\item From the normalization of the conformal map, we know that $\Phi(X(x_0,t),t) = 0$ for all $t\geq 0$. Hence taking a time derivative we get
\begin{align*}
\Phi_t(X(x_0,t),t) + \Phi_z(X(x_0,t),t)u(X(x_0,t),t) = 0
\end{align*}
Now we see that $u(X(x_0,t),t) = \Ubar(0,t)$.  As $U$ is holomorphic in $\Dsp$ with boundary value $\Ztbar$, we see that 
\begin{align}\label{eq:AvZt}
u(X(x_0,t),t) = \Av(\Zt)
\end{align}
As $X(x_0,t) = \Psi(0,t)$ we obtain using \eqref{eq:PhizPsi}
\begin{align}\label{eq:Phitzero}
\Phi_t(\Psi(0,t),t) + \frac{\Av(\Zt)}{\Psi_z(0,t)} = 0
\end{align}
\item Again by the normalization of the conformal map, we know that $\Phi_z(\Psi(0,t),t) > 0$ for all $t\geq 0$. Hence taking a time derivative we get
\begin{align*}
\Imag\cbrac{\Phi_{zt}(\Psi(0,t),t) + \Phi_{zz}(\Psi(0,t),t)\Psi_t(0,t)} = 0
\end{align*}
Now as $\Psi(0,t) = X(x_0,t)$, we see that $\Psi_t(0,t) = u(X(x_0,t),t) =  \Av(\Zt)$. Hence we have
\begin{align*}
\Imag\cbrac{\Phi_{zt}(\Psi(0,t),t) + \Phi_{zz}(\Psi(0,t),t)\Av(\Zt)} = 0
\end{align*}
Now using \eqref{eq:PhizPsi} we get
\begin{align}\label{eq:PhiztAvZt}
\Imag\cbrac{\Phi_{zt}(\Psi(0,t),t) + \Av(\Zt)\brac{\frac{1}{\Psi_z}\pzp\frac{1}{\Psi_z} }(0,t) } = 0
\end{align}
\end{enumerate}

Now from \eqref{def:hbddorig} we see that
\begin{align*}
\h(\al,t) = -i\log\brac{\Phi(z(\al,t),t)}
\end{align*}
Taking a time derivative and precomposing with $\hinv$ we obtain
\begin{align*}
(\h_t\compose\hinv)(\ap,t) = \frac{-i}{\Phi(\Z(\ap,t),t)}\cbrac{\Phi_t(\Z(\ap,t),t) + \Phi_z(\Z(\ap,t),t)\Zt(\ap,t)}
\end{align*}
Hence using \eqref{eq:PhizoverPhi},  $\Phi(\Z(\ap,t),t) = e^{i\ap}$  and \eqref{eq:oneoverZapTr} we get
\begin{align*}
\bvar(\ap,t) & = \frac{\Phi_t(\Z(\ap,t),t)}{ie^{i\ap}} + \frac{\Zt(\ap,t)}{\Zap(\ap,t)} \\
& = \cbrac{\frac{\Phi_t(\Z(\ap,t),t)}{ie^{i\ap}} + \frac{\Av(\Zt)}{\Zap(\ap,t)} }+ \frac{\Zt(\ap,t) - \Av(\Zt)}{\Zap(\ap,t)} \\
& = \Tr\cbrac{\frac{\Phi_t(\Psi(z,t),t)}{iz} + \frac{\Av(\Zt)}{iz\Psi_z(z,t)}} + \frac{\Zt(\ap,t) - \Av(\Zt)}{\Zap(\ap,t)} 
\end{align*}
Now using \eqref{eq:Phitzero} we obtain
\begin{align}\label{eq:bvarint}
\begin{split}
 \bvar(\ap,t) & =  \Tr\cbrac{\frac{\Phi_t(\Psi(z,t),t) - \Phi_t(\Psi(0,t),t)}{iz} + \frac{\Av(\Zt)}{iz}\brac{\frac{1}{\Psi_z(z,t)} - \frac{1}{\Psi_z(0,t)}}} \\
 & \quad + \frac{\Zt(\ap,t) - \Av(\Zt)}{\Zap(\ap,t)} 
\end{split}
\end{align}
As the term inside $\Tr$ is a holomorphic function on $\Dsp$, applying $\Real(\Id - \Hiltil)$ to the above equation we obtain 
\begin{align*}
\bvar & = \Real\cbrac{\Av\Tr\cbrac{\frac{\Phi_t(\Psi(z,t),t) - \Phi_t(\Psi(0,t),t)}{iz} + \frac{\Av(\Zt)}{iz}\brac{\frac{1}{\Psi_z(z,t)} - \frac{1}{\Psi_z(0,t)}}}} \\
& \quad  + \Real(\Id - \Hiltil)\brac{\frac{\Zt - \Av(\Zt)}{\Zap} }
\end{align*}
Now as the term inside $\Tr$ is holomorphic, we see that the average value of the trace equals the value of the holomorphic function at $z = 0$. Hence by the definition of the complex derivative we get
\begin{align*}
\bvar & = \Real\cbrac{\frac{1}{i}\Phi_{tz}(\Psi(0,t))\Psi_z(0,t) + \frac{\Av(\Zt)}{i} \brac{\pz\frac{1}{\Psi_z}}(0,t) } \\
& \quad  + \Real(\Id - \Hiltil)\brac{\frac{\Zt - \Av(\Zt)}{\Zap} } \\
& = \Real\cbrac{\frac{\Psi_z(0,t)}{i} \brac{ \Phi_{tz}(\Psi(0,t)) + \Av(\Zt) \brac{\frac{1}{\Psi_z}\pz\frac{1}{\Psi_z}}(0,t)}} \\
& \quad  + \Real(\Id - \Hiltil)\brac{\frac{\Zt - \Av(\Zt)}{\Zap} }
\end{align*}
As $\Psi_z(0,t) > 0$, therefore from \eqref{eq:PhiztAvZt} we see that the first term vanishes. Hence we obtain the formula for $\bvar$
\begin{align}\label{eq:bbdd}
\bvar = \Real(\Id - \Hiltil)\brac{\frac{\Zt - \Av(\Zt)}{\Zap} }
\end{align}

Note that we have now derived the system \eqref{eq:systemonebdd}. 

\subsubsection{Some useful identities}

We now write down some useful identities similar to \secref{sec:identities}. First we observe that \eqref{form:RealImagTh}, \eqref{eq:commutator}, \eqref{form:DtZapabs} and \eqref{form:DtoneoverZap} hold here as well. Also it is easy to see that the analog of \eqref{form:Real} and \eqref{form:Imag} is 
\begin{align}
(\Id + \Hiltil)(\Real f) = f -i\Imag (\Id - \Hiltil) f  \label{form:Realbdd}\\
(\Id + \Hiltil)(i \Imag f) = f - \Real (\Id - \Hiltil)f  \label{form:Imagbdd}
\end{align}
for any complex valued function on $\Sone$. Now using \eqref{form:Realbdd} and the formula \eqref{eq:Azerobdd} we see that
\begin{align}\label{form:Aonenewbdd}
\Azero = \Imag \sqbrac*[\big]{\Zt, \Hiltil} \Ztapbar = \Imag\cbrac{(\Id - \Hiltil)(\Zt\Ztapbar)} = -i\Zt\Ztapbar + i(\Id + \Hiltil)\cbrac{\Real(\Zt\Ztapbar)}
\end{align}
Now using \eqref{form:Imagbdd} and \eqref{eq:bbdd} we obtain
\begin{align*}
\bvar =  \Real(\Id - \Hiltil)\brac{\frac{\Zt - \Av(\Zt)}{\Zap} } = \frac{\Zt - \Av(\Zt)}{\Zap} - i(\Id + \Hiltil)\Imag\cbrac{\frac{\Zt - \Av(\Zt)}{\Zap}}
\end{align*}
Hence
\begin{align*}
\bap & = \Dap\Zt + (\Zt - \Av(\Zt))\brac{\pap\frac{1}{\Zap}} - i(\Id + \Hiltil)\pap\Imag\cbrac{\frac{\Zt - \Av(\Zt)}{\Zap}} \\
& = \Dap\Zt + (\Zt - \Av(\Zt))\brac{\pap\frac{1}{\Zap} + i\brac{\frac{1}{\Zap} - \Av\brac{\frac{1}{\Zap}}}} \\
& \quad -i(\Zt - \Av(\Zt))\brac{\frac{1}{\Zap} - \Av\brac{\frac{1}{\Zap}}} - i(\Id + \Hiltil)\pap\Imag\cbrac{\frac{\Zt - \Av(\Zt)}{\Zap}}
\end{align*}
Therefore
\begin{align}\label{eq:bapbdd}
\begin{aligned}
& \bap - 2\Real(\Dap\Zt) \\
& = -\Dapbar\Ztbar + (\Zt - \Av(\Zt))\brac{\pap\frac{1}{\Zap} + i\brac{\frac{1}{\Zap} - \Av\brac{\frac{1}{\Zap}}}} \\
& \quad -i(\Zt - \Av(\Zt))\brac{\frac{1}{\Zap} - \Av\brac{\frac{1}{\Zap}}} - i(\Id + \Hiltil)\pap\Imag\cbrac{\frac{\Zt - \Av(\Zt)}{\Zap}}
\end{aligned}
\end{align}
Applying $\Real(\Id - \Hiltil)$ we get
\begin{align}\label{eq:bapbddcomm}
\begin{aligned}
& \bap - 2\Real(\Dap\Zt) \\
& = \Real\Bigg\{ -\sqbrac{\frac{1}{\Zapbar}, \Hiltil}\Ztapbar + \sqbrac{\Zt - \Av(\Zt), \Hiltil}\brac{\pap\frac{1}{\Zap} + i\brac{\frac{1}{\Zap} - \Av\brac{\frac{1}{\Zap}}}} \\
& \qquad \qquad -i(\Id - \Hiltil)\cbrac{(\Zt - \Av(\Zt))\brac{\frac{1}{\Zap} - \Av\brac{\frac{1}{\Zap}}}} \Bigg\}
\end{aligned}
\end{align}
Now similar to the calculation of \eqref{eq:Azero}, we see that
\begin{align}
\begin{aligned}\label{eq:Azerobddcalc}
\Azero(\ap) & = (\Imag \sqbrac*{\Zt,\Hiltil}\Ztapbar)(\ap) \\
& = \Imag\cbrac{\frac{1}{2\pi i} \int_0^{2\pi} (\Zt(\ap) - \Zt(\bp))\cot\brac{\frac{\bp - \ap}{2}}\Ztapbar(\bp) \diff \bp } \\
& = \frac{1}{8\pi} \int_0^{2\pi} \abs*[\Bigg]{\frac{\Zt(\ap) - \Zt(\bp)}{\sin\brac{\frac{\ap - \bp}{2}}}}^2 \diff \bp \\
& = \frac{1}{8\pi}\norm[\Ltwo([0,2\pi], \diff\bp)]{\frac{\Zt(\ap) - \Zt(\bp)}{\sin\brac{\frac{\ap - \bp}{2}}}}^2 \\
& = -\frac{i}{2}\sqbrac{\Zt,\Ztbar; 1}
\end{aligned}
\end{align}
From this it is clear that if $\Zt$ is continuous and not a constant function, then there exists $c>0$ such that $\Azero \geq c >0$ on $[0,2\pi]$. Here $c$ depends on the profile of $\Zt$ and changes with time in general. 

We now give an analogue of \lemref{lem:DtAgoverAg}. 
\begin{lemma}\label{lem:DtAgoverAgbdd}
We have
\begin{align*}
 \frac{\Dt \Azero}{\Azero} & = \frac{\Imag\cbrac{2\sqbrac{\Zt,\Zttbar;1} - \sqbrac{\Zt,\Zt;\Dap\Ztbar}}}{\Ag} + 2\Real(\Dap\Zt) - \bvarap \\
& = 2\Real\cbrac{\sqbrac{\Zt,\frac{1}{\Zap}; 1}(\ap)} + \frac{1}{\Azero}\Imag\cbrac{2i\sqbrac{\Zt,\Azero; \frac{1}{\Zap}}(\ap) - \sqbrac{\Zt,\Zt; \Dap\Ztbar}(\ap) } \\
& \quad + 2\Real(\Dap\Zt) - \bvarap
\end{align*}
\end{lemma}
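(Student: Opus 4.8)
\textbf{Proof plan for \lemref{lem:DtAgoverAgbdd}.}

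The plan is to mirror the proof of \lemref{lem:DtAgoverAg} almost verbatim, tracking the sign change that distinguishes the bounded-domain system \eqref{eq:systemonebdd}, \eqref{form:Zttbarbdd} from the unbounded one. First I would recall the defining relation for the auxiliary variable $\avar$ in the bounded case: from the derivation in \secref{sec:derivofeqnbdd} we have $\avar = -\frac{1}{\abs{\zal}}\frac{\partial P}{\partial \hat n}$ and $\Azero = (\avar\abs{\zal}^2/\hal)\compose\hinv$, so exactly as in \eqref{eq:DtAbyAgcomatbya} we get
\begin{align*}
\frac{\Dt\Azero}{\Azero} = \frac{\avar_t}{\avar}\compose\hinv + 2\Real(\Dap\Zt) - \bvarap.
\end{align*}
Then I would invoke the formula \eqref{eq:atovera} already established in this section, namely
\begin{align*}
\frac{\avar_t}{\avar}\compose\hinv = \frac{\Imag\cbrac{2\sqbrac{\Zt,\Hiltil}\Zttapbar + 2\sqbrac{\Ztt,\Hiltil}\Ztapbar - \sqbrac{\Zt,\Zt;\Dap\Ztbar}}}{\Azero},
\end{align*}
which plays the role of the formula from \cite{Wu97} quoted in \lemref{lem:DtAgoverAg}. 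This gives the first displayed equality, except I must first simplify the combination $\Imag\cbrac{\sqbrac{\Zt,\Hiltil}\Zttapbar + \sqbrac{\Ztt,\Hiltil}\Ztapbar}$.

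The next step is that simplification, which is the analogue of the integration-by-parts computation in the proof of \lemref{lem:DtAgoverAg}. I would write out $\sqbrac{f_1;g}$ using the kernel $\cot((\bp-\ap)/2)$ from \eqref{eq:sqfgbounded}, symmetrize the sum $\int (\Zt(\ap)-\Zt(\bp))\cot(\tfrac{\bp-\ap}{2})\Zttbpbar(\bp)\diff\bp + \int(\Ztt(\ap)-\Ztt(\bp))\cot(\tfrac{\bp-\ap}{2})\Ztbpbar(\bp)\diff\bp$, and integrate by parts in $\bp$ using $\pbp\cot(\tfrac{\bp-\ap}{2}) = -\tfrac12\csc^2(\tfrac{\bp-\ap}{2})$ together with the trig identity \eqref{eq:trigidentity}; after collecting terms this should collapse to $-\Real\cbrac{i\sqbrac{\Zt,\Zttbar;1}}$ with the appropriate bounded-domain normalizing constant in \eqref{eq:sqfoneftwogbounded}, hence to $\Imag\sqbrac{\Zt,\Zttbar;1}$. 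The one point to watch is that here there is no "$\Av$" correction slipping in, since $\sqbrac{\cdot,\Hiltil}$ rather than $\sqbrac{\cdot,\Hil}$ appears and the commutator kills constants; I would double-check that the boundary terms at the ends of $[0,2\pi]$ cancel by periodicity. Granting this, $\frac{\avar_t}{\avar}\compose\hinv = \frac{\Imag\cbrac{2\sqbrac{\Zt,\Zttbar;1} - \sqbrac{\Zt,\Zt;\Dap\Ztbar}}}{\Azero} + 2\Real(\Dap\Zt)-\bvarap$, noting the sign on $\sqbrac{\Zt,\Zttbar;1}$ is $+2$ here versus $-2$ in \lemref{lem:DtAgoverAg} because $\Zttbar = i\Azero/\Zap$ in \eqref{form:Zttbarbdd} rather than $\Zttbar - ig = -i\Ag/\Zap$.

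For the second equality I would substitute \eqref{form:Zttbarbdd} into the difference quotient of $\Zttbar$, exactly as in \eqref{eq:Zttbarfrac}:
\begin{align*}
\frac{\Zttbar(\ap)-\Zttbar(\bp)}{\ap-\bp} = i\cbrac{\Azero(\ap)\frac{\frac{1}{\Zap}(\ap)-\frac{1}{\Zap}(\bp)}{\ap-\bp} + \frac{\Azero(\ap)-\Azero(\bp)}{\ap-\bp}\frac{1}{\Zap}(\bp)},
\end{align*}
which yields $\sqbrac{\Zt,\Zttbar;1}(\ap) = i\Azero(\ap)\sqbrac{\Zt,\tfrac{1}{\Zap};1}(\ap) + i\sqbrac{\Zt,\Azero;\tfrac{1}{\Zap}}(\ap)$. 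Plugging this in, the $i\Azero(\ap)\sqbrac{\Zt,\tfrac1{\Zap};1}$ term divided by $\Azero$ contributes $2\Real\sqbrac{\Zt,\tfrac1{\Zap};1}$ after taking $\tfrac{2}{\Azero}\Imag$ of $i(\cdots)$, and the remaining term gives $\tfrac{1}{\Azero}\Imag\cbrac{2i\sqbrac{\Zt,\Azero;\tfrac1{\Zap}} - \sqbrac{\Zt,\Zt;\Dap\Ztbar}}$, which is the claimed form. I do not expect a genuine obstacle here: the only real content is the integration-by-parts simplification in the second step, and that is a routine (if slightly fiddly) transcription of the corresponding unbounded-domain computation with the $\cot$ kernel and its symmetrization in place of the $\tfrac{1}{\ap-\bp}$ kernel. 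I would therefore present the proof as: (i) the $\avar$ identity, (ii) cite \eqref{eq:atovera}, (iii) the trig integration-by-parts lemma giving $\Imag\sqbrac{\Zt,\Zttbar;1}$, (iv) the $\Zttbar$ difference-quotient substitution, and note throughout that the derivation of \eqref{eq:atovera} already carried out steps (ii)--(iii) implicitly, so the lemma is essentially a repackaging.
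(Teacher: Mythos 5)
Your plan is correct and is essentially the paper's own argument: the paper proves this lemma by declaring it identical to \lemref{lem:DtAgoverAg} up to the sign changes coming from the bounded-domain parametrization, and your steps (the $\avar$ identity, citing \eqref{eq:atovera}, the $\cot$-kernel integration by parts giving $\Imag\sqbrac{\Zt,\Zttbar;1}$, and the difference-quotient substitution via \eqref{form:Zttbarbdd}) are exactly that transcription. The sign bookkeeping you flag is handled automatically once \eqref{eq:atovera} and \eqref{form:Zttbarbdd} are used as stated, so no further work is needed.
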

Except for some sign changes due to the difference of parametrization of the interface, the formulae are exactly the same as \lemref{lem:DtAgoverAg} and the proof is also identical.

\subsubsection{Equations of motion}

Let us now derive the main equations. This will be very similar to \secref{sec:maineqn}. We define 
\begin{align}\label{form:Jzerobdd}
\Jzerobdd = \Dt(\bvarap - \Dap\Zt - \Dapbar\Ztbar)
\end{align}
Using \eqref{form:DtoneoverZap} we get
\begin{align*}
\Dt\brac{\frac{e^{i\ap}}{\Zap}} = \frac{e^{i\ap}}{\Zap} \cbrac{(\bap - \Dap\Zt - \Dapbar\Ztbar) + \Dapbar\Ztbar + i\bvar}
\end{align*}
Now following a similar calculation as in \secref{sec:maineqn} we get
\begin{align*}
\brac{\Dt^2 - i\frac{\Azero}{\Zapabs^2}\pap}\brac{\frac{e^{i\ap}}{\Zap}} = \frac{e^{i\ap}}{\Zap}(\Jzerobdd + \Qzerobdd)
\end{align*}
where
\begin{align}\label{eq:Qzerobdd}
\Qzerobdd = (\bvarap-\Dap\Zt + i\bvar)^2 - (\Dapbar\Ztbar)^2 + \frac{i}{\Zapabs^2}\pap\Azero + i\Dt\bvar + \frac{\Azero}{\Zapabs^2}
\end{align}
Now applying $\pap$ to the above equation and doing a similar calculation as in \secref{sec:maineqn} we get the main equation as
\begin{align}\label{eq:paponeoverZapbdd}
\brac{\Dt^2 - i\frac{\Azero}{\Zapabs^2}\pap}\pap\brac{\frac{e^{i\ap}}{\Zap}} = e^{i\ap}\Dap\Jzerobdd + \Rzerobdd
\end{align}
where
\begin{align}\label{form:Rzerobdd}
\begin{split}
\Rzerobdd & = \brac{\pap\brac{\frac{e^{i\ap}}{\Zap}}}(\Jzerobdd+\Qzerobdd) + e^{i\ap}\Dap\Qzerobdd -  \bvarap\brac{\pap\Dt\brac{\frac{e^{i\ap}}{\Zap}} + \Dt\pap\brac{\frac{e^{i\ap}}{\Zap}}} \\
& \quad - (\Dt\bvarap)\brac{\pap\brac{\frac{e^{i\ap}}{\Zap}}} + 2i\Azero\brac{\Dapabs\frac{1}{\Zapabs}}\brac{\pap\brac{\frac{e^{i\ap}}{\Zap}}} \\
& \quad + i\brac{\frac{1}{\Zapabs^2}\pap\Azero}\brac{\pap\brac{\frac{e^{i\ap}}{\Zap}}}
\end{split}
\end{align}
The other main equation is also derived in a very similar manner. We define
\begin{align}\label{eq:Jonebdd}
\Jonebdd = \Dt\Azero + \Azero\brac{\bap - \Dap\Zt - \Dapbar\Ztbar}
\end{align}
Following the same calculations as in \secref{sec:maineqn} we get
\begin{align*}
\brac{\Dt^2 - i\frac{\Azero}{\Zapabs^2}\pap}\Ztbar = i\frac{\Jonebdd}{\Zap}
\end{align*}
The formula \eqref{eq:maineqcomDap} also holds here and hence by following the same argument as in \secref{sec:maineqn} we obtain
\begin{align}\label{eq:mainDapDapZtbarbdd}
\brac{\Dt^2 - i\frac{\Azero}{\Zapabs^2}\pap}\Dap^2\Ztbar = \Ronebdd  + i \frac{\wbar^3}{\Zapabs}\pap\brac{\frac{1}{\Zapabs^2}\pap\Jonebdd} 
\end{align}
where
\begin{align}\label{eq:Ronebdd}
\begin{split}
\Ronebdd & = -2(\Dap^2\Ztt)(\Dap\Ztbar) -4(\Dap\Ztt)(\Dap^2\Ztbar) -2(\Dap^2\Zt)(\Dt\Dap\Ztbar) \\
& \quad -2(\Dap\Zt)(\Dap\Dt\Dap\Ztbar)  - 2(\Dap\Zt)(\Dt\Dap^2\Ztbar)  + i(\Dap\Jonebdd) \brac{\Dap\frac{1}{\Zap}} \\
& \quad  + i\Jonebdd\Dap^2\frac{1}{\Zap}  + 2i\wbar(\Dap\wbar)\brac{\frac{1}{\Zapabs^2}\pap\Jonebdd}
\end{split}
\end{align}
Note the equation is the same as \eqref{eq:mainDapDapZtbar} except for the minus sign in front of $\Azero$ and terms involving $\Jonebdd$. 

\subsection{The a priori estimate}

We define the energies
\begin{align*}
\Eonebdd(t) & = \norm[2]{\Ztapbar}^2\brac{\norm[2]{\pap\frac{1}{\Zap}}^2 + \norm[2]{\frac{1}{\Zap}}^2 }\\
\Etwobdd(t) & =  \norm[2]{\Ztapbar}^2\cbrac{\norm[2]{\Dt\nobrac{\pap\brac{\frac{e^{i\ap}}{\Zap}}}}^2 + \norm[\Hhalf]{\frac{\sqrt{\Azero}}{\Zap}\pap\brac{\frac{e^{i\ap}}{\Zap}}}^2} \\
\Ethreebdd(t) & = \norm[2]{\Ztapbar}^2 \cbrac{\norm[2]{\Dt\Dap^2\Ztbar }^2 + \norm[\Hhalf]{\frac{\sqrt{\Azero}}{\Zap}\Dap^2\Ztbar}^2} 
\end{align*}
and also define
\begin{align}
\Eabdd(t) & = \brac{\Eonebdd(t)^2 + \Etwobdd(t)}^\half \label{def:Eabdd}\\
\Ebdd(t) & = \brac{\Eonebdd(t)^3 + \Etwobdd(t)^{\frac{3}{2}} + \Ethreebdd(t)}^{\frac{1}{3}} \label{def:Ebdd}
\end{align}
Note that the energy is essentially the same as for  $\Rsp$ except for some small differences. The first difference is the addition of $\norm[2]{\Ztapbar}^2 \norm[2]{\frac{1}{\Zap}}^2$ in $\Eonebdd(t)$ which is a lower order term (note that this is very minor change as we are on a bounded domain). The other change is replacing $\pap\frac{1}{\Zap}$ with $\pap\brac{\frac{e^{i\ap}}{\Zap}}$ in $\Etwobdd(t)$. This is done as $\pap\frac{1}{\Zap}$ is no longer boundary value of a holomorphic function whereas $\pap\brac{\frac{e^{i\ap}}{\Zap}}$ is (see \eqref{eq:oneoverZapTr} and \eqref{eq:HiltilpaponeoverZap}). 

For this energy we have the following energy estimate.
\begin{theorem}\label{thm:aprioriEbdd}
Let $T>0$ and let $(\Z, \Zt )$ be a solution to the  water wave equation \eqref{eq:systemonebdd} in the time interval $[0,T]$ with $(\Zap - 1, \frac{1}{\Zap} - 1, \Zt) \in \Linfty([0,T], H^s(\Rsp)\times H^s(\Rsp)\times H^{s + \half}(\Rsp))$ for some $s \geq 4$. Then  $\E(t) < \infty$ for all $t\in [0,T]$ and there exists a universal constant $c >0$ so that for all $t \in [0,T)$ we have
\begin{align*}
\frac{d\Eabdd(t)}{dt} \leq c \brac{\norm[\Linfty\cap\Hhalf]{\Dap\Ztbar}  +  \nobrac{\norm[2]{\Ztapbar}}\brac{\norm[2]{\pap\frac{1}{\Zap}} + \norm[2]{\frac{1}{\Zap}} }} \Eabdd(t) \leq c^2 \Eabdd(t)^{3/2}
\end{align*}
and also 
\begin{align*}
\frac{d\Ebdd(t)}{dt} \leq c\Eabdd(t)^\half\Ebdd(t) \leq c^2 \Ebdd(t)^{3/2}
\end{align*}
\end{theorem}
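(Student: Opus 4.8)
The plan is to mirror, essentially verbatim, the proof of \thmref{thm:aprioriE} carried out in \secref{sec:identities}--\secref{sec:closing}, replacing the unbounded-domain ingredients by their bounded-domain analogues established in \secref{sec:derivofeqnbdd}. The starting point is the observation, already emphasized after the definitions of the energies, that the only structural changes are: (i) the weight $\Ag = g + \Azero$ is replaced by $\Azero$ alone (with $g=0$ now forced and $\Azero \geq c > 0$ available, though crucially \emph{not used quantitatively}); (ii) the sign in front of the Taylor-sign term flips ($-i\Azero/\Zapabs^2\,\pap$ instead of $+i\Ag/\Zapabs^2\,\pap$), which is harmless since every energy-identity manipulation that uses it only needs its realness and boundedness properties; (iii) $\pap\frac{1}{\Zap}$ is replaced in $\Etwobdd$ by $\pap\brac{\frac{e^{i\ap}}{\Zap}}$, because it is the latter that is the boundary value of a holomorphic function (see \eqref{eq:oneoverZapTr}, \eqref{eq:HiltilpaponeoverZap}), so that the projection identities $\Hil f = f$ survive; and (iv) the Hilbert transform $\Hil$ on $\Sone$ and the trilinear forms \eqref{eq:sqfgbounded}--\eqref{eq:foneftwofthreegbounded} replace their real-line counterparts, together with the extra lower-order term $\norm[2]{\Ztapbar}^2\norm[2]{\frac{1}{\Zap}}^2$ in $\Eonebdd$ which is trivially controlled on a compact interval. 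The auxiliary analytic estimates (\propref{prop:commutator}, \propref{prop:Hardy}, \propref{prop:Hhalfweight}, \propref{prop:LinftyHhalf}, \propref{prop:tripleidentity}, \propref{prop:triple}, \propref{prop:Leibniz}, \lemref{lem:timederiv}) all hold on $\Sone$ as well, so they can be invoked directly.

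Concretely, I would proceed as follows. First, record the bounded-domain main equations: \eqref{eq:paponeoverZapbdd} for $\pap\brac{\frac{e^{i\ap}}{\Zap}}$ and \eqref{eq:mainDapDapZtbarbdd} for $\Dap^2\Ztbar$, together with the formulas \eqref{eq:Qzerobdd}, \eqref{form:Rzerobdd}, \eqref{eq:Jonebdd}, \eqref{eq:Ronebdd} and the material-derivative identity \lemref{lem:DtAgoverAgbdd}. Second, re-derive the list of ``quantities controlled by $\Eabdd$'' exactly paralleling \secref{sec:quantEa}: the bounds on $\norm[\Hhalf]{\Azero}$, $\norm[\infty]{\Azero}$, on $\norm[\infty]{\bvarap}$, $\norm[\Linfty\cap\Hhalf]{\Dap\Ztbar}$, on $\norm[2]{\Dap\Dapbar\Ztbar}$, $\norm[2]{\Dap^2\Ztbar}$, $\norm[2]{\Zttapbar}$, on $\norm[2]{\frac{1}{\sqrt{\Azero}}\Dapabs\Azero}$ and $\norm[\infty]{\frac{\Dt\Azero}{\Azero}}$ (this last using \lemref{lem:DtAgoverAgbdd} and the representation \eqref{eq:Azerobddcalc} of $\Azero$ as a quadratic form, exactly as \eqref{eq:Agfrac} was proved), and finally on $\norm[2]{\Dapabs\Jzerobdd}$, $\norm[2]{\Rzerobdd}$, $\norm[\Linfty\cap\Hhalf]{\frac{1}{\Zapabs^2}\pap\Azero}$. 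Third, re-derive the ``quantities controlled by $\Ebdd$'' paralleling \secref{sec:quantE}, culminating in $\norm[\infty]{\frac{1}{\Zapabs^2}\pap\Jonebdd} \lesssim \Eabdd(t)^\half \Ebdd(t)$ and $\norm[2]{\Ronebdd} \lesssim \Eabdd(t)^\half\norm[2]{\Dt\Dap^2\Ztbar} + \Ebdd(t)(\dots) + \dots$. Fourth, close the estimate: differentiate $\Eonebdd$ via \lemref{lem:timederiv} (the extra term $\frac{d}{dt}\norm[2]{\Ztapbar}^2\norm[2]{\frac{1}{\Zap}}^2$ costs only $\norm[\infty]{\bvarap}$ plus $\norm[2]{\Zttapbar}$ times lower order, already controlled); establish the abstract identity \eqref{eq:temptimederivfull} with $\Ag$ replaced by $\Azero$ and $f$ a function with $\Hil f = f$ — the computation is identical except for the sign flip, which changes $+i\frac{\Ag}{\Zapabs^2}\pap f$ to $-i\frac{\Azero}{\Zapabs^2}\pap f$ on the right — and apply it once with $f = \pap\brac{\frac{e^{i\ap}}{\Zap}}$ (using \eqref{eq:paponeoverZapbdd}) to control $\Etwobdd$, and once with $f = \Dap^2\Ztbar$ (using \eqref{eq:mainDapDapZtbarbdd}) to control $\Ethreebdd$. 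Combining the three gives $\frac{d}{dt}\Eabdd \lesssim \Eabdd^{3/2}$ and $\frac{d}{dt}\Ebdd \lesssim \Eabdd^\half\Ebdd \lesssim \Ebdd^{3/2}$, which is the assertion; finiteness of $\Ebdd(t)$ on $[0,T]$ follows from the Sobolev regularity hypothesis exactly as in the unbounded case.

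The step I expect to be the genuine (rather than bookkeeping) obstacle is verifying that the estimate $\norm[\infty]{\frac{\Dt\Azero}{\Azero}} \lesssim \norm[\infty]{\Dap\Ztbar} + \norm[2]{\Ztapbar}\norm[2]{\pap\frac{1}{\Zap}}$ and, more importantly, its ``one-derivative-up'' companion $\norm[2]{\frac{1}{\sqrt{\Azero}}\Dapabs\Dt\Azero} \lesssim (\dots)$ go through without ever using the lower bound $\Azero \geq c$. This is exactly the new phenomenon that \thmref{thm:aprioriE} was built to handle in the unbounded case: one must control $\frac{\Azero(\ap) - \Azero(\bp)}{\ap - \bp}$ weighted by $\frac{1}{\Zap}$ by $\sqrt{\Azero(\ap)}$ alone, using the quadratic-form structure \eqref{eq:Azerobddcalc} of $\Azero$ on $\Sone$ (the analogue of \eqref{eq:Agfractemp1}--\eqref{eq:Agfrac}), the identity $\Hiltil \Ztapbar = \Ztapbar$, and \propref{prop:Hardy} on the circle. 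The circle version of the algebraic manipulation in \eqref{eq:Agfractemp1} requires using the trigonometric identity \eqref{eq:trigidentity} in place of the factor $\frac{1}{\ap - \bp}$, so the partial-fractions step $\frac{1}{\ap - \bp}\brac{\frac{1}{\ap - s} - \frac{1}{\bp - s}} = \frac{1}{(\ap - s)(\bp - s)}$ must be replaced by the corresponding cotangent identity; this is standard but needs to be done carefully. Everything downstream — the estimates for $\Jzerobdd$, $\Rzerobdd$, $\Jonebdd$, $\Ronebdd$ and the closing argument — then follows the $\Rsp$ template line by line, with the harmless sign change in the dispersive term and the harmless extra lower-order $\frac{1}{\Zap}$ terms. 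I would therefore write the proof as: ``The proof is identical to that of \thmref{thm:aprioriE}, replacing \eqref{eq:paponeoverZap}, \eqref{eq:mainDapDapZtbar}, \eqref{form:Rzero}, \eqref{eq:Jone}, \eqref{eq:Rone}, \lemref{lem:DtAgoverAg} by \eqref{eq:paponeoverZapbdd}, \eqref{eq:mainDapDapZtbarbdd}, \eqref{form:Rzerobdd}, \eqref{eq:Jonebdd}, \eqref{eq:Ronebdd}, \lemref{lem:DtAgoverAgbdd} respectively, $\Ag$ by $\Azero$, and $\pap\frac{1}{\Zap}$ by $\pap\brac{\frac{e^{i\ap}}{\Zap}}$ in the relevant places; the only estimate requiring a genuinely separate argument is the analogue of \eqref{eq:Agfrac}, which we now give,'' followed by that one computation and a remark that the lower-order $\norm[2]{\Ztapbar}^2\norm[2]{\frac{1}{\Zap}}^2$ term is trivially absorbed.
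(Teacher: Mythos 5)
Your proposal matches the paper's own proof: the paper proves \thmref{thm:aprioriEbdd} by running the argument of \thmref{thm:aprioriE} with exactly the substitutions you list ($\Ag \to \Azero$, $\Zt \to \Zt - \Av(\Zt)$, $\pap\frac{1}{\Zap} \to \pap\brac{\frac{e^{i\ap}}{\Zap}}$, the extra $\norm[2]{\frac{1}{\Zap}}$ term), and it isolates the same genuine difficulty you flag, namely the analogue of \eqref{eq:Agfrac} for $\frac{\Azero(\ap)-\Azero(\bp)}{\sin\brac{\frac{\ap-\bp}{2}}}$, handled with the cotangent identity. The only additional wrinkles the paper records beyond your outline are routine: since $\Hil = \Hiltil + \Av$ on the circle, the identity \eqref{eq:IminusHDt2f} picks up an extra term $\Dt\sqbrac{\bvar,\Av}\pap f$ (controlled by integrating by parts and \propref{prop:Hardy}), and the estimate for $\Dapabs\brac{\frac{1}{\Zapabs^2}\pap\Azero}$ needs $e^{i\ap}$ factors inserted so that the holomorphic-projection identities still apply.
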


The proof of this theorem is essentially the same as the one for \thmref{thm:aprioriE} with only a few changes with most of them related to some modifications to \secref{sec:quantEa}. Some of the common changes are:
\begin{enumerate}[ label=(\alph*)]
\item Replacing occurrences of $\Zt$ with $\Zt - \Av(\Zt)$.
\item Replacing $\norm[2]{\pap\frac{1}{\Zap}}$ with $\brac{\norm[2]{\pap\frac{1}{\Zap}} + \norm[2]{\frac{1}{\Zap}}}$ on the right hand side of the estimates. 
\item Replacing $\Dt\brac{\pap\frac{1}{\Zap}}$ and $\frac{\sqrt{\Ag}}{\Zap}\pap\frac{1}{\Zap}$ with $\Dt\brac{\pap\brac{\frac{e^{i\ap}}{\Zap}}}$ and $\frac{\sqrt{\Azero}}{\Zap}\pap\brac{\frac{e^{i\ap}}{\Zap}}$ respectively.
\end{enumerate}
We will give a few examples of these common changes and also explain any other non standard changes.

\begin{enumerate}[widest = 99, leftmargin =*, align=left, label=\arabic*)]

\item The estimates for $\norm[\Linfty]{\Azero}$ and $\norm[\Hhalf]{\Azero}$ remain the same. Similarly for $\norm[2]{\pap\frac{1}{\Zapabs}}$ and $\norm[2]{\Dapabs\w}$. Now observe that
\begin{align*}
& 2\pap\Pa\brac{\frac{\Zt - \Av(\Zt)}{\Zap}} \\
& = (\Id - \Hil)\cbrac{\Dap\Zt + (\Zt - \Av(\Zt))\pap\frac{1}{\Zap}} \\
& = 2\Dap\Zt - (\Id + \Hil)\Dap\Zt + (\Id - \Hil)\cbrac{(\Zt - \Av(\Zt))\pap\frac{1}{\Zap}} \\
& = 2\Dap\Zt + \sqbrac{\frac{1}{\Zap}, \Hil}\Ztap + \sqbrac{\Zt - \Av(\Zt), \Hil}\brac{\pap\frac{1}{\Zap} + i\brac{\frac{1}{\Zap} - \Av\brac{\frac{1}{\Zap}}}} \\
& \quad  - i(\Id - \Hil)\cbrac{(\Zt - \Av(\Zt))\brac{\frac{1}{\Zap} - \Av\brac{\frac{1}{\Zap}}}}
\end{align*}
Hence using \propref{prop:commutator} and \propref{prop:Hardy} we get
\begin{align*}
& \norm[\infty]{\pap\Pa\brac{\frac{\Zt - \Av(\Zt)}{\Zap}}} \\
& \lesssim \norm[\infty]{\Dap\Ztbar} + \norm[2]{\pap\frac{1}{\Zap}}\norm[2]{\Ztapbar}  + \norm[H^1]{(\Zt - \Av(\Zt))\brac{\frac{1}{\Zap} - \Av\brac{\frac{1}{\Zap}}}} \\
& \lesssim \norm[\infty]{\Dap\Ztbar} + \norm[2]{\pap\frac{1}{\Zap}}\norm[2]{\Ztapbar} \\
& \lesssim \norm[\infty]{\Dap\Ztbar} +  \brac{\norm[2]{\pap\frac{1}{\Zap}} + \norm[2]{\frac{1}{\Zap}}}\norm[2]{\Ztapbar} 
\end{align*}
Hence the estimate \eqref{eq:papPaZtZap} gets modified by the changes mentioned above. 

The estimate for $\norm[\infty]{\bap}$ now follows similar to the above calculation by using \eqref{eq:bapbddcomm}. We also note from \eqref{eq:systemonebdd} and \propref{prop:Hardy} that
\begin{align*}
\norm[\infty]{\bvar} \lesssim \norm[H^1]{\bvar} \lesssim \norm[H^1]{\frac{\Zt - \Av(\Zt)}{\Zap}} \lesssim \norm[2]{\Ztapbar}\brac{\norm[2]{\pap\frac{1}{\Zap}} + \norm[2]{\frac{1}{\Zap}}}
\end{align*}

\item To get the estimate for $\norm[2]{\Dapabs(\bap - \Dap\Zt - \Dapbar\Ztbar)}$ we follows as in the $\Rsp$ case and see that
\begin{align*}
\Dapabs(\bvarap -\Dap\Zt - \Dapbar\Ztbar)  = \Real \cbrac{\frac{\w}{\Zap}(\Id - \Hiltil)\pap(\bvarap -\Dap\Zt - \Dapbar\Ztbar)} 
\end{align*}
Note that to get this equality we had to use $\Id - \Hiltil$ instead of $\Id - \Hil$ (we are again using the fact that $\bap - \Dap\Zt - \Dapbar\Ztbar$ is real valued). However we see that $(\Id - \Hiltil)\pap = (\Id - \Hil)\pap$ and hence we get
\begin{align*}
\Dapabs(\bvarap -\Dap\Zt - \Dapbar\Ztbar)  = \Real \cbrac{\frac{\w}{\Zap}(\Id - \Hil)\pap(\bvarap -\Dap\Zt - \Dapbar\Ztbar)} 
\end{align*}
Now following the same proof as in the $\Rsp$ case and using \eqref{eq:bapbdd} instead of \eqref{form:bvarapnew} we get the estimate 
\begin{align*}
& \norm[2]{\Dapabs(\bvarap - \Dap\Zt - \Dapbar\Ztbar)} \\
& \lesssim \brac{ \norm*[\Big][2]{\pap\frac{1}{\Zap}} + \norm[2]{\frac{1}{\Zap}}}^2\norm[2]{\Ztapbar} + \brac{\norm*[\Big][2]{\pap\frac{1}{\Zap}} + \norm[2]{\frac{1}{\Zap}}}\norm[\infty]{\Dap\Ztbar}
\end{align*}
We follow this same approach for estimating the term $\norm[2]{\Dapabs\Jzerobdd}$ and also for the term $\norm[2]{\Dapabs\brac{\frac{1}{\Zapabs^2}\pap\Jonebdd}}$.

\item The proof of the estimate for $\norm[2]{\Dap\Dapbar\Ztbar}$ follows the same as for the  $\Rsp$ case and we get the estimate
\begin{align*}
\norm[2]{\Dap\Dapbar\Ztbar}  & \lesssim \brac{\norm*[\Big][2]{\pap\frac{1}{\Zap}} + \norm[2]{\frac{1}{\Zap}}}^2\norm[2]{\Ztapbar} + \brac{\norm*[\Big][2]{\pap\frac{1}{\Zap}} + \norm[2]{\frac{1}{\Zap}}}\norm[\infty]{\Dap\Ztbar} \\
& \quad + \norm[2]{\Dt\brac{\pap\brac{\frac{e^{i\ap}}{\Zap}}}}
\end{align*}
Similarly the estimates for $\norm[\Linfty\cap\Hhalf]{\Dapbar\Ztbar}$, $\norm[\Linfty\cap\Hhalf]{\Dap\Ztbar}$ and most other terms follow in the same manner as $\Rsp$.  From now on we will concentrate on the non-standard changes. 

\item Similar to the $\Rsp$ case, we easily obtain the estimates
\begin{align*}
& \norm[\Hhalf]{\w^n\frac{\sqrt{\Azero}}{\Zapabs}\pap\brac{\frac{e^{i\ap}}{\Zap}}} + \norm[\Hhalf]{e^{-i\ap}\w^n\frac{\sqrt{\Azero}}{\Zapabs}\pap\brac{\frac{e^{i\ap}}{\Zap}}} \\
& \lesssim_n  \norm[\Hhalf]{\frac{\sqrt{\Azero}}{\Zap}\pap\brac{\frac{e^{i\ap}}{\Zap}}} + \brac{\norm[2]{\pap\frac{1}{\Zap}} + \norm[2]{\frac{1}{\Zap}} }^2\norm[2]{\Ztapbar} \\
& \quad + \brac{\norm[2]{\pap\frac{1}{\Zap}}  + \norm[2]{\frac{1}{\Zap}}}\norm[\Linfty\cap\Hhalf]{\Dap\Ztbar}
\end{align*}
Now observe that 
\begin{align*}
e^{-i\ap}\w^n\frac{\sqrt{\Azero}}{\Zapabs}\pap\brac{\frac{e^{i\ap}}{\Zap}} = \w^n\frac{\sqrt{\Azero}}{\Zapabs}\pap\nobrac{\frac{1}{\Zap}} + i\w^n \frac{\sqrt{\Azero}}{\Zapabs\Zap}
\end{align*}
We can easily control the second term
\begin{align*}
& \norm[\Hhalf]{ i\w^n \frac{\sqrt{\Azero}}{\Zapabs\Zap}} \\
& \lesssim \norm[H^1]{ i\w^n \frac{\sqrt{\Azero}}{\Zapabs\Zap}} \\
& \lesssim  \brac{\norm[2]{\pap\frac{1}{\Zap}} + \norm[2]{\frac{1}{\Zap}} }^2\norm[2]{\Ztapbar} + \brac{\norm[2]{\pap\frac{1}{\Zap}}  + \norm[2]{\frac{1}{\Zap}}}\norm[\Linfty\cap\Hhalf]{\Dap\Ztbar}
\end{align*}
Therefore we get control for the first term. Now again by following the proof in the $\Rsp$ case, we get the estimates
\begin{align*}
& \norm[\Hhalf]{\w^n\frac{\sqrt{\Azero}}{\Zapabs}\pap\frac{1}{\Zap}} + \norm[\Hhalf]{\w^n\frac{\sqrt{\Azero}}{\Zapabs}\pap\frac{1}{\Zapabs}} +  \norm[\Hhalf]{\w^n\frac{\sqrt{\Azero}}{\Zapabs^2}\pap \w}\\
& \lesssim_n  \norm[\Hhalf]{\frac{\sqrt{\Azero}}{\Zap}\pap\brac{\frac{e^{i\ap}}{\Zap}}} + \brac{\norm[2]{\pap\frac{1}{\Zap}} + \norm[2]{\frac{1}{\Zap}} }^2\norm[2]{\Ztapbar} \\
& \quad + \brac{\norm[2]{\pap\frac{1}{\Zap}}  + \norm[2]{\frac{1}{\Zap}}}\norm[\Linfty\cap\Hhalf]{\Dap\Ztbar}
\end{align*}
Following this same logic, we also get the estimates
\begin{align*}
&  \norm[\Hhalf]{\w^n\frac{\Azero}{\Zapabs}\pap\brac{\frac{e^{i\ap}}{\Zap}}} +  \norm[\Hhalf]{e^{-i\ap}\w^n\frac{\Azero}{\Zapabs}\pap\brac{\frac{e^{i\ap}}{\Zap}}}  \\
& \quad + \norm[\Hhalf]{\w^n\frac{\Azero}{\Zapabs}\pap\frac{1}{\Zap}} + \norm[\Hhalf]{\w^n\frac{\Azero}{\Zapabs}\pap\frac{1}{\Zapabs}} +  \norm[\Hhalf]{\w^n\frac{\Azero}{\Zapabs^2}\pap \w}  \\
& \lesssim_n   \norm[2]{\Ztapbar}\norm[\Hhalf]{\frac{\sqrt{\Azero}}{\Zap}\pap\brac{\frac{e^{i\ap}}{\Zap}}} \\
& \quad + \brac{\norm[\Linfty\cap\Hhalf]{\Dap\Ztbar} + \norm[2]{\Ztapbar}\brac{\norm[2]{\pap\frac{1}{\Zap}} + \norm[2]{\frac{1}{\Zap}} }}^2
\end{align*}

\item We need a slight modification to the proof of the estimate of $\norm[2]{\Dapabs\brac{\frac{1}{\Zapabs^2}\pap\Azero}}$. This is because we no longer have the property that $\Hil f = f$ then $\Hil\brac{\frac{1}{\Zap^2}\pap f} = \frac{1}{\Zap^2}\pap f$, which is true in $\Rsp$. To remedy this, we write
\begin{align*}
 \Dapabs\brac*[\bigg]{\frac{1}{\Zapabs^2}\pap\Azero}  & = \Real \cbrac*[\bigg]{\frac{\w^3\wbar^3 e^{-i\ap} e^{i\ap}}{\Zapabs}(\Id - \Hiltil)\pap\brac*[\bigg]{\frac{1}{\Zapabs^2}\pap\Azero}} \\
 & = \Real \cbrac*[\bigg]{\frac{\w^3\wbar^3 e^{-i\ap} e^{i\ap}}{\Zapabs}(\Id - \Hil)\pap\brac*[\bigg]{\frac{1}{\Zapabs^2}\pap\Azero}}
\end{align*}
Now 
\begin{align*}
& \frac{\wbar^3 e^{i\ap}}{\Zapabs}(\Id - \Hil)\pap\brac*[\bigg]{\frac{1}{\Zapabs^2}\pap\Azero} \\
 & = (\Id - \Hil) \cbrac*[\bigg]{\Dap \brac*[\bigg]{\frac{e^{i\ap}}{\Zap^2}\pap\Azero} - 2\brac*[\bigg]{\frac{\wbar}{\Zapabs^2}\pap\Azero}\brac{\Dap\wbar}e^{i\ap} -i\frac{e^{i\ap}\wbar^2}{\Zap}\brac{\frac{1}{\Zapabs^2}\pap\Azero} } \\
 & \quad - \sqbrac{ \frac{\wbar^3e^{i\ap}}{\Zapabs} ,\Hil}\pap \brac*[\bigg]{\frac{1}{\Zapabs^2}\pap\Azero}
\end{align*}
Now we can follow the same proof as in the $\Rsp$ by using the property that if $\Hil f = f$,  then $\Hil(\Dap f) = \Dap f$ and $\Hil\brac{\frac{e^{i\ap}}{\Zap^2}\pap f } = \frac{e^{i\ap}}{\Zap^2}\pap f$. 

\item The estimate for  $\norm[\infty]{\frac{\Dt\Azero}{\Azero}} $ follows essentially the same was as in the $\Rsp$ with some minor modifications. First we obviously have to use \lemref{lem:DtAgoverAgbdd} instead of \lemref{lem:DtAgoverAg}. The main difference comes in the calculation of \eqref{eq:Agfractemp1}. To remedy this use the following identity
\begin{align*}
\cot\brac{\frac{s - \ap}{2}} - \cot\brac{\frac{s - \bp}{2}} = \frac{\sin\brac{\frac{\ap - \bp}{2}}}{\sin\brac{\frac{\ap- s}{2}}\sin\brac{\frac{\bp - s}{2}}}
\end{align*}
Then by noting the difference between the formulae of \eqref{eq:Azerobddcalc} and \eqref{eq:Azero}, instead of \eqref{eq:Agfractemp1} we get
\begin{align*}
& \frac{\Azero(\ap) - \Azero(\bp)}{\sin\brac{\frac{\ap - \bp}{2}} } \\
& = \Imag\cbrac{\frac{1}{2\pi i } \int_0^{2\pi} \frac{1}{\sin\brac{\frac{\bp - s}{2}}} \brac{\frac{\Zt(\ap) - \Zt(s)}{\sin\brac{\frac{\ap - s}{2}}}}\Ztapbar(s) \diff s + \frac{\Zt(\ap) - \Zt(\bp)}{\sin\brac{\frac{\ap - \bp}{2}}}\Ztapbar(\bp)  } 
\end{align*}
The rest of the proof is the same as the one in $\Rsp$. 

\item In the bounded domain case, we estimate the term $\norm[2]{(\Id - \Hil)\Dt^2\brac{\pap\brac{\frac{e^{i\ap}}{\Zap}}}}$ instead of $\norm[2]{(\Id - \Hil)\Dt^2\brac{\pap\frac{1}{\Zap}}}$ and similarly replace $\norm[2]{(\Id - \Hil)\cbrac{i\frac{\Ag}{\Zapabs^2}\pap\brac{\pap\frac{1}{\Zap}}}}$ with $ \norm[2]{(\Id - \Hil)\cbrac{i\frac{\Azero}{\Zapabs^2}\pap\brac{\pap\frac{e^{i\ap}}{\Zap}}}}$. The replacement of $\pap\frac{1}{\Zap}$ with $\pap\brac{\frac{e^{i\ap}}{\Zap}}$ is natural because the main equation \eqref{eq:paponeoverZapbdd} is in terms $\pap\brac{\frac{e^{i\ap}}{\Zap}}$. The proof to estimate the term $ \norm[2]{(\Id - \Hil)\cbrac{i\frac{\Azero}{\Zapabs^2}\pap\brac{\pap\frac{e^{i\ap}}{\Zap}}}}$ remains the same, however the proof for the term $\norm[2]{(\Id - \Hil)\Dt^2\brac{\pap\brac{\frac{e^{i\ap}}{\Zap}}}}$ gets slightly modified as the formula \eqref{eq:IminusHDt2f} does not hold exactly. It gets modified as follows: Let $f = \pap\brac{\frac{e^{i\ap}}{\Zap}}$ then we still have $\Hil f = f$ and using \eqref{eq:sqfgbounded} and \propref{prop:tripleidentity} we get
\begin{align*}
& (\Id - \Hil)\Dt^2f \\
& = \sqbrac{\Dt,\Hil}\Dt f + \Dt\sqbrac{\Dt,\Hil}f \\
& = \sqbrac{\bvar,\Hil}\pap\Dt f + \Dt\sqbrac{\bvar,\Hil}\pap f \\
& = \sqbrac{\bvar,\Hil}\pap\Dt f + \Dt\sqbrac*{\bvar,\Hiltil}\pap f + \Dt\sqbrac{\bvar,\Av}\pap f \\
& = \sqbrac{\bvar,\Hil}\pap\Dt f + \sqbrac*{\bvar,\Hiltil}\pap\Dt f + \sqbrac*{\Dt\bvar,\Hiltil}\pap f - \sqbrac{\bvar, \bvar ; \pap f} + \Dt\sqbrac{\bvar,\Av}\pap f
\end{align*}
Now all terms except for the last term are handled as in the $\Rsp$ case. For the last term we observe that
\begin{align*}
& \Dt\sqbrac{\bvar,\Av}\pap f \\
& = -\partial_t \cbrac{\frac{1}{2\pi}\int_0^{2\pi} \bvar(s) f_\ap(s) \diff s } \\
& =  - \nobrac{\frac{1}{2\pi}\int_0^{2\pi} (\partial_t \bvar)(s) f_\ap(s)  + \bvar(s)\pap(\partial_t f)(s)\diff s } \\
& =  - \nobrac{\frac{1}{2\pi}\int_0^{2\pi} (\Dt \bvar )(s) f_\ap(s)  + \bvar(s)\pap(\Dt f)(s)  } \\
& \quad + \nobrac{\frac{1}{2\pi}\int_0^{2\pi} (  \bvar\bvarap)(s) f_\ap(s)  + \bvar(s)\pap( \bvar f_\ap)(s)\diff s } \\
& =  \nobrac{\frac{1}{2\pi}\int_0^{2\pi} (\pap\Dt \bvar )(s) f(s)  + \bvarap(s)(\Dt f)(s)  } 
\end{align*}
where in the last step we integrated by parts. Hence from \propref{prop:Hardy} we see that
\begin{align*}
\norm[2]{\Dt\sqbrac{\bvar,\Av}\pap f} & \lesssim \norm[2]{\pap\Dt\bvarap}\norm[2]{f} + \norm[2]{\bvarap}\norm[2]{\Dt f} \\
& \lesssim \norm[BMO]{\pap\Dt\bvarap}\norm[2]{f} + \norm[\infty]{\bvarap}\norm[2]{\Dt f} \\
& \lesssim \norm[\Hhalf]{\pap\Dt\bvarap}\norm[2]{f} + \norm[\infty]{\bvarap}\norm[2]{\Dt f}
\end{align*}
Hence $\norm[2]{\Dt\sqbrac{\bvar,\Av}\pap f} $ can be controlled in the same way as is done in the $\Rsp$ case. Therefore the term $\norm[2]{(\Id - \Hil)\Dt^2\brac{\pap\brac{\frac{e^{i\ap}}{\Zap}}}}$ is controlled. By the same process, we also control the term $\norm[2]{(\Id - \Hil)\Dt^2\Dap^2\Ztbar }$. The proof for  $\norm[2]{(\Id - \Hil)\cbrac{i\frac{\Ag}{\Zapabs^2}\pap\Dap^2\Ztbar } }$ remains the same as in the $\Rsp$ case. 

\end{enumerate}
This concludes the changes needed with respect to \secref{sec:quantEa} and \secref{sec:quantE}. There are essentially no changes needed to \secref{sec:closing} and hence the proof of \thmref{thm:aprioriEbdd} is complete.

\subsection{Proof of \thmref{thm:existencemainbdd} and \thmref{thm:blowup}}\label{sec:existenceanduniqbdd}

The proof of \thmref{thm:existencemainbdd} is very similar to the proof of \thmref{thm:existencemain} with only minor differences. Note that in the unbounded domain case, the crucial place where gravity played a role in the uniqueness proof was the lower bound on $\Ag$ (see \eqref{eq:Ag}, \eqref{eq:Azero}) for $g>0$. In the compact domain case, even though we have no gravity, if the velocity is not constant, we still have a positive lower bound on $\Azero$ (see \eqref{eq:Azerolowerbound}, \eqref{eq:oneoversqrtAzerobdd}) due to compactness. Due to this, the uniqueness proof for the compact domain case goes through with only minor changes.  

As in the unbounded case, we need to define the notion of a solution and also define the class of smoothly approximable solutions $\mathcal{SA}$. The definitions are very similar to the unbounded case except for some minor changes. 

\begin{definition}\label{def:solutionbdd}
Let the initial data $(\U,\Psi,\Pfrak)(0)$ be given as in the paragraph above \thmref{thm:existencemainbdd}. Let $ T>0$ and let $\U, \Psi: \Dsp \times \sqbrac{0,T} \to  \Csp$ and $\Pfrak : \Dsp \times \sqbrac{0,T} \to  \Rsp$. We say $(\U,\Psi, \Pfrak)(t)$ solves the Cauchy problem for the system \eqref{eq:EulerRiem2} in the time interval $[0,T]$ if the following holds:
\begin{enumerate}
\item $\U,\Psi,\Pfrak$ extend continuously to $\Dspbar\times \sqbrac{0,T}$ and $(\U,\Psi) \in C^1(\Dsp\times (0,T))$. Also for each $t \in [0,T]$ we have $\Pfrak(\cdot,t) \in C^1(\Dsp)$. 

\item $\U(\cdot,t), \Psi(\cdot,t)$ are holomorphic maps for each $t \in [0,T]$, $\Psi_{\zp}(\zp,t) \neq 0$ for $(\zp,t) \in \Dsp\times [0,T]$ and $\Psizp(0,t) > 0$  for any $t\in\sqbrac{0,T}$. Moreover $\frac{1}{\Psizp}$ and $\frac{\Psi_t}{\Psizp}$ extend continuously to $\Dspbar\times \sqbrac{0,T}$ (and by abuse of notation we will continue to denote these extensions as $\frac{1}{\Psizp}$ and $\frac{\Psi_t}{\Psizp}$ respectively). 

\item We have
\begin{align*}
\sup_{t \in [0,T]}\cbrac{\sup_{0< r < 1} \norm[H^1([0, 2\pi], \diff \theta)]{U(re^{i\theta},t)} + \sup_{0< r < 1} \norm[H^1([0, 2\pi], \diff \theta)]{\frac{1}{\Psizp}(re^{i\theta},t)} }  < \infty
\end{align*}
\item $(\U,\Psi,\Pfrak)$ solves \eqref{eq:EulerRiem2} in $\Dsp\times [0,T]$ with the given initial data and $\Pfrak$ solves \eqref{eq:DeltaPfrakbdd}. 
\end{enumerate}
\end{definition}

Similar to the unbounded domain case, we need to define a notion of difference of two solutions.  Let $(\Z,\Zt)_a$ and $(\Z,\Zt)_b$ be two solutions of the water wave equation \eqref{eq:systemonebdd} and let $h_a, h_b$ be the homeomorphisms from \eqref{eq:hbdd} for the respective solutions. Define $\htil$ and $\Util$ in the same way as \eqref{eq:htilandUtilunbdd} i.e.
\begin{align*}
\htil = h_b \compose h_a^{-1} \quad \tx{ and } \quad \Util = U_{\htil} = U_{h_a}^{-1}U_{h_b}
\end{align*}
Similarly define $\Delta(f)$ as in \eqref{eq:Deltafunbdd} i.e.
\begin{align*}
\Delta (f) = f_a - \Util(f_b)
\end{align*}
We define 
\begin{align}\label{eq:Fcalbdd}
\begin{split}
& \Fcaltil_{\Delta}((\Z,\Zt)_a, (\Z,\Zt)_b)(t) \\
& = \norm[\Hhalf]{\Delta(\Zt)} + \norm[\Hhalf]{\Delta(\Ztt)} + \norm[\Hhalf]{\Delta\brac{\frac{e^{i\ap}}{\Zap}}} + \norm[2]{\htil_\ap - 1} + \norm[2]{\Delta(\Dap\Zt)}\\
& \quad  + \norm[2]{\Delta(\Azero)} + \norm[2]{\Delta(\bvarap)} + \abs{\Avg(\Delta(\Zt))} + \abs{\Avg\brac{\Delta\brac{\frac{e^{i\ap}}{\Zap}}}}
\end{split}
\end{align}
As compared to the definition \eqref{eq:Fcal}, here we have the addition of two terms which takes into account the difference of the averages of $\Zt$ and $\frac{e^{i\ap}}{\Zap}$. We can now define the class $\mathcal{SA}$ for the bounded case:

\begin{definition}\label{def:solutionSAbdd}
Let $(\U,\Psi, \Pfrak)(t)$ be a solution to \eqref{eq:EulerRiem2} in the time interval $[0,T]$ in the sense of \defref{def:solutionbdd}. We say that this solution lies in the smoothly approximable class $\mathcal{SA}$ if the following holds:

For any $\widetilde{T} \in [0,T]$, there exists $T_1, T_2$ such that $0\leq T_1 \leq \widetilde{T} \leq T_2 \leq T$ satisfying $T_2 - T_1>0$ and if $\widetilde{T} \in (0,T)$  then we have $T_1 < \widetilde{T} < T_2$,  and a sequence of smooth $2\pi$ periodic functions $(\Z^{(n)}, \Zt^{(n)}): \mathbb{R}\times \sqbrac{T_1,T_2} \rightarrow \mathbb{C}$ for $n \geq 1$ satisfying:
\begin{enumerate}[label=(\alph*)]
\item For each $n\in \mathbb{N}$ we have $\Zap^{(n)}(\ap,t) \neq 0$ for all $(\ap,t) \in \Rsp \times [T_1,T_2]$. 
\item We have 
\begin{align*}
\sup_{n \geq 1, t\in [T_1,T_2]} \cbrac*[\bigg]{\norm*[\big][H^1]{\Zt^{(n)}}(t) + \norm*[\bigg][H^1]{\frac{1}{\Zap^{(n)}}}(t) } < \infty
\end{align*}
\item There exist holomorphic functions $(\U^{(n)}, \Psi^{(n)})(\cdot,t): \Dsp \to \Csp$ whose boundary values are $(\Ztbar^{(n)},\Z^{(n)})(\cdot,t)$ and which satisfy for each $n$ the property $\Psi_{\zp}^{(n)}(\zp,t) \neq 0$ for all $(\zp,t) \in \Dsp\times [T_1,T_2]$ and $\Psi_\zp^{(n)}(0,t) > 0$ for $t \in [T_1,T_2]$. Let $\Pfrak^{(n)}$ be the function satisfying 
\begin{align*}
\Delta \Pfrak^{(n)} = -2\abs*{\U_\zp^{(n)}}^2 \quad \tx{ on } \Dsp, \qq \Pfrak^{(n)} = 0 \quad \tx{ on } \partial \Dsp
\end{align*}
Then $(\U^{(n)}, \Psi^{(n)}, \Pfrak^{(n)}, \frac{1}{\Psizp^{(n)}}) \to (\U, \Psi, \Pfrak, \frac{1}{\Psizp})$ uniformly on $\Dspbar\times [T_1, T_2]$ as $n \to \infty$.

\item For each $n\in \mathbb{N}$, $(\Z^{(n)}, \Zt^{(n)})$ solves the system \eqref{eq:systemonebdd}  and we have the uniform bound $\sup_{n \in \Nsp, t \in [T_1, T_2]}\cbrac{ \Ecaltil_n(t) + \norm[2]{\Ztap^{(n)}}(t)} < \infty$. In addition if we fix the Lagrangian parameterizations for these solutions by imposing $h^{(n)}(\ap, \widetilde{T}) = h(\ap, \widetilde{T})$ for all $\ap \in \Rsp$ and $n \in \Nsp$, then  $\sup_{t \in [T_1, T_2]}\Fcaltil_\Delta((\Z^{(n)}, \Zt^{(n)}), (\Z, \Zt))(t) \to 0$ as $n \to \infty$. 
\end{enumerate}
Furthermore we say that $(\U,\Psi, \Pfrak)(t)$ belongs in the class $\mathcal{SA}$ in the time interval $[0,T)$, if for any $0< T^* < T$ the solution belongs in the class $\mathcal{SA}$ in the time interval $[0, T^*]$. 
\end{definition}

Similar to \lemref{lem:equivSobolevunbdd} we have the following relations between the different energies. 

\begin{lemma}\label{lem:equivSobolevbdd}
Let $T>0$ and let $(\Z, \Zt )(t)$ be a solution to the water wave equation\eqref{eq:systemonebdd} in the time interval $[0,T]$ with $(\Zap, \frac{1}{\Zap}, \Zt) \in \Linfty([0,T], H^s(\Sone)\times H^s(\Sone)\times H^{s + \half}(\Sone))$ for some $s \geq 4$. Then we have the following:
\begin{enumerate}
\item There exists a universal constant $M_1>0$ such that $\Ebdd(t) \leq M_1\Ecaltil(t)$ for all $t \in [0,T]$. Moreover there exists a universal increasing function $C_1:[0,\infty) \to [0,\infty)$ such that if $\norm[2]{\Ztap}(0) >0$, then we have
\begin{align}\label{eq:controlEcalfromEbdd}
\Ecaltil(t) \leq C_1\brac{\Ebdd(t) + \norm[\infty]{\frac{1}{\sqrt{\Azero}}}(t) + \norm[2]{\Ztapbar}(t)} \quad \tx{ for all } t \in [0,T]
\end{align}
and also
\begin{align}\label{eq:controlEcalfromEsupbdd}
\sup_{t \in [0,T]}\Ecaltil(t) \leq C_1\brac{\sup_{t \in [0,T]}\Ebdd(t) + \norm[\infty]{\frac{1}{\sqrt{\Azero}}}(0) + T + \norm[2]{\Ztapbar}(0)}
\end{align}

\item Assume that  $\norm[2]{\Ztap}(0) > 0$. Then there exists a universal increasing function $C_2:[0,\infty) \to [0,\infty)$ so that we have
\begin{align}\label{eq:SobolevlowerfromEgzerobdd}
\begin{aligned}
&\sup_{t \in [0,T]} \cbrac{ \norm[H^1]{\Zt}(t) + \norm[H^1]{\frac{1}{\Zap}} + \frac{1}{\norm[2]{\Ztap}^2(t)}} \\
&\leq C_2\brac{\sup_{t \in [0,T]}\Ebdd(t) + T  + \norm[H^1]{\Zt}(0) + \frac{1}{\norm[2]{\Ztap}^2(0)} }
\end{aligned}
\end{align}
and also
\begin{align}\label{eq:SobolevfromEcalbdd}
\begin{aligned}
& \sup_{t\in [0,T]}\cbrac{\norm[H^{2.5}]{\Zt}(t) + \norm[H^2]{\Zap}(t) + \norm[H^2]{\frac{1}{\Zap}}(t)} \\
& \leq C_2\brac{\sup_{t \in [0,T]}\Ebdd(t)  +  \norm[\infty]{\frac{1}{\sqrt{\Azero}}}(0) + T + \norm[H^1]{\Zt}(0) + \frac{1}{\norm[2]{\Ztap}^2(0)}  + \norm[\infty]{\Zap}(0) }
\end{aligned}
\end{align}

\end{enumerate}
\end{lemma}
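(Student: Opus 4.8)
The plan is to mirror the proof of \lemref{lem:equivSobolevunbdd}, exploiting the fact that the energies $\Ebdd(t)$, $\Eabdd(t)$, $\Ecaltil(t)$ are built from essentially the same building blocks as in the unbounded case, with the systematic substitutions $\Zt \rightsquigarrow \Zt - \Av(\Zt)$, $\pap\frac{1}{\Zap} \rightsquigarrow \pap\brac{\frac{e^{i\ap}}{\Zap}}$, $\Ag \rightsquigarrow \Azero$, and $\norm[2]{\pap\frac{1}{\Zap}} \rightsquigarrow \brac{\norm[2]{\pap\frac{1}{\Zap}} + \norm[2]{\frac{1}{\Zap}}}$. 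The four assertions to prove are: (1) $\Ebdd \lesssim \Ecaltil$ and the converse inequalities \eqref{eq:controlEcalfromEbdd}, \eqref{eq:controlEcalfromEsupbdd}; (2) the Sobolev lower bounds \eqref{eq:SobolevlowerfromEgzerobdd} and the higher Sobolev bound \eqref{eq:SobolevfromEcalbdd}. Throughout I would use the estimates collected in the modified \secref{sec:quantEa}/\secref{sec:quantE} as adapted in \secref{sec:existenceanduniqbdd}, the Hardy-type inequality \propref{prop:Hardy}, the weighted $\Hhalf$ estimate \propref{prop:Hhalfweight}, and the Leibniz estimate \propref{prop:Leibniz}, exactly as in the unbounded argument.

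First, for $\Ebdd(t) \lesssim \Ecaltil(t)$: I would note $\Eonebdd(t)$ is literally a piece of $\Ecaltilone(t)$, then control $\norm[\infty]{\Dap\Ztbar}^2 \lesssim \Ecaltil(t)$ from the weighted $\Linfty\cap\Hhalf$ estimate applied to $\Dapbar\Ztbar$, and from the bounded-domain analogue of \eqref{eq:DtpaponeoverZap} deduce $\norm[2]{\Ztapbar}\norm[2]{\Dt\pap\brac{\frac{e^{i\ap}}{\Zap}}} \lesssim \Ecaltil(t)$, and similarly $\norm[\Hhalf]{\frac{\sqrt{\Azero}}{\Zap}\pap\brac{\frac{e^{i\ap}}{\Zap}}}$ via \propref{prop:Hhalfweight} with weight $\frac{1}{\Zap}$ and $h=\sqrt{\Azero}$, using $\norm[\infty]{\sqrt{\Azero}} \lesssim \norm[2]{\Ztapbar}$. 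This gives $\Etwobdd^\half \lesssim \Ecaltil$. For $\Ethreebdd$ I would repeat the step-1 argument of \lemref{lem:equivSobolevunbdd}, expressing $\Dt\Dap^2\Ztbar$ in terms of $\Azero\Dap^2\frac{1}{\Zap}$ plus lower-order terms via the bounded-domain versions of \eqref{eq:DapDtDapZtbar}, \eqref{eq:Dap2Zttbar1}, \eqref{eq:Dap2Zttbar2}, \eqref{eq:mainDapDapZtbarbdd}, and use $\norm[2]{\Azero\Dap^2\frac{1}{\Zap}} \lesssim \norm[2]{\Ztapbar}^2\norm[2]{\Dap^2\frac{1}{\Zap}} \lesssim$ the relevant piece of $\Ecaltil$. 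The reverse direction \eqref{eq:controlEcalfromEbdd}: as in Step 2 of \lemref{lem:equivSobolevunbdd}, invert the weight using \propref{prop:Hhalfweight} with $h = \frac{1}{\sqrt{\Azero}}$, $w = \frac{1}{\Zap}$, $f = \sqrt{\Azero}\pap\brac{\frac{e^{i\ap}}{\Zap}}$, which produces the factor $\norm[\infty]{\frac{1}{\sqrt{\Azero}}}$ (the bounded-domain replacement for $\frac{1}{\sqrt g}$) and thereby $\norm[\Hhalf]{\Dap\frac{e^{i\ap}}{\Zap}}$, hence $\Ecaltiltwo$; similarly for $\Ecaltilthree$, also invoking the $\norm[2]{\Azero\Dap^2\frac{1}{\Zap}}$ bound from the energy estimate. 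For \eqref{eq:controlEcalfromEsupbdd}, I would reprove the bounded-domain analogue of \eqref{eq:ddtZtapbarLtwo}–\eqref{eq:ZtapbarL2norm2plusgestimate}, i.e. $\frac{\diff}{\diff t}\norm[2]{\Ztapbar}^2 \lesssim \Eabdd(t)^\half \norm[2]{\Ztapbar}^2$ from \lemref{lem:timederiv} and the $\norm[2]{\Zttapbar}$ bound, then Grönwall; but here there is a subtlety: unlike the $g>0$ unbounded case there is no additive $g$, so the lower bound $\norm[2]{\Ztapbar}^2(t) \geq c > 0$ must instead be propagated from $\norm[2]{\Ztap}(0) > 0$ via the two-sided exponential, which is why \eqref{eq:controlEcalfromEsupbdd} carries $\norm[\infty]{\frac{1}{\sqrt{\Azero}}}(0)$ rather than $\frac1g$. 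A second subtlety is that $\norm[\infty]{\frac{1}{\sqrt{\Azero}}}$ itself is not controlled by the energy at a single time; this is exactly the point emphasized in the discussion after \thmref{thm:existencemainbdd}, so \eqref{eq:controlEcalfromEbdd} and \eqref{eq:controlEcalfromEsupbdd} genuinely need $\norm[\infty]{\frac{1}{\sqrt{\Azero}}}$ as an extra input — I would keep it as a hypothesis-side term and not attempt to absorb it.

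For part (2): \eqref{eq:SobolevlowerfromEgzerobdd} follows the pattern of Step 4 of \lemref{lem:equivSobolevunbdd}. Setting $L$ to be the right-hand side, the propagated bound $\norm[2]{\Ztapbar}^2(t) + \frac{1}{\norm[2]{\Ztapbar}^2(t)} \lesssim_L 1$ comes from the two-sided Grönwall estimate on $\norm[2]{\Ztapbar}^2$ — this is where $\norm[2]{\Ztap}(0) > 0$ is essential — and then the remaining $H^1$ norms are controlled by setting $f(t) = \norm[2]{\frac{1}{\Zap} - 1}^2 + \norm[2]{\Zt}^2 + 1$ (or the $\Sone$-appropriate variant without the $-1$, using $\frac{e^{i\ap}}{\Zap}$) and showing $\pt f \lesssim_L f$ by differentiating using \eqref{eq:systemonebdd}, exactly as in Lemma 6.2 of \cite{Ag21} adapted to the bounded domain; then Grönwall. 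For \eqref{eq:SobolevfromEcalbdd}: with $M$ the right-hand side, \eqref{eq:controlEcalfromEbdd} and the definition of $\Ecaltil$ give uniform bounds on $\norm[2]{\pap\frac{1}{\Zap}}$, $\norm[2]{\Dap^2\Ztbar}$, $\norm[2]{\Dap^2\frac{1}{\Zap}}$, $\norm[\Hhalf]{\frac{e^{i\ap}}{\Zap}\Dap^2\Ztbar}$; then $\norm[\infty]{\Dap\frac{1}{\Zap}} \lesssim \norm[\infty]{\frac{1}{\Azero}}\norm[\infty]{\Azero\Dap\frac{1}{\Zap}}$ — note this brings in $\norm[\infty]{\frac{1}{\sqrt{\Azero}}}$ (or rather its square), which at a single time $t$ I would propagate from $t=0$ using the already-established two-sided control on $\norm[2]{\Ztap}^2$ together with the pointwise formula \eqref{eq:Azerobddcalc} and \propref{prop:Hardy}, since $\Azero \gtrsim \norm[2]{\Ztap}^2 / (\text{diam})^2$ type lower bounds hold on the compact circle. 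Then $\Dt\Zap = \Zap(\Dap\Zt - \bvarap)$ with $\norm[\infty]{\Dap\Zt} + \norm[\infty]{\bvarap} \lesssim \Eabdd^\half$ gives two-sided control of $\norm[\infty]{\Zap}$, after which the higher Sobolev norms $\norm[H^1]{\pap\frac{1}{\Zap}}$, $\norm[H^1]{\pap\Zap}$, $\norm[H^1]{\Ztapbar}$ are bootstrapped exactly as in Step 5 of \lemref{lem:equivSobolevunbdd} using \propref{prop:Leibniz} on the identity $\frac{1}{\Zap}\Dap^2\Ztbar = (\Dap\Ztbar)\Dap\frac{1}{\Zap} + \frac{1}{\Zap^3}\pap\Ztapbar$, and the lower-order $L^2$ norms from \eqref{eq:SobolevlowerfromEgzerobdd}.

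The main obstacle is bookkeeping rather than a genuinely new difficulty: I need to be careful that every estimate borrowed from the unbounded-domain sections has in fact been restated with the correct bounded-domain modifications (the list in \secref{sec:existenceanduniqbdd} and the discussion preceding \thmref{thm:aprioriEbdd}), in particular that the weighted $\Hhalf$ and Hardy estimates on $\Sone$ produce the extra lower-order terms $\norm[2]{\frac{1}{\Zap}}$ wherever $\frac{1}{\Zap}$ is not the trace of a holomorphic function, and that $\Azero$ — which plays the role of $\Ag$ but has no additive constant $g$ — requires the strictly positive lower bound $\norm[2]{\Ztap}(0) > 0$ to be invoked each time I would previously have used $\Ag \geq g > 0$. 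Concretely, the one place where I would expect to spend real effort is establishing the quantitative lower bound for $\Azero(\cdot,t)$ on the circle in terms of $\norm[2]{\Ztap}(0)$ and the geometry, which underlies both \eqref{eq:controlEcalfromEbdd}-type inversions appearing inside \eqref{eq:SobolevfromEcalbdd} and the $\frac{1}{\norm[2]{\Ztap}^2}$ term in \eqref{eq:SobolevlowerfromEgzerobdd}; everything else is a faithful transcription of the unbounded-domain proof.
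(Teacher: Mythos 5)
Your overall route is the paper's: transcribe the proof of \lemref{lem:equivSobolevunbdd} with the substitutions you list, with $\norm[\infty]{\frac{1}{\sqrt{\Azero}}}$ playing the role of $\frac{1}{\sqrt g}$. But there is a concrete gap in how you control $\norm[\infty]{\frac{1}{\sqrt{\Azero}}}(t)$ for $t>0$, which you need both for \eqref{eq:controlEcalfromEsupbdd} and inside \eqref{eq:SobolevfromEcalbdd} (e.g.\ for $\norm[\infty]{\Dap\frac{1}{\Zap}} \lesssim \norm[\infty]{\frac{1}{\Azero}}\norm[\infty]{\Azero\Dap\frac{1}{\Zap}}$). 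You propose to deduce a lower bound on $\Azero(\cdot,t)$ from the two-sided Gr\"onwall control of $\norm[2]{\Ztap}(t)$ via an inequality of the form $\Azero \gtrsim \norm[2]{\Ztap}^2$ ``on the compact circle''. That inequality is false: the formula \eqref{eq:Azerobddcalc} only gives $\Azero(\ap,t) \gtrsim \norm[2]{\Zt - \Avg(\Zt)}^2(t)$ (this is \eqref{eq:Azerolowerbound} in the paper), and there is no reverse Poincar\'e inequality bounding $\norm[2]{\Zt - \Avg(\Zt)}$ from below by $\norm[2]{\Ztap}$: take $\Zt = \ep e^{iN\ap}$, for which $\norm[2]{\Zt - \Avg(\Zt)} = \ep\sqrt{2\pi}$ while $\norm[2]{\Ztap} = N\ep\sqrt{2\pi}$. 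So a lower bound on $\norm[2]{\Ztap}(t)$ gives no quantitative lower bound on $\Azero(\cdot,t)$, and your proposed propagation of $\norm[\infty]{\frac{1}{\sqrt{\Azero}}}$ from $t=0$ does not close.

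The paper fixes exactly this point with one extra, easy estimate that your write-up omits: since $\norm[\infty]{\frac{\Dt\Azero}{\Azero}}$ is controlled by $\Eabdd(t)^{\half}$ (the bounded-domain analogue of the $\frac{\Dt\Ag}{\Ag}$ estimate, via \lemref{lem:DtAgoverAgbdd}), one applies Gr\"onwall to $\frac{1}{\sqrt{\Azero}}$ along the Lagrangian flow and obtains the two-sided bound \eqref{eq:oneoversqrtAzerobdd}, namely $\norm[\infty]{\frac{1}{\sqrt{\Azero}}}(t) \leq \exp\brac{C\int_0^t \Eabdd(s)^{\half}\diff s}\norm[\infty]{\frac{1}{\sqrt{\Azero}}}(0)$, as the companion of \eqref{eq:ZtapbarL2norm2plusgestimatebdd}; the formula \eqref{eq:Azerobddcalc} is only needed at $t=0$, to see that $\norm[\infty]{\frac{1}{\sqrt{\Azero}}}(0) \lesssim \norm[2]{\Zt - \Avg(\Zt)}(0)^{-1} < \infty$ once $\norm[2]{\Ztap}(0) > 0$. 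With that replacement your argument coincides with the paper's; everything else in your proposal (the direction $\Ebdd \lesssim \Ecaltil$, the weighted-$\Hhalf$ inversion producing the factor $\norm[\infty]{\frac{1}{\sqrt{\Azero}}}$, the two-sided control of $\norm[2]{\Ztapbar}$, and the Sobolev bootstrap in part (2)) is the paper's route.
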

\begin{proof}
The proof of this lemma is essentially the same as the proof of \lemref{lem:equivSobolevunbdd}. The only main difference is that instead of the assumption of $g>0$ in \lemref{lem:equivSobolevunbdd}, we have the condition of having the term $\norm[\infty]{\frac{1}{\sqrt{\Azero}}}(0)$ on the right hand side. 
Now assume that $\norm[2]{\Ztap}(0) > 0$. Note that this implies that $\norm[2]{\Zt - \Avg(\Zt)}(0) > 0$. From \eqref{eq:Azerobddcalc} we have
\begin{align}\label{eq:Azerolowerbound}
\begin{aligned}
\Azero(\ap,t) & =  \frac{1}{8\pi} \int_0^{2\pi} \abs*[\Bigg]{\frac{\Zt(\ap, t) - \Zt(\bp, t)}{\sin\brac{\frac{\ap - \bp}{2}}}}^2 \diff \bp \\
& \gtrsim \int_0^{2\pi} \abs{\Z(\ap,t) - \Zt(\bp,t)}^2\diff \bp \\
& \gtrsim \norm[2]{\Zt - \Avg(\Zt)}^2(t)
\end{aligned}
\end{align}
Hence 
\begin{align*}
\norm[\infty]{\frac{1}{\sqrt{\Azero}}}(0) \lesssim \frac{1}{\norm[2]{\Zt - \Avg(\Zt)}(0) } < \infty
\end{align*}
Now we also have the same estimate as in \eqref{eq:ZtapbarL2norm2plusgestimate} here as well, namely that there exists a universal constant $C_4>0$ such that
\begin{align}\label{eq:ZtapbarL2norm2plusgestimatebdd}
\begin{aligned}
\exp\cbrac{-C_4\int_0^t \Eabdd(s)^\half \diff s} \norm[2]{\Ztapbar}^2(0)  \leq \norm[2]{\Ztapbar}^2(t)  \leq \exp\cbrac{C_4\int_0^t \Eabdd(s)^\half \diff s} \norm[2]{\Ztapbar}^2(0) 
\end{aligned}
\end{align}
Similarly by using the fact that $\norm[\infty]{\frac{\Dt\Azero}{\Azero}}$ is controlled by $\Eabdd$, we also easily get the estimate
\begin{align}\label{eq:oneoversqrtAzerobdd}
\begin{aligned}
\exp\cbrac{-C_4\int_0^t \Eabdd(s)^\half \diff s} \norm[\infty]{\frac{1}{\sqrt{\Azero}}}(0)  \leq \norm[\infty]{\frac{1}{\sqrt{\Azero}}}(t)  \leq \exp\cbrac{C_4\int_0^t \Eabdd(s)^\half \diff s} \norm[\infty]{\frac{1}{\sqrt{\Azero}}}(0) 
\end{aligned}
\end{align}
With these bounds, we can now follow the proof of \lemref{lem:equivSobolevunbdd} to complete the proof of this lemma and we leave the details to the reader. 
\end{proof}

Similar to \thmref{thm:existenceSobolevunbdd} we have the following existence result in Sobolev spaces.
\begin{thm}\label{thm:existenceSobolevbdd}
Let $s \geq 4$. Assume that the initial data $(\Z,\Zt)(0)$ satisfies the condition $(\Zap, \frac{1}{\Zap}, \Zt)(0) \in H^{s}(\Sone)\times H^s(\Sone) \times H^{s + \half}(\Sone)$ and there exists a $c>0$ such that $\Azero(\cdot,0) \geq c > 0$. Then there exists a $T > 0$ such that on $[0,T]$ the initial value problem for \eqref{eq:systemone} has a unique solution $(\Z,\Zt)(t)$ satisfying $(\Zap, \frac{1}{\Zap}, \Zt) \in C^l([0,T], H^{s - l}(\Sone)\times H^{s - l}(\Sone) \times H^{s + \half - l}(\Sone))$ for $l = 0,1$. Moreover if $T^{*}$ is the maximal time of existence, then either $T^* = \infty$ or $T^{*} < \infty$ and 
\begin{align*}
\sup_{t \in [0,T^*)} \cbrac{\norm[H^2]{\Zap - 1}(t) + \norm[H^2]{\frac{1}{\Zap} - 1}(t) + \norm[H^{2 + \half}]{\Zt}(t) + \norm[\infty]{\frac{1}{\Azero}}(t) } = \infty
\end{align*}
\end{thm}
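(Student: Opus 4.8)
The plan is to run the argument in exact parallel with that of \thmref{thm:existenceSobolevunbdd}. The hypothesis $\Azero(\cdot,0)\geq c>0$ is the bounded-domain analogue of the Taylor sign condition $-\partial P/\partial\hat n\geq c>0$ (recall $-\partial P/\partial\hat n\circ\hinv=\Azero/\Zapabs$), and since $\Azero=\Ag$ when $g=0$, the system \eqref{eq:systemonebdd} restricted to data with $\Azero$ bounded below is structurally the same quasilinear system as the gravity water wave system \eqref{eq:systemone}, with the uniform lower bound $\Azero\geq c$ playing the role of $g>0$. So first I would establish persistence of this lower bound on a short time interval: from $\norm[\infty]{\frac{\Dt\Azero}{\Azero}}\lesssim\Eabdd(t)^{1/2}$ (one of the quantities controlled in \secref{sec:quantEa}, adapted via \lemref{lem:DtAgoverAgbdd} in the proof of \thmref{thm:aprioriEbdd}) and Gronwall --- cf. \eqref{eq:oneoversqrtAzerobdd} --- one gets $\Azero(\cdot,t)\geq c/2$ for $t$ small depending only on the initial data. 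Simultaneously one tracks the nondegeneracy $\Psizp(\cdot,t)\neq 0$ on $\Dsp$ and $\Psizp(0,t)>0$, which are encoded in the $H^s$ control of $\frac{1}{\Zap}$ together with the holomorphic extension.

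Second, I would set up higher-order energies for $s\geq 4$ in the spirit of Wu, built from iterated material derivatives $\Dt$ and the weighted derivatives $\Dap$, with the quadratic structure $\int\abs{\Dt(\cdot)}^2+\int\frac{\Azero}{\Zapabs^2}\abs{\cdot}^2$ (the same structure already used in $\Etwobdd,\Ethreebdd$), and prove a differential inequality $\frac{d}{dt}\mathcal E_s(t)\lesssim P_s\!\left(\mathcal E_s(t),\,\norm[\infty]{\frac{1}{\Azero}}(t)\right)$ for a polynomial $P_s$, valid as long as $\Azero\geq c/2$. The algebraic input is exactly what is already recorded: the commutator identities \eqref{eq:commutator} and \eqref{eq:maineqcomDap}, the main equations \eqref{eq:paponeoverZapbdd} and \eqref{eq:mainDapDapZtbarbdd}, the formula \eqref{eq:atovera} for $\avar_t/\avar$, and \lemref{lem:DtAgoverAgbdd}; the estimates are the higher-order analogues of those in \secref{sec:quantEa}--\secref{sec:quantE}, the top-order terms being handled by the usual cancellation in $2\Real\int\bigl(\Dt^2 f+i\frac{\Azero}{\Zapabs^2}\pap f\bigr)(\Dt\bar f)$ combined with the main equations, and the contribution of $\Dt\bigl(\frac{\Azero}{\Zapabs^2}\bigr)$ absorbed via \lemref{lem:DtAgoverAgbdd}-type bounds (this is where $\norm[\infty]{\frac{1}{\Azero}}$ enters).

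Third, for existence I would mollify the initial data exactly as in Step 1 of the proof of \thmref{thm:existencemain}, obtain smooth solutions with uniform-in-$\ep$ Sobolev bounds from the energy inequality above on a common interval $[0,T]$ with $T$ depending only on the data and $c$, and pass to the limit; uniqueness follows by subtracting two solutions in Lagrangian coordinates, bringing them to a common conformal frame via $\Util$ as in \eqref{eq:htilandUtilunbdd}--\eqref{eq:Fcalbdd}, and closing an estimate for the difference norm, mirroring \cite{Wu19} and \defref{def:solutionbdd}. Finally, the blow-up criterion is a continuation argument: if $T^*<\infty$ and $\norm[H^2]{\Zap-1}+\norm[H^2]{\frac{1}{\Zap}-1}+\norm[H^{2.5}]{\Zt}+\norm[\infty]{\frac{1}{\Azero}}$ stays bounded on $[0,T^*)$, then directly from the definition of $\Ecaltil$ in terms of $\pz U$ and $\pz\frac{1}{\Psizp}$ (together with $\norm[\infty]{\Azero}\lesssim\norm[2]{\Ztapbar}^2$ for the weights) one gets $\sup_{[0,T^*)}\Ecaltil(t)<\infty$, hence $\sup_{[0,T^*)}\Ebdd(t)<\infty$ by \lemref{lem:equivSobolevbdd}; since the coefficients in the higher-order differential inequality depend only on these controlled quantities and $\norm[\infty]{\frac{1}{\Azero}}$, $\mathcal E_s$ --- and therefore $\norm[H^s]{\Zap-1},\norm[H^s]{\frac{1}{\Zap}-1},\norm[H^{s+\frac12}]{\Zt}$ --- remain bounded up to $T^*$, so the solution extends past $T^*$, contradicting maximality (the bounds \eqref{eq:controlEcalfromEsupbdd} and \eqref{eq:SobolevfromEcalbdd} are the low-order version of this bootstrap).

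The main obstacle I expect is the construction/higher-order estimate step: one must carry out the full Wu-type energy hierarchy in the conformal-coordinate formulation for a bounded domain, ensuring the mollification (or regularized system) is compatible with the holomorphicity constraints listed in \secref{sec:derivofeqnbdd} (the $\Hil$- and $\Hiltil$-identities) and with the positivity $\Azero>0$, and correctly tracking the averaging terms $\Avg(\cdot)$ and the factor $e^{i\ap}$ that distinguish \eqref{eq:systemonebdd} from \eqref{eq:systemone}. Since all the relevant algebra and the low-order estimates are already in place, this is lengthy but not conceptually new; alternatively one may shortcut the existence and uniqueness by invoking an existing bounded-domain local wellposedness result and only re-deriving the stated blow-up criterion from \thmref{thm:aprioriEbdd} and \lemref{lem:equivSobolevbdd}, exactly as is done for \thmref{thm:existenceSobolevunbdd} via \cite{Wu97,Wu19}.
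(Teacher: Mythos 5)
Your proposal is correct and in substance coincides with the paper's treatment: the paper gives no written proof of this theorem, stating only that it is proved exactly like \thmref{thm:existenceSobolevunbdd} (i.e.\ by the Wu-type Sobolev energy method of \cite{Wu97,Wu19}, with the lower bound $\Azero\geq c>0$ playing the role of $g>0$ and the extra term $\norm[\infty]{\frac{1}{\Azero}}$ entering the blow-up criterion to preserve the Taylor sign condition), which is precisely your ``shortcut'' route, while your longer outline is a faithful reconstruction of the deferred argument. No gap to report.
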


The proof of this theorem is very similar to \thmref{thm:existenceSobolevunbdd} and so we skip it. There are some minor differences of this theorem with respect to \thmref{thm:existenceSobolevunbdd} which we now explain. First we have imposed the condition $\Azero(\cdot,0) \geq c > 0$ on the initial data. If $\norm[2]{\Zt - \Av(\Zt)}(0) = 0$, then the initial velocity is constant and hence the trivial solution namely the solution where the domain moves with constant speed for all time is clearly the unique global solution. If $\norm[2]{\Zt - \Av(\Zt)}(0) > 0$, then from \eqref{eq:Azerolowerbound} we do get $\Azero(\cdot,0) \geq c > 0$ and hence the condition is satisfied. Moreover this condition is equivalent to the condition that the Taylor sign condition is satisfied at $t = 0$ from \eqref{eq:gradientpressurebdd}. The main difference of this theorem with \thmref{thm:existenceSobolevunbdd} is in regards to the blow up criterion where there is an additional term of $ \norm[\infty]{\frac{1}{\Azero}}$. This term is necessary to ensure that the Taylor sign condition is satisfied as again can be seen from \eqref{eq:gradientpressurebdd}. In \thmref{thm:existenceSobolevunbdd} we do not such a term because as gravity $g > 0$, we already have $\Ag \geq g$ everywhere from \eqref{eq:Ag} and \eqref{eq:Azero} and therefore $\norm[\infty]{\frac{1}{\Ag}}  \leq \frac{1}{g} $. One of the important features of \thmref{thm:existencemainbdd} is that we remove this condition of  $ \norm[\infty]{\frac{1}{\Azero}}$ in the blow up criterion. 

We now write down a result which allows us to prove uniqueness of solutions in the class $\mathcal{SA}$.

\begin{thm}\label{thm:uniquenessbdd}
Let $s \geq 4$ and $T>0$. Let $(\Z,\Zt)_a$ and $(\Z,\Zt)_b$ be two solutions of the water wave equation \eqref{eq:systemonebdd} in $[0,T]$ satisfying $(\Zap, \frac{1}{\Zap}, \Zt)_a, (\Zap, \frac{1}{\Zap}, \Zt)_b \in C^l([0,T], H^{s - l}(\Sone)\times H^{s - l}(\Sone) \times H^{s + \half - l}(\Sone))$ for $l = 0,1$. Also assume that there exists a constant $c_1>0$ such that  
\begin{align*}
 c_1 & \geq \norm[2]{(\Ztap)_a}(0) + \norm[2]{(\Ztap)_b}(0) + \frac{1}{\norm[2]{(\Ztap)_a}(0)} + \frac{1}{\norm[2]{(\Ztap)_b}(0)}  \\
 & \quad  + \norm[\infty]{\frac{1}{(\Azero)_a}}(0) + \norm[\infty]{\frac{1}{(\Azero)_b}}(0)
\end{align*}
Then there exists a constant $L>0$ depending only on $c_1, T, \sup_{t \in [0,T]}\Ecaltil((\Z,\Zt)_a)(t)$ and $\sup_{t \in [0,T]}\Ecaltil((\Z,\Zt)_b)(t)$ such that
\begin{align*}
\sup_{t \in [0,T]} \Fcaltil_{\Delta}((\Z,\Zt)_a, (\Z,\Zt)_b)(t) \leq L \brac{\Fcaltil_{\Delta}((\Z,\Zt)_a, (\Z,\Zt)_b)(0) + \norm[\infty]{\Delta\brac{\frac{1}{\Zap}}}(0) }
\end{align*}
\end{thm}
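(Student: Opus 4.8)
The plan is to derive a Gr\"onwall inequality for $\Fcaltil_\Delta(t)$, following the scheme of the stability estimate of \cite{Wu19} (Theorem 3.7), with the modifications forced by the bounded domain: the hypothesis $\norm[\infty]{\frac{1}{(\Azero)_a}}(0)+\norm[\infty]{\frac{1}{(\Azero)_b}}(0)<\infty$ (which is finite here precisely because $\norm[2]{(\Ztap)_a}(0),\norm[2]{(\Ztap)_b}(0)>0$, via \eqref{eq:Azerolowerbound}) plays the role the gravity bound $\frac1g$ played in \cite{Wu19}; the holomorphic quantity $\frac{e^{i\ap}}{\Zap}$ replaces $\frac{1}{\Zap}$; and the two extra terms $\abs{\Avg(\Delta(\Zt))}$, $\abs{\Avg(\Delta(\frac{e^{i\ap}}{\Zap}))}$ in $\Fcaltil_\Delta$ must be carried along, since the zero Fourier mode is no longer trivial (it encodes the base-point motion and the conformal normalization).

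First I would record the bookkeeping identity that makes $\Delta$ compatible with the dynamics: writing $\Dt^a=\pt+\bvar_a\pap=U_{h_a}^{-1}\pt U_{h_a}$ and similarly $\Dt^b$, and recalling $\Util=U_{h_a}^{-1}U_{h_b}$, one gets $\Dt^a\Util=\Util\Dt^b$, hence $\Dt^a(\Delta f)=\Delta(\Dt f)$ for every $f$. Then, using $\sup_{[0,T]}\Ecaltil$ for both solutions, the hypotheses on $\norm[2]{\Ztap}(0)$ and $\norm[\infty]{\frac{1}{\Azero}}(0)$, and \lemref{lem:equivSobolevbdd} together with the propagation bounds \eqref{eq:ZtapbarL2norm2plusgestimatebdd}, \eqref{eq:oneoversqrtAzerobdd}, I would fix once and for all a constant $L$ depending only on $c_1$, $T$ and the two energy suprema that dominates, on $[0,T]$ and for both solutions, all the background quantities entering the Appendix estimates: $\norm[H^{2.5}]{\Zt}$, $\norm[H^2]{\Zap}$, $\norm[H^2]{\frac{1}{\Zap}}$, $\norm[\infty]{\frac{1}{\sqrt{\Azero}}}$, $\norm[2]{\Ztapbar}$, $\norm[\infty]{\Dap\Ztbar}$, $\norm[\infty]{\bvarap}$, and so on.

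The heart is the difference system. Applying $\Dt^a$ and using \eqref{eq:systemonebdd}, \eqref{form:Zttbarbdd}, \eqref{eq:mainbdd}, \eqref{eq:atovera}, \eqref{eq:bbdd}, \eqref{eq:Azerobdd}, one obtains $\Dt^a\Delta(\Zt)=\Delta(\Ztt)$, first-order transport equations for $\htil_\ap-1$, $\Delta(\Zap)$, $\Delta(\Dap\Zt)$, $\Delta(\Azero)$, $\Delta(\bvarap)$ and the two averages, and the essential equation: a wave-type equation $\big(\Dt^2-i\frac{\Azero}{\Zapabs^2}\pap\big)_af_\Delta=(\text{error})$ for $f_\Delta=\Delta(\Dap^2\Ztbar)$ (and, in parallel, for $\Delta(\pap(\frac{e^{i\ap}}{\Zap}))$), obtained by differencing \eqref{eq:mainDapDapZtbarbdd}, resp.\ \eqref{eq:paponeoverZapbdd}. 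Every difference of a nonlocal/nonlinear building block is reduced to the entries of $\Fcaltil_\Delta$ by splitting, schematically,
\[
\Delta([f_1,\Hiltil]g)=[\Delta f_1,\Hiltil]g_a+[\Util f_b,\Hiltil]\Delta g+\big(\Util([f_1,\Hiltil]g)_b-[\Util f_b,\Hiltil]\Util g_b\big),
\]
the last parenthesis being a change-of-variables error controlled by $\norm[2]{\htil_\ap-1}$ through the perturbation estimates for $\Hiltil$, and then applying \propref{prop:commutator}, \propref{prop:triple}, \propref{prop:Hardy}, \propref{prop:Leibniz}, \propref{prop:tripleidentity}. The factors $\frac1{\Zap}$, $\frac{\Azero}{\Zap}$, $\w$, $\ldots$ that appear undifferentiated in these formulas (e.g.\ in $\Zttbar=i\frac{\Azero}{\Zap}$) force the appearance of $\norm[\infty]{\Delta(\frac{1}{\Zap})}$, which is \emph{not} in $\Fcaltil_\Delta$ (only $\Delta(\frac{e^{i\ap}}{\Zap})$ is) and is not dominated by $\Fcaltil_\Delta$ at a fixed time, since $\htil$ may differ from the identity by an $O(1)$ shift; so I close for it separately via its own transport equation $\Dt^a\Delta(\frac1{\Zap})=(\text{controlled by }\Fcaltil_\Delta+\norm[\infty]{\Delta(\frac1{\Zap})})$, which gives $\norm[\infty]{\Delta(\frac1{\Zap})}(t)\lesssim_L\norm[\infty]{\Delta(\frac1{\Zap})}(0)+\int_0^t\Fcaltil_\Delta(s)\,ds$ — and this is exactly where the term $\norm[\infty]{\Delta(\frac1{\Zap})}(0)$ enters the conclusion.

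Finally I would run the energy estimate for the hyperbolic differences exactly as in \secref{sec:closing}: differentiate $\norm[2]{\Dt^af_\Delta}^2+\norm[\Hhalf]{\frac{\sqrt{(\Azero)_a}}{(\Zap)_a}f_\Delta}^2$ in time with \lemref{lem:timederiv}, use the identity $\frac{\sqrt{\Azero}}{\Zapbar}\papabs(\frac{\sqrt{\Azero}}{\Zap}\,\cdot)\approx i\frac{\Azero}{\Zapabs^2}\pap(\cdot)$ established there, substitute the wave equation for $f_\Delta$ (coefficients comparable to those of solution $a$ up to $\Fcaltil_\Delta$), and combine with the first-order transport estimates for the remaining entries of $\Fcaltil_\Delta$ and the auxiliary bound for $\norm[\infty]{\Delta(\frac1{\Zap})}$, to reach $\frac{d}{dt}\Fcaltil_\Delta(t)\le C(L)\big(\Fcaltil_\Delta(t)+\norm[\infty]{\Delta(\frac1{\Zap})}(0)\big)$; Gr\"onwall then yields the assertion. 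The step I expect to be the main obstacle is the difference-of-commutators bookkeeping — guaranteeing that every error term lands in $\{\Fcaltil_\Delta,\ \norm[\infty]{\Delta(\frac1{\Zap})}\}$ with no higher-order difference left over, correctly handling the averages $\Avg(\Delta(\Zt))$ and $\Avg(\Delta(\frac{e^{i\ap}}{\Zap}))$ (which feed into $\bvar$ through $\Av(\Zt)$ and into the conformal normalization) and the change-of-variables errors for $\Hiltil$ uniformly; this is where the bounded-domain argument genuinely departs from \cite{Wu19}, and where the uniform lower bound on $\Azero$ is used to keep the weight $\frac{\sqrt{\Azero}}{\Zap}$ and its reciprocal under control.
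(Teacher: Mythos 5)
Your scaffolding — follow Wu's Theorem 3.7 stability argument, with the lower bound on $\Azero$ replacing the gravity bound, $\frac{e^{i\ap}}{\Zap}$ replacing $\frac{1}{\Zap}$, the two average terms carried along, and the observation that $\norm[\infty]{\Delta\brac{\frac{1}{\Zap}}}(0)$ is genuinely extra data because $\htil$ may shift the base point — is exactly the paper's, and those modifications are precisely the ones the paper highlights. The genuine gap is in the core step: you propose to difference the top-order equations \eqref{eq:mainDapDapZtbarbdd} and \eqref{eq:paponeoverZapbdd} and run Gr\"onwall on an energy of the form $\norm[2]{\Dt^a\Delta(\Dap^2\Ztbar)}^2 + \norm[\Hhalf]{\frac{\sqrt{(\Azero)_a}}{(\Zap)_a}\Delta(\Dap^2\Ztbar)}^2$ (and its analogue for $\Delta\pap\brac{\frac{e^{i\ap}}{\Zap}}$). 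This sits one derivative above the level of $\Fcaltil_\Delta$, and the estimate cannot close there for two reasons. First, the theorem only allows $\Fcaltil_\Delta(0) + \norm[\infty]{\Delta\brac{\frac{1}{\Zap}}}(0)$ on the right, and these quantities do not control $\Delta(\Dap^2\Ztbar)$ or $\Dt^a\Delta(\Dap^2\Ztbar)$ at $t=0$; a Gr\"onwall inequality for your energy cannot be initialized from the assumed data, and in any case would prove a stability statement in a stronger norm than the one asserted (which is also the norm needed for uniqueness in the class $\mathcal{SA}$, where only $\Fcaltil_\Delta$-convergence is available). Second, differencing the top-order equations produces terms of the type $\Delta(\text{coefficient})$ — e.g. differences of $\frac{\Azero}{\Zapabs^2}$, $\bvarap$, $\Jonebdd$ — multiplied against top-order factors of one of the two solutions (such as $\pap\Dap^2\Ztbar$ of solution $b$); bounding these requires norms of the individual solutions one order beyond what $\Ecaltil$ controls, whereas the constant $L$ may depend only on $c_1$, $T$ and the two $\Ecaltil$ suprema. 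This is exactly why the stability estimate must be run one derivative below the a priori energy level.

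The paper's proof does this by building the difference energy directly out of the quantities appearing in $\Fcaltil_\Delta$: with the symmetrized weight $\kappa = \sqrt{\frac{(\Azero)_a}{\Util(\Azero)_b}\htil_\ap}$ it takes $\mathfrak{F}_1,\mathfrak{F}_2,\mathfrak{F}_3$ consisting of weighted $\Ltwo$ norms of $\Delta(\Zap\Zttbar)$ (which is $i\Delta(\Azero)$), $\Delta\cbrac{\Zap\Dt\brac{\frac{e^{i\ap}}{\Zap}}}$ and $\Delta(\Zap\Ztttbar)$, paired with the $\Hhalf$-type quadratic forms $-i\int\pap(\Delta\Ztbar)\overline{\Delta\Ztbar}\,\diff\ap$ for $\Delta\Ztbar$, $\Delta\brac{\frac{e^{i\ap}}{\Zap}}$, $\Delta\Zttbar$, together with $M\mathfrak{F}_0$ carrying $\htil_\ap-1$ and the two averages; this functional is equivalent to $\Fcaltil_\Delta$, is controlled at $t=0$ by the assumed data, and its Gr\"onwall estimate closes with constants depending only on $c_1$, $T$ and the $\Ecaltil$ suprema, exactly as in Wu's Theorem 3.7. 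So the fix to your argument is not in the bookkeeping of commutator differences (your splitting via $\Util$ and $\norm[2]{\htil_\ap-1}$ is the right mechanism) but in the choice of unknowns: difference the quasilinear equations for $\Ztbar$, $\frac{e^{i\ap}}{\Zap}$ and $\Zttbar$ rather than the equations for $\Dap^2\Ztbar$ and $\pap\brac{\frac{e^{i\ap}}{\Zap}}$.
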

\begin{proof}
The proof of this is very similar to the proof of Theorem 3.7 in \cite{Wu19} and so we will only describe the important differences. In the following $L>0$ will be a constant depending only on $c_1, T, \sup_{t \in [0,T]}\Ecaltil((\Z,\Zt)_a)(t)$ and $\sup_{t \in [0,T]}\Ecaltil((\Z,\Zt)_b)(t)$. Now note that from \eqref{eq:ZtapbarL2norm2plusgestimatebdd} and \eqref{eq:oneoversqrtAzerobdd} we see that for all $t \in [0,T]$
\begin{align*}
1 & \gtrsim_L \norm[2]{(\Ztap)_a}(t) + \norm[2]{(\Ztap)_b}(t) + \frac{1}{\norm[2]{(\Ztap)_a}(t)} + \frac{1}{\norm[2]{(\Ztap)_b}(t)}  \\
 & \quad  + \norm[\infty]{\frac{1}{(\Azero)_a}}(t) + \norm[\infty]{\frac{1}{(\Azero)_b}}(t)
\end{align*}
Hence from the definition of $\Ecaltil$ we see that
\begin{align*}
1 & \gtrsim_L  \norm[H^1]{\brac{\frac{1}{\Zap}}_{\n a}}(t)  +  \norm[H^1]{\brac{\frac{1}{\Zap}}_{\n b}}(t) + \norm[2]{\brac{\Dap^2\Ztbar}_a}(t) + \norm[2]{\brac{\Dap^2\Ztbar}_b}(t) \\
& \quad + \norm[2]{\brac{\Dap^2\brac{\frac{e^{i\ap}}{\Zap}}}_{\n a}}(t) + \norm[2]{\brac{\Dap^2\brac{\frac{e^{i\ap}}{\Zap}}}_{\n b}}(t) + \norm[\Hhalf]{\brac{\frac{e^{i\ap}}{\Zap}\Dap^2\Ztbar }_{\n a}}(t) \\
& \quad +  \norm[\Hhalf]{\brac{\frac{e^{i\ap}}{\Zap}\Dap^2\Ztbar }_{\n b}}(t)
\end{align*}
From the energy estimate we also get
\begin{align*}
1 & \gtrsim_L \norm[\Linfty\cap\Hhalf]{\brac{\Dap\Ztbar}_a}(t) + \norm[\Linfty\cap\Hhalf]{\brac{\Dap\Ztbar}_b}(t)
\end{align*}
With these estimate we now have control of all quantities which are required to follow the proof of Theorem 3.7 in \cite{Wu19}. There are only slight differences in the energy estimate which are as follows. Define
\begin{align*}
\kappa = \sqrt{\frac{(\Azero)_a}{\Util(\Azero)_b} \htil_\ap}
\end{align*}
and the energies
\begingroup
\allowdisplaybreaks
\begin{align*}
\mathfrak{F}_0(t) & =  \norm[2]{\htil_\ap - 1}^2 + \abs{\Avg(\Delta(\Zt))}^2 + \abs{\Avg\brac{\Delta\brac{\frac{e^{i\ap}}{\Zap}}}}^2 \\
\mathfrak{F}_1(t) & = \int_0^{2\pi} \frac{\kappa}{(\Azero)_a} \abs{\Delta(\Zap\Zttbar)}^2 \diff \ap - i \int_0^{2\pi}\pap(\Delta(\Ztbar))\overline{\Delta(\Ztbar)} \diff \ap \\
\mathfrak{F}_2(t) & = \int_0^{2\pi} \frac{\kappa}{(\Azero)_a} \abs{\Delta\cbrac{\Zap\Dt\brac{\frac{e^{i\ap}}{\Zap}}}}^2 \diff \ap - i \int_0^{2\pi}\pap\brac{\Delta\brac{\frac{e^{i\ap}}{\Zap}}}\overline{\Delta\brac{\frac{e^{i\ap}}{\Zap}}} \diff \ap \\
\mathfrak{F}_3(t) & = \int_0^{2\pi} \frac{\kappa}{(\Azero)_a} \abs{\Delta(\Zap\Ztttbar)}^2 \diff \ap - i \int_0^{2\pi}\pap(\Delta(\Zttbar))\overline{\Delta(\Zttbar)} \diff \ap
\end{align*}
\endgroup
The main difference from the energy used in Theorem 3.7 in \cite{Wu19} is that we have the addition of two terms in $\mathfrak{F}_0(t)$ and in $\mathfrak{F}_2(t)$ we have used the function$\brac{\frac{e^{i\ap}}{\Zap}}$ instead of $\brac{\frac{1}{\Zap}}$. The modification of $\mathfrak{F}_0(t)$ is necessary as we need to control the difference of average values of the solutions in a bounded domain as there is no corresponding decay condition at infinity as in the unbounded case.  In $\mathfrak{F}_2(t)$ we use $\brac{\frac{e^{i\ap}}{\Zap}}$  as $\brac{\frac{1}{\Zap}}$ is no longer the boundary value of a holomorphic function. 

We then define the energy 
\begin{align*}
\mathfrak{F}(t) = M \mathfrak{F}_0(t) + \mathfrak{F}_1(t) + \mathfrak{F}_2(t) + M^{-1}\mathfrak{F}_3(t)
\end{align*}
where $M>0$ is a constant taken large enough and depending only on values of $c_1$, $T, \sup_{t \in [0,T]}\Ecaltil((\Z,\Zt)_a)(t)$ and $\sup_{t \in [0,T]}\Ecaltil((\Z,\Zt)_b)(t)$. One can then show that $\mathfrak{F}(t)$ is equivalent to the energy $\Fcaltil_{\Delta}((\Z,\Zt)_a, (\Z,\Zt)_b)(t)$ and then prove the following estimate
\begin{align*}
\mathfrak{F}(t) + \int_0^t \mathfrak{F}(s) \diff s \leq L \brac{\mathfrak{F}(0) + \norm[\infty]{\Delta\brac{\frac{1}{\Zap}}}^2(0)} \qq \tx{ for all } t \in [0,T]
\end{align*}
The proof of this estimate follows in the same way as in Theorem 3.7 in \cite{Wu19} and so we skip it. 
\end{proof}

We are now ready to prove \thmref{thm:existencemainbdd}. 

\begin{proof}[Proof of \thmref{thm:existencemainbdd}]

The proof of this result is very similar to \thmref{thm:existencemain} and so we only highlight the main differences. We will first consider the existence part of the result. Let $\half<\ep \leq 1$ and define
\begin{align*}
\Psiep(\zp,0) = \Psi(\ep\zp, 0), \quad \Uep(\zp,0) = \U(\ep\zp,0) \quad \tx{ and }  \quad \hep(\al, 0) = \al 
\end{align*}
We again define their boundary values as 
\begin{align*}
\Zep(\ap,0) = \Psiep(\ap,0) \quad \tx{ and } \quad \Ztepbar(\ap,0) = \Uep(\ap,0)
\end{align*}
which are now smooth for $\half < \ep < 1$. It is clear that for all $\half < \ep < 1$, $\Psiep(\cdot,0), \Uep(\cdot, 0)$ satisfy the conditions on the initial data and we have
\begin{align*}
\sup_{0< r < 1} \norm[H^1([0, 2\pi], \diff \theta)]{\Uep(re^{i\theta},0)} + \sup_{0< r < 1} \norm[H^1([0, 2\pi], \diff \theta)]{\frac{1}{\Psizp^{\ep}}(re^{i\theta},0)} \leq c_0
\end{align*}
and also $\Ecaltil^{\ep}(0) \leq \Ecaltil(0)$. 

If $\norm[2]{\Ztap}(0) = 0$, then the initial velocity is a constant function i.e. there exists $c_1 \in \Csp$ such that $\Zt(\ap,0) = c_1$, for all $\ap \in [0,2\pi]$, and hence the trivial solution namely $\Z(\ap,t) = \Z(\ap, 0) + c_1t$ and $\Zt(\ap,t) = c_1$ for all $t \geq 0$ is a solution to the equation. Note that in this case the time of existence $T = \infty$ and $\Ecaltil(t) = 0$ for all $t \in [0,\infty)$. 

Now let $c_2 = \norm[2]{\Ztap}(0) > 0$. This in particular implies that $c_3 = \norm[2]{\Zt - \Av(\Zt)}(0) > 0$. Hence by the definition of $\Ztep$ we see that there exists $\half < \ep_0 < 1$ such that for all $\ep_0 \leq \ep < 1$ we have 
\begin{align*}
\frac{c_2}{2} \leq \norm[2]{\Ztap^\ep}(0) \leq c_2 \qq \tx{ and  } \quad \frac{c_3}{2} \leq \norm[2]{\Ztep - \Av(\Ztep)}(0) \leq c_3
\end{align*}
From \eqref{eq:Azerolowerbound} this means that for all $\ep_0 \leq \ep < 1$ we have
\begin{align*}
\norm[\infty]{\frac{1}{\sqrt{\Azero^\ep}}}(0) \lesssim \frac{1}{c_3}
\end{align*}
Now the proof of existence of solution in the class $\mathcal{SA}$ follows in the same was as \thmref{thm:existencemain}. 

Let us now prove the uniqueness of solutions in the class $\mathcal{SA}$. We first need to prove a basic property:

\smallskip
\noindent\textbf{Claim:} Consider a solution $(\U,\Psi,\Pfrak)(t)$  in the class $\mathcal{SA}$ in the time interval $[0,T]$. If there exists $t_0 \in [0,T]$ such that $\norm[2]{\Ztap}(t_0) >0$, then there exists $c_1, c_2>0$ such that for all $\ap \in \Rsp$ and $t \in [0,T]$ we have $\Azero(\ap,t) \geq c_1$ and $\norm[2]{\Ztap}(t) \geq c_2$.

To prove this claim, first note that by the definition of the class $\mathcal{SA}$ and the compactness of the interval $[0,T]$, there exists $N \geq 1$ and $0 = T_0 < T_1 < \cdots < T_N = T$ such that for each interval $[T_i, T_{i + 1}]$ for $0\leq i \leq N-1$, we have a sequence of smooth solutions $(\Z^{i, (n)}, \Zt^{i, (n)})$ converging to the solution $(\Z, \Zt)$ with  $\sup_{t \in [T_i, T_{i + 1}]}\Fcaltil_\Delta((\Z^{i, (n)}, \Zt^{i, (n)}), (\Z, \Zt))(t) \to 0$ as $n \to \infty$. Suppose $0\leq j \leq N-1$ is such that $t_0 \in [T_j, T_{j + 1}]$. Now as $\norm[2]{\Ztap}(t_0) >0$, this implies that $\norm[2]{\Zt - \Avg(\Zt)}(t_0) >0$. Now $\Fcaltil_\Delta((\Z^{j, (n)}, \Zt^{j, (n)}), (\Z, \Zt))(t_0) \to 0$ implies that $\norm*[\big][\Hhalf]{\Zt^{j, (n)} - \Zt}(t_0) \to 0$ and hence $\norm*[\big][2]{(\Zt^{j, (n)} - \Avg(\Zt^{j, (n)})) - (\Zt - \Avg(\Zt))}(t_0) \to 0$. Hence from \eqref{eq:Azerolowerbound} we see that there exists $c_j>0$ and $N_j \in \Nsp$ such that for $n \geq N_j$ we have $\Azero^{j, (n)}(\cdot, t_0) \geq c_j >0$.  Now from \eqref{eq:oneoversqrtAzerobdd} we see that there exists $c_j^*>0$ such that for all $t \in [T_j, T_{j + 1}]$ and $n \geq N_j$ we have $\Azero^{j, (n)}(\cdot, t) \geq c_j^* >0$. Now as $\Fcaltil_\Delta((\Z^{j, (n)}, \Zt^{j, (n)}), (\Z, \Zt))(t) \to 0$ as $n \to \infty$, this implies that $\norm[2]{\Azero^{j, (n)} - \Azero}(t) \to 0$. As $\Azero(\cdot, t)$ is a continuous function (by a similar argument as  \lemref{lem:decayatinfinity}) we see that $\Azero(\cdot, t) \geq c_j^*$. This directly implies that $c_j^* \leq \Azero(\cdot, t) \lesssim \norm[2]{\Ztap}^2(t)$ for all $t \in [T_j, T_{j + 1}]$. Now following this same proof for the other intervals, we prove the claim. 

Let us now complete the proof of uniqueness. If $\norm[2]{\Ztap}(0) = 0$, then by the above claim, we see that $\norm[2]{\Ztap}(t) = 0$ for all $t \in [0,T]$. Hence $\bvar(\ap,t) = \Azero(\ap,t) = 0$ for all $\ap\in \Rsp$ and $t \in [0,T]$ and therefore $\Ztt(\ap, t) = 0$ for all $\ap\in \Rsp$ and $t \in [0,T]$.  Hence there exists a constant $C \in \Csp$ such that $\Zt(\ap,t) = C$ for all $\ap\in \Rsp$ and $t \in [0,T]$. Therefore $\Z(\ap,t) = \Z(\ap, 0) + Ct$. This proves uniqueness for this case. 

If $\norm[2]{\Ztap}(0) > 0$, then by the above claim we see that  there exists $c_1, c_2>0$ such that for all $\ap \in \Rsp$ and $t \in [0,T]$ we have $\Azero(\ap,t) \geq c_1$ and $\norm[2]{\Ztap}(t) \geq c_2$. Now on each interval $[T_i, T_{i+1}]$ as in the proof of the claim, we can use \thmref{thm:uniquenessbdd} to prove uniqueness of the solution in the interval $[T_i, T_{i+1}]$. Hence we are done. 

\end{proof}

We now prove the blow up result \thmref{thm:blowup}. 

\begin{proof}[Proof of \thmref{thm:blowup}]
Consider an initial domain $\Omega(0)$ of the form \figref{fig:blowup} with corners of angle $\nu\pi$ with $0 \leq \nu < \half$ (here $\nu = 0$ corresponds to cusps) which is symmetric with respect to the x-axis and $0 \in \Omega(0)$. Let $\Psi(\cdot, 0) : \Dsp \to \Omega(0)$ be the conformal map with $\Psi(0,0) = 0$ and which is also symmetric with respect to the x-axis. Hence we see that for $\zp \in [-1,1]$, we have $\Psi(\zp,0) \in \Rsp$, $\Psizp(0,0) > 0$ and that $d = \abs{\Z(0,0) - \Z(\pi,0)} > 0$. Consider the initial velocity $\U(\zp, 0) = - \zp$. Note that $v = \abs{\Zt(0,0) - \Zt(\pi,0)} > 0$.  Now it is easy to check that this initial condition satisfies the conditions of the theorem and also has $0< \Ecaltil(0) < \infty$ (see \cite{Ag20} for details on computations regarding the conformal map when the domain has a corner or cusp).   

Now let $T^* > 0$ be the maximal time of existence in the class $\mathcal{SA}$ with this initial data. Clearly by \thmref{thm:existencemainbdd} we have that $T^* \geq \frac{c}{\sqrt{\Ecaltil(0)}}$. We argue by contradiction and assume that $T^* > \frac{d}{v}$. Let $T_0 = \frac{d}{v}$ and so we have a unique solution in the class $\mathcal{SA}$ in the time interval $[0,T_0]$ with $\sup_{t \in [0,T_0]} \Ecaltil(t) < \infty$. As the initial data is symmetric with respect to the x-axis, by using the approximations of the solution by smooth solutions given by the class $\mathcal{SA}$, we see that for each $t \in [0, T_0]$ the solution is also symmetric with respect to the x-axis. 

By the proof of \thmref{thm:existencemainbdd} we see that there exists a constant $C_1 >0$ such that $\norm[2]{\Ztap}(t) \geq C_1$ for all $t \in [0,T_0]$. As $\sup_{t \in [0, T_0]}\Ecaltil(t) < \infty$, this implies that there exists a constant $C_2>0$ such that for all $\zp \in \Dsp$ and $t \in [0, T_0]$ we have
\begin{align}\label{eq:Psizplowerblowup}
\abs{\frac{1}{\Psizp}(\zp,t) } \leq C_2 \quad \implies \abs{\Psizp(\zp, t)} \geq \frac{1}{C_2} > 0
\end{align}

Now by the assumptions we have $0, \pi \in S(0)$ and as the solution is symmetric with respect to the x-axis, we see from \thmref{thm:mainangle} that $0,\pi \in S(t)$ for all $t \in [0,T_0]$. Also from \thmref{thm:mainangle} we see that $\Ztt(0,t) = \Ztt(\pi, t) = 0$. Hence we have that $\Zt(0,t) = \Zt(0,0)$ and $\Zt(\pi,t) = \Zt(\pi,0)$ for all $t \in [0, T_0]$. Therefore $\Z(0,t) = \Z(0,0) + \Zt(0,0)t $ and $\Z(\pi,t) = \Z(\pi,0) + \Zt(\pi,0)t$ and so we have $\Z(0,T_0) = \Z(\pi, T_0)$. Consider the function $g: [-1,1] \to \Csp$ given by $g(\zp) = \Psi(\zp,T_0)$. By the fact that $\Psi(\cdot, T_0)$ is symmetric with respect to the x-axis implies that $g$ is real valued. By the fact that the solution lies in the class $\mathcal{SA}$, we see that $g$ is continuous on $[-1,1]$ and $C^1$ on $(-1,1)$.  As $\Z(0,T_0) = \Z(\pi, T_0)$ this implies that $g(-1) = g(1)$. Therefore by Rolle's theorem, there exists $z_0 \in (-1,1)$ such that $g'(z_0) = 0$. In particular we have $\Psizp(z_0, T_0) = 0$ which directly contradicts \eqref{eq:Psizplowerblowup}. 

Therefore $T^* \leq \frac{d}{v} < \infty$. The blow up condition follows from \thmref{thm:existencemainbdd} and hence we are done. 
\end{proof}

\section{Appendix}\label{sec:appendix}

Let $\Dt = \pt + \bvar\pap$ and let $\Dt^*$ be defined as $\Dt^* f = \pt f + \pap(\bvar f)$. 
We have
\begin{prop}\label{prop:tripleidentity}
Let $n \geq 1$ and let $f_1,\cdots, f_n, g, h \in \mathcal{S}(\Rsp)$. Consider the function $\sqbrac{f_1, \cdots, f_n;  g}:\Rsp \to \Csp$ defined by \eqref{eq:fonefn}. Then we have the following identities
\begin{align*}
h\pap\sqbrac{f_1, \cdots, f_n;  g} & = \sqbrac{h\pap f_1, \cdots, f_n;  g} + \cdots + \sqbrac{f_1, \cdots, h\pap f_n;  g}  \\
& \quad + \sqbrac{f_1, \cdots, f_n;  \pap(hg)} - n\sqbrac{h,f_1,\cdots, f_n; g}
\end{align*}
and 
\begin{align*}
\Dt \sqbrac{f_1, \cdots, f_n;  g} & = \sqbrac{\Dt f_1, \cdots, f_n;  g}  + \cdots + \sqbrac{f_1, \cdots, \Dt f_n;  g}  \\
& \quad + \sqbrac{f_1, \cdots, f_n;  \Dt^* g} - n\sqbrac{\bvar,f_1,\cdots, f_n; g}
\end{align*}  
If we have functions $f_1, f_2, f_3, g, h \in C^\infty(\Sone)$ and we consider the function $\sqbrac{f_1, \cdots, f_n;  g}: \Sone \to \Csp$ for $n = 1,2,3$ given by \eqref{eq:sqfgbounded}, \eqref{eq:sqfoneftwogbounded} and \eqref{eq:foneftwofthreegbounded}, then we have the same two identities as above for $n = 1, 2$. In addition we have the identity
\begin{align*}
[f^2, \Hiltil]\pap g - 2[f,\Hiltil]\pap(fg) = - [f,f;g]
\end{align*}
\end{prop}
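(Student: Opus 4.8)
The plan is to collapse the left-hand side to a single integral in $\bp$ whose integrand is a perfect $\bp$-derivative, and then integrate by parts. First I would expand the two commutators against the kernel $\cot\left(\frac{\bp-\ap}{2}\right)$, using the definition of $\Hiltil$:
\[
([f^2,\Hiltil]\pap g)(\ap)=\frac{1}{2\pi i}\int_0^{2\pi}\big(f^2(\ap)-f^2(\bp)\big)\cot\left(\frac{\bp-\ap}{2}\right)\pbp g(\bp)\,\diff\bp,
\]
and likewise for $([f,\Hiltil]\pap(fg))(\ap)$, with $\pbp(fg)(\bp)=\pbp f(\bp)\,g(\bp)+f(\bp)\,\pbp g(\bp)$ in place of $\pbp g(\bp)$. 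Writing $a=f(\ap)$, $b=f(\bp)$ and taking the first minus twice the second, the factor multiplying $\cot\left(\frac{\bp-\ap}{2}\right)$ is
\[
(a^2-b^2)\,\pbp g-2(a-b)\big(\pbp f\cdot g+b\,\pbp g\big)=(a-b)^2\,\pbp g-2(a-b)\,\pbp f\cdot g,
\]
using the elementary identity $(a^2-b^2)-2(a-b)b=(a-b)^2$.

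Next I would observe that $\pbp\big[(f(\ap)-f(\bp))^2\big]=-2\big(f(\ap)-f(\bp)\big)\,\pbp f(\bp)$, so the right-hand side of the last display is exactly $\pbp\big[(f(\ap)-f(\bp))^2 g(\bp)\big]$. Hence the left-hand side of the claimed identity equals
\[
\frac{1}{2\pi i}\int_0^{2\pi}\pbp\Big[(f(\ap)-f(\bp))^2 g(\bp)\Big]\cot\left(\frac{\bp-\ap}{2}\right)\diff\bp,
\]
and integrating by parts in $\bp$ over $[0,2\pi]$ — the boundary terms vanishing by $2\pi$-periodicity — turns this into $-\frac{1}{2\pi i}\int_0^{2\pi}(f(\ap)-f(\bp))^2 g(\bp)\,\pbp\cot\left(\frac{\bp-\ap}{2}\right)\diff\bp$. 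Since $\pbp\cot\left(\frac{\bp-\ap}{2}\right)=-\half\csc^2\left(\frac{\bp-\ap}{2}\right)$, this is
\[
\frac{1}{4\pi i}\int_0^{2\pi}\left(\frac{f(\ap)-f(\bp)}{\sin\left(\frac{\bp-\ap}{2}\right)}\right)^2 g(\bp)\,\diff\bp,
\]
which is precisely $-[f,f;g](\ap)$ by the definition \eqref{eq:sqfoneftwogbounded}, as desired.

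There is no genuine obstacle here; the one point meriting a word of justification is that every integrand above is actually regular, since $(f(\ap)-f(\bp))^2$ vanishes to second order at $\bp=\ap$, which absorbs the first-order pole of $\cot\left(\frac{\bp-\ap}{2}\right)$ before the integration by parts and the second-order pole of its $\bp$-derivative afterward; thus the integration by parts is legitimate in the ordinary (non-principal-value) sense. Alternatively, the identity can be deduced from the product-rule identity of \propref{prop:tripleidentity} (applied with $n=1$ and $n=2$ together with the $n=1$ identity), but the direct computation above is the shortest route.
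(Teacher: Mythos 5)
Your argument addresses only the last identity of the proposition, $[f^2,\Hiltil]\pap g - 2[f,\Hiltil]\pap(fg) = -[f,f;g]$, and for that identity it is correct: combining the two kernels gives $(f(\ap)-f(\bp))^2\pbp g(\bp)-2(f(\ap)-f(\bp))\pbp f(\bp)\,g(\bp)=\pbp\bigl[(f(\ap)-f(\bp))^2 g(\bp)\bigr]$, the integration by parts is legitimate because the quadratic vanishing at $\bp=\ap$ cancels the pole of $\cot\bigl(\frac{\bp-\ap}{2}\bigr)$ before and of $\csc^2\bigl(\frac{\bp-\ap}{2}\bigr)$ after, and the constants match \eqref{eq:sqfoneftwogbounded}. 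This is also a genuinely different route from the paper's, which instead expands $f\pap\sqbrac{f;g}$ in two ways — once via the $n=1$ product-rule identity and once by commutator algebra — and equates the results; your computation is self-contained and arguably shorter, while the paper's reuses the already-established product rule.

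The gap is everything else. The main content of the proposition is the pair of identities for $h\pap\sqbrac{f_1,\dots,f_n;g}$ and $\Dt\sqbrac{f_1,\dots,f_n;g}$ for all $n\ge 1$ on $\Rsp$, together with their analogues on $\Sone$ for $n=1,2$, and your proposal proves none of these; indeed your closing remark deduces the commutator identity "from the product-rule identity of the proposition", which would be circular if offered as part of a proof of the proposition itself. These identities do require an argument (it is the bulk of the paper's proof): differentiate \eqref{eq:fonefn} under the integral, apply the product rule to each factor $\frac{f_i(\ap)-f_i(\bp)}{\ap-\bp}$, and integrate by parts in $\bp$ so that the terms where the derivative falls on the kernel recombine into $\sqbrac{f_1,\dots,f_n;\pap(hg)}$ and $-n\sqbrac{h,f_1,\dots,f_n;g}$; the $\Dt$ identity then follows by writing $\Dt=\pt+\bvar\pap$ (the $\pt$ passes directly onto the $f_i$ and $g$, producing $\Dt^*g$), and the $\Sone$ case is the same computation with the trigonometric kernels. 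As it stands, your submission verifies one auxiliary identity but leaves the central statements of the proposition unproved.
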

\begin{proof}
First we consider the case of functions on $\Rsp$.  We see that
\begingroup
\allowdisplaybreaks
\begin{align*}
&h(\ap)\pap\sqbrac{f_1, \cdots, f_n;  g}  \\
& = h(\ap)\pap\brac{ \frac{1}{i\pi} \int \brac{\frac{f_1(\ap) - f_1(\bp)}{\ap - \bp}}\cdots \brac{\frac{f_n(\ap) - f_n(\bp)}{\ap-\bp}} g(\bp) \diff \bp } \\
& = h(\ap)f_1'(\ap) \brac{ \frac{1}{i\pi} \int \frac{1}{\ap-\bp}\brac{\frac{f_2(\ap) - f_2(\bp)}{\ap - \bp}}\cdots \brac{\frac{f_n(\ap) - f_n(\bp)}{\ap-\bp}} g(\bp) \diff \bp} \\
& \quad + \cdots + h(\ap)f_n'(\ap) \brac{ \frac{1}{i\pi} \int \frac{1}{\ap-\bp}\brac{\frac{f_1(\ap) - f_1(\bp)}{\ap - \bp}}\cdots \brac{\frac{f_{n-1}(\ap) - f_{n-1}(\bp)}{\ap-\bp}} g(\bp) \diff \bp} \\ 
& \quad - \frac{n}{i\pi} \int \brac{\frac{h(\ap) - h(\bp)}{\ap-\bp}}\brac{\frac{f_1(\ap) - f_1(\bp)}{\ap - \bp}}\cdots \brac{\frac{f_n(\ap) - f_n(\bp)}{\ap-\bp}} g(\bp) \diff \bp\\
& \quad - \frac{n}{i\pi} \int \frac{1}{\ap-\bp}\brac{\frac{f_1(\ap) - f_1(\bp)}{\ap - \bp}}\cdots \brac{\frac{f_n(\ap) - f_n(\bp)}{\ap-\bp}} h(\bp)g(\bp) \diff \bp\\
& = \sqbrac{h\pap f_1, \cdots, f_n;  g} + \cdots + \sqbrac{f_1, \cdots, h\pap f_n;  g} + \sqbrac{f_1, \cdots, f_n;  \pap(hg)} - n\sqbrac{h,f_1,\cdots, f_n; g}
\end{align*}
\endgroup
This proves the first identity. The second identity follows directly from this. The proof of these identities for the case of $\Sone$ is identical and so we skip it. Now let us prove the last identity. For $f,g \in C^\infty(\Sone)$ we observe from the first identity proved that
\begin{align}\label{eq:tempderivcomm}
f\pap\sqbrac{f ; g} = \sqbrac{ff';g} + \sqbrac{f;\pap(fg)} - \sqbrac{f,f;g}
\end{align}
On the other hand by using the definition of $\sqbrac{f;g}$ \eqref{eq:sqfgbounded} and by expanding the commutator we obtain
\begingroup
\allowdisplaybreaks
\begin{align*}
f\pap\sqbrac{f ; g} & = f\pap\sqbrac{f,\Hiltil}g \\
& = f\sqbrac{f',\Hiltil}g + f\sqbrac{f,\Hiltil}\pap g \\
& = \sqbrac{ff',\Hiltil}g - \sqbrac{f,\Hiltil}g\pap f + \sqbrac{f^2,\Hiltil}\pap g - \sqbrac{f,\Hiltil}f\pap g \\
& = \sqbrac{ff',\Hiltil}g - \sqbrac{f,\Hiltil}\pap(fg) + \sqbrac{f^2,\Hiltil}\pap g
\end{align*}
\endgroup
Combining this with \eqref{eq:tempderivcomm} gives us the required identity. 
\end{proof}

\begin{prop}\label{prop:Coifman} 
Fix $m \in \Nsp$. Let $H \in C^1(\Rsp),A_i \in C^1(\Rsp) $ for $i=1,\cdots m$ and $F\in C^\infty(\Rsp)$. Define 
\begin{align*}
C_1(H,A,f)(x) & = p.v. \int F\brac{\frac{H(x)-H(y)}{x-y}}\frac{\Pi_{i=1}^{m}(A_i(x) - A_i(y))}{(x-y)^{m+1}}f(y)\diff y \\
C_2(H,A,f)(x) & = p.v. \int F\brac{\frac{H(x)-H(y)}{x-y}}\frac{\Pi_{i=1}^{m}(A_i(x) - A_i(y))}{(x-y)^{m}} \partial_y f(y)\diff y
\end{align*}
Then there exists constants $c_1,c_2,c_3,c_4$ depending only on $m$, $F$ and $\norm[\infty]{H'}$ so that
\begin{enumerate}
\item $\norm[2]{C_1(H,A,f)} \leq c_1\norm[\infty]{A_1'}\cdots\norm[\infty]{A_m'}\norm[2]{f}$

\item $\norm[2]{C_1(H,A,f)} \leq c_2\norm[2]{A_1'}\norm[\infty]{A_2'}\cdots\norm[\infty]{A_m'}\norm[\infty]{f}$ 

\item $\norm[2]{C_2(H,A,f)} \leq c_3\norm[\infty]{A_1'}\cdots\norm[\infty]{A_m'}\norm[2]{f}$ 

\item $\norm[2]{C_2(H,A,f)} \leq c_4\norm[2]{A_1'}\norm[\infty]{A_2'}\cdots\norm[\infty]{A_m'}\norm[\infty]{f}$ 
\end{enumerate}
Now let $A_i \in C^1(\Sone)$ for $i = 1,\cdots m$. Define
\begin{align*}
B_1(A,f)(\ap) = p.v. \int_0^{2\pi} \frac{\Pi_{i=1}^m (A_i(\ap) - A_i(\bp))}{\cbrac{\sin\brac{\frac{\ap - \bp}{2}}}^{m+1}} f(\bp) \diff \bp \\
B_2(A,f)(\ap) = p.v. \int_0^{2\pi}  \frac{\Pi_{i=1}^m (A_i(\ap) - A_i(\bp))}{\cbrac{\sin\brac{\frac{\ap - \bp}{2}}}^m} \partial_{\bp}f(\bp) \diff \bp 
\end{align*} 
Then we have the following estimates
\begin{enumerate}
\item $\norm[2]{B_1(A,f)} \lesssim_m \norm[\infty]{A_1'}\cdots\norm[\infty]{A_m'}\norm[2]{f}$

\item $\norm[2]{B_1(A,f)} \lesssim_m \norm[2]{A_1'}\norm[\infty]{A_2'}\cdots\norm[\infty]{A_m'}\norm[\infty]{f}$ 

\item $\norm[2]{B_2(A,f)}\lesssim_m \norm[\infty]{A_1'}\cdots\norm[\infty]{A_m'}\norm[2]{f}$ 

\item $\norm[2]{B_2(A,f)} \lesssim_m \norm[2]{A_1'}\norm[\infty]{A_2'}\cdots\norm[\infty]{A_m'}\norm[\infty]{f}$ 
\end{enumerate}

\end{prop}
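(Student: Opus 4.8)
The plan is to reduce every assertion to the Coifman--McIntosh--Meyer (CMM) theorem on multilinear Calder\'on commutators, plus integration by parts. Consider first the estimates on $\Rsp$. Estimates (1) and (3) for $C_1$ are precisely the CMM bound: the kernel $F\brac{\frac{H(x)-H(y)}{x-y}}\frac{\prod_{i=1}^m(A_i(x)-A_i(y))}{(x-y)^{m+1}}$ is an $\Ltwo$-bounded Calder\'on--Zygmund kernel whose operator norm is $\lesssim \prod_i\norm[\infty]{A_i'}$ with constant depending only on $F$ and $\norm[\infty]{H'}$; estimates (2) and (4), with $A_1'$ in $\Ltwo$ and $f$ in $\Linfty$, are the ``dual'' formulation of the same theorem, obtained by writing $A_1(x)-A_1(y)=\int_0^1 A_1'(y+t(x-y))\,\diff t$. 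These I would simply invoke for $f$ smooth and decaying, the general case by density. The operator $C_2$ reduces to $C_1$ by integrating by parts in $y$ (no boundary terms): writing $C_2(H,A,f)(x)=-\int(\partial_y K)(x,y)f(y)\,\diff y$, the derivative hitting $(x-y)^{-m}$ reproduces $m\,C_1(H,A,f)$; hitting $F$ produces $C_1$-type operators with $F$ replaced by $t\mapsto tF'(t)$ and by $F'$ times the bounded multiplier $H'(y)$; and hitting one factor $A_i(x)-A_i(y)$ produces the $C_1$-operator carrying the remaining $m-1$ commutators applied to the function $A_i'(y)f(y)$. An induction on $m$ then yields (3) and (4) from (1) and (2).

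For the circle I would transfer the $\Rsp$ bounds. The identity $e^{i\ap}-e^{i\bp}=2ie^{i(\ap+\bp)/2}\sin\brac{\frac{\ap-\bp}{2}}$ shows $\sin\brac{\frac{\ap-\bp}{2}}=\frac{\ap-\bp}{2}\,g\brac{\frac{\ap-\bp}{2}}$ with $g(u)=\frac{\sin u}{u}$ real analytic and nonvanishing on $\abs{u}<\pi$. Fix a small $\delta\in(0,\pi)$ and split the $\bp$-integral into the near-diagonal region $\abs{\ap-\bp}_{\Sone}<\delta$ and its complement. On the complement all denominators are bounded below and all commutator factors above, so the kernel is bounded on $\Sone\times\Sone$ and the operator is Hilbert--Schmidt. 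On the near-diagonal region write $\sin\brac{\frac{\ap-\bp}{2}}^{-(m+1)}=\frac{2^{m+1}}{(\ap-\bp)^{m+1}}\,g\brac{\frac{\ap-\bp}{2}}^{-(m+1)}$ and Taylor-expand $g(u)^{-(m+1)}=\sum_{k=0}^{N}c_k u^{2k}+u^{2N+2}R(u)$ with $R$ smooth. The remainder term times the $m$ commutator factors has kernel $O\brac{\prod_i\norm[\infty]{A_i'}\,\abs{\ap-\bp}^{2N+1}}$, hence bounded on $\Sone\times\Sone$; for $2k\ge m+1$ the polynomial term is likewise nonsingular. For $2k\le m$, the expression $\frac{\prod_{i=1}^m(A_i(\ap)-A_i(\bp))}{(\ap-\bp)^{m+1}}(\ap-\bp)^{2k}$ equals a genuine $C_1$-type kernel carrying $m-2k$ of the commutators (with $F\equiv1$) times the remaining $2k$ factors $A_i(\ap)-A_i(\bp)$; expanding the latter into monomials in $A_i(\ap)$ and $A_i(\bp)$, and replacing each $A_i$ by $A_i-\Avg(A_i)$ so that $\norm[\infty]{A_i}\lesssim\norm[\infty]{A_i'}$ on $\Sone$, each resulting piece is a bounded multiplier of size $\prod\norm[\infty]{A_i'}$ composed with a CMM-bounded operator (the $\Rsp$ bound localizes to $\Sone$ by a standard periodization/localization argument, since the kernel is supported within distance $\delta<\pi$ of the diagonal). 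Summing gives estimate (1) on $\Sone$; (2) follows by carrying the $\Ltwo$-$A_1'$/$\Linfty$-$f$ version of CMM through the same steps. Finally $B_2$ is obtained from $B_1$ by integrating by parts in $\bp$, legitimate now by $2\pi$-periodicity, exactly as $C_2$ was obtained from $C_1$; the one new feature is the factor $\cos\brac{\frac{\ap-\bp}{2}}$ generated when $\partial_\bp$ lands on the sine, which near the diagonal is absorbed by the same Taylor-expansion device.

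The only genuinely non-bookkeeping point, and hence the main obstacle, is the conversion of the circle singularity $\sin\brac{\frac{\ap-\bp}{2}}^{-(m+1)}$ into the linear difference $(\ap-\bp)^{-(m+1)}$ while keeping all estimates uniform in the $\norm[\infty]{A_i'}$ alone: the Taylor coefficients of $g(u)^{-(m+1)}$ introduce extra factors $A_i(\ap)-A_i(\bp)$ that are not part of a Calder\'on--Zygmund kernel, and one must verify that after expansion these act merely as $C^0$ multipliers---controlled by the Poincar\'e-type normalization available on the compact group---rather than degrading the commutator structure. Everything else is the Coifman--McIntosh--Meyer theorem, integration by parts, and the boundedness on the compact space $\Sone$ of operators with bounded kernels.
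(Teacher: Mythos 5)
Your proposal is correct, and on the real line it is essentially the paper's proof: the paper likewise just cites Coifman--McIntosh--Meyer for estimate (1), cites Wu's 2009 paper for estimate (2), and gets (3) and (4) by the same integration by parts you describe. One caution there: estimate (2) is not a formal ``dual formulation'' of (1) --- putting the $\Ltwo$ norm on $A_1'$ instead of on $f$ is a separate (known) theorem, which is exactly why the paper cites Wu rather than CMM for it; the fundamental-theorem-of-calculus representation of $A_1(x)-A_1(y)$ you mention is indeed the starting point of that proof, so as a citation-level step this is acceptable, but it should not be presented as an immediate corollary of (1). On the circle your route genuinely differs from the paper's. The paper rewrites $\sin\brac{\frac{\ap-\bp}{2}}$ via the identity $e^{i\ap}-e^{i\bp}=2ie^{i(\ap+\bp)/2}\sin\brac{\frac{\ap-\bp}{2}}$ and then invokes the already-established periodic analogues of the CMM bounds (Propositions 2.2 and 2.3 of Bieri--Miao--Shahshahani--Wu), i.e.\ it transfers the kernels to a form covered by existing circle results. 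You instead transfer the circle problem to the line: near-diagonal cutoff, Taylor expansion of $(\sin u/u)^{-(m+1)}$, Hilbert--Schmidt bounds off the diagonal, periodization, and treatment of the leftover difference factors as bounded multipliers via $\norm[\infty]{A_i-\Avg(A_i)}\lesssim\norm[2]{A_i'}$, with $\norm[2]{f}\lesssim\norm[\infty]{f}$ on the compact circle closing the $\Ltwo$--$\Linfty$ bookkeeping for estimate (2) even when $A_1$ falls among the leftover factors; the boundary terms in the periodic integration by parts for $B_2$ cancel because the kernel has equal one-sided limits at the diagonal. Your version is more self-contained (it needs only the real-line theorem) at the cost of the localization/expansion bookkeeping you flag; the paper's version is shorter because it leans on the periodic results already in the literature. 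Both are sound.
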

\begin{proof}
Let us first consider the real line case. The first estimate is a theorem by Coifman, McIntosh and Meyer \cite{CoMcMe82}. See also chapter 9 of \cite{MeCo97}. Proof of the second estimate can be found in \cite{Wu09}. The third and fourth estimates can be obtained from the first two by integration by parts. 

The estimates for $\Sone$ can be obtained from the real line case by some modifications. For the functions $B_1(A,f)$ and $B_2(A,f)$, we first replace $\sin\brac{\frac{\ap - \bp}{2}}$ by $e^{i\ap} - e^{i\bp}$ from the formula \eqref{eq:trigidentity}. Now using Proposition 2.2 and Proposition 2.3 of \cite{BiMiShWu17} the estimates follow. 
\end{proof}

\begin{prop}\label{prop:Hardy}
Let $f \in \Scalsp(\Rsp)$. Then we have
\begin{enumerate}

\item $\norm[\infty]{f} \lesssim_s \norm[H^s]{f}$ if $s>\frac{1}{2}$ and for $s=\half$ we have $\norm[BMO]{f} \lesssim \norm[\Hhalf]{f}$

\item $
\begin{aligned}[t]
\norm[\Ltwo(\Rsp, \diff \ap)]{\sup_{\bp}\abs{\frac{f(\ap) - f(\bp)}{\ap - \bp}}} \lesssim \norm[2]{f'}
\end{aligned}
$

\item $
\begin{aligned}[t]
\int \abs{\frac{f(\ap) - f(\bp)}{\ap - \bp}}^2 \diff \bp \lesssim \norm[2]{f'}^2 
\end{aligned}
$

\item $
\begin{aligned}[t]
\norm[\Hhalf]{f}^2 = \frac{1}{2\pi}\int\!\! \!\int \abs{\frac{f(\ap) - f(\bp)}{\ap - \bp}}^2 \diff \bp \diff\ap
\end{aligned}
$

\end{enumerate}
Now let $f \in C^{\infty}(\Sone)$. Then we have
\begin{enumerate}

\item $\norm[\infty]{f} \lesssim_s \norm[H^s]{f}$ if $s>\frac{1}{2}$ and for $s=\half$ we have $\norm[BMO]{f} \lesssim \norm[\Hhalf]{f}$. Moreover $\norm[p]{f - \Avg(f)} \lesssim_p \norm[BMO]{f}$ for any $1\leq p < \infty$. We also have
\begin{align*}
\norm[2]{f - \Avg(f)} \lesssim \norm[\infty]{f - \Avg(f)} \lesssim \norm[1]{f'} \lesssim \norm[2]{f'}
\end{align*}

\item $
\begin{aligned}[t]
\norm[\Ltwo([0,2\pi], \diff \ap)]{\sup_{\bp}\abs{\frac{f(\ap) - f(\bp)}{\sin\brac*[\big]{\frac{\bp - \ap}{2}}}}} \lesssim \norm[2]{f'}
\end{aligned}
$

\item $
\begin{aligned}[t]
\int_0^{2\pi} \abs{\frac{f(\ap) - f(\bp)}{\sin\brac*[\big]{\frac{\bp - \ap}{2}}}}^2 \diff \bp \lesssim \norm[2]{f'}^2 
\end{aligned}
$

\item $
\begin{aligned}[t]
\norm[\Hhalf]{f}^2 = \frac{1}{8\pi}\int_0^{2\pi}\int_0^{2\pi} \abs{\frac{f(\ap) - f(\bp)}{\sin\brac*[\big]{\frac{\bp - \ap}{2}}}}^2 \diff \bp \diff\ap
\end{aligned}
$

\end{enumerate}
\end{prop}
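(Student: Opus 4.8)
The proposition collects classical one-dimensional facts, and the plan is to prove each item on $\Rsp$ and then transfer it to $\Sone$ using the elementary comparison coming from \eqref{eq:trigidentity}, namely $\abs{e^{i\ap}-e^{i\bp}}=2\abs{\sin\brac{\frac{\ap-\bp}{2}}}$ together with the two-sided bound $\frac{2}{\pi}\abs{\theta}\le\abs{\sin\theta}\le\abs{\theta}$ for $\abs{\theta}\le\pi$, which makes $\abs{\sin\brac{\frac{\ap-\bp}{2}}}$ comparable to the geodesic distance on $\Sone$ for $\ap,\bp$ in one period. For Part 1 on $\Rsp$, the bound $\norm[\infty]{f}\lesssim\norm[H^s]{f}$ with $s>\half$ is Sobolev embedding: write $f(x)=\frac{1}{\sqrt{2\pi}}\int e^{ix\xi}\hat f(\xi)\diff\xi$, apply Cauchy--Schwarz to get $\abs{f(x)}\le\norm[2]{\langle\xi\rangle^{-s}}\norm[H^s]{f}$, and note $\norm[2]{\langle\xi\rangle^{-s}}<\infty$ exactly when $2s>1$; the endpoint $\norm[BMO]{f}\lesssim\norm[\Hhalf]{f}$ is the classical embedding $\Hhalf(\Rsp)\hookrightarrow BMO(\Rsp)$, which I would cite. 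On $\Sone$ the identical Fourier-series argument gives $\norm[\infty]{f}\lesssim\norm[H^s]{f}$ and $\norm[BMO]{f}\lesssim\norm[\Hhalf]{f}$; then $\norm[p]{f-\Avg(f)}\lesssim_p\norm[BMO]{f}$ is John--Nirenberg on the finite-measure interval $[0,2\pi]$, and $\norm[2]{f-\Avg(f)}\lesssim\norm[\infty]{f-\Avg(f)}\lesssim\norm[1]{f'}\lesssim\norm[2]{f'}$ follows by writing $f(\ap)-\Avg(f)=\frac{1}{2\pi}\int_0^{2\pi}\brac{f(\ap)-f(\bp)}\diff\bp$, bounding $\abs{f(\ap)-f(\bp)}$ by $\int\abs{f'}$, and Cauchy--Schwarz on $[0,2\pi]$.

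\textbf{Parts 2 and 3.} The common device is the identity $\frac{f(\ap)-f(\bp)}{\ap-\bp}=\int_0^1 f'\brac{(1-t)\bp+t\ap}\diff t$. For Part 3, Minkowski's integral inequality in $\Ltwo(\diff\bp)$ followed by the substitution $u=(1-t)\bp+t\ap$ (Jacobian $1-t$) gives $\norm[\Ltwo(\diff\bp)]{\frac{f(\ap)-f(\cdot)}{\ap-\cdot}}\le\brac{\int_0^1(1-t)^{-\half}\diff t}\norm[2]{f'}=2\norm[2]{f'}$ uniformly in $\ap$, which is exactly the stated bound, and for $\Sone$ one repeats this after replacing $\sin\brac{\frac{\ap-\bp}{2}}$ by the comparison above. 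For Part 2, note that $\abs{\frac{f(\ap)-f(\bp)}{\ap-\bp}}$ is an average of $\abs{f'}$ over an interval having $\ap$ as an endpoint, hence is $\le 2M(f')(\ap)$ where $M$ is the Hardy--Littlewood maximal operator; taking $\Ltwo$ norms and using the $\Ltwo$-boundedness of $M$ gives the estimate, and the $\Sone$ version is identical with the maximal operator on the circle.

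\textbf{Part 4.} This is the Gagliardo seminorm identity. On $\Rsp$: by Plancherel, $\int\abs{f(\bp+h)-f(\bp)}^2\diff\bp=\int\abs{e^{ih\xi}-1}^2\abs{\hat f(\xi)}^2\diff\xi$; integrating against $h^{-2}\diff h$ and rescaling $u=h\abs{\xi}$ yields $\int\abs{e^{ih\xi}-1}^2h^{-2}\diff h=\abs{\xi}\int\frac{\abs{e^{iu}-1}^2}{u^2}\diff u=2\pi\abs{\xi}$, so that $\frac{1}{2\pi}\iint\abs{\frac{f(\ap)-f(\bp)}{\ap-\bp}}^2\diff\bp\diff\ap=\int\abs{\xi}\abs{\hat f(\xi)}^2\diff\xi=\norm[\Hhalf]{f}^2$. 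On $\Sone$: by Parseval, $\int_0^{2\pi}\abs{f(\bp+\gamma)-f(\bp)}^2\diff\bp=2\pi\sum_n\abs{\hat f(n)}^2\abs{e^{in\gamma}-1}^2$; writing $\abs{e^{in\gamma}-1}^2=4\sin^2\brac{\frac{n\gamma}{2}}$ and computing $\int_0^{2\pi}\frac{\sin^2(n\gamma/2)}{\sin^2(\gamma/2)}\diff\gamma=2\pi\abs{n}$ (expand the Dirichlet-type kernel $\frac{\sin(n\gamma/2)}{\sin(\gamma/2)}$ as a finite sum of exponentials and use orthogonality) gives $\frac{1}{8\pi}\int_0^{2\pi}\int_0^{2\pi}\abs{\frac{f(\ap)-f(\bp)}{\sin((\ap-\bp)/2)}}^2\diff\bp\diff\ap=2\pi\sum_n\abs{n}\abs{\hat f(n)}^2=\norm[\Hhalf]{f}^2$.

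\textbf{Main obstacle.} There is no conceptual difficulty; the only real care needed is bookkeeping of constants in Part 4 so that they come out exactly $\frac{1}{2\pi}$ on the line and $\frac{1}{8\pi}$ on the circle given the normalizations of the Fourier transform and of $\papabs^\half$ fixed in \secref{sec:notation}, and checking that the constants in Parts 2--3 are genuinely independent of the base point $\ap$ (which the change-of-variables computation guarantees). The one step I would defer to the literature rather than reprove is the endpoint embedding $\Hhalf\hookrightarrow BMO$ in Part 1.
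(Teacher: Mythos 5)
Your proposal is correct and follows essentially the same route as the paper: the real-line items are the standard maximal-function and Plancherel arguments (which the paper simply cites from its earlier work), and on the circle you use the same averaging argument for item (1), a Hardy--Littlewood maximal bound for items (2)--(3), and the Parseval/Dirichlet-kernel computation for item (4), with your constants $\tfrac{1}{2\pi}$ and $\tfrac{1}{8\pi}$ matching the paper's normalizations of $\papabs^{1/2}$ and of the $L^2$ norm on $\Sone$. The only cosmetic differences are that the paper obtains circle-(3) as a consequence of circle-(2) and proves circle-(2) by periodizing to the line and using the uncentered maximal operator there, whereas you work with the circle maximal operator directly and reprove (3) via Minkowski's inequality; both are equally valid.
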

\begin{proof}
See \cite{Ag21} for a proof of the results on $\Rsp$. For functions on $\Sone$ we have:
\begin{enumerate}[leftmargin =*]
\item The first two estimates follow from standard Sobolev embedding results and properties of BMO functions. For the last estimate we only need to prove the middle inequality. Observe that
\begin{align*}
f(x) - f(y) = \int_y^x f'(s) \diff s
\end{align*}
Now averaging over $y$ gives
\begin{align*}
f(x) - \Avg(f) = \frac{1}{2\pi}\int_0^{2\pi} \brac{\int_y^{x} f'(s) \diff s} \diff y
\end{align*}
from which we easily get $\norm[\infty]{f(x) - \Avg(f)} \lesssim \norm[1]{f'}$. 
 
\item We identify $f$ with the $2\pi$ periodic function $\tilde{f} : \Rsp \to \Csp$ defined as $\tilde{f}(x) = f(e^{ix})$. Now we define $f^*:\Rsp \to \Csp$ as
\begin{align*}
f^* = 
\begin{cases}
f' \qq \tx{ on  } [-\pi, 3\pi) \\
0 \qq \tx{ otherwise}
\end{cases}
\end{align*}
Now for any fixed $\ap \in [0,2\pi)$ we see that
\begin{align*}
\sup_{\bp \in [\ap - \pi,\ap + \pi)}\abs{\frac{f(\ap) - f(\bp)}{\sin\brac*[\big]{\frac{\bp - \ap}{2}}}} \lesssim  \sup_{\bp \in [\ap - \pi, \ap + \pi)}\abs{\frac{f(\ap) - f(\bp)}{\ap - \bp}} \lesssim M(f^*)(\ap)
\end{align*}
where $M$ is the uncentered Hardy Littlewood maximal operator on $\Rsp$. As the maximal operator is bounded on $\Ltwo(\Rsp)$, the estimate follows.
\item This is a consequence of the second inequality.
\item The proof of this identity is the same as the one for the real line case. 
\end{enumerate}
\end{proof}

\begin{prop}\label{prop:commutator}
Let $f,g \in \mathcal{S}(\Rsp)$ with $s,a\in \Rsp$ and $m,n \in \Zsp$. Then we have the following estimates
\begin{enumerate}
\item $\norm*[\big][2]{\papabs^s\sqbrac{f,\Hil}(\papabs^{a} g )} \lesssim_{s,a}  \norm*[\big][BMO]{\papabs^{s+a}f}\norm[2]{g}$ \quad  for $s,a \geq 0$

\item $\norm*[\big][2]{\papabs^s\sqbrac{f,\Hil}(\papabs^{a} g )} \lesssim_{s,a}  \norm*[\big][2]{\papabs^{s+a}f}\norm[BMO]{g}$ \quad  for $s\geq 0$ and $a>0$

\item $\norm*[\big][2]{\sqbrac*[\big]{f,\papabs^\half}g } \lesssim \norm*[\big][BMO]{\papabs^\half f}\norm[2]{g}$

\item $\norm*[\big][2]{\sqbrac*[\big]{f,\papabs^\half}(\papabs^\half g) } \lesssim \norm*[\big][BMO]{\papabs f}\norm[2]{g}$

\item $\norm[\Linfty\cap\Hhalf]{\sqbrac{f,\Hil} g} \lesssim \norm*[\big][2]{f'}\norm[2]{g}$ 

\item $\norm[2]{\partial_{\ap}^m\sqbrac{f,\Hil}\partial_{\ap}^n g} \lesssim_{m,n} \norm*[\infty]{\partial_\ap^{(m+n)} f}\norm[2]{g}$ \quad  for $m,n \geq 0$

\item $\norm[2]{\partial_{\ap}^m\sqbrac{f,\Hil}\partial_{\ap}^n g} \lesssim_{m,n} \norm*[2]{\partial_\ap^{(m+n)} f}\norm[\infty]{g}$ \quad for $m\geq 0$ and $n\geq 1$

\item $\norm[2]{\sqbrac{f,\Hil}g} \lesssim \norm[2]{f'}\norm[1]{g}$
\end{enumerate}
The same estimates also hold for $f, g \in C^\infty(\Sone)$. In addition for $f, g \in C^\infty(\Sone)$ the above estimates except (3) and (4) also hold with $\Hil$ replaced with $\Hiltil$. 
\end{prop}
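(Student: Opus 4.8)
The plan is to organize the list according to two features: whether the differentiations present are of integer or fractional order, and whether the underlying operator is the Hilbert transform or its averaged variant $\Hiltil$. The estimates on $\Rsp$ for items (1)--(8) are precisely the commutator bounds established in Section~4 of \cite{Ag21} (which themselves rest on the Calder\'on commutator theorem of Coifman--McIntosh--Meyer, see \propref{prop:Coifman}, on the work in \cite{Wu09}, and on the Hardy-type inequalities of \propref{prop:Hardy}), so for the real line I would simply invoke those; the substance of the proof is the passage to $\Sone$ and to $\Hiltil$.

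For the integer-order estimates (5)--(8) I would write $[f,\Hil]g(\ap)=\frac{1}{i\pi}\int\frac{f(\ap)-f(\bp)}{\ap-\bp}g(\bp)\diff\bp$ and expand $\partial_\ap^m[f,\Hil]\partial_\bp^n$ acting on the divided difference by the Leibniz rule (equivalently, by iterating the identities of \propref{prop:tripleidentity}). Each resulting term is of Calder\'on-commutator type $\int F\bigl(\tfrac{f(\ap)-f(\bp)}{\ap-\bp}\bigr)\tfrac{\prod_i(A_i(\ap)-A_i(\bp))}{(\ap-\bp)^{k+1}}\,\widetilde g(\bp)\diff\bp$ with $F$ a polynomial and each $A_i$ either $f$ or the identity, so parts (1)--(4) of \propref{prop:Coifman} give the bounds with $\norm[\infty]{\partial^{m+n}f}$ or $\norm[2]{\partial^{m+n}f}$ as appropriate; estimate (8) and the $\Hhalf$ half of (5) instead follow from \propref{prop:Hardy}(3)--(4) together with Minkowski's inequality, and the $\Linfty$ half of (5) is Cauchy--Schwarz in $\bp$ plus \propref{prop:Hardy}(3). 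On $\Sone$ the same expansion is carried out with the kernel $\cot\bigl(\tfrac{\bp-\ap}{2}\bigr)$: using the trigonometric identity \eqref{eq:trigidentity} every term becomes one of the operators $B_1,B_2$ of \propref{prop:Coifman} (or is directly estimated by the $\Sone$ version of \propref{prop:Hardy}), which settles (5)--(8) in the periodic case.

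For the fractional estimates (1)--(4) I would represent $\papabs^{1/2}$ (and $\papabs^s$, after peeling off integer derivatives, as a possibly iterated fractional-integral singular operator) by its singular-integral formula, so that $[f,\papabs^{1/2}]g$ and $\papabs^s[f,\Hil](\papabs^a g)$ become generalized Calder\'on commutators whose kernels carry the homogeneity $-(1+s+a)$. These are handled by a Littlewood--Paley/paraproduct decomposition, splitting into the high--low, low--high and high--high frequency interactions of $f$ and $g$; the pieces in which $f$ enters through its $BMO$ norm are controlled by duality against $H^1$ atoms, exactly as in \cite{Ag21}, which is where I would cite the real-line statement rather than reprove it. For the transfer to $\Sone$ the cleanest route is to observe that the periodic and real-line versions of $\papabs^{1/2}$ differ (on $2\pi$-periodic functions) by an operator with bounded, indeed smooth, kernel, so that $[f,\papabs^{1/2}_{\Sone}]g=[f,\papabs^{1/2}_{\Rsp}]g+[f,\tx{smooth}]g$ with the last commutator trivially bounded; the periodic forms of (1)--(4) then follow from the real-line ones, and the analogous reduction handles $\papabs^s$.

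Finally, for the $\Hiltil$ versions on $\Sone$ I would use $\Hil=\Hiltil+\Av$, so $[f,\Hil]g=[f,\Hiltil]g+[f,\Av]g$ with $[f,\Av]g=f\,\widehat g(0)-\widehat{fg}(0)$ a rank-one operator with constant kernel; each of (1),(2),(5)--(8) for $\Av$ in place of $\Hil$ is elementary (e.g.\ $\norm[2]{\partial^m[f,\Av]\partial^n g}\lesssim\norm[\infty]{\partial^m f}\,\norm[2]{\partial^n g}\lesssim\norm[\infty]{\partial^{m+n}f}\,\norm[2]{g}$ after moving derivatives off $g$ by periodic integration by parts), so subtracting recovers the estimates for $\Hiltil$; items (3),(4) do not involve $\Hil$ and so require nothing further. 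The main obstacle is squarely the fractional family (1)--(4): extracting the sharp $BMO$ endpoint — rather than merely $L^\infty$ or $\dot H^s$ — for a commutator with a nonlocal operator needs the full paraproduct/$T(1)$ machinery, and the one genuinely new point here, namely that this machinery is unaffected by replacing $\Rsp$ with $\Sone$, is exactly what the smoothing-correction argument above is designed to isolate and make routine.
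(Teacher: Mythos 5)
Your proposal is correct, and its skeleton coincides with the paper's: the $\Rsp$ estimates are simply cited from \cite{Ag21}; the integer-order bounds (5)--(8) on $\Sone$ are obtained by expanding the commutator kernel and invoking the periodic Calder\'on-commutator operators $B_1,B_2$ of \propref{prop:Coifman} together with \propref{prop:Hardy} (your Cauchy--Schwarz-plus-Hardy argument for the $\Linfty$ half of (5), and Minkowski-plus-Hardy for (8), are exactly the computations the paper writes out); and the passage between $\Hil$ and $\Hiltil$ is handled by the rank-one splitting $\Hil=\Hiltil+\Av$ with the $\Av$-commutator estimated elementarily via $[f,\Av]g=(f-\Av(f))\Av(g)-\Av((f-\Av(f))g)$ and \propref{prop:Hardy} --- the paper uses this identical device, only in the opposite direction (it bounds the $\Hiltil$ commutator directly and adds the $\Av$ piece to recover $\Hil$, whereas you subtract it). The one point where you genuinely diverge is the fractional family (1)--(4) on $\Sone$: the paper regards these as "straightforward modifications" of the \cite{Ag21} proofs, i.e.\ one reruns the real-line arguments with the periodic kernels, while you propose transference, claiming the periodic and real-line $\papabs^{s}$ differ on $2\pi$-periodic functions by an operator with smooth kernel. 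That route is standard and workable, but as stated it needs one more step to be licit: the real-line estimates are formulated for Schwartz $f,g$ with norms over $\Rsp$, and periodic functions are neither decaying nor of finite $\Ltwo(\Rsp)$ norm, so you must either periodize the kernel (splitting it into the local singular piece plus a smooth $2\pi$-periodic correction, and applying the $\Rsp$ bounds only to the compactly supported local interaction) or insert cutoffs and control the commutators they generate. With that repair your argument buys a cleaner reduction to the real-line theorems at the price of a small transference lemma, whereas the paper's (implicit) route avoids transference entirely by leaning on the $\Sone$ versions of \propref{prop:Coifman} and \propref{prop:Hardy}; neither is more than a routine amount of work, and your assessment that the BMO endpoint requires the full paraproduct machinery somewhat overstates the difficulty the paper attributes to it.
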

\begin{proof}
See \cite{Ag21} for a proof for the estimates on $\Rsp$. Now let $f, g \in C^\infty(\Sone)$. All of the required estimates except for the $\Linfty$ estimate of (5) and the estimate (8) follow from relatively straightforward modifications to the proof in the $\Rsp$ case. For (5) we see that
\begin{align*}
(\sqbrac*{f,\Hiltil}g)(\ap) = \frac{1}{2\pi i} \int_0^{2\pi} (f(\ap) - f(\bp))\cot\brac{\frac{\bp - \ap}{2}}g(\bp) \diff \bp
\end{align*}
Hence using \propref{prop:Hardy} and the Cauchy Schwarz inequality we get $\norm*[\infty]{\sqbrac*{f,\Hiltil}g} \lesssim \norm[2]{f'} \norm[2]{g}$. Now observe that
\begin{align*}
\sqbrac*{f,\Avg}g = f\Avg(g) - \Avg(fg) = (f - \Avg(f))\Avg(g) - \Avg((f - \Avg(f))g)
\end{align*}
Hence from \propref{prop:Hardy} we get
\begin{align*}
\norm[\infty]{\sqbrac*{f,\Avg}g} \lesssim \norm[\infty]{f - \Avg(f)}\norm[1]{g} \lesssim \norm[2]{f'}\norm[1]{g} \lesssim \norm[2]{f'}\norm[2]{g} 
\end{align*}
As $\Hil = \Hiltil + \Avg$, we therefore obtain $\norm*[\infty]{\sqbrac*{f,\Hil}g} \lesssim \norm[2]{f'} \norm[2]{g}$. The proof for (8) is similar to this computation. 
\end{proof}

\begin{prop}\label{prop:Leibniz}
Let $f,g,h \in \mathcal{S}(\Rsp)$ with $s\in \Rsp$. Then we have the following estimates
\begin{enumerate}
\item $\norm[2]{\papabs^s (fg)} \lesssim_s  \norm[2]{\papabs^s f}\norm[\infty]{g} + \norm[\infty]{f}\norm[2]{\papabs^s g}$ \quad for $s > 0$
\item $\norm[\Hhalf]{fg} \lesssim \norm[2]{f'}\norm[2]{g} + \norm[\infty]{f}\norm[\Hhalf]{g}$
\end{enumerate}
The same estimates also hold for $f,g,h \in C^\infty(\Sone)$.
\end{prop}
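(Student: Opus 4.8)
The plan is to derive all three estimates, in both the $\Rsp$ and the $\Sone$ setting, from one elementary identity together with \propref{prop:Hardy}. The identity is that for any functions $f,g$,
\[
\frac{(fg)(\ap)-(fg)(\bp)}{\ap-\bp}=f(\ap)\,\frac{g(\ap)-g(\bp)}{\ap-\bp}+g(\bp)\,\frac{f(\ap)-f(\bp)}{\ap-\bp},
\]
so every difference quotient of a product splits into two pieces, each carrying a difference quotient of a single factor times a bare factor that will be taken out in $L^\infty$ (or, for estimate (3), absorbed by an extra integration). Together with $\abs{a+b}^2\le 2\abs a^2+2\abs b^2$ this reduces the product estimates to statements about single functions that are exactly of the type recorded in \propref{prop:Hardy}.

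First I would prove (2) and (3), which are the cases $s=\half$. Applying the identity $\norm[\Hhalf]{h}^2=\frac{1}{2\pi}\iint \abs{\frac{h(\ap)-h(\bp)}{\ap-\bp}}^2\diff\bp\diff\ap$ of \propref{prop:Hardy}(4) with $h=fg$ and inserting the product split, one bounds $\norm[\Hhalf]{fg}^2$ by a constant times the sum of $\iint \abs{f(\ap)}^2\abs{\frac{g(\ap)-g(\bp)}{\ap-\bp}}^2\diff\bp\diff\ap$ and $\iint \abs{g(\bp)}^2\abs{\frac{f(\ap)-f(\bp)}{\ap-\bp}}^2\diff\bp\diff\ap$. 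In the first integral pull out $\norm[\infty]{f}^2$ and use \propref{prop:Hardy}(4) again to recognize the remaining double integral as a constant times $\norm[\Hhalf]{g}^2$. For (2), do the symmetric thing with the second integral: pull out $\norm[\infty]{g}^2$ and use \propref{prop:Hardy}(4) to produce $\norm[\Hhalf]{f}^2$. For (3), instead integrate the second integral in $\ap$ first and apply \propref{prop:Hardy}(3) to bound $\int \abs{\frac{f(\ap)-f(\bp)}{\ap-\bp}}^2\diff\ap\lesssim\norm[2]{f'}^2$ uniformly in $\bp$, then integrate $\abs{g(\bp)}^2$ in $\bp$ to pick up $\norm[2]{g}^2$. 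Summing the two pieces gives (2) and (3).

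For (1): when $s=1$ it is just the classical Leibniz rule $(fg)'=f'g+fg'$ followed by H\"older. When $0<s<1$ the same mechanism works, with $\norm[\Hhalf]{\cdot}^2$ replaced by the homogeneous Gagliardo seminorm $\iint \frac{\abs{h(\ap)-h(\bp)}^2}{\abs{\ap-\bp}^{1+2s}}\diff\bp\diff\ap$, which is comparable to $\norm[2]{\papabs^s h}^2$ (the case $s=\half$ being precisely \propref{prop:Hardy}(4)): one inserts the product split, pulls $\norm[\infty]{f}$ and $\norm[\infty]{g}$ out of the two pieces, and is left with the corresponding seminorms of $f$ and $g$. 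For $s>1$ one reduces to the range $0<s\le 1$: write $s=k+\sigma$ with $k\in\Nsp$ and $0\le\sigma<1$, distribute $k$ classical derivatives by the ordinary Leibniz rule, and interpolate the resulting bilinear estimates (equivalently, run a Littlewood--Paley/paraproduct argument); this is standard. The $\Sone$ versions are proved by the identical scheme, replacing $\ap-\bp$ by $2\sin\brac{\frac{\bp-\ap}{2}}$ and using the $\Sone$ parts of \propref{prop:Hardy}; since $\abs{\ap-\bp}$ and $\abs{\sin\brac{\frac{\ap-\bp}{2}}}$ are comparable over one period, nothing new happens.

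The only point that asks for more than bookkeeping — the \textbf{main obstacle}, such as it is — is the exponent range $s>1$ in estimate (1): the clean difference-quotient proof is genuinely tied to $0<s<1$, so one must either verify that the body of the paper only ever invokes (1) with $s\le 1$ (in which case the elementary argument suffices everywhere it is used) or supply the interpolation/paraproduct reduction indicated above. Everything else is routine manipulation of the inequalities already in \propref{prop:Hardy}.
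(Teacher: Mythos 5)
The paper does not actually prove this proposition: it simply points to \cite{Ag21} for the $\Rsp$ case and asserts the $\Sone$ case is analogous. Your argument is therefore a genuine self-contained alternative, and for items (2) and (3) it is complete and correct: splitting the difference quotient of $fg$, pulling one factor out in $L^\infty$ (resp.\ integrating it in $L^2$ after applying \propref{prop:Hardy}(3) in the free variable), and invoking the double-integral identity \propref{prop:Hardy}(4) is exactly the natural real-variable proof, and the passage to $\Sone$ via the comparability of $\abs{\ap-\bp}$ and $\abs{\sin\brac{\frac{\ap-\bp}{2}}}$ on a period, together with the periodic statements in \propref{prop:Hardy}, goes through as you say. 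What your route buys is transparency and independence from the citation; what it does not cover in closed form is item (1) for all $s>0$.

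On that point, be a little careful: estimate (1) for general $s>0$ is the (homogeneous, $L^2\times L^\infty$-endpoint) Kato--Ponce fractional Leibniz rule. Your Gagliardo-seminorm argument settles $0<s<1$, and $s=1$ is the classical Leibniz rule, but the reduction you sketch for $s>1$ --- distribute $k$ integer derivatives and then handle the fractional remainder --- does not close by itself: it produces cross terms such as $\norm[2]{\papabs^{\sigma}(f^{(j)}g^{(k-j)})}$ with derivatives split between the factors, and controlling these by the two endpoint quantities requires either bilinear interpolation/Gagliardo--Nirenberg inequalities or, more cleanly, the Littlewood--Paley paraproduct proof you mention parenthetically; that is the argument one should actually run, and it is standard but not a one-line consequence of your identity. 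This is not a gap in the sense of a false step --- you flag it yourself --- and it is also worth noting that every application of item (1) in this paper occurs at $s=\tfrac12$ (i.e.\ through items (2) and (3)), so for the purposes of the paper your elementary argument already suffices; for the proposition as stated, cite or reproduce the paraproduct proof for $s>1$.
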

\begin{proof}
See \cite{Ag21} for a proof for the estimates on $\Rsp$ and the proof for functions on $\Sone$ is very similar to the $\Rsp$ case.
\end{proof}

\begin{prop}\label{prop:triple}
Let $f,g,h \in \mathcal{S}(\Rsp)$ . Then we have the following estimates
\begin{enumerate}

\item $\norm[2]{\sqbrac{f,g;h}} \lesssim \norm[2]{f'}\norm[2]{g'}\norm[2]{h}$

\item $\norm[2]{\sqbrac{f,g; h'}} \lesssim \norm[\infty]{f'}\norm[\infty]{g'}\norm[2]{h}$

\item $\norm[\Linfty\cap\Hhalf]{\sqbrac{f,g;h}} \lesssim \norm[\infty]{f'}\norm[2]{g'}\norm[2]{h}$

\end{enumerate}
The same estimates hold if instead we have $f,g,h \in C^\infty(\Sone)$.
\end{prop}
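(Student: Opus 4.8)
The three estimates in \propref{prop:triple} are all of Calder\'on-commutator type, and the plan is to reduce each of them to Propositions \propref{prop:Coifman} and \propref{prop:Hardy}. In every case the estimate on $\Sone$ follows from the one on $\Rsp$ by replacing $\sin\brac{\frac{\ap-\bp}{2}}$ with $e^{i\ap}-e^{i\bp}$ via \eqref{eq:trigidentity} and then applying Proposition 2.2 and Proposition 2.3 of \cite{BiMiShWu17}, exactly as in the proof of \propref{prop:Coifman}; so I will only describe the argument on $\Rsp$. (These $\Rsp$ estimates are essentially those of \cite{Ag21}, so one could simply quote them, but here is how I would prove them directly.)

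For (1) I would work pointwise in $\ap$. Applying the Cauchy--Schwarz inequality in the $\bp$-integral defining $\sqbrac{f,g;h}(\ap)$ and then \propref{prop:Hardy}(3),
\begin{align*}
\abs{\sqbrac{f,g;h}(\ap)} \lesssim \brac{\sup_{\bp}\abs{\frac{f(\ap)-f(\bp)}{\ap-\bp}}}\,\norm[\Ltwo(\diff\bp)]{\frac{g(\ap)-g(\bp)}{\ap-\bp}}\,\norm[2]{h} \lesssim \brac{\sup_{\bp}\abs{\frac{f(\ap)-f(\bp)}{\ap-\bp}}}\,\norm[2]{g'}\,\norm[2]{h}.
\end{align*}
Taking the $\Ltwo$ norm in $\ap$ and using \propref{prop:Hardy}(2) then yields $\norm[2]{\sqbrac{f,g;h}}\lesssim \norm[2]{f'}\norm[2]{g'}\norm[2]{h}$.

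For (2), the quickest route is to note that, by definition, $i\pi\sqbrac{f,g;h'}$ is exactly the Coifman--McIntosh--Meyer singular integral $C_2(0,(f,g),h)$ of \propref{prop:Coifman} with $F\equiv 1$, $H\equiv 0$, $m=2$, $A_1=f$, $A_2=g$ (the function integrated against $\partial_{\bp}$ being $h$); so \propref{prop:Coifman}(3) gives $\norm[2]{\sqbrac{f,g;h'}}\lesssim \norm[\infty]{f'}\norm[\infty]{g'}\norm[2]{h}$ directly. Alternatively one integrates by parts in $\bp$ to move the derivative off $h$, which writes $\sqbrac{f,g;h'}$ as a combination of $\int \frac{(f(\ap)-f(\bp))(g(\ap)-g(\bp))}{(\ap-\bp)^3}h(\bp)\diff\bp$ — the $C_1$ kernel with $m=2$ — and two terms of the form $\int\frac{f'(\bp)(g(\ap)-g(\bp))}{(\ap-\bp)^2}h(\bp)\diff\bp$, each of which reduces, after one further integration by parts, to a commutator of $\Hil$ with a Lipschitz function times $h$; one then invokes \propref{prop:Coifman}(1) and \propref{prop:commutator}.

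For (3), the $\Linfty$ bound is proved as in (1), but this time pulling $\abs{\frac{f(\ap)-f(\bp)}{\ap-\bp}}\le\norm[\infty]{f'}$ out of the integral directly, so that $\abs{\sqbrac{f,g;h}(\ap)}\le \norm[\infty]{f'}\,\norm[\Ltwo(\diff\bp)]{\frac{g(\ap)-g(\bp)}{\ap-\bp}}\,\norm[2]{h}\lesssim\norm[\infty]{f'}\norm[2]{g'}\norm[2]{h}$ uniformly in $\ap$. The $\Hhalf$ bound $\norm[\Hhalf]{\sqbrac{f,g;h}}\lesssim\norm[\infty]{f'}\norm[2]{g'}\norm[2]{h}$ is the main obstacle. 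One cannot simply differentiate: $\pap\sqbrac{f,g;h}=\sqbrac{f',g;h}+\sqbrac{f,g';h}+\sqbrac{f,g;h'}$, and the first term would cost $\norm[\infty]{f''}$; and a pointwise H\"older-$\frac12$ bound on $\sqbrac{f,g;h}$ is not enough, since H\"older-$\frac12$ functions are not in general in $\Hhalf$ in one dimension. Instead I would use the commutator identity (verified by integration by parts)
\begin{align*}
\sqbrac{f,g;h} \;=\; i\bigl(g\,[f,\papabs]h - [f,\papabs](gh)\bigr) \;=\; i\,[g,[f,\papabs]]h,
\end{align*}
where $[f,\papabs]$ is the Calder\'on commutator, with Calder\'on--Zygmund kernel $-\frac{1}{\pi}\frac{f(\ap)-f(\bp)}{(\ap-\bp)^2}$ of constant $\lesssim\norm[\infty]{f'}$; the half-derivative is then extracted from the iterated commutator $[g,[f,\papabs]]$ by a Littlewood--Paley argument, using the intrinsic $\Hhalf$ double-integral formula of \propref{prop:Hardy}(4) together with \propref{prop:Hardy}(2)--(3) to keep track of the $\Ltwo_\ap$-decay carried by $f'$ and $g'$. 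This last step — distributing the single available half-derivative so as to pay only $\norm[2]{g'}$ and $\norm[2]{h}$ rather than an $\Linfty$ norm — is the part I expect to be genuinely delicate; everything else is routine.
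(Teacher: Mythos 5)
Items (1), (2), and the $\Linfty$ half of (3) are fine: the Cauchy--Schwarz argument with \propref{prop:Hardy}(2)--(3) for (1), and the observation that $i\pi\sqbrac{f,g;h'}$ is the operator $C_2$ of \propref{prop:Coifman} with $F\equiv 1$, $H\equiv 0$, $m=2$, are both correct. The problem is the $\Hhalf$ bound in (3), which is the only genuinely nontrivial assertion of the proposition and which you do not actually prove. Your identity $\sqbrac{f,g;h}=i\bigl(g\,[f,\papabs]h-[f,\papabs](gh)\bigr)=i[g,[f,\papabs]]h$ is correct (the kernel computation checks out), but it only restates the problem: what you then need is an estimate of the form $\norm[\Hhalf]{[g,T]h}\lesssim\norm[2]{g'}\,\norm[2]{h}$ for the Calder\'on commutator $T=[f,\papabs]$, i.e. a gain of half a derivative from the commutator with $g$, paid for by $\norm[2]{g'}$ rather than any $\Linfty$ norm of $g'$ --- and that is precisely item (3) itself. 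The sentence about "extracting the half-derivative by a Littlewood--Paley argument using \propref{prop:Hardy}(2)--(4)" is a plan, not an argument: \propref{prop:Hardy} says nothing about how $\papabs^{1/2}$ interacts with the iterated commutator, and carrying the plan out would require either the kernel regularity of $[f,\papabs]$ together with an almost-orthogonality/paraproduct step, or a direct estimate of the difference $\sqbrac{f,g;h}(\ap)-\sqbrac{f,g;h}(\bp)$ in the double-integral characterization of \propref{prop:Hardy}(4), splitting the inner integral at scale $\abs{\ap-\bp}$. As you yourself flag, this is the delicate step, and it is left undone; the same gap is inherited by the $\Sone$ version of (3).

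For context, the paper does not reprove these bounds either: its proof of this proposition is a citation to \cite{Ag21} for the $\Rsp$ case, plus the remark that the $\Sone$ case is essentially identical, the only change being that estimate (2) uses the $\Sone$ part of \propref{prop:Coifman}. So quoting \cite{Ag21}, as you parenthetically suggest, would be acceptable; but as a self-contained argument your proposal is incomplete at exactly the step that gives the proposition its content, and the $\Hhalf$ estimate in (3) must either be cited or proved in full.
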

\begin{proof}
See \cite{Ag21} for a proof for the $\Rsp$ case. For functions on $\Sone$, the proofs are essentially identical to the real line case, with the only main difference being that for the second estimate we have to use the $\Sone$ estimates from \propref{prop:Coifman}. 
\end{proof}

\begin{prop} \label{prop:LinftyHhalf}
Let $f \in \mathcal{S}(\Rsp)$ and let $w$ be a smooth non-zero weight with $w,\frac{1}{w} \in \Linfty(\Rsp) $ and $w' \in \Ltwo(\Rsp)$. Then 
\begin{align*}
\norm[\Linfty\cap\Hhalf]{f}^2 \lesssim \norm[2]{\frac{f}{w}}\norm[2]{wf'} +  \norm[2]{\frac{f}{w}}^2\norm[2]{w'}^2
\end{align*}
If $f, w, \frac{1}{w} \in C^\infty(\Sone)$, then we have
\begin{align*}
\norm[\Linfty\cap\Hhalf]{f}^2 \lesssim \norm[2]{\frac{f}{w}}\norm[2]{wf'} +  \norm[2]{\frac{f}{w}}^2\norm[2]{w'}^2 + \norm[2]{f}^2
\end{align*}
\end{prop}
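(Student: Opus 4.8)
The plan is to estimate the two pieces of the norm, $\norm[\infty]{f}$ and $\norm[\Hhalf]{f}$, separately, inserting the weight $w$ at each step in such a way that only $\norm[2]{w'}$ and the quantities already on the right-hand side ever appear (in particular, never $\norm[\infty]{w}$ or $\norm[\infty]{1/w}$, since these are not allowed in the universal constant). The $\Rsp$ version is of exactly the same type as the weighted estimates in \cite{Ag21}, and the $\Sone$ version is obtained by the same computation with the $\Sone$ items of \propref{prop:Hardy} and \propref{prop:commutator} replacing their $\Rsp$ counterparts; the only genuinely new feature on $\Sone$ is the extra lower-order term $\norm[2]{f}^2$, which is forced by the fact that $\Hhalf(\Sone)$ is only a seminorm modulo constants.

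First I would do the $L^\infty$ bound by the fundamental theorem of calculus applied to $f^2$. Writing $g=f/w$, for $\ap,\bp\in[0,2\pi]$ we have $f(\ap)^2-f(\bp)^2=2\int_{\bp}^{\ap}ff'\,\diff s$; averaging in $\bp$ gives
\begin{align*}
\abs{f(\ap)}^2 \;\le\; \Avg(\abs{f}^2) + 2\int_0^{2\pi}\abs{f(s)}\,\abs{f'(s)}\,\diff s \;=\; \Avg(\abs{f}^2) + 2\int_0^{2\pi}\Bigl|\tfrac{f}{w}(s)\Bigr|\,\bigl|(wf')(s)\bigr|\,\diff s,
\end{align*}
so that by Cauchy--Schwarz $\norm[\infty]{f}^2 \lesssim \norm[2]{f}^2 + \norm[2]{\tfrac{f}{w}}\,\norm[2]{wf'}$. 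On $\Rsp$ the Schwartz decay lets $\bp\to\pm\infty$ and the $\norm[2]{f}^2$ term drops out.

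The heart of the matter is the $\Hhalf$ bound, and here I would deliberately avoid the Gagliardo double integral together with the substitution $f=wg$: that route produces the weighted seminorm $\int\!\!\int \abs{w(\ap)}^2\frac{\abs{g(\ap)-g(\bp)}^2}{\sin^2((\ap-\bp)/2)}\,\diff\bp\,\diff\ap$, which cannot be controlled by the right-hand side without losing a power of $\norm[\infty]{w}$. Instead I would work on the Fourier/operator side. Using the identity $\norm[\Hhalf]{f}^2=\int_0^{2\pi}\bar f\,\papabs f\,\diff\ap=\int_0^{2\pi}\overline{(f/w)}\,\bar w\,\papabs f\,\diff\ap$ and Cauchy--Schwarz,
\begin{align*}
\norm[\Hhalf]{f}^2 \;\le\; \norm[2]{\tfrac{f}{w}}\,\norm[2]{w\,\papabs f}.
\end{align*}
Since $\papabs=-i\Hiltil\pap$ on $\Sone$, we get $w\,\papabs f=-i\,\Hiltil(wf')-i\,[w,\Hiltil]f'$, hence $\norm[2]{w\,\papabs f}\le \norm[2]{wf'}+\norm[2]{[w,\Hiltil]f'}$. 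The commutator term is estimated in $L^2$ by duality: $[w,\Hiltil]^{*}=-[\bar w,\Hiltil]$ (because $\Hiltil$ is self-adjoint), so for $\phi\in C^\infty(\Sone)$,
\begin{align*}
\Bigl|\textstyle\int_0^{2\pi}\overline{\bigl([w,\Hiltil]f'\bigr)}\,\phi\,\diff\ap\Bigr| = \Bigl|\textstyle\int_0^{2\pi}\overline{f'}\,[\bar w,\Hiltil]\phi\,\diff\ap\Bigr| \;\lesssim\; \norm[\Hhalf]{f}\,\norm[\Hhalf]{[\bar w,\Hiltil]\phi} \;\lesssim\; \norm[\Hhalf]{f}\,\norm[2]{w'}\,\norm[2]{\phi},
\end{align*}
where the first $\lesssim$ uses the $\dot H^{-1/2}$--$\dot H^{1/2}$ pairing together with $\norm[\dot{H}^{-1/2}]{f'}=\norm[\Hhalf]{f}$, and the second uses \propref{prop:commutator}(5) (its $\Hiltil$ form). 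Taking the supremum over $\norm[2]{\phi}=1$ yields $\norm[2]{[w,\Hiltil]f'}\lesssim \norm[2]{w'}\,\norm[\Hhalf]{f}$, whence
\begin{align*}
\norm[\Hhalf]{f}^2 \;\lesssim\; \norm[2]{\tfrac{f}{w}}\,\norm[2]{wf'} + \norm[2]{\tfrac{f}{w}}\,\norm[2]{w'}\,\norm[\Hhalf]{f},
\end{align*}
and $ab\le\tfrac12 a^2+\tfrac12 b^2$ applied to the last term, absorbing $\tfrac12\norm[\Hhalf]{f}^2$ on the left, gives $\norm[\Hhalf]{f}^2\lesssim \norm[2]{\tfrac{f}{w}}\norm[2]{wf'}+\norm[2]{\tfrac{f}{w}}^2\norm[2]{w'}^2$. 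Adding the $L^\infty$ estimate finishes the proof, with the $\norm[2]{f}^2$ coming only from the $L^\infty$ part.

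The step that requires care is precisely this $\Hhalf$ estimate: the naive real-variable argument picks up $\norm[\infty]{w}$, which the statement does not permit, so one must pair $\papabs f$ against $f/w$ and channel the whole loss through the single commutator $[w,\Hiltil]$, whose $\Hhalf$-boundedness with gain $\norm[2]{w'}$ is exactly \propref{prop:commutator}(5) after dualizing. Everything else is routine: on $\Rsp$ the identical computation works with $\Hil$ in place of $\Hiltil$ and $\papabs=i\Hil\pap$, the $\norm[2]{f}^2$ term does not appear, and the duality/Plancherel manipulations are all legitimate because $f\in\Scalsp(\Rsp)$ (respectively $f,w,\tfrac1w\in C^\infty(\Sone)$, with $f'$ having zero mean so that the $\dot H^{-1/2}$ pairing makes sense).
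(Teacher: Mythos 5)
Your proof is correct. The $\Linfty$ half is exactly the paper's argument: the paper also writes $f^2(\ap)-f^2(\bp)=2\int_\bp^\ap (f/w)(wf')\,\diff s$, averages in $\bp$, and on $\Sone$ picks up the extra $\norm[2]{f}^2$ from $\Avg(f^2)$ (with the same harmless blur between $f^2$ and $\abs{f}^2$). For the $\Hhalf$ half the paper does not give an argument at all — it cites \cite{Ag21} for $\Rsp$ and asserts the $\Sone$ case is identical — whereas you supply a complete, self-contained proof: pair $\norm[\Hhalf]{f}^2=\int \bar f\,\papabs f$ against $f/w$, write $w\papabs f$ as $\mp i\,\Hil(wf')$ (resp. $\Hiltil$) plus the commutator $[w,\Hil]f'$, bound the commutator in $\Ltwo$ by duality through the $\dot H^{-1/2}$--$\Hhalf$ pairing and \propref{prop:commutator}(5), and absorb. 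This is a legitimate alternative route, entirely within the paper's toolkit; its virtues are that every constant is manifestly universal (no $\norm[\infty]{w}$ or $\norm[\infty]{1/w}$ enters, consistent with the scaling invariance $w\mapsto\lambda w$ of the stated inequality), that it treats complex weights $w$ without modification, and that it makes transparent why the $\norm[2]{f}^2$ correction on $\Sone$ comes only from the $\Linfty$ part — which matches the paper's own remark that the $\Hhalf$ estimate on $\Sone$ needs no such term. The only points requiring the care you already took are that $f'$ has zero mean on $\Sone$ (so the homogeneous pairing is legitimate and the zero mode of $[\bar w,\Hiltil]\phi$ drops out), that \propref{prop:commutator}(5) is among the estimates the paper extends to $\Hiltil$, and that the absorption step uses $\norm[\Hhalf]{f}<\infty$, guaranteed by smoothness.
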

\begin{proof}
See \cite{Ag21} for a proof for the $\Rsp$ case (note that it does not matter whether we have $\norm[2]{wf'}$ or $\norm[2]{(wf)'}$ on the right hand side). Now for the $\Sone$ case, the proof of the $\Hhalf$ estimate is the same as for $\Rsp$ and we don't actually need the extra $\norm[2]{f}^2$ on the right hand side. For the $\Linfty$ estimate, observe that
\begin{align*}
f^2(\ap) - f^2(\bp) = 2\int_\bp^\ap \brac{\frac{f}{w}} (wf') \diff s
\end{align*}
Hence by averaging in $\bp$, we see that 
\begin{align*}
\abs{f^2(\ap) - \Avg(f^2)} \lesssim \norm[2]{\frac{f}{w}}\norm[2]{wf'} 
\end{align*}
Therefore we see that
\begin{align*}
\norm[\infty]{f}^2 \lesssim \norm[2]{\frac{f}{w}}\norm[2]{wf'}  + \abs{\Avg(f^2)} \lesssim \norm[2]{\frac{f}{w}}\norm[2]{wf'} + \norm[2]{f}^2
\end{align*} 
\end{proof}

\begin{prop}\label{prop:Hhalfweight}
Let $f,g \in \mathcal{S}(\Rsp)$ and let $w,h \in \Linfty(\Rsp)$ be smooth functions with $w',h' \in \Ltwo(\Rsp)$. Then 
\begin{align*}
\norm[\Hhalf]{fwh} \lesssim \norm[\Hhalf]{fw}\norm[\infty]{h} + \norm[2]{f}\norm[2]{(wh)'} + \norm[2]{f}\norm[2]{w'}\norm[\infty]{h}
\end{align*}
If in addition we assume that $w$ is real valued then
\begin{align*}
\norm[2]{fgw} \lesssim \norm[\Hhalf]{fw}\norm[2]{g} + \norm[\Hhalf]{gw}\norm[2]{f} + \norm[2]{f}\norm[2]{g}\norm[2]{w'} 
\end{align*}
If $f,g,h, w \in C^\infty(\Sone)$, then the first estimate also holds and the second estimate gets modified to
\begin{align*}
\norm[2]{fgw} \lesssim \norm[\Hhalf]{fw}\norm[2]{g} + \norm[\Hhalf]{gw}\norm[2]{f} + \norm[2]{f}\norm[2]{g}\brac{\norm[2]{w} + \norm[2]{w'} }
\end{align*}
\end{prop}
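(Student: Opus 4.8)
The plan is to take the two inequalities on $\Rsp$ as known — they are proved (or can be proved by the same arguments) in \cite{Ag21} — and to transfer the arguments to $\Sone$ in the same spirit as the other appendix statements, using the $\Sone$ versions of the Hardy, commutator and Leibniz estimates (\propref{prop:Hardy}, \propref{prop:commutator}, \propref{prop:Leibniz}) and keeping track of the mean‑value corrections that arise whenever a homogeneous Sobolev tool is used on the circle. For the first estimate it is cleanest to argue directly. I would start from the algebraic identity
\begin{align*}
(fwh)(\ap) - (fwh)(\bp) & = h(\ap)\sqbrac{(fw)(\ap) - (fw)(\bp)} + f(\bp)\sqbrac{(wh)(\ap) - (wh)(\bp)} \\
& \quad - f(\bp)h(\ap)\sqbrac{w(\ap) - w(\bp)},
\end{align*}
divide by $\sin((\bp-\ap)/2)$, and apply the identity $\norm[\Hhalf]{F}^2 = \frac{1}{8\pi}\int_0^{2\pi}\!\int_0^{2\pi}\abs{\frac{F(\ap)-F(\bp)}{\sin((\bp-\ap)/2)}}^2 \diff\bp\diff\ap$ from \propref{prop:Hardy}.

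The first term contributes $\lesssim \norm[\infty]{h}\norm[\Hhalf]{fw}$ after pulling out $\norm[\infty]{h}$ and recognising the $\Hhalf$ double integral of $fw$; the second and third, after Minkowski's inequality and part (3) of \propref{prop:Hardy} applied to $wh$ and to $w$, contribute $\lesssim \norm[2]{f}\norm[2]{(wh)'}$ and $\lesssim \norm[\infty]{h}\norm[2]{f}\norm[2]{w'}$ respectively. The same computation proves the $\Rsp$ case; the whole point of this particular splitting is that every increment of $w$ is multiplied only by bounded factors ($h$ or $f(\bp)$), so $\norm[\infty]{w}$ never enters.

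For the second estimate I would follow the $\Rsp$ proof of \cite{Ag21}. There the hypothesis that $w$ is real valued is used to symmetrise, via $\overline{fgw} = \bar f\,\bar g\,w$, so that the two ``regular'' factors $fw$ and $gw$ play symmetric roles and one is reduced to estimates involving $\norm[\Hhalf]{fw}$, $\norm[\Hhalf]{gw}$ and the $\Ltwo$ norms of $f$ and $g$. The only modification on $\Sone$ is that each time the argument uses an embedding of the form $\norm[p]{u}\lesssim\norm[\Hhalf]{u}$ (with $u$ equal to $fw$ or $gw$), on the circle one has instead $\norm[p]{u}\lesssim_p\norm[\Hhalf]{u}+\norm[1]{u}$ from \propref{prop:Hardy}; since $\norm[1]{fw}\le\norm[2]{f}\norm[2]{w}$ and $\norm[1]{gw}\le\norm[2]{g}\norm[2]{w}$, these low‑frequency remainders, multiplied by the $\Ltwo$ norm of the remaining factor, combine into the single extra term $\norm[2]{f}\norm[2]{g}\norm[2]{w}$ of the modified estimate. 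All the auxiliary commutator and product estimates invoked in the $\Rsp$ argument are already available on $\Sone$ via \propref{prop:commutator} and \propref{prop:Leibniz}, so nothing else needs changing.

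The routine part is the first estimate: once the decomposition above is written, the bounds fall straight out of \propref{prop:Hardy}. The delicate points I expect are (i) choosing that decomposition so as to keep $\norm[\infty]{w}$ out — the naive splitting into an $fw$‑increment plus an $h$‑increment leaves a factor $fw$ next to the $h$‑increment, which forces $\norm[\infty]{w}$, so one has to re‑expand and cancel as above; and (ii) in the second estimate, the bookkeeping of the mean corrections on the circle, where one must verify that the accumulated $\Lone$‑remainders assemble into exactly one copy of $\norm[2]{f}\norm[2]{g}\norm[2]{w}$ and nothing larger. If it should turn out that the $\Rsp$ proof of the second estimate uses a tool not yet established on $\Sone$, the fallback would be a direct duality argument — testing $fgw$ against a unit vector in $\Ltwo$ and splitting the pairing so that the $\Hhalf$ regularity of $fw$ (or $gw$) is paired against an $\Ltwo\times\Ltwo$ product — and reconciling that with the subtraction of means on $\Sone$ is where the real effort would lie.
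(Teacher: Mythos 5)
Your argument is correct, but it departs from the paper's route in two places worth noting. For the first estimate the paper gives no argument at all (it cites \cite{Ag21} and asserts the circle case is identical), whereas you prove it directly from the telescoping identity $ (fwh)(\ap)-(fwh)(\bp)=h(\ap)\sqbrac{(fw)(\ap)-(fw)(\bp)}+f(\bp)\sqbrac{(wh)(\ap)-(wh)(\bp)}-f(\bp)h(\ap)\sqbrac{w(\ap)-w(\bp)}$ together with the double-integral characterization of $\Hhalf$ and part (3) of \propref{prop:Hardy}; the identity checks out and each term gives exactly one of the three contributions, with $\norm[\infty]{w}$ never appearing, so this is a complete self-contained proof. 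For the second estimate both you and the paper lean on the unexhibited $\Rsp$ proof of \cite{Ag21}, but the circle correction is organized differently: you propose to reopen that proof and replace every homogeneous embedding $\norm[p]{u}\lesssim\norm[\Hhalf]{u}$ by $\norm[p]{u}\lesssim\norm[\Hhalf]{u}+\norm[1]{u}$, arguing the $\Lone$ remainders assemble into $\norm[2]{f}\norm[2]{g}\norm[2]{w}$, while the paper instead observes that the $\Rsp$ argument applies verbatim to mean-zero functions on $\Sone$ and then writes
\begin{align*}
fgw=(f-\Av(f))(g-\Av(g))w+\Av(f)gw+\Av(g)fw-\Av(f)\Av(g)w,
\end{align*}
bounding the cross terms by $\abs{\Av(f)}\lesssim\norm[2]{f}$ and $\norm[2]{gw}\lesssim\norm[\Hhalf]{gw}+\norm[2]{g}\norm[2]{w}$ (the zeroth Fourier mode of $gw$ being controlled by $\norm[2]{g}\norm[2]{w}$). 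The paper's mean-subtraction decomposition buys you the modified estimate without having to audit the internals of the \cite{Ag21} proof, which is precisely the bookkeeping you flag as the delicate point; your embedding-patching route is plausible and morally the same zeroth-mode correction, but as written it remains contingent on the unseen structure of that proof, so if you want a verifiable argument you should either adopt the decomposition above or carry out your duality fallback explicitly.
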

\begin{proof}
See \cite{Ag21} for a proof for the real line case. The proof of the $\Hhalf$ estimate on $\Sone$ case is identical to the real line case. Similarly for the $\Ltwo$ estimate, by following the proof we see that the same estimate holds if $f$ and $g$ have zero mean and hence
\begin{align*}
& \norm[2]{(f - \Av(f))(g - \Av(g))w} \\
& \lesssim \norm[\Hhalf]{(f - \Av(f)) w}\norm[2]{g - \Av(g)} + \norm[\Hhalf]{(g - \Av(g)) w}\norm[2]{f - \Av(f)} \\
& \quad + \norm[2]{f - \Av(f)}\norm[2]{g - \Av(g)}\norm[2]{w'} \\
& \lesssim \norm[\Hhalf]{fw}\norm[2]{g} + \norm[\Hhalf]{gw}\norm[2]{f} + \norm[2]{f}\norm[2]{g}\brac{\norm[2]{w} + \norm[2]{w'} }
\end{align*}
Now observe that
\begin{align*}
fgw =  (f - \Av(f))(g - \Av(g))w + \Av(f)gw + \Av(g)fw -\Av(f)\Av(g)w
\end{align*}
By taking the $\Ltwo$ norms on both side, we get the required estimate. 

\end{proof}

\begin{prop} \label{prop:DtLinfty}
Let $f  \in  C^3([0,T), H^3(\Rsp))$. Then for any $t\in [0,T)$ we have
\begin{align*}
\limsup_{s \to 0^+} \frac{\norm[\infty]{f(\cdot,t+s)} - \norm[\infty]{f(\cdot,t)}}{s} \leq \norm[\infty]{\pt f(\cdot,t)}
\end{align*}
The same estimate holds for $f  \in  C^3([0,T), H^3(\Tsp))$ as well. 
\end{prop}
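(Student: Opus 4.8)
The plan is to dominate the forward difference quotient of $\norm[\infty]{f}$ by a short-time average of $\norm[\infty]{\pt f}$ and then let the averaging window shrink to the base point; this is essentially the fundamental theorem of calculus combined with the monotonicity of the supremum, so the argument is routine. First I would fix $t \in [0,T)$ and $s>0$ small enough that $t+s < T$. Since $f \in C^3([0,T), H^3(\Rsp))$ is in particular $C^1$ in time with values in $H^3(\Rsp)$, and $H^3(\Rsp)$ embeds continuously into $\Linfty(\Rsp)$ (in fact into $C^0$), for each fixed $x \in \Rsp$ the map $\tau \mapsto f(x, t+\tau)$ is $C^1$ and
\begin{align*}
f(x,t+s) = f(x,t) + \int_0^s (\pt f)(x, t+\tau) \diff\tau.
\end{align*}
Hence $\abs{f(x,t+s)} \leq \norm[\infty]{f(\cdot,t)} + \int_0^s \norm[\infty]{\pt f(\cdot,t+\tau)} \diff\tau$ for every $x$, and taking the supremum in $x$ gives
\begin{align*}
\norm[\infty]{f(\cdot,t+s)} \leq \norm[\infty]{f(\cdot,t)} + \int_0^s \norm[\infty]{\pt f(\cdot,t+\tau)} \diff\tau.
\end{align*}

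Next I would rearrange and divide by $s$ to obtain
\begin{align*}
\frac{\norm[\infty]{f(\cdot,t+s)} - \norm[\infty]{f(\cdot,t)}}{s} \leq \frac{1}{s}\int_0^s \norm[\infty]{\pt f(\cdot,t+\tau)} \diff\tau,
\end{align*}
and then pass to $\limsup_{s \to 0^+}$. The point is that $\tau \mapsto \norm[\infty]{\pt f(\cdot, t+\tau)}$ is continuous at $\tau = 0$: since $\pt f \in C^2([0,T), H^3(\Rsp)) \subset C([0,T), H^3(\Rsp))$ and $\norm[\infty]{g} \lesssim \norm[H^3]{g}$ by Sobolev embedding, we have $\pt f \in C([0,T), \Linfty(\Rsp))$, so $\tau \mapsto \norm[\infty]{\pt f(\cdot, t+\tau)}$ is a continuous real-valued function near $\tau = 0$. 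The average of a continuous function over $[0,s]$ converges to its value at $0$ as $s \to 0^+$, so the right-hand side above tends to $\norm[\infty]{\pt f(\cdot, t)}$, which yields the stated inequality.

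The torus case $f \in C^3([0,T), H^3(\Tsp))$ is handled verbatim, using $H^3(\Tsp) \hookrightarrow \Linfty(\Tsp)$ and the same Taylor expansion, supremum, and averaging steps. I do not expect any genuine obstacle here; the only point that deserves an explicit word is the Sobolev embedding that upgrades $H^3$-continuity in time to $\Linfty$-continuity in time, since that is precisely what makes the short-time average of $\norm[\infty]{\pt f}$ converge to its value at $t$. Everything else — the interchange of supremum and integral (justified by the pointwise bound) and the monotonicity of $g \mapsto \norm[\infty]{g}$ — is elementary.
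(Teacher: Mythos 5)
Your argument is correct: the pointwise fundamental theorem of calculus in time (justified by $C^1$-in-time regularity with values in $H^3\hookrightarrow \Linfty$, so that evaluation at a point commutes with the time integral), the supremum over $x$, and the shrinking average of the continuous function $\tau \mapsto \norm[\infty]{\pt f(\cdot,t+\tau)}$ give exactly the stated bound, and the torus case is indeed verbatim. The paper itself only cites \cite{Ag21} for this proposition, and your proof is the standard argument one would expect there; if anything, it can be compressed slightly by writing $\norm[\infty]{f(\cdot,t+s)} - \norm[\infty]{f(\cdot,t)} \leq \norm[\infty]{f(\cdot,t+s)-f(\cdot,t)}$ and using that $s^{-1}(f(\cdot,t+s)-f(\cdot,t)) \to \pt f(\cdot,t)$ in $\Linfty$ by the same Sobolev embedding, which avoids the averaging step.
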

\begin{proof}
See \cite{Ag21} for a proof for the $\Rsp$ case and the $\Tsp$ case is proved in the same manner.
\end{proof}

\begin{lem}\label{lem:decayatinfinity}
Let $f \in H^1(\Rsp)$ and consider the function $g : \Rsp \to \Rsp$
\begin{align*}
g(\ap) = \int_\Rsp \abs{\frac{f(\ap)-f(\bp)}{\ap-\bp}}^2\diff\bp
\end{align*}
Then $g$ is a continuous function and $g(\ap) \to 0$ as $\abs{\ap} \to \infty$. 
\end{lem}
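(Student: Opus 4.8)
The plan is to reduce to the case of smooth, compactly supported $f$ and then to pass to the general case by a uniform approximation argument. Since $C_c^\infty(\Rsp)$ is dense in $H^1(\Rsp)$, I would pick $f_n \in C_c^\infty(\Rsp)$ with $f_n \to f$ in $H^1$, and let $g_n$ be the function built from $f_n$ exactly as $g$ is built from $f$. Note first that $g$ is finite everywhere and uniformly bounded, by the Hardy-type bound $\int \bigl|\tfrac{f(\ap)-f(\bp)}{\ap-\bp}\bigr|^2 \diff\bp \lesssim \|f'\|_2^2$ of \propref{prop:Hardy}.

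First I would handle $f \in C_c^\infty(\Rsp)$, say with $\supp f \subset [-R,R]$. For $|\ap|>R$ one has $f(\ap)=0$, hence $g(\ap) = \int_{-R}^R \tfrac{|f(\bp)|^2}{|\ap-\bp|^2}\diff\bp \le \tfrac{\|f\|_2^2}{(|\ap|-R)^2} \to 0$ as $|\ap|\to\infty$, which gives decay at infinity in this case. For continuity at a fixed $\ap_0$, I would restrict $\ap$ to $[\ap_0-1,\ap_0+1]$ and dominate the integrand $\bigl|\tfrac{f(\ap)-f(\bp)}{\ap-\bp}\bigr|^2$ uniformly in such $\ap$: by $\|f'\|_\infty^2$ on $\{|\bp-\ap_0|\le 2\}$, and on $\{|\bp-\ap_0|>2\}$ (where $|\ap-\bp|>1$) by $2\|f\|_\infty^2+2|f(\bp)|^2$ on the bounded set where $f(\bp)$ may be nonzero and by $\tfrac{\|f\|_\infty^2}{(|\bp|-|\ap_0|-1)^2}$ for large $|\bp|$. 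This provides an $L^1(\diff\bp)$ majorant valid for all $\ap$ near $\ap_0$, so the dominated convergence theorem yields $g(\ap)\to g(\ap_0)$ along any sequence $\ap\to\ap_0$, i.e. continuity.

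Next I would pass to general $f \in H^1$. Writing $g(\ap)^{1/2} = \bigl\|\tfrac{f(\ap)-f(\cdot)}{\ap-\cdot}\bigr\|_{L^2(\diff\bp)}$, the triangle inequality in $L^2(\diff\bp)$ together with the Hardy-type bound of \propref{prop:Hardy} applied to $f-f_n$ gives
\[
\bigl|g(\ap)^{1/2}-g_n(\ap)^{1/2}\bigr| \;\le\; \Bigl\|\tfrac{(f-f_n)(\ap)-(f-f_n)(\bp)}{\ap-\bp}\Bigr\|_{L^2(\diff\bp)} \;\lesssim\; \|(f-f_n)'\|_2,
\]
uniformly in $\ap$. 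Hence $g_n^{1/2}\to g^{1/2}$ uniformly on $\Rsp$, and since the $g_n^{1/2}$ are uniformly bounded (again by \propref{prop:Hardy}) this upgrades to $g_n\to g$ uniformly. A uniform limit of continuous functions is continuous, so $g$ is continuous; and given $\ep>0$, first choosing $n$ with $\|g-g_n\|_\infty<\ep/2$ and then $M$ with $\sup_{|\ap|>M}g_n(\ap)<\ep/2$ shows $g(\ap)\to 0$ as $|\ap|\to\infty$.

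The main obstacle is bookkeeping rather than anything deep: constructing the $L^1$ dominating function in the compactly supported case (the split into $\bp$ near $\ap_0$, $\bp$ in $\supp f$, and $\bp$ far out), and making sure the Hardy-type $L^2$ bound is applied uniformly in $\ap$ so that the approximation in the last step is genuinely uniform. Once \propref{prop:Hardy} is invoked for these uniform estimates, the remainder is routine.
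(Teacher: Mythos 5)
Your proposal is correct and follows essentially the same route as the paper: approximate $f$ in $H^1$ by smooth compactly supported functions, observe continuity and decay of the corresponding $g_n$, and use the Hardy-type bound of \propref{prop:Hardy} to get uniform convergence $g_n \to g$ (the paper states the uniform estimate $\norm[\infty]{g_n - g} \lesssim \norm[2]{f_n' - f'}(\norm[2]{f_n'} + \norm[2]{f'})$ directly, which is just your square-root argument multiplied out). Your write-up merely fills in the dominated-convergence details that the paper leaves as "clear."
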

\begin{proof}
From  \propref{prop:Hardy} it is clear that $\norm[\infty]{g} \lesssim \norm[2]{f'}^2$.  Let $f_n:\Rsp \to \Csp$ be a sequence of smooth functions with compact support such that $f_n \to f$ in $H^1(\Rsp)$. Consider the functions
\begin{align*}
g_n(\ap) = \int_\Rsp \abs{\frac{f_n(\ap)-f_n(\bp)}{\ap-\bp}}^2\diff\bp
\end{align*}
We again have the estimate $\norm[\infty]{g_n} \lesssim \norm[2]{f'_n}^2$ and it is clear that for each fixed $n$, we have $g_n$ is a continuous function and $g_n(\ap) \to 0  $ as $\abs{\ap} \to \infty$. Now by \propref{prop:Hardy} we see that for $n  \in \Nsp$
\begin{align*}
\norm[\infty]{g_n - g} \lesssim \norm[2]{f_n' - f'}\brac{\norm[2]{f_n'} + \norm[2]{f'}}
\end{align*}
Therefore $g_n \to g$ in $\Linfty(\Rsp)$ and hence we are done. 
\end{proof}

\medskip
\noindent \textbf{Acknowledgment}: The author was supported by the National Science Foundation under Grant No. DMS-1928930 while participating in a program hosted by MSRI during the Spring 2021 semester. The author also received funding from the European Research Council (ERC) under the European Union’s Horizon 2020 research and innovation programme through the grant agreement 862342. 

\medskip
\noindent \textbf{Data availability}: This article has no associated data.

\medskip
\noindent \textbf{Conflict of interest}: The author declares no conflict of interest.


\bibliographystyle{amsplain}
\providecommand{\bysame}{\leavevmode\hbox to3em{\hrulefill}\thinspace}
\providecommand{\MR}{\relax\ifhmode\unskip\space\fi MR }
\providecommand{\MRhref}[2]{%
  \href{http://www.ams.org/mathscinet-getitem?mr=#1}{#2}
}
\providecommand{\href}[2]{#2}

\end{document}